\newtheorem{thm}{Theorem}[section]
\newtheorem{cor}[thm]{Corollary}
\newtheorem{claim}[thm]{Claim}
\newtheorem{fact}[thm]{Fact}
\newtheorem{lemma}[thm]{Lemma}
\newtheorem{prop}[thm]{Proposition}
\theoremstyle{definition}
\newtheorem{definition}[thm]{Definition}
\newtheorem{ex}[thm]{Example}
\newtheorem{remark}[thm]{Remark}
\newtheorem{question}[thm]{Question}
\title{Morphisms between right-angled Coxeter groups and the embedding problem in dimension two}
\date{\today}
\author{Anthony Genevois}
\begin{document}

\maketitle

\begin{abstract}
In this article, given two finite simplicial graphs $\Gamma_1$ and $\Gamma_2$, we state and prove a complete description of the possible morphisms $C(\Gamma_1) \to C(\Gamma_2)$ between the right-angled Coxeter groups $C(\Gamma_1)$ and $C(\Gamma_2)$. As an application, assuming that $\Gamma_2$ is triangle-free, we show that, if $C(\Gamma_1)$ is isomorphic to a subgroup of $C(\Gamma_2)$, then the ball of radius $8|\Gamma_1||\Gamma_2|$ in $C(\Gamma_2)$ contains the basis of a subgroup isomorphic to $C(\Gamma_1)$. This provides an algorithm determining whether or not, among two given two-dimensional right-angled Coxeter groups, one is isomorphic to a subgroup of the other. 
\end{abstract}

\tableofcontents

\section{Introduction}

\noindent
Given two groups, a basic question one may ask is whether one of them is isomorphic to a subgroup of the other. In full generality, this question turns out to be extremely difficult, so usually one focuses on a specific class of groups. For instance, significant work has been made about the embedding problem among right-angled Artin groups \cite{Cocontraction, MR3039768, MR3436157, MR3447104, MR3692568}. In this article, inspired by the recent work \cite{RACGStallings}, we focus on right-angled Coxeter groups.  

\medskip \noindent
Recall that, given a simplicial graph $\Gamma$, the right-angled Coxeter group $C(\Gamma)$ is defined by the presentation
$$\langle u \in V(\Gamma) \mid u^2=1 \ \text{for every $u \in V(\Gamma)$}, [u,v]=1 \ \text{if $\{u,v\} \in E(\Gamma)$} \rangle,$$
where $V(\Gamma)$ and $E(\Gamma)$ denote the vertex- and edge-sets of $\Gamma$. Right-angled Coxeter groups define a very interesting class of groups for several reasons. First, they have been used as a seminal source of (counter)examples \cite{RACGdiv, RACGhypdim, RACGhyOsadja, RACGboundaryMorse} and many groups turn out to embed into right-angled Coxeter groups \cite{MR2377497}. This motivates the idea that the family of right-angled Coxeter groups encompasses a large diversity of groups. And second, powerful geometric tools are available in order to study these groups, mainly their cubical geometry. (See Section \ref{section:cc} for more information.) 

\medskip \noindent
The main contribution of this article is the construction of an algorithmic solution to the embedding problem among two-dimensional right-angled Coxeter groups, i.e., right-angled Coxeter groups defined by triangle-free simplicial graphs.

\begin{thm}\label{thm:AlgoIntroduction}
There exists an algorithm determining, given two finite and triangle-free simplicial graphs $\Phi, \Psi$, whether or not $C(\Phi)$ is isomorphic to a subgroup of $C(\Psi)$. If so, an explicit basis is provided. Moreover, the algorithm also determines whether or not $C(\Phi)$ is isomorphic to a finite-index subgroup of $C(\Psi)$. If so, an explicit basis is provided and the index is computed.
\end{thm}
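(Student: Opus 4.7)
The plan is to package the two main inputs of the paper — the localization result stated in the abstract and the classification of morphisms $C(\Gamma_1) \to C(\Gamma_2)$ — as an algorithmic procedure. Set $R := 8|V(\Phi)|\cdot|V(\Psi)|$. The algorithm enumerates all ordered $|V(\Phi)|$-tuples $(g_1, \ldots, g_n)$ of elements in the ball $B_R \subset C(\Psi)$ of radius $R$ around the identity with respect to the standard generating set $V(\Psi)$. Listing $B_R$ is effective because the word problem in any Coxeter group is solvable, and by the localization theorem, if $C(\Phi)$ embeds into $C(\Psi)$ at all, then at least one tuple in $B_R$ is the basis of an isomorphic copy; hence this search space is complete.

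For each tuple, the algorithm checks three conditions in turn: that $g_i^2 = 1$ for every $i$; that $g_i g_j = g_j g_i$ if and only if $v_i v_j \in E(\Phi)$; and that the induced surjection $\phi : C(\Phi) \twoheadrightarrow \langle g_1, \ldots, g_n\rangle$ is injective. The first two conditions reduce directly to the word problem. The third is the crux, and it is here that the classification of morphisms between right-angled Coxeter groups — the main technical contribution of the paper — is invoked: the classification pins down the possible kernels of such a surjection in terms of finite combinatorial data attached to $\Phi$, $\Psi$, and the tuple, so injectivity can be certified or refuted in bounded time. A tuple that passes all three tests yields an explicit basis, which is recorded.

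Finally, for the finite-index refinement, once an embedding $\phi : C(\Phi) \hookrightarrow C(\Psi)$ has been produced, we run Todd--Coxeter coset enumeration on $\phi(C(\Phi))$; this terminates exactly when the index is finite, and in that case outputs both the index and a transversal. To turn this into a full decision procedure, we run it in parallel with a test for infinite index based on the action of $\phi(C(\Phi))$ on the two-dimensional Davis complex of $C(\Psi)$: finite index is equivalent to cocompactness of this action, which is detectable from cubical data in finite time. The principal obstacle throughout is the injectivity test, which cannot be settled by a bare word-problem argument — one must certify the absence of any relation in an a~priori uncontrolled subgroup — and this is precisely what the classification theorem of the paper is designed to make tractable. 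Once that theorem is in place, together with the ball bound $R = 8|\Gamma_1||\Gamma_2|$, Theorem~\ref{thm:AlgoIntroduction} becomes essentially a bookkeeping exercise.
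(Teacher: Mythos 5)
There is a genuine gap, and it sits exactly where you locate ``the crux'': the injectivity test. You propose to enumerate tuples of involutions in the ball $B_R$, check the order and commutation relations, and then decide whether the induced surjection $C(\Phi)\twoheadrightarrow\langle g_1,\dots,g_n\rangle$ is injective by appealing to the classification of morphisms, which you say ``pins down the possible kernels \ldots so injectivity can be certified or refuted in bounded time.'' The classification theorem does not do this, and no procedure is actually given. For an arbitrary tuple of involutions satisfying the right commutation pattern, deciding whether the resulting homomorphism is injective is precisely the problem one cannot attack by bare word-problem or kernel-enumeration arguments. The paper sidesteps the issue entirely: it never tests injectivity. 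It enumerates only tuples of \emph{reflections} $g_iu_ig_i^{-1}$ and checks, via explicit word-level criteria (Lemmas \ref{lem:AlgoId}, \ref{lem:AlgoTransverse}, \ref{lem:AlgoPeripheral}), that the associated hyperplanes $g_iJ_{u_i}$ are pairwise distinct, form a \emph{peripheral} collection, and have crossing graph isomorphic to $\Phi$; once these combinatorial conditions hold, the ping-pong structure theorem (Theorem \ref{thm:ReflectionGeneral}) guarantees that the reflections generate a copy of $C(\Phi)$ -- injectivity is automatic, not certified. The classification of morphisms (Corollary \ref{cor:Embedding}) enters only to prove \emph{completeness} of this restricted search: every embedding can be precomposed with an automorphism to become a peripheral embedding, and the cut-and-paste argument (Proposition \ref{prop:MainCut}) then pushes the hyperplanes into the explicit ball. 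Your proposal inverts this logic, using the localization result for completeness (correct) but the classification for soundness of the membership test (where it gives you nothing).

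The finite-index half has a parallel problem. Todd--Coxeter is a valid semi-decision procedure for finite index, but your complementary claim that infinite index ``is detectable from cubical data in finite time'' via cocompactness is asserted, not proved, and for an arbitrary subgroup it is not obvious. The paper makes this effective only because the subgroup in hand is a reflection subgroup along an explicit peripheral collection $\mathscr{C}$: its fundamental domain is the intersection $Y$ of the halfspaces containing $1$, the index equals $\#Y$, and Lemma \ref{lem:ControlVolume} bounds $Y$ inside an explicit ball whenever $Y$ is finite, so one decides finiteness of the index by inspecting that ball. Without restricting to peripheral reflection tuples, neither the injectivity step nor the index step of your algorithm is actually executable.
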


\noindent
Notice that the finite-index part of the statement is also proved in \cite[Theorem D]{RACGStallings}. 

\medskip \noindent
The techniques developed in this article in order to prove Theorem \ref{thm:AlgoIntroduction} can also be used to obtained explicit solutions to embedding problems among specific families of (two-dimensional) right-angled Coxeter groups. For instance, we prove:

\begin{thm}\label{thm1}
Let $\Gamma$ be a finite simplicial graph and let $C_n$ denote the cycle of length $n \geq 5$. Then $C(\Gamma)$ is isomorphic to a subgroup of $C(C_n)$ if and only if $\Gamma$ is either a disjoint union of segments or a single cycle of length $m$ such that $n-4$ divides $m-4$. 
\end{thm}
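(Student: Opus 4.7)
The plan is to apply the structural description of morphisms $C(\Gamma_1) \to C(\Gamma_2)$ developed earlier in the paper, specialized to $\Gamma_2 = C_n$. A preliminary observation is that $\Gamma$ must be triangle-free: a triangle would yield a $(\mathbb{Z}/2)^3$ subgroup of $C(\Gamma)$, whereas the finite subgroups of the hyperbolic group $C(C_n)$ (for $n \geq 5$) are all isomorphic to subgroups of $(\mathbb{Z}/2)^2$, namely the vertex stabilizers of its Davis complex. Moreover $\Gamma$ cannot have a vertex of degree at least three: such a vertex would produce an induced $K_{1,3}$, hence a subgroup isomorphic to $\mathbb{Z}/2 \times F_2$ inside $C(\Gamma)$; this would force the centralizer of some involution in $C(C_n)$ to contain $F_2$, contradicting the fact that every involution in $C(C_n)$ has virtually cyclic centralizer. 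It follows that $\Gamma$ is a disjoint union of segments and cycles.

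For the ``if'' direction, when $\Gamma$ is a disjoint union of segments, I would embed each segment as a subpath of $C_n$ (so that the corresponding parabolic subgroup of $C(C_n)$ is isomorphic to the right-angled Coxeter group of that segment) and then combine these copies through a ping-pong argument in the Davis complex to realize the free product they generate. When $\Gamma = C_m$ with $(n-4) \mid (m-4)$, I would use the geometric picture: $C(C_n)$ acts on $\mathbb{H}^2$ as the reflection group of a right-angled $n$-gon, and one can glue $k = (m-4)/(n-4)$ such $n$-gons along shared edges into a single right-angled $m$-gon, since each edge-gluing merges two pairs of right angles into straight angles and therefore adds $n-4$ sides to the boundary. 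The reflection group of the resulting $m$-gon is then a finite-index subgroup of $C(C_n)$ isomorphic to $C(C_m)$.

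For the ``only if'' direction, once $\Gamma$ is known to be a disjoint union of segments and cycles, the structural theorem on morphisms with target $C_n$ translates any embedding $C(\Gamma) \hookrightarrow C(C_n)$ into a combinatorial datum on $C_n$. I would analyse this datum: the presence of a cycle component $C_m$ forces the corresponding piece of the datum to ``wrap around'' $C_n$ like a covering, imposing $(n-4) \mid (m-4)$ via an orbifold Euler characteristic or area count, and simultaneously leaves no room for any further component. This last incompatibility, which rules out mixed cases such as $C_m$ together with a segment, or a disjoint union of two cycles, is the step I expect to be the main obstacle: each component is individually embeddable, so the obstruction must be genuinely global and come from the fine combinatorics provided by the structural theorem rather than from a local subgroup argument.
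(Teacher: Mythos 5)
Your overall strategy (reduce to max degree $\le 2$, then treat segments and cycles separately) matches the shape of the paper's argument, and two of your ingredients are sound: the centralizer argument ruling out vertices of degree $\ge 3$ is a legitimate alternative to the paper's route (the paper instead shows, using planarity of $X(C_n)$ and peripherality, that a facing triple of hyperplanes all transverse to a fourth cannot exist), and your gluing of $k$ right-angled $n$-gons into a right-angled $m$-gon is a clean geometric version of the paper's iterated doubling (Proposition \ref{prop:double} / Corollary \ref{cor:EmbeddingExCycle}). However, there are two genuine gaps. First, in the ``if'' direction for a disjoint union of segments, you propose to ``embed each segment as a subpath of $C_n$'' and take the corresponding parabolic subgroup. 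This fails for long segments: a path with more than $n-1$ vertices is not an induced subgraph of $C_n$, and a parabolic on a non-induced subpath is not the right-angled Coxeter group of that path. The correct mechanism (Corollary \ref{cor:EmbeddingExSegment} in the paper) is that $C(S_k)$ embeds into $C(S_3)$ for every $k$ via a peripheral collection of hyperplanes realising a graph morphism $S_k\to S_3$, and $S_3$ is an induced subgraph of $C_n$ for every $n\ge 5$.

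Second, and more seriously, the two hardest parts of the ``only if'' direction are not actually proved: (a) that a cycle component forces $(n-4)\mid(m-4)$, and (b) that a cycle component excludes every other component. For (a), your orbifold Euler characteristic count only applies once you know the embedding $C(C_m)\hookrightarrow C(C_n)$ has finite index; this is in fact automatic (a one-ended finitely generated subgroup of a virtually closed-surface group has finite index), but that reduction is missing and must be supplied. For (b), you explicitly leave the global obstruction open. The paper's resolution of both points is the planarity of $X(C_n)$ combined with Corollary \ref{cor:Sub}: a peripheral collection whose crossing graph contains a cycle consists of walls enclosing a compact region of the hyperbolic plane containing the vertex $1$, so peripherality forbids any further hyperplane in the collection (which gives (b)), and cutting the enclosed region along a wall dual to an interior edge splits a peripheral $m$-cycle of walls into peripheral $p$- and $q$-cycles with $m=p+q-4$, whence $(n-4)\mid(m-4)$ by induction (which gives (a)). Without an argument of this kind, the statement is not established.
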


\noindent
In particular, $C(C_m)$ is isomorphic to a subgroup of $C(C_n)$ if and only if $n-4$ divides $m-4)$. (Compare with \cite[Theorem 1.12]{MR3039768} for right-angled Artin groups.)

\begin{thm}\label{thm2}
Let $R,S$ be two finite trees. Then $C(R)$ is isomorphic to a subgroup of $C(S)$ if and only if there exists a graph morphism $\varphi : R \to S$ which sends a vertex of degree $2$ to a vertex of degree $\geq 2$ and a vertex of degree $\geq 3$ to a vertex of degree $\geq 3$. 
\end{thm}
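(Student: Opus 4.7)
The plan is to prove both implications using the main classification of morphisms between right-angled Coxeter groups announced in the abstract.

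For the ``if'' direction, given a graph morphism $\varphi : R \to S$ with the prescribed degree conditions, the goal is to produce some embedding $C(R) \hookrightarrow C(S)$. A first observation is that the naive assignment $v \mapsto \varphi(v)$ is typically \emph{not} injective: if $\varphi$ folds two leaves of $R$ with a common parent onto a single leaf of $S$, then the product of those two leaves maps to the identity despite being of infinite order in $C(R)$. Hence $\varphi$ is to be treated as a witness of existence only. I would proceed by induction on $|V(R)|$, peeling off a leaf $v$ attached to a vertex $p$, applying the inductive embedding to $R' := R \setminus \{v\}$, and extending it using the degree inequality $\deg_S(\varphi(p)) \geq \deg_R(p)$, which guarantees a free neighbour of $\varphi(p)$ in $S$ available to host the image of $v$. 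In cubical terms, this amounts to an inductive gluing of a new leaf-hyperplane into a locally convex subcomplex of the Davis cube complex of $C(S)$.

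For the ``only if'' direction, suppose $\iota : C(R) \hookrightarrow C(S)$ is given. Applying the main classification, and using that $S$ is triangle-free, one obtains that each $\iota(v)$ is conjugate to an involution supported on a clique of $S$, hence to a single vertex of $S$ or to a product of two adjacent vertices. By examining the pairwise commutations among the $\iota(v)$ and using the triangle-freeness of $R$, one extracts a canonical vertex $\varphi(v) \in V(S)$ associated to each $v \in V(R)$, and the edge relations in $C(R)$ force $\varphi$ to send edges of $R$ to edges of $S$. For the degree conditions, I would compare the subgroup $\langle v \rangle \times (\mathbb{Z}/2 \ast \cdots \ast \mathbb{Z}/2)$ of $C(R)$ generated by $v$ together with its $\deg_R(v)$ neighbours with its image in $C(S)$: a realisation of this subgroup inside $C(S)$ forces $\varphi(v)$ to carry enough distinct neighbours. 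The asymmetry between the case of degree $2$ and that of degree $\geq 3$ reflects the fact that a degree-two interior vertex can be realised using only the commutation of two generators along a single edge of $S$, whereas a genuine free-product factor of rank three in the star requires a genuinely trivalent vertex of $S$.

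The main obstacle is the ``only if'' direction: the classification delivers each $\iota(v)$ only up to conjugation and further admissible modifications, so patching the local vertex choices into a \emph{global} graph morphism requires coherence across the whole tree $R$. I expect the tree structure of $R$ to rigidify these choices, since fixing a root and its image determines the rest inductively along outgoing edges, and the absence of cycles in $R$ precludes any global monodromy obstruction.
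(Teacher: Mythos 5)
Your ``only if'' direction is essentially the paper's route and is fine in outline: Corollary \ref{cor:Sub} (equivalently, Corollary \ref{cor:Embedding}) already delivers a \emph{peripheral} collection of hyperplanes of $X(S)$ with crossing graph $R$, and the map $\varphi$ is then simply the labelling of hyperplanes by vertices of $S$; Lemma \ref{lem:transverseimpliesadj} makes it a graph morphism, and the degree conditions follow from your centraliser comparison (or, as in the paper, from the observation that a degree-$\geq 3$ vertex of $R$ forces a facing triple of hyperplanes crossing $N(J_r)$, which is impossible when the label of $J_r$ has degree $\leq 2$). The coherence problem you flag as ``the main obstacle'' is not actually an obstacle: the classification already produces a single global peripheral collection, so no rooting or monodromy argument is needed, and the case where some $\iota(v)$ is a product of two commuting reflections is absorbed by the automorphism $\alpha$ of Corollary \ref{cor:Embedding} (via Lemma \ref{lem:Commute} and the transvections/ingestions of Proposition \ref{prop:Morphisms}).

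The genuine gap is in your ``if'' direction. The inequality $\deg_S(\varphi(p)) \geq \deg_R(p)$ is \emph{not} a hypothesis of the theorem and is false in general: the hypothesis only imposes the thresholds at degrees $2$ and $3$. For instance, if $R$ is a star with five leaves and $S$ a star with three leaves, the morphism sending centre to centre and leaves to leaves satisfies the hypothesis, and $C(R)$ does embed in $C(S)$, yet there are not five distinct ``free neighbours'' of $\varphi(p)$ to host the five leaf-images. So the one-leaf-at-a-time extension by adjacent generators of $S$ cannot work; indeed the resulting generating set would not even be injective on generators. The paper's construction (Proposition \ref{prop:EmbeddingExTree}) instead realises the new leaves as reflections along hyperplanes placed \emph{far out} in the carrier $N(J_p)$: once $\varphi(p)$ has degree $\geq 3$, the carrier contains a thickened $k$-regular tree ($k\geq 3$), so one can choose arbitrarily many pairwise non-separating edges with the prescribed labels outside a large ball, and Lemma \ref{lem:CrossingSquare} (girth $\geq 5$, so projections of other carriers onto $\partial J_p$ are at most an edge) guarantees that the enlarged collection remains peripheral and has the right crossing graph. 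Peripherality is the property you must maintain throughout the induction --- it is what Theorem \ref{thm:ReflectionGeneral} needs to certify injectivity --- and ``locally convex subcomplex'' is not a substitute for it. Without the far-out placement and the peripherality check, the inductive step does not go through.
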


\noindent
See Examples \ref{ex:GraphMorphism} and \ref{ex:EmbeddingTree} for explicit illustrations of this statement.

\paragraph{First step towards the proof of Theorem \ref{thm:AlgoIntroduction}: reflection subgroups.} First of all, we conduct a general study of \emph{reflection subgroups} in right-angled Coxeter groups. Algebraically, a reflection in $C(\Gamma)$ is a conjugate of a generator. More geometrically, when looking at the action of $C(\Gamma)$ on its CAT(0) cube complex $X(\Gamma)$, a reflection stabilises each edge dual to some hyperplane $J$ and inverts the two halfspaces delimited by $J$. Our study is based on a simple ping-pong lemma we proved in \cite{Qm}. A particular case of our structure theorem is the following (see Theorem \ref{thm:ReflectionGeneral} for a more general statement):

\begin{thm}\label{thm:PP}
Let $\Gamma$ be a simplicial graph and $H \leq C(\Gamma)$ a subgroup generated by reflections. Let $\mathcal{J}$ denote the collection of the hyperplanes $J$ of $X(\Gamma)$ such that $H$ contains a reflection $r_J$ along it. Also, let $\mathcal{J}_0$ be the maximal peripheral subcollection of $\mathcal{J}$ and let $\Delta$ denote the crossing graph of $\mathcal{J}_0$ (i.e., the graph whose vertex-set is $\mathcal{J}_0$ and whose edges link two hyperplanes whenever they are transverse). Then the map 
$$\left\{ \begin{array}{lcl} \Delta & \mapsto & H \\ J & \mapsto & r_J \end{array} \right.$$
induces an isomorphism $C(\Delta) \to H$.
\end{thm}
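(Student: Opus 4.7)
The plan is to construct a group homomorphism $\varphi : C(\Delta) \to H$ extending the assignment $J \mapsto r_J$, and then to verify separately that $\varphi$ is surjective and injective. The three steps correspond exactly to checking the defining Coxeter relations of $C(\Delta)$, exhibiting every generator of $H$ as a word in the $r_J$ with $J \in \mathcal{J}_0$, and playing ping-pong on $X(\Gamma)$.

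First, to see that $\varphi$ is well defined, I would verify the relations of $C(\Delta)$ in $H$. Each reflection $r_J$ is an involution by construction. If $J, K \in \mathcal{J}_0$ are transverse, they bound a square in $X(\Gamma)$; since a reflection along a hyperplane fixes pointwise the midcubes of every transverse hyperplane, the standard argument on two-dimensional subcomplexes shows that $r_J$ and $r_K$ commute. These are the only relations in $C(\Delta)$, so the homomorphism $\varphi$ exists.

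Second, for surjectivity, I would fix a vertex $x_0 \in X(\Gamma)$ and argue by induction on the number of hyperplanes of $\mathcal{J}_0$ separating a given $K \in \mathcal{J}$ from $x_0$. If $K \in \mathcal{J}_0$ there is nothing to show. Otherwise, by maximality of $\mathcal{J}_0$, there exists some $J \in \mathcal{J}_0$ separating $K$ from $x_0$; then $r_J K \in \mathcal{J}$ is separated from $x_0$ by strictly fewer hyperplanes of $\mathcal{J}_0$, so the reflection $r_{r_J K} = r_J r_K r_J$ lies in the subgroup generated by $\{r_L : L \in \mathcal{J}_0\}$ by induction, hence so does $r_K$. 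A clean formulation of this induction will, of course, depend on the precise definition of \emph{maximal peripheral} given in the body of the article.

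Third, and this is where I expect the main obstacle, I would establish injectivity by applying the ping-pong lemma of \cite{Qm}. For each $J \in \mathcal{J}_0$, let $D_J$ denote the halfspace delimited by $J$ that does not contain $x_0$. The peripheral property should ensure that whenever $J, K \in \mathcal{J}_0$ are distinct and not transverse, the halfspaces $D_J$ and $D_K$ are disjoint; transverse pairs correspond precisely to edges of $\Delta$ and commute. These are exactly the hypotheses needed to conclude that a reduced word in the $r_J$ acts non-trivially on the set $\{D_J : J \in \mathcal{J}_0\}$, hence $\ker(\varphi) = 1$. The delicate point will be to translate the definition of $\mathcal{J}_0$ into the required disjointness of halfspaces, and to check that transverse pairs in $\mathcal{J}_0$ interact correctly with non-transverse ones, so that the ping-pong lemma of \cite{Qm} applies verbatim.
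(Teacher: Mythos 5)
Your strategy is essentially the one the paper follows: check the Coxeter relations, prove surjectivity by a reflection--descent argument, and prove injectivity by the ping-pong lemma of \cite{Qm} applied to the halfspaces $D_J$ not containing the basepoint (the paper packages all of this as Theorem \ref{thm:ReflectionGeneral}, whose proof verifies for the sets $X_J=D_J$ exactly the three hypotheses of Proposition \ref{prop:pingpong}; your disjointness claim for non-transverse pairs and the invariance $r_K\cdot D_J=D_J$ for transverse pairs are precisely what is checked there). So the architecture is sound, but one step of your surjectivity argument fails as stated.

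The quantity you induct on --- the number of hyperplanes of $\mathcal{J}_0$ separating $K$ from $x_0$ --- need not strictly decrease when $K$ is replaced by $r_JK$. Take $\Gamma$ to be two non-adjacent vertices $a,b$, so $C(\Gamma)$ is the infinite dihedral group acting on a line, let $H=C(\Gamma)$, $x_0=1$ and $\mathcal{J}_0=\{J_a,J_b\}$. The hyperplane $K$ dual to the edge $(ab,aba)$ is separated from $1$ by exactly one hyperplane of $\mathcal{J}_0$, namely $J_a$; but $aK$, dual to $(b,ba)$, is again separated from $1$ by exactly one hyperplane of $\mathcal{J}_0$, namely $J_b$. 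The paper's fix is to induct on $d(x_0,N(K))$ instead (formally: choose $r$ in $\langle r_J,\ J\in\mathcal{J}_0\rangle$ minimising $d(x_0,N(rK))$), which does strictly decrease by Lemma \ref{lem:shortendist}. Two further points you flagged yourself but should not gloss over: maximality of $\mathcal{J}_0$ only gives a separating hyperplane in $\mathcal{J}$, and one must pass to an innermost such hyperplane to land in $\mathcal{J}_0$; and your justification of the commutation relation is slightly off --- a reflection along $J$ does \emph{not} fix pointwise the midcube of a transverse hyperplane in a common square (it reverses it). The correct argument is that $[r_J,r_K]$ fixes a vertex of that square, and vertex-stabilisers of $C(\Gamma)\curvearrowright X(\Gamma)$ are trivial (equivalently, each hyperplane of $X(\Gamma)$ carries a unique reflection of $C(\Gamma)$).
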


\noindent
A collection of hyperplanes is \emph{peripheral} if no hyperplane of the collection separates another one from the vertex $1$. 

\medskip \noindent
Interestingly, (the general version of) Theorem \ref{thm:PP} leads to simple proofs of several results available in the literature. For instance, it is possible to (re)prove that reflection subgroups are word-quasiconvex \cite[Theorem B]{RACGStallings}, and that word-quasiconvex subgroups are separable \cite[Theorem A]{MR2413337}. We refer to Sections \ref{section:QC} and \ref{section:FI} for more details. 

\medskip \noindent
Theorem \ref{thm:PP} leads to the following notion of morphisms between right-angled Coxeter groups, which is fundamental in our work:

\begin{definition}
Let $\Phi,\Psi$ be two simplicial graphs. A morphism $C(\Phi) \to C(\Psi)$ is a \emph{peripheral embedding} if it sends the generators of $C(\Phi)$ to pairwise distinct reflections along a peripheral collection of hyperplanes of $X(\Psi)$.
\end{definition}

\paragraph{Second step towards the proof of Theorem \ref{thm:AlgoIntroduction}: morphisms between right-angled Coxeter groups.} Among right-angled Coxeter groups, we do not only study embeddings, but much more generally we are able to classify all the possible morphisms. Loosely speaking, we show that every morphism can be decomposed as a composition of elementary morphisms. More precisely:

\begin{thm}\label{thm:IntroMainMo}
Let $\Phi, \Psi$ be two finite graphs and $\rho : C(\Phi) \to C(\Psi)$ a morphism. Then there exist 
\begin{itemize}
	\item a sequence of graphs $\Lambda_1, \ldots, \Lambda_p$;
	\item a diagonal morphism $\delta : C(\Phi) \hookrightarrow C(\Lambda_1)$;
	\item partial conjugations $\alpha_1 \in \mathrm{Aut}(C(\Lambda_1)), \ldots, \alpha_{p-1} \in \mathrm{Aut}(C(\Lambda_{p-1}))$;
	\item foldings $\pi_1 : C(\Lambda_1) \to C(\Lambda_2), \ldots, \pi_{p-1} : C(\Lambda_{p-1}) \to C(\Lambda_p)$;
	\item and a peripheral embedding $\bar{\rho} : C(\Lambda_p) \hookrightarrow C(\Psi)$
\end{itemize}
such that $\rho = \bar{\rho} \circ \pi_{p-1} \circ \alpha_{p-1} \circ \cdots \circ \pi_1 \circ \alpha_1 \circ \delta$. 
\end{thm}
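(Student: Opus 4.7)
The plan is to process $\rho$ in stages, first peeling off a diagonal morphism coming from the involution structure of the generator images, and then progressively transforming the surviving morphism into a peripheral embedding by alternating partial conjugations and foldings. The key structural input is the essentially unique decomposition of every involution in a right-angled Coxeter group as a product of pairwise commuting reflections, which follows from Theorem \ref{thm:PP} applied to the finite reflection subgroup generated by the involution.

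Applying this to each $\rho(v)$ for $v \in V(\Phi)$ yields a canonical decomposition $\rho(v) = r_{J_1(v)} \cdots r_{J_{k(v)}(v)}$ along pairwise transverse hyperplanes. Let $\Lambda_1$ be the graph whose vertices are the formal symbols $[v,i]$ with $v \in V(\Phi)$ and $1 \le i \le k(v)$, where $[v,i]$ and $[w,j]$ are adjacent whenever either $v=w$ (a pair of hyperplanes from the same $\rho(v)$ being automatically transverse) or the hyperplanes $J_i(v)$ and $J_j(w)$ are transverse. The rule $v \mapsto [v,1]\cdots [v,k(v)]$ then extends to a morphism $\delta : C(\Phi) \to C(\Lambda_1)$: whenever $[v,w]=1$ in $\Phi$ the images $\rho(v)$ and $\rho(w)$ commute in $C(\Psi)$, which by Theorem \ref{thm:PP} forces all the reflections in their decompositions to pairwise commute, and this is precisely the commutation data encoded in $\Lambda_1$. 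By construction $\delta$ is a diagonal morphism, and the map $\rho_1 : C(\Lambda_1) \to C(\Psi)$ sending $[v,i] \mapsto r_{J_i(v)}$ satisfies $\rho = \rho_1 \circ \delta$.

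Starting from $\rho_1$, which sends generators to reflections but is neither necessarily injective on generators nor necessarily peripheral in image, I iterate two elementary moves. A \emph{folding} applies when two generators $x,y$ of the current $\Lambda_i$ have the same image reflection: by Theorem \ref{thm:PP} their links in $\Lambda_i$ then coincide, so the quotient identifying $x$ with $y$ is a valid graph $\Lambda_{i+1}$ and yields a folding $\pi_i : C(\Lambda_i) \twoheadrightarrow C(\Lambda_{i+1})$ through which the morphism factors. A \emph{partial conjugation} applies when the image collection of hyperplanes is not peripheral: choose an image hyperplane $J$ separated from the basepoint $1$ by another image hyperplane $J'$, pick a generator $v_0$ mapping to $r_{J'}$, and let $\alpha_i$ conjugate by $v_0$ all generators whose image hyperplane is separated from $1$ by $J'$. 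One verifies that this set is a union of connected components of $\Lambda_i \setminus \mathrm{star}(v_0)$, so $\alpha_i$ is a genuine partial conjugation. Each generator $w$ in this set has its effective image replaced by the reflection along $r_{J'}(J_w)$, which is combinatorially closer to $1$ than $J_w$. The lexicographic pair (total combinatorial distance of image hyperplanes to $1$, number of generators) strictly decreases at every move, ensuring termination with a morphism $\bar\rho : C(\Lambda_p) \to C(\Psi)$ sending distinct generators to distinct reflections along a peripheral collection, that is, a peripheral embedding.

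The main obstacle is verifying the legitimacy of both moves at each stage. For a folding, one needs two generators of $\Lambda_i$ with identical reflection image to have exactly the same neighbours in $\Lambda_i$; this is a rigidity statement asserting that commutation in $C(\Psi)$ between image reflections reflects transversality of the underlying hyperplanes, and is precisely what Theorem \ref{thm:PP} provides. For a partial conjugation, one needs the candidate subset to be compatible with the combinatorial structure of $\Lambda_i$, which reduces to the geometric observation that two hyperplanes transverse to $J'$ are separated (or not) by $J'$ simultaneously. Once these verifications are in place and one tracks that every move preserves the identity $\rho = \bar\rho \circ \pi_{p-1} \circ \alpha_{p-1} \circ \cdots \circ \pi_1 \circ \alpha_1 \circ \delta$ as a running invariant, the termination argument completes the proof.
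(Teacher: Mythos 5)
Your proof follows essentially the same route as the paper's: decompose each $\rho(v)$ into pairwise commuting reflections so as to extract a diagonal morphism onto a graph recording the transversality pattern of the image hyperplanes, then alternate foldings with partial conjugations that strictly decrease the sum of the distances from the basepoint to the image hyperplanes (the paper's ``complexity'', via Lemma \ref{lem:shortendist} and Lemma \ref{lem:InductionComplexity}) until the collection is peripheral. The one loose point is attributing to Theorem \ref{thm:PP} the fact that commuting involutions have pairwise transverse-or-equal hyperplane sets --- the clean justification is that a finite subgroup is conjugate into a clique subgroup --- but the paper's own factorisation of $\rho$ through the crossing graph of $\mathcal{J}$ relies on exactly the same unstated fact, so this is not a gap relative to the paper's argument.
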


\noindent
In other words, we a commutative diagram
\[
\xymatrix{ C(\Phi) \ar[rrrr]^\rho \ar[dr]_\delta & & & & C(\Psi) \\ & C(\Lambda_1) \ar@(ul,ur)^{\alpha_1} \ar[dr]_{\pi_1} & & & \\ & & C(\Lambda_2) \ar@(ul,ur)^{\alpha_2} \ar[dr]_{\pi_2} & & \\ & & & \ddots \ar[dr]_{\pi_{p-1}} & \\ & & & & C(\Lambda_p) \ar[uuuu]_{\bar{\rho}}
 }
\]

\medskip \noindent
We refer to Sections \ref{section:ReMo} and \ref{section:Diagonal} for the definitions of foldings, partial conjugations and diagonal morphisms. If moreover $\Psi$ is triangle-free (i.e., if $C(\Psi)$ is two-dimensional), then the diagonal morphism can be replaced with a composition of even simpler morphisms, namely \emph{ingestions} and \emph{erasings}. (See Section \ref{section:MainThm} for details.) Such a description leads to the following classification of embeddings (generalising \cite[Theorem C]{RACGStallings}):

\begin{cor}\label{cor:IntroSub}
Let $\Phi,\Psi$ be two finite simplicial graphs and $\rho : C(\Phi) \hookrightarrow C(\Psi)$ an injective morphism. Assume that $\Psi$ is triangle-free and that $\Phi$ has no isolated vertex. There exists an automorphism $\alpha \in \mathrm{Aut}(C(\Phi))$ such that $\rho \circ \alpha : C(\Phi) \hookrightarrow C(\Psi)$ is a peripheral embedding. 
\end{cor}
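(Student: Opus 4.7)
The plan is to apply the structure theorem for morphisms (Theorem \ref{thm:IntroMainMo}) to the injective morphism $\rho$, and then to use injectivity to force each non-invertible piece of the resulting decomposition to be trivial. Write
$$\rho \;=\; \bar{\rho} \circ \pi_{p-1} \circ \alpha_{p-1} \circ \cdots \circ \pi_1 \circ \alpha_1 \circ \delta,$$
where $\delta$ is a diagonal morphism, the $\alpha_i$ are partial conjugations (hence automorphisms of the intermediate groups $C(\Lambda_i)$), the $\pi_i$ are foldings, and $\bar{\rho} : C(\Lambda_p) \hookrightarrow C(\Psi)$ is a peripheral embedding. Since $\Psi$ is triangle-free, I can invoke the refinement of the main theorem given in Section \ref{section:MainThm} to decompose $\delta$ further as a finite composition of ingestions and erasings. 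The problem thus reduces to analysing these three elementary moves.

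The crucial step is to show that, under the hypotheses of the corollary, every erasing, ingestion, and folding appearing in the decomposition is actually an isomorphism. An erasing sends some generator of an intermediate graph to $1$; since $\Phi$ has no isolated vertex, every standard generator of $C(\Phi)$ is a nontrivial reflection and remains so after any partial conjugation, so an erasing occurring in the decomposition would produce a nontrivial element in the kernel of $\rho$, contradicting injectivity. A nontrivial folding $\pi_i$ identifies two distinct commuting generators of $C(\Lambda_i)$; by tracing these two generators backwards through the (already established) invertible portion of the decomposition, one exhibits two distinct standard generators of $C(\Phi)$ with the same image under $\rho$, again a contradiction. The same bookkeeping, using the combinatorial description of an ingestion in the triangle-free setting, shows that every ingestion must be an isomorphism as well.

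Once each $\pi_i$, each ingestion, and each erasing is an isomorphism, all the intermediate graphs $\Lambda_i$ are isomorphic to $\Phi$, and the whole composition from $C(\Phi)$ to $C(\Lambda_p)$ becomes an automorphism $\beta \in \mathrm{Aut}(C(\Phi))$. Setting $\alpha := \beta^{-1}$ gives $\rho \circ \alpha = \bar{\rho}$, and the right-hand side is a peripheral embedding by construction, which is exactly the conclusion of the corollary.

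The main obstacle is the middle paragraph: tracking the images of the standard generators of $C(\Phi)$ along the composition, and verifying that distinctness at any intermediate $C(\Lambda_i)$ can be traced back to distinctness in $C(\Phi)$. This amounts to a finite case analysis based on the precise definitions of foldings, partial conjugations, ingestions and erasings recalled in Sections \ref{section:ReMo}, \ref{section:Diagonal} and \ref{section:MainThm}, rather than a deep argument. The no-isolated-vertex hypothesis is used precisely to prevent the degenerate situation in which an erasing could remove a $\mathbb{Z}/2$-factor of $C(\Phi)$ without destroying injectivity.
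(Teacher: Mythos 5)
Your proposal is correct and takes essentially the same route as the paper, which deduces this corollary in one line from the two-dimensional decomposition theorem (Theorem \ref{thm:Morphisms}): every elementary piece there (automorphism, folding, ingestion, erasing) is surjective and, when non-trivial, the latter three have non-trivial kernels, so injectivity of $\rho$ forces each piece to be an automorphism, forces $\Lambda_p \cong \Phi$ to have no isolated vertex, and hence forces $\bar{\rho}$ to be a peripheral embedding. The one caveat is your phrase ``decompose $\delta$ further as a composition of ingestions and erasings'': the diagonal morphism is injective and cannot literally factor through such maps; what the triangle-free refinement actually does is replace the entire decomposition of Theorem \ref{thm:IntroMainMo} by one whose pieces are all surjective, and it is precisely this surjectivity (not present for $\delta$) that lets you push a non-trivial kernel at an intermediate stage back to a non-trivial element of $\ker \rho$ in $C(\Phi)$.
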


\noindent
As an application, given two triangle-free simplicial graphs $\Phi$ and $\Psi$, it follows that $C(\Phi)$ is isomorphic to a subgroup of $C(\Psi)$ if and only if the cube complex $X(\Psi)$ contains a peripheral collection of hyperplanes whose crossing graph is isomorphic to $\Phi$.

\paragraph{Third step towards the proof of Theorem \ref{thm:AlgoIntroduction}: cuts-and-pastes.} However, the solution to the embedding problem provided by Corollary \ref{cor:IntroSub} is not algorithmic, as we need to check infinitely many configurations of hyperplanes in order to show that a given right-angled Coxeter group is not isomorphic to a subgroup of another one. We address this problem in Section \ref{section:peripheral}. Roughly speaking, we show that, given a peripheral collection of hyperplanes $\mathcal{J}$, if one hyperplane of $\mathcal{J}$ does cross a ball $B(1,\mathrm{cst}(\#\mathcal{J}))$, then it is possible to \emph{cut-and-paste} $\mathcal{J}$ in order to create a new peripheral collection of hyperplanes $\mathcal{J}'$, with the same crossing graph, and such that the sum of the distances from $1$ to the hyperplanes of $\mathcal{J}'$ is smaller than the same sum for $\mathcal{J}$. Therefore, by iterating, we are able to construct a peripheral collection of hyperplanes, with the same crossing graph, all of whose hyperplanes cross a given ball. The argument leads to the following statement:

\begin{thm}
Let $\Phi,\Psi$ be two finite simplicial graphs. Assume that $\Psi$ is triangle-free. Then $C(\Phi)$ is isomorphic to a subgroup of $C(\Psi)$ if and only if $C(\Psi)$ contains a basis of $C(\Phi)$ in the ball of radius $2(1+(1+2 \cdot \# V(\Phi)) \cdot \# V(\Psi) )$ centered at the vertex $1$.
\end{thm}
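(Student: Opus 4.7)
The ``if'' direction is immediate. For the ``only if'' direction, the plan is to combine Corollary \ref{cor:IntroSub} with an iterative minimisation driven by a cut-and-paste procedure. After handling the isolated vertices of $\Phi$ separately (they contribute free factors, which can be realised by arbitrary disjoint reflections), Corollary \ref{cor:IntroSub} guarantees that, up to precomposing with an automorphism of $C(\Phi)$, any embedding $C(\Phi) \hookrightarrow C(\Psi)$ is peripheral. This yields a peripheral collection $\mathcal{J}$ of $\# V(\Phi)$ hyperplanes of $X(\Psi)$ whose crossing graph is $\Phi$; the reflections $\{ r_J : J \in \mathcal{J} \}$ then form the desired basis. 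Since a reflection $r_J$ has word length $2 d(1,J) + 1$, controlling the word lengths of the basis elements reduces to controlling the distances $d(1,J)$.

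Define the complexity $\Sigma(\mathcal{J}) := \sum_{J \in \mathcal{J}} d(1,J) \in \mathbb{Z}_{\geq 0}$. The cut-and-paste lemma advertised in the third-step paragraph of the introduction should take the following contrapositive form: if some $J_0 \in \mathcal{J}$ lies at distance greater than $(1 + 2 \# V(\Phi)) \cdot \# V(\Psi)$ from $1$, then there exists a peripheral collection $\mathcal{J}'$ with the same crossing graph as $\mathcal{J}$ but $\Sigma(\mathcal{J}') < \Sigma(\mathcal{J})$. Since $\Sigma$ is a non-negative integer, iterating terminates in finitely many steps, leaving a peripheral collection whose every hyperplane lies at distance at most $(1 + 2 \# V(\Phi)) \cdot \# V(\Psi)$ from $1$. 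Converting back through $|r_J| = 2 d(1,J) + 1$ places the final basis inside the ball of radius $2 \bigl( 1 + (1 + 2 \# V(\Phi)) \cdot \# V(\Psi) \bigr)$, as claimed.

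The crux of the argument is therefore the cut-and-paste lemma itself. My plan would be to consider a combinatorial geodesic $\gamma$ from $1$ to a vertex adjacent to $J_0$ and to apply pigeonhole to the hyperplanes crossed by $\gamma$: because $X(\Psi)$ has only $\# V(\Psi)$ orbits of hyperplanes and $\mathcal{J}$ contains only $\# V(\Phi)$ hyperplanes, if $\gamma$ is long enough then two of the hyperplanes $H_1, H_2$ crossed by $\gamma$ should lie in the same $C(\Psi)$-orbit while interacting with $\mathcal{J}$ in a controlled manner (this is where the factor $1 + 2 \# V(\Phi)$ is paid, essentially one unit per hyperplane of $\mathcal{J}$ and its two ``sides''). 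One then translates the portion of $\mathcal{J}$ lying beyond $H_2$ by an element of $C(\Psi)$ sending $H_2$ to $H_1$, producing a new peripheral collection $\mathcal{J}'$ with strictly smaller complexity and the same crossing graph. The main obstacle will be verifying that peripherality and the full crossing pattern genuinely survive the cut-and-paste, and packaging the pigeonhole bookkeeping cleanly enough to yield the announced constants.
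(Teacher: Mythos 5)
Your proposal follows essentially the same route as the paper: Corollary \ref{cor:Sub} reduces the problem to finding a peripheral collection of hyperplanes with crossing graph $\Phi$ (the isolated vertices of $\Phi$ requiring a slightly more careful free-product argument than ``arbitrary disjoint reflections''), and Proposition \ref{prop:MainCut} is exactly your contrapositive cut-and-paste lemma, proved by the pigeonhole you describe together with Lemmas \ref{lem:shortenagain} and \ref{lem:CutCrossing} for the strict decrease of $\sum_J d(1,N(J))$ and the preservation of peripherality and of the crossing graph. The one adjustment to your bookkeeping is that the paper runs the pigeonhole along a maximal chain of pairwise non-transverse hyperplanes separating $1$ from $J_0$ (extracted via the $\ell^\infty$-metric, Lemma \ref{lem:dinfty}) rather than along all hyperplanes crossed by a geodesic, since the sets of hyperplanes of the collection crossing the region between consecutive members of the chain only make sense for nested hyperplanes; this is where the outer factor $\dim X(\Psi)=2$ in the radius is spent.
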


\noindent
Then, an algorithm solving the embedding problem among two-dimensional right-angled Coxeter groups follows easily.

\paragraph{Organisation of the article.} Section \ref{section:cc} is a preliminary section dedicated to the cubical geometry of right-angled Coxeter groups. Next, Section \ref{section:reflection} contains our general study of reflection groups acting on CAT(0) cube complexes: a general structure theorem is proved in Section \ref{section:pingpong}, namely the general version of Theorem \ref{thm:PP}, and Sections~\ref{section:QC}--\ref{section:FI} contain a few applications to right-angled Coxeter groups. In Sections~\ref{section:ReMo} and \ref{section:Diagonal}, we study morphisms between right-angled Coxeter groups of any dimension, proving Theorem \ref{thm:IntroMainMo}, and next we focus on the two-dimensional case in Section \ref{section:MainThm}, which contains the proof of Corollary \ref{cor:IntroSub}. A few applications, including Theorems \ref{thm1} and \ref{thm2}, are proved in Section \ref{section:Appli}. Cuts-and-pastes are studied in Section \ref{section:peripheral}, and Theorem~\ref{thm:IntroMainMo} is proved in Section \ref{section:AlgoFinal}.  The last section is dedicated to concluding remarks and a few open questions.

\paragraph{Acknowledgements.} I am grateful to Ivan Levcovitz and Sang-Hyun Kim for their comments on an earlier version of the article. This work was supported by a public grant as part of the Fondation Math\'ematique Jacques Hadamard.

\section{Cubical geometry of right-angled Coxeter groups}\label{section:cc}

\noindent
In this preliminary section, we recall basic definitions and properties related to CAT(0) cube complexes and right-angled Coxeter groups which will be used in the rest of the article. We begin with CAT(0) cube complexes.

\paragraph{CAT(0) cube complexes.}
A \textit{cube complex} is a CW complex constructed by gluing together cubes of arbitrary (finite) dimension by isometries along their faces. It is \textit{nonpositively curved} if the link of any of its vertices is a simplicial \textit{flag} complex (ie., $n+1$ vertices span a $n$-simplex if and only if they are pairwise adjacent), and \textit{CAT(0)} if it is nonpositively curved and simply-connected. See \cite[page 111]{MR1744486} for more information.

\medskip \noindent
Fundamental tools when studying CAT(0) cube complexes are \emph{hyperplanes}. Formally, a \textit{hyperplane} $J$ is an equivalence class of edges with respect to the transitive closure of the relation identifying two parallel edges of a square. Geometrically, a hyperplane $J$ is rather thought of as the union of the \textit{midcubes} transverse to the edges belonging to $J$ (sometimes referred to as its \emph{geometric realisation}). See Figure \ref{figure27}. The \textit{carrier} $N(J)$ of a hyperplane $J$ is the union of the cubes intersecting (the geometric realisation of) $J$. Two distinct hyperplanes are \emph{transverse} if their geometric realisations intersect, and they are \emph{tangent} if they are not transverse but their carriers intersect.
\begin{figure}
\begin{center}
\includegraphics[trim={0 13cm 10cm 0},clip,scale=0.4]{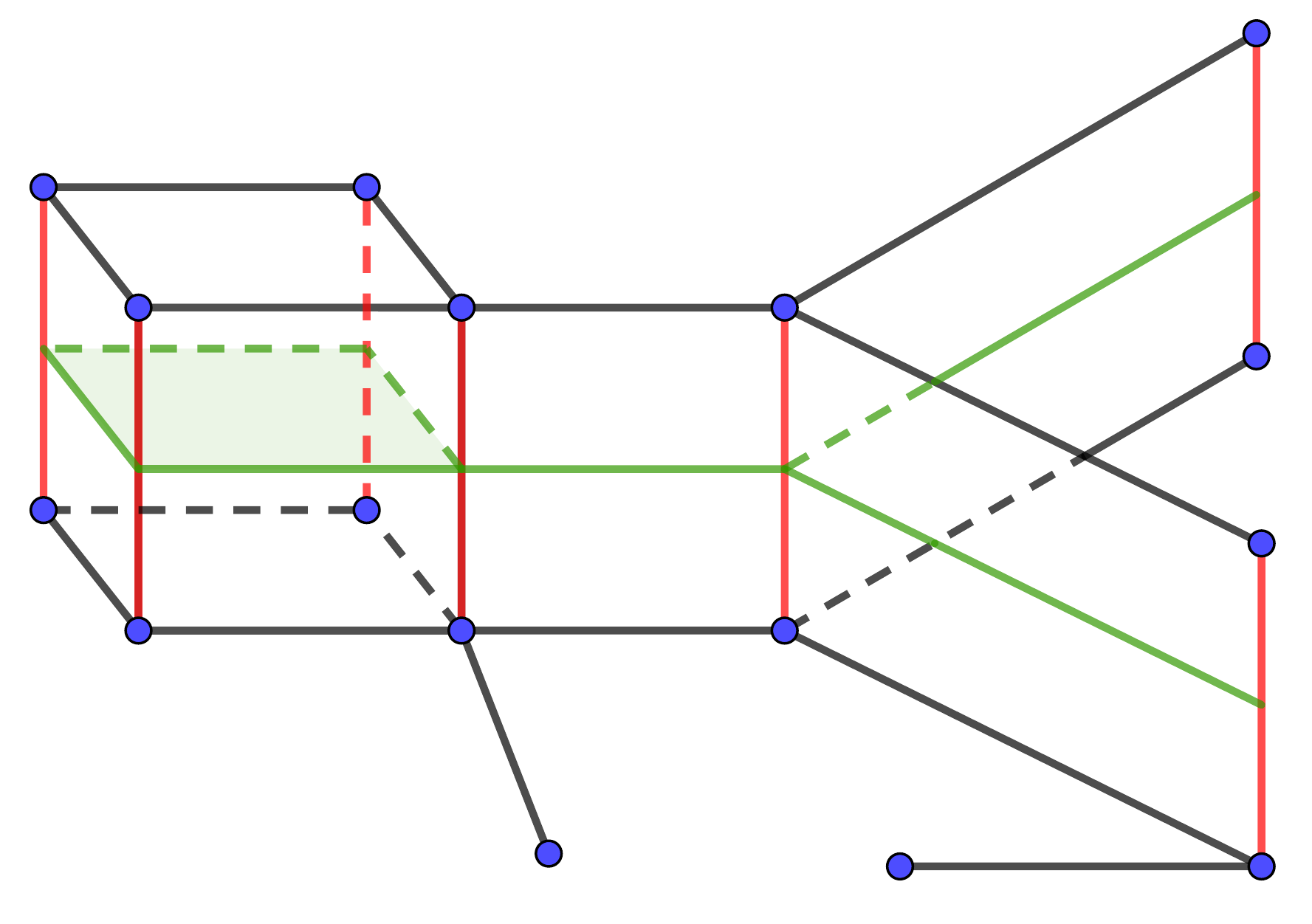}
\caption{A hyperplane (in red) and the associated union of midcubes (in green).}
\label{figure27}
\end{center}
\end{figure}

\medskip \noindent
There exist several metrics naturally defined on a CAT(0) cube complex. For instance, one can extend in a standard way the Euclidean metrics defined on each cube to a global length metric, and the distance one obtains in this way turns out to be CAT(0). However, in this article, we are mainly interested in the graph metric defined on the one-skeleton of the cube complex, referred to as its \emph{combinatorial metric}. Unless specified otherwise, we will always identify a CAT(0) cube complex with its one-skeleton, thought of as a collection of vertices endowed with a relation of adjacency. In particular, when writing $x \in X$, we always mean that $x$ is a vertex of $X$. 

\medskip \noindent
The following theorem will be often used along the article without mentioning it.

\begin{thm}\emph{\cite{MR1347406}}
Let $X$ be a CAT(0) cube complex.
\begin{itemize}
	\item If $J$ is a hyperplane of $X$, the graph $X \backslash \backslash J$ obtained from $X$ by removing the (interiors of the) edges of $J$ contains two connected components. They are convex subgraphs of $X$, referred to as the \emph{halfspaces} delimited by $J$.
	\item For every vertices $x, y \in X$, the distance between $x$ and $y$ coincides with the number of hyperplanes separating them.
\end{itemize}
\end{thm}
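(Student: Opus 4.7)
The plan is to build a combinatorial sign function that flips exactly across $J$, and then to show that no geodesic crosses the same hyperplane twice. For the first bullet, I would define a mod-$2$ $1$-cochain $\eta_J$ on edges of $X$ by $\eta_J(e) = 1$ iff $e \in J$. Since opposite edges of any square are parallel, they either both lie in $J$ or neither does; hence $\eta_J$ vanishes on the boundary of every square in $\mathbb{Z}/2\mathbb{Z}$. Using that $X$ is simply connected and that its $2$-skeleton is filled by squares, $\eta_J$ is a coboundary: there exists a function $f : X^{(0)} \to \mathbb{Z}/2\mathbb{Z}$, unique up to a constant, such that for every edge $\{u,v\}$ one has $f(u) \neq f(v)$ iff $\{u,v\} \in J$. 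The preimages $f^{-1}(0)$ and $f^{-1}(1)$ partition the vertex set, and removing from $X$ the interiors of the edges of $J$ is precisely the operation of erasing the edges joining these two sides.

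The geometric heart of the theorem is the claim that no combinatorial geodesic crosses the same hyperplane twice. I would argue by contradiction: pick a geodesic $\gamma$ and two edges $e_0, e_n$ of $\gamma$, dual to a common hyperplane $K$, whose subpath $\gamma' = e_0 f_1 \cdots f_{n-1} e_n$ is minimal among such choices. Since $e_0$ and $e_n$ are parallel, a square-pushing argument exchanges $e_0$ with the consecutive edge $f_1$, using the flag condition on the vertex link at the common endpoint to recognise the required square and produce an edge parallel to $e_0$ one step further in; iterating brings two parallel edges of $K$ adjacent to one another and yields a strict shortcut inside a single square, contradicting geodesicity.

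The no-backtracking property then forces each halfspace $f^{-1}(\varepsilon)$ to be combinatorially convex: a geodesic between two vertices with the same $f$-value cannot cross $J$ at all, for a single crossing would flip the sign and a second is now forbidden. In particular each halfspace is connected, so they are exactly the two components of $X \backslash\backslash J$. For the distance formula, any hyperplane $K$ separating $x$ and $y$ contributes at least one edge to any geodesic from $x$ to $y$ (its sign function differs at the endpoints), and exactly one by the previous step; conversely each edge of a geodesic belongs to a unique hyperplane, which must then separate its endpoints. Summing gives $d(x,y) = \#\{K : K \ \text{separates} \ x \ \text{from} \ y\}$.

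The main obstacle I expect is making the square-pushing argument rigorous: propagating a square along $\gamma'$ requires the flag condition to guarantee at each step that a new square actually exists, and this is the technical core of Sageev's original work. Simple connectedness enters only lightly, to promote the cocycle $\eta_J$ to a coboundary and thereby produce the sign function $f$ from which the halfspaces are read off.
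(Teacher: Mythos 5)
First, a point of reference: the paper does not prove this statement at all --- it is quoted from Sageev \cite{MR1347406} as background (``the following theorem will be often used along the article without mentioning it''), so there is no in-paper argument to compare yours against, and the comparison has to be with the standard proofs in the literature. Your overall architecture is the standard one, and the first and last parts of your sketch are sound: the mod-$2$ cocycle argument correctly produces the sign function $f$ (opposite edges of a square are both dual to $J$ or neither, so $\eta_J$ vanishes on square boundaries, and simple connectedness kills $H^1(X;\mathbb{Z}/2\mathbb{Z})$), and once the no-double-crossing lemma is granted, your deduction of the connectedness and convexity of the two sides, and of the distance formula (a separating hyperplane is crossed an odd number of times, hence exactly once on a geodesic; a non-separating one an even number of times, hence zero), is complete and correct.

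The genuine gap sits exactly where the real content lives: the existence of the square you push across. Two edges $e_0$ and $f_1$ sharing a vertex $v$ correspond to two vertices of $\mathrm{lk}(v)$, and the flag condition never creates an edge between them --- it only promotes an already-present hollow cube boundary to a filled cube. So ``using the flag condition on the vertex link at the common endpoint to recognise the required square'' is not a mechanism that can work: in general two consecutive edges of a path, even of a geodesic, span no square at all. What actually forces the square to exist is a global argument, not a local one: either Sageev's original route via the product structure $N(K) \cong K \times [0,1]$ of carriers together with an induction, or the minimal-area disk-diagram argument, in which nonpositive curvature enters as the absence of bigons and cornsquares in reduced diagrams rather than as a statement about the link of a single vertex. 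You correctly identify this step as the technical core, but the justification you propose for it would fail if executed literally, so as written the central lemma --- and with it both bullets of the theorem --- remains unproved.
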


\medskip \noindent
Occasionally, we will use the \emph{$\ell^\infty$-metric}, which corresponds to the graph metric $d_\infty$ associated to the graph obtained from (the one-skeleton of) our CAT(0) cube complex by adding an edge between any two vertices which belong to a common cube. Alternatively, the $d_\infty$-distance between two vertices $x$ and $y$ can be defined as the maximal number of pairwise non-transverse hyperplanes separating $x$ and $y$ \cite{depth}. It is worth noticing the the $\ell^\infty$-metric is bi-Lipschitz equivalent to the combinatorial metric:  

\begin{lemma}\label{lem:dinfty}
Let $X$ be a finite-dimensional CAT(0) cube complex. For every vertices $x,y \in X$, the following inequality holds:
$$d_\infty(x,y) \leq d(x,y) \leq \dim(X) \cdot d_\infty(x,y).$$
\end{lemma}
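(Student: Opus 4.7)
The plan is to prove the two inequalities separately, using the natural relationship between the two graph structures on $X$ (the one-skeleton with its cubical adjacency versus the one obtained by making every pair of vertices in a common cube adjacent).

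For the left inequality $d_\infty(x,y)\le d(x,y)$, I would observe that every combinatorial edge of $X$ is itself a cube of dimension $1$, so its two endpoints lie in a common cube; hence every combinatorial edge is also an $\ell^\infty$-edge. Consequently any combinatorial path from $x$ to $y$ is automatically an $\ell^\infty$-path of the same length, and minimising over such paths yields $d_\infty(x,y)\le d(x,y)$.

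For the right inequality $d(x,y)\le \dim(X)\cdot d_\infty(x,y)$, I would start from an $\ell^\infty$-geodesic $x=z_0,z_1,\ldots,z_n=y$ of length $n=d_\infty(x,y)$. By definition each consecutive pair $z_i,z_{i+1}$ lies in a common cube $C_i$ of $X$. Since any two vertices of a $k$-cube are connected by a combinatorial path of length at most $k$, and $\dim(C_i)\le\dim(X)$, we have $d(z_i,z_{i+1})\le\dim(X)$. Concatenating these short paths gives a combinatorial path from $x$ to $y$ of length at most $\dim(X)\cdot n$, proving the inequality.

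There is essentially no obstacle here; the lemma is a direct consequence of the definitions together with the elementary observation that a $k$-cube has combinatorial diameter $k$. The only subtle point worth flagging is that the bound $k\le\dim(X)$ on the dimension of the cubes involved is exactly where the hypothesis of finite dimensionality is used, so I would make sure this is explicit in the write-up.
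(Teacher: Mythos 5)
Your proof is correct, and both inequalities are established by sound elementary arguments: the left one because the $\ell^\infty$-graph contains the one-skeleton as a subgraph, and the right one because each $\ell^\infty$-edge joins two vertices of a common cube, whose combinatorial distance is bounded by the dimension of that cube, hence by $\dim(X)$. The paper itself offers no proof of this lemma (it is stated as a standard bi-Lipschitz remark), so there is nothing to diverge from; it is worth noting only that the paper's preferred description of $d_\infty(x,y)$ as the maximal number of pairwise non-transverse hyperplanes separating $x$ and $y$ suggests an alternative route to the right-hand inequality: the separating hyperplanes can be partitioned into $d_\infty(x,y)$ families of pairwise transverse ones, each of size at most $\dim(X)$, and $d(x,y)$ is the total count. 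Your cube-by-cube argument is more self-contained and equally valid; just make sure, as you already flag, that the bound on the dimension of the cubes containing consecutive vertices of the $\ell^\infty$-geodesic is where finite-dimensionality enters.
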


\noindent
Interestingly, balls with respect to the $\ell^\infty$-metric turn out to be convex \cite[Corollary~3.5]{packing}.

\paragraph{Projections onto convex subcomplexes.} The projection of a vertex onto a convex subcomplex in a CAT(0) cube complex is defined by the following proposition:

\begin{prop}\emph{\cite[Lemma 13.8]{MR2377497}} 
Let $X$ be a CAT(0) cube complex, $C \subset X$ be a convex subcomplex and $x \in X \backslash C$ a vertex. There exists a unique vertex $y \in C$ minimising the distance to $x$. Moreover, for any vertex of $C$, there exists a geodesic from it to $x$ passing through $y$.
\end{prop}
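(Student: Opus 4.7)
The plan is to exploit the fact that CAT(0) cube complexes are median graphs and that convex subcomplexes are median-closed. I would first establish the existence of a minimiser (trivial, since $d$ takes integer values), then characterise it via hyperplanes, and finally deduce uniqueness together with the geodesic statement.

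The key step is the following hyperplane characterisation: for any $y \in C$ minimising $d(x,y)$, no hyperplane $H$ separating $x$ from $y$ crosses $C$ (i.e., $C$ sits entirely in one halfspace of $H$). To prove this I would argue by contradiction: suppose some such $H$ meets $C$ on both sides and pick a vertex $z \in C$ lying on the $x$-side of $H$. Consider then the median $m := m(x,y,z)$. Since $y, z \in C$ and $C$ is convex, the interval $[y,z]$ lies in $C$ and so does $m$. On the other hand, two of $x,y,z$ (namely $x$ and $z$) lie on the $x$-side of $H$, so by the majority characterisation of the median $m$ lies on that side as well; in particular $m \neq y$. Combined with $m \in [x,y]$, which gives $d(x,y) = d(x,m) + d(m,y)$, this produces a vertex of $C$ strictly closer to $x$ than $y$, contradicting the minimality of $y$.

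Uniqueness follows immediately: if $y, y' \in C$ are both minimisers and $y \neq y'$, some hyperplane $H$ separates them, hence crosses $C$; yet $H$ must separate $x$ from exactly one of $y, y'$, contradicting the characterisation just established. For the \emph{moreover} statement, given any $z \in C$, the hyperplanes separating $x$ from $y$ do not cross $C$ whereas those separating $y$ from $z$ do, so these two sets of hyperplanes are disjoint. Since the hyperplanes separating $x$ from $z$ form the symmetric difference of the two, their total count is the sum, yielding $d(x,z) = d(x,y) + d(y,z)$; concatenating geodesics from $z$ to $y$ and from $y$ to $x$ then gives the required geodesic through $y$.

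The main obstacle is the hyperplane characterisation, specifically producing a vertex of $C$ closer to $x$ than $y$. The median argument bypasses the natural but delicate attempt to flip $y$ across $H$ to an adjacent vertex of $C$, which would require $y \in N(H)$ and is not automatic. Invoking the median machinery---namely that medians exist, lie in any convex subcomplex containing two of the three points, and are characterised by the majority-side rule---sidesteps this difficulty cleanly.
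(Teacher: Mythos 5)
Your proof is correct. Note that the paper does not actually prove this proposition: it is quoted as a known result from the cited reference, so there is no in-paper argument to compare against. Your median-based argument is a standard and complete route: the majority rule for medians correctly yields the hyperplane characterisation of the gate (no hyperplane separating $x$ from the closest point $y$ crosses $C$), and the disjointness of $\mathcal{W}(x,y)$ and $\mathcal{W}(y,z)$ combined with the symmetric-difference identity for separating hyperplanes gives both uniqueness and the concatenation-of-geodesics statement. Your closing remark is also well taken: the median machinery genuinely avoids the delicate ``flip $y$ across $H$'' step, which would require knowing $y \in N(H)$.
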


\noindent
A particularly useful lemma is:

\begin{lemma}\label{lem:ProjSep}\emph{\cite[Lemma 13.8]{MR2377497}}
Let $X$ be a CAT(0) cube complex, $C \subset X$ a convex subcomplex and $x \in X$ a vertex. Every hyperplane separating $x$ from its projection onto $C$ separates $x$ from $C$.
\end{lemma}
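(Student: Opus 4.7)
The plan is to argue by contradiction. Let $p$ denote the projection of $x$ onto $C$ and let $J$ be a hyperplane separating $x$ from $p$. Suppose for contradiction that $J$ does not separate $x$ from $C$; then there exists a vertex $c \in C$ lying on the same side of $J$ as $x$. Since $p$ lies on the opposite side, the hyperplane $J$ separates $p$ from $c$, so in particular $J$ separates both the pair $(p,c)$ and the pair $(p,x)$.

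Next I would invoke the projection proposition stated just above: there exists a geodesic from $c$ to $x$ passing through $p$, which yields the equality $d(c,x) = d(c,p) + d(p,x)$. By the second bullet of the theorem of Sageev, the distance between two vertices equals the number of hyperplanes separating them. Writing $\mathcal{H}(u,v)$ for the set of hyperplanes separating two vertices $u,v$, and using the obvious identity $\mathcal{H}(c,x) = \mathcal{H}(c,p) \triangle \mathcal{H}(p,x)$ (a hyperplane separates $c$ from $x$ if and only if it separates an odd number of the pairs $(c,p)$ and $(p,x)$), the geodesic equality forces $\mathcal{H}(c,p) \cap \mathcal{H}(p,x) = \emptyset$. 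However $J$ belongs to both of these sets, which is the desired contradiction.

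The argument contains essentially one step, namely the symmetric-difference identity for hyperplane sets; this is a standard consequence of the fact that edges in a CAT(0) cube complex are labelled by their dual hyperplanes and of the hyperplane characterization of the distance, so no real obstacle is expected.
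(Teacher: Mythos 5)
Your proof is correct. Note that the paper does not actually prove this lemma: it is quoted verbatim from \cite[Lemma 13.8]{MR2377497}, so there is no in-text argument to compare against. Your derivation is the standard one: the projection proposition gives a geodesic from $c$ to $x$ through $p$, hence $d(c,x)=d(c,p)+d(p,x)$; combined with the hyperplane-counting formula for the distance and the identity $\mathcal{H}(c,x)=\mathcal{H}(c,p)\,\triangle\,\mathcal{H}(p,x)$, this forces $\mathcal{H}(c,p)\cap\mathcal{H}(p,x)=\emptyset$, contradicting $J$ lying in both. All three ingredients are explicitly available in the paper's preliminaries, so the argument is complete as written.
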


\noindent
The following statement will be also needed:

\begin{lemma}\label{lem:InterProj}
Let $X$ be a CAT(0) cube complex and $Y,Z \subset X$ two convex subcomplexes. If $Y \cap Z \neq \emptyset$, then $\mathrm{proj}_Z(y) \in Y$ for every $y \in Y$. 
\end{lemma}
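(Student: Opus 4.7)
The plan is to invoke the second (``moreover'') half of the projection proposition, which guarantees that from any vertex of a convex subcomplex $C$ there exists a combinatorial geodesic to any external vertex $x$ passing through $\mathrm{proj}_C(x)$. Combined with the convexity of $Y$, this yields the conclusion in essentially one line.

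Concretely, I would fix $y \in Y$, set $p := \mathrm{proj}_Z(y)$, and use the assumption $Y \cap Z \neq \emptyset$ to pick some vertex $z \in Y \cap Z$. Applying the projection proposition to the convex subcomplex $Z$, the external vertex $y$, and the vertex $z \in Z$, I obtain a combinatorial geodesic $\gamma$ from $z$ to $y$ that passes through $p$. Both endpoints of $\gamma$ lie in $Y$ (namely $z$ and $y$), so by convexity of $Y$ the whole of $\gamma$ is contained in $Y$; in particular $p \in Y$, as desired.

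I do not expect any genuine obstacle here: once one notices that the ``moreover'' clause of the projection proposition is tailor-made for this kind of assertion, the convexity of $Y$ completes the argument. As an alternative one could instead use Lemma \ref{lem:ProjSep} to show that every hyperplane separating $y$ from $p$ separates $y$ from $z$ and hence crosses $Y$, and then invoke the halfspace characterisation of convex subcomplexes to deduce $p \in Y$; but the direct geodesic argument seems cleaner.
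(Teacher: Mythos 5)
Your proof is correct, but it takes a genuinely different route from the one in the paper. The paper introduces the projection $y''$ of $y$ onto $Y \cap Z$ in addition to $y' = \mathrm{proj}_Z(y)$, forms the median point $m$ of $y, y', y''$, and argues that $m$ lies in both $Y$ and $Z$ and must coincide with $y'$. You instead take an arbitrary $z \in Y \cap Z$ and invoke the ``moreover'' clause of the projection proposition to produce a geodesic from $z$ to $y$ through $p = \mathrm{proj}_Z(y)$, then finish by convexity of $Y$. Your argument is arguably more elementary: it avoids the median machinery entirely and does not need the (true but unstated) fact that $Y \cap Z$ is itself convex so that the projection $y''$ is well defined; in exchange, the paper's median argument gives slightly more for free, namely that $\mathrm{proj}_Z(y)$ actually lands in $Y \cap Z$ and equals $\mathrm{proj}_{Y \cap Z}(y)$. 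The only cosmetic point in your write-up is that the projection proposition is stated for $x \in X \setminus C$, so the case $y \in Z$ should be dispatched separately --- but there $\mathrm{proj}_Z(y) = y \in Y$ trivially, so nothing is lost.
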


\begin{proof}
Let $y'$ (resp. $y''$) denote the projection of $y$ onto $Z$ (resp. $Y \cap Z$), and let $m$ denote the \emph{median point} of $y,y',y''$, i.e., the unique vertex of $X$ which belong to a geodesic between any two vertices among $y,y',y''$ \cite[Proposition 2.21]{MR2413337}. Because $m$ belongs to a geodesic between the two vertices $y,y''$ of $Y$, necessarily $m \in Y$. And because $m$ belongs to a geodesic between the two vertices $y',y''$ of $Z$, necessarily $m \in Z$. We conclude that $y' = m \in Y \cap Z$, as desired.
\end{proof}

\noindent
Our next lemma is an easy consequence of the fact that convex subcomplexes satisfy the \emph{Helly property}, i.e., if finitely many convex subcomplexes pairwise intersect then the total intersection is non-empty \cite[Corollary 2.22]{MR2413337}. 

\begin{lemma}\label{lem:DistInter}
Let $X$ be a CAT(0) cube complex and $Y_1, \ldots, Y_n \subset X$ pairwise intersecting convex subcomplexes. For every vertex $x \in X$, a hyperplane separates $x$ from $\bigcap\limits_{i=1}^n Y_i$ if and only if it separates $x$ from $Y_i$ for some $1 \leq i \leq n$.
\end{lemma}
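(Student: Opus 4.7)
The plan is to prove the two implications separately; the forward direction is immediate from set-theoretic inclusion, and the reverse direction is the one that genuinely uses the Helly property.

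First I would dispatch the easy direction: if a hyperplane $J$ separates $x$ from some $Y_i$, then since $\bigcap_{j=1}^n Y_j \subseteq Y_i$, every vertex of $\bigcap_{j=1}^n Y_j$ lies on the side of $J$ opposite to $x$, so $J$ separates $x$ from $\bigcap_{j=1}^n Y_j$. (Note that $\bigcap_{j=1}^n Y_j$ is nonempty by the Helly property, so the statement is not vacuous.)

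For the reverse direction, I would argue by contrapositive. Suppose $J$ is a hyperplane that does not separate $x$ from any $Y_i$ individually, and let $H$ be the halfspace delimited by $J$ that contains $x$. By assumption, for each $i$ there is a vertex $y_i \in Y_i$ lying on the same side of $J$ as $x$, i.e., in $H$; thus $H \cap Y_i \neq \emptyset$ for every $i$. Now $H$ is a convex subcomplex (halfspaces are convex by the theorem of Sageev recalled at the beginning of the section), and by hypothesis $Y_i \cap Y_j \neq \emptyset$ for every $i,j$. Hence the family $\{H, Y_1, \ldots, Y_n\}$ consists of pairwise intersecting convex subcomplexes.

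Applying the Helly property to this family yields $H \cap \bigcap_{i=1}^n Y_i \neq \emptyset$, so $\bigcap_{i=1}^n Y_i$ contains a vertex lying on the same side of $J$ as $x$. Therefore $J$ does not separate $x$ from $\bigcap_{i=1}^n Y_i$, completing the contrapositive. I do not anticipate any real obstacle here: the only subtlety is remembering that halfspaces are themselves convex subcomplexes, so that Helly may be invoked with $H$ added to the collection.
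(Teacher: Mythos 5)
Your proof is correct and follows essentially the same route as the paper: the easy direction by set inclusion, and the substantive direction by adjoining the halfspace containing $x$ to the family $Y_1,\ldots,Y_n$ and invoking the Helly property. No issues.
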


\begin{proof}
Let $J$ be a hyperplane. Assume that, for every $1 \leq i \leq n$, $J$ does not separate $x$ from $Y_i$. Consequently, the halfspace $J^+$ delimited by $J$ which contains $x$ has a non-trivial intersection with $Y_i$. It follows from the Helly property that $J^+ \cap \bigcap\limits_{i=1}^n Y_i$ is non-empty, so that $J^+$ contains $x$ and a vertex of $\bigcap\limits_{i=1}^n Y_i$. In other words, $J$ does not separate $x$ from $\bigcap\limits_{i=1}^n Y_i$. Conversely, it is clear that if $J$ separate $x$ from $Y_i$ for some $1 \leq i \leq n$ then $J$ separates $x$ from $\bigcap\limits_{i=1}^n Y_i$. 
\end{proof}

\noindent
For our next lemma, recall that a \emph{reflection} in a CAT(0) cube complex is an isometry which stabilises and inverts all the edges dual to a given hyperplane.

\begin{lemma}\label{lem:shortendist}
Let $X$ be a CAT(0) cube complex, $J,H$ two hyperplanes and $x \in X$ a vertex. Assume that $J$ separates $x$ from $H$. If $r \in \mathrm{Isom}(X)$ is a reflection along $J$, then $d(x,N(rH))<d(x,N(H))$. 
\end{lemma}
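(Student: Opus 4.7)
The plan is to exhibit an explicit vertex of $N(rH)$ that is closer to $x$ than any vertex of $N(H)$. The key intuition is that, since $J$ separates $x$ from $H$, the carrier $N(H)$ lies entirely in the halfspace $J^-$ opposite to $x$, while $N(rH)$ lies in the halfspace $J^+$ containing $x$; therefore, reflecting the closest point of $N(H)$ across $J$ should produce a point of $N(rH)$ whose distance to $x$ is smaller by exactly one (the hyperplane $J$ being removed from the count).

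First I would let $y := \mathrm{proj}_{N(H)}(x)$, so that $d(x,y) = d(x,N(H))$. Applying Lemma~\ref{lem:ProjSep}, every hyperplane separating $x$ from $y$ must separate $x$ from $N(H)$; since $J$ is such a hyperplane (by assumption, together with the easy observation that $J$ cannot be transverse to $H$ if it separates $x$ from $H$, so $N(H) \subset J^-$), $J$ must in particular appear among the hyperplanes crossed by any geodesic from $x$ to $y$. Fix such a geodesic $\gamma$ and decompose it as $\gamma = \gamma_1 \cdot e \cdot \gamma_2$, where $e = (z,z')$ is the unique edge of $\gamma$ dual to $J$, with $z \in J^+$ and $z' \in J^-$.

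Next I would exploit the defining property of the reflection $r$: it is an isometry fixing $J$, swapping its two halfspaces, and in particular sending $z'$ to $z$. Consequently $r\gamma_2$ is a combinatorial path of length $|\gamma_2|$ from $z = r(z')$ to $r(y)$, so that the concatenation $\gamma_1 \cdot r\gamma_2$ is a path from $x$ to $r(y)$ of length
\[ |\gamma_1| + |\gamma_2| = |\gamma| - 1 = d(x,y) - 1. \]
Since $r(y) \in rN(H) = N(rH)$, this yields $d(x, N(rH)) \leq d(x, r(y)) \leq d(x,N(H)) - 1$, which gives the strict inequality.

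I do not anticipate a serious obstacle here: the only point requiring a little care is the justification that $N(H) \subset J^-$ (so that $J$ really does separate $x$ from the entire subcomplex $N(H)$, not merely from the geometric hyperplane $H$), and the verification that the concatenation $\gamma_1 \cdot r\gamma_2$ is an admissible combinatorial path, both of which follow directly from the fact that $J$ separates $x$ from $H$ and that reflections are cube-complex isometries permuting edges.
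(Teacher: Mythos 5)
Your argument is correct, and it takes a genuinely different route from the paper. The paper's proof splits into two cases according to the position of $rx$ relative to $H$ (either $H$ separates $J$ from $rx$, or $rx$ lies between $J$ and $H$), and in the second case it establishes an inclusion between two sets of separating hyperplanes and concludes by counting. You instead produce an explicit witness: unfolding a geodesic from $x$ to $y=\mathrm{proj}_{N(H)}(x)$ across the unique edge dual to $J$ yields a path of length $d(x,N(H))-1$ from $x$ to $r(y)\in N(rH)$, with no case distinction. The two points you flag as needing care are indeed the only ones, and both go through: since $J\neq H$ and $J$ is not transverse to $H$ (it separates $x$ from $H$), no edge of $N(H)$ is dual to $J$, so the connected convex subcomplex $N(H)$ lies entirely in the halfspace of $J$ not containing $x$; and the concatenation $\gamma_1\cdot r\gamma_2$ is a genuine edge-path because the reflection swaps the endpoints of the edge dual to $J$, so $r\gamma_2$ starts at $z$. (One small remark: your appeal to Lemma \ref{lem:ProjSep} is not actually needed -- the fact that $J$ separates $x$ from $y$ follows directly from $y\in N(H)$ and $N(H)$ lying on the far side of $J$.) Your version is shorter and more transparent than the paper's, and it also handles cleanly the fact that the reflection need not have order two, since the witness $r(y)$ lands in $N(rH)$ directly rather than passing through a comparison of $d(rx,N(H))$ with $d(x,N(rH))$.
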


\begin{proof}
We distinguish two cases. First, assume that $H$ separates $J$ and $rx$, or equivalently, that $rH$ separates $x$ and $J$. As $rH$ separates $x$ from $H$, it is clear that $d(x,N(rH))<d(x,N(H))$. Next, assume that $rx$ lies between $J$ and $H$. We claim that the inclusion
$$r \mathcal{W}(rx,N(H) \cup \{x\}) \subset \mathcal{W}(x, N(J))$$
holds, where $\mathcal{W}(S_1,S_2)$ denotes the collection of the hyperplanes of $X$ separating two subsets $S_1,S_2 \subset X$. So let $K$ be a hyperplane separating $rx$ from $N(H)$ and $x$. Notice that $K$ cannot be transverse to $J$, since otherwise $r$ would stabilise the halfspace delimited by $K$ which contains $rx$ and so $K$ would not separate $x$ and $rx$. Consequently, $K$ lies between $J$ and $K$. It follows that $rK$ lies in the halfspace delimited by $J$ which contains $x$. Moreover, since $K$ separates $rx$ from $J$, necessarily $rK$ separates $x$ from $rJ=J$, concluding the proof of our claim. Now, because
$$\mathcal{W}(x,N(H)) \subset \mathcal{W}(x,N(J)) \sqcup \{J \} \sqcup \mathcal{W}(\{x,rx\}, N(H)),$$
we deduce from our claim that
$$\begin{array}{lcl} d(x,N(H)) & = & \# \mathcal{W}(x,N(H)) \geq 1+ \# \mathcal{W}(rx,N(H) \cup \{x\}) + \# \mathcal{W}(\{x,rx\},N(H)) \\ \\ & \geq & 1+ \# \mathcal{W}(rx,N(H)) = 1+ d(rx,N(H))> d(rx,N(H)) \end{array}$$
concluding the proof of our lemma.
\end{proof}

\paragraph{Normal form in right-angled Coxeter groups.} Fixing a simplicial graph $\Gamma$, the right-angled Coxeter group $C(\Gamma)$ has a canonical generating set, namely the generating set coming from the presentation which defines it (which can be identified with the vertex-set of $\Gamma$). Below, we describe a normal form associated to this generating set. For more details, we refer to \cite{GreenGP} (see also \cite{GPvanKampen}) where the normal form is proved in a more general setting. 

\medskip \noindent
Clearly, the following operations on a word of generators $g_1 \cdots g_n$ do not modify the element of $C(\Gamma)$ it represents:
\begin{itemize}
	\item[(O1)] delete the letter $g_i=1$;
	\item[(O2)] if $g_i = g_{i+1} \in G$, remove the two letters $g_i$ and $g_{i+1}$;
	\item[(O3)] if $g_i$ and $g_{i+1}$ are adjacent vertices, switch them.
\end{itemize}
A word is \emph{reduced} if its length cannot be shortened by applying these elementary moves. Given a word $g_1 \cdots g_n$ and some $1\leq i<n$, if  $g_i$ is adjacent to each $g_{i+1}, \ldots, g_n$, then the words $g_1 \cdots g_n$ and $g_1\cdots g_{i-1} \cdot g_{i+1} \cdots g_n \cdot g_i$ represent the same element of $C(\Gamma)$; we say that $g_i$ \textit{shuffles to the right}. Analogously, one can define the notion of a letter shuffling to the left. If $g=g_1 \cdots g_n$ is a reduced word and $h$ is a generator, then a reduction of the product $gh$ is given by
\begin{itemize}
	\item $g_1 \cdots g_n$ if $h=1$;
	\item $g_1 \cdots g_{i-1} \cdot g_{i+1} \cdots g_n$ if $g_i=h$ and  $g_i$ shuffles to the right;
	\item $g_1 \cdots g_nh$ otherwise.
\end{itemize}
In particular, every element of $C(\Gamma)$ can be represented by a reduced word, and this word is unique up to applying the operation (O3). We will also need the following definitions:

\begin{definition}
Let $g \in C(\Gamma)$ be an element. The \emph{head} of $g$, denoted by $\mathrm{head}(g)$, is the collection of the first letters appearing in the reduced words representing $g$. Similarly, the \emph{tail} of $g$, denoted by $\mathrm{tail}(g)$, is the collection of the last letters appearing in the reduced words representing $g$. 
\end{definition}

\begin{definition}
Let $g,h \in C(\Gamma)$ be two elements. One says that $g$ is a \emph{prefix} of $h$ if one of the reduced words representing $h$ has a prefix representing $g$. 
\end{definition}

\paragraph{Cube complexes of RACG.} Given a simplicial graph $\Gamma$, let $X(\Gamma)$ denote the cube complex whose one-skeleton with the Cayley graph of $C(\Gamma)$ with respect to its canonical generating set and whose $n$-cubes fill in the subgraphs isomorphic to one-skeleta of $n$-cubes.

\begin{prop}\label{prop:RACGcc}
The cube complex $X(\Gamma)$ is a CAT(0) cube complex of dimension $\mathrm{clique}(\Gamma)$, the maximal size of a complete subgraph in $\Gamma$. 
\end{prop}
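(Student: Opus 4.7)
The plan is to verify, in turn, the three defining features of a CAT(0) cube complex of the asserted dimension: that $X(\Gamma)$ is a cube complex of dimension exactly $\mathrm{clique}(\Gamma)$, that its vertex links are flag (nonpositive curvature), and that it is simply connected.

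First, I would identify the cubes. By construction an $n$-cube of $X(\Gamma)$ corresponds to a subgraph of the Cayley graph isomorphic to the $1$-skeleton of an $n$-cube. At the vertex $1$, such a subgraph is determined by a choice of $n$ generators $v_1,\ldots,v_n \in V(\Gamma)$ that pairwise commute, i.e.\ that span a clique of $\Gamma$, with $2^n$ vertices of the form $v_{i_1}\cdots v_{i_k}$. The normal form recalled above guarantees that these $2^n$ elements are pairwise distinct (each such product is a reduced word, and products indexed by different subsets of $\{1,\ldots,n\}$ cannot represent the same element). Conversely, any $1$-skeleton of an $n$-cube at $1$ must be spanned by $n$ pairwise commuting generators, because two generators commute in $C(\Gamma)$ if and only if they are adjacent in $\Gamma$. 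Thus cubes at $1$ are in bijection with cliques of $\Gamma$; the left action of $C(\Gamma)$ on itself then transports this picture to every other vertex, proving $\dim X(\Gamma) = \mathrm{clique}(\Gamma)$.

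Second, the link condition. The link of the vertex $1$ has vertex set $V(\Gamma)$ (one vertex per incident edge) and, by the previous paragraph, an $(n-1)$-simplex for every clique of size $n$ in $\Gamma$. Hence $\mathrm{link}(1)$ is the flag complex on $\Gamma$, which is flag by definition; again the vertex-transitive action of $C(\Gamma)$ shows that every vertex has a flag link, so $X(\Gamma)$ is nonpositively curved.

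Finally, simple connectivity is a standard consequence of covering space theory: $X(\Gamma)^{(2)}$ is (a quotient of) the Cayley $2$-complex of the defining presentation of $C(\Gamma)$, where the relators $u^2=1$ make edges undirected and the relators $[u,v]=1$ are filled by squares; as the universal cover of the presentation $2$-complex, it is simply connected, and attaching cubes of dimension $\geq 3$ does not change $\pi_1$. Combining these three points yields the proposition. The only genuinely nontrivial point is the bijection between cliques and cubes, and that is immediate from the normal form; everything else is formal.
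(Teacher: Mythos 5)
The paper does not actually prove this proposition: it states it and defers to the literature (the reference \cite{DavisCoxeter}), so there is no argument of the author's to compare yours against. Your proof supplies the standard argument, and it is essentially correct: the clique--cube correspondence via the normal form, the identification of $\mathrm{link}(1)$ with the flag complex on $\Gamma$, and simple connectivity via the Cayley $2$-complex are exactly the three standard ingredients. Two places are thinner than the rest and deserve a sentence each if this were written out in full. First, your converse direction --- ``any $1$-skeleton of an $n$-cube at $1$ must be spanned by $n$ pairwise commuting generators, because two generators commute iff they are adjacent'' --- is not quite the right justification: what one actually needs is that an \emph{embedded} $4$-cycle in the Cayley graph based at $1$ with initial edges labelled $u \neq w$ forces, via the normal form (two reduced length-two words $ux$ and $wy$ represent the same element only if $\{u,x\}=\{w,y\}$ and the swap is a legal commutation), that $u$ and $w$ are adjacent in $\Gamma$; one then uses that every pair of edge-directions at a vertex of the hypercube graph $Q_n$ spans a $4$-cycle, and an induction on distance from the base vertex to see that the whole subgraph is the standard cube $\{v_{i_1}\cdots v_{i_k}\}$. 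Second, in the simple-connectivity step the Cayley $2$-complex of the presentation is not literally $X(\Gamma)^{(2)}$: the relators $u^2$ contribute bigons and each commutator contributes a doubled square, so one must observe that collapsing bigons to edges and identifying squares glued along the same $4$-cycle preserves $\pi_1$; your parenthetical ``(a quotient of)'' gestures at this but the homotopy-equivalence justification should be made explicit. Neither point is a genuine gap --- both are routine --- so your proposal is a sound expansion of a statement the paper leaves to a citation.
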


\noindent
We refer to \cite{DavisCoxeter} for more information. 

\medskip \noindent
First of all, we have the following structure of geodesics in $X(\Gamma)$, which follows from the very definition of $X(\Gamma)$:

\begin{lemma}\label{lem:geod}
Let $g,h \in C(\Gamma)$ be two elements. If $v_1 \cdots v_n$ is a reduced word representing $g^{-1}h$, then 
$$g, \ gv_1, \ gv_1v_2, \ldots, gv_1 \cdots v_n = h$$
defines a geodesic in $X(\Gamma)$ from $g$ to $h$. Moreover, every geodesic has this form. 
\end{lemma}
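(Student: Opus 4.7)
The plan is to read off the lemma directly from the definition of $X(\Gamma)$ together with the normal form recalled just above. First I would note that, by construction, the $1$-skeleton of $X(\Gamma)$ is the Cayley graph of $C(\Gamma)$ with respect to $V(\Gamma)$. Consequently, edge-paths of length $n$ starting at $g$ correspond bijectively to words $v_1 \cdots v_n$ over $V(\Gamma)$ via the rule $k \mapsto gv_1 \cdots v_k$, the endpoint being $h$ precisely when $v_1\cdots v_n = g^{-1}h$ in $C(\Gamma)$. In particular, the combinatorial distance $d(g,h)$ coincides with the minimal length of a word over $V(\Gamma)$ representing $g^{-1}h$.

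Next I would invoke the normal form recalled in the preceding paragraph: the moves (O1) and (O2) strictly shorten the length of a word, while (O3) preserves it, and every element of $C(\Gamma)$ is represented by a reduced word, unique up to (O3). It follows that the reduced words representing a given element of $C(\Gamma)$ are exactly the words of minimal length among those representing it: any non-reduced word can be further shortened by (O1)-(O2), while reduced words, related to one another only by (O3), all have the same length.

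Combining these two observations settles both halves of the lemma at once. If $v_1 \cdots v_n$ is a reduced word representing $g^{-1}h$, the associated edge-path $g, gv_1, \ldots, gv_1 \cdots v_n = h$ has length $n$, which by the previous paragraph equals $d(g,h)$, so the path is a geodesic. Conversely, any geodesic from $g$ to $h$ has length $d(g,h)$ and, under the same bijection, corresponds to a word $v_1 \cdots v_n$ with $v_1 \cdots v_n = g^{-1}h$ and $n = d(g,h)$; being of minimal length, this word is necessarily reduced.

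There is no real obstacle here; the only point worth highlighting is the equivalence between "reduced" in the combinatorial sense (length cannot be shortened by (O1)-(O3)) and "of minimal length among words representing the same element", which is immediate from the fact that (O1) and (O2) strictly decrease length while (O3) preserves it.
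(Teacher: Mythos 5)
Your proof is correct and follows exactly the route the paper intends: the paper states this lemma without proof, remarking only that it ``follows from the very definition of $X(\Gamma)$'', and your argument simply spells out that reasoning, namely the identification of edge-paths in the Cayley graph with words over $V(\Gamma)$ together with the fact (from the normal form, since (O1)--(O2) strictly shorten while (O3) preserves length and reduced words are unique up to (O3)) that reduced words are precisely the minimal-length representatives.
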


\noindent
Next, we would like to describe the hyperplanes of $X(\Gamma)$. Our first result in this direction is the following well-known lemma. (We refer the reader to \cite[Theorem 2.10, Proposition 2.11]{AutGP} for a proof in a more general setting.) 

\begin{lemma}\label{lem:DescriptionHyp}
Let $J$ be a hyperplane of $X(\Gamma)$. There exist $g \in C(\Gamma)$ and $u \in V(\Gamma)$ such that the edges dual to $J$ are 
$$\{(gh,ghu) \mid h \in \langle \mathrm{star}(u) \rangle \}.$$ 
As a consequence, the edges of $J$ are all labelled by the same vertex of $\Gamma$ and $\mathrm{stab}(J)= g \langle \mathrm{star}(u) \rangle g^{-1}$. 
\end{lemma}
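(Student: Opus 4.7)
My plan is to unwind the definition of a hyperplane as an equivalence class of edges under the square-parallelism relation, and to keep track of labels. Every edge of $X(\Gamma)$ has the form $(a,au)$ for some $a\in C(\Gamma)$ and some generator $u\in V(\Gamma)$; I will call $u$ its \emph{label}. Because $X(\Gamma)$ is obtained by cubing the Cayley graph, a square in $X(\Gamma)$ has the form $(a,au,auv,av)$ precisely when $\{u,v\}\in E(\Gamma)$, and its two pairs of opposite edges are $\{(a,au),(av,avu)\}$ and $\{(a,av),(au,auv)\}$. The crucial observation is that opposite edges of such a square share the same label. Consequently, the square-parallelism relation preserves labels, so every edge in a fixed hyperplane $J$ carries the same label $u$; this already proves the second assertion of the lemma.

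Next, fix an edge $e_0=(g,gu)\in J$ and show by induction on the minimal number of square-moves from $e_0$ to a given edge $e\in J$ that $e$ has the form $(gh,ghu)$ with $h\in\langle\mathrm{link}(u)\rangle$. The base case is $h=1$. For the inductive step, a square-move applied to $(gh,ghu)$ produces $(ghv,ghvu)$ for some $v\in V(\Gamma)$ adjacent to $u$ in $\Gamma$, i.e.\ $v\in\mathrm{link}(u)$; so $hv\in\langle\mathrm{link}(u)\rangle$ as required. The reverse inclusion is immediate: any $h\in\langle\mathrm{link}(u)\rangle$ can be written as $v_1\cdots v_k$ with each $v_i\in\mathrm{link}(u)$, and the corresponding sequence of square-moves connects $(g,gu)$ to $(gh,ghu)$ inside $J$. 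Finally, enlarging $\mathrm{link}(u)$ to $\mathrm{star}(u)=\mathrm{link}(u)\cup\{u\}$ does not enlarge the set of edges described: $u$ commutes with every element of $\mathrm{link}(u)$ in $C(\Gamma)$, so any $h\in\langle\mathrm{star}(u)\rangle$ can be written as $h'$ or $h'u$ with $h'\in\langle\mathrm{link}(u)\rangle$, and the factor $u$ only reverses the orientation of the edge $(gh',gh'u)$. This yields the first assertion of the lemma.

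Finally, I read off the stabiliser. An element $w\in C(\Gamma)$ lies in $\mathrm{stab}(J)$ iff it sends the edge $e_0=(g,gu)$ to some edge of $J$, i.e.\ iff $\{wg,wgu\}=\{gh,ghu\}$ for some $h\in\langle\mathrm{star}(u)\rangle$. Either $wg=gh$, which gives $w=ghg^{-1}$, or $wg=ghu$, which gives $w=g(hu)g^{-1}$ with $hu\in\langle\mathrm{star}(u)\rangle$; both cases yield $w\in g\langle\mathrm{star}(u)\rangle g^{-1}$. Conversely, such an element clearly sends every edge $(gh',gh'u)$ of $J$ to another edge of the same form, so it stabilises $J$. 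This establishes $\mathrm{stab}(J)=g\langle\mathrm{star}(u)\rangle g^{-1}$.

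The only step with any subtlety is the inductive description of $J$: one must be sure that the square-moves allowed in $X(\Gamma)$ are exactly those coming from edges of $\Gamma$, and that they preserve labels. Both points follow directly from Proposition~\ref{prop:RACGcc} and the fact that $X(\Gamma)$ has no $2$-cells beyond squares filling commuting pairs of generators, so this is more a careful bookkeeping than a genuine obstacle.
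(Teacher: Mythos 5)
Your proof is correct. Note that the paper does not actually prove Lemma~\ref{lem:DescriptionHyp}: it states it as well known and refers to \cite{AutGP} for a proof in a more general setting, so there is no in-paper argument to compare against. Your self-contained proof is exactly the standard one: the key points — that opposite edges of a square $(a,au,auv,av)$ carry the same label, so the parallelism relation preserves labels; that iterating square-moves from $(g,gu)$ reaches precisely the edges $(gh,ghu)$ with $h\in\langle\mathrm{link}(u)\rangle$; that passing from $\langle\mathrm{link}(u)\rangle$ to $\langle\mathrm{star}(u)\rangle$ only re-orients edges; and that $wJ=J$ is equivalent to $w$ sending one fixed edge of $J$ into $J$ because each edge lies in a unique hyperplane — are all present and correctly justified. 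The only cosmetic remark is that in the stabiliser computation you could state explicitly that $\langle\mathrm{star}(u)\rangle=\langle\mathrm{link}(u)\rangle\times\langle u\rangle$, which is the structural fact underlying both the $\mathrm{link}$-versus-$\mathrm{star}$ comparison and the two cases $wg=gh$ or $wg=ghu$; but nothing is missing.
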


\noindent
Because all the edges of a hyperplane are labelled by the same vertex of $\Gamma$, one says that this vertex \emph{labels} the hyperplane. It is worth noticing that two hyperplanes belong to the same $C(\Gamma)$-orbit if and only if they have the same label. For convenience, given a vertex $u \in V(\Gamma)$, we denote by $J_u$ the hyperplane dual to the edge $(1,u)$.  

\medskip \noindent
Our next two lemmas follow easily from the definition of squares in $X(\Gamma)$.

\begin{lemma}\label{lem:transverseimpliesadj}
Two transverse hyperplanes are labelled by adjacent vertices. 
\end{lemma}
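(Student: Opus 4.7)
The plan is to unwind the definitions and use the description of squares in $X(\Gamma)$. Two hyperplanes $J_1, J_2$ are transverse precisely when their geometric realisations intersect, which by the standard characterisation of transversality in CAT(0) cube complexes means there exists a square $C \subset X(\Gamma)$ such that $J_1$ and $J_2$ are both dual to edges of $C$ (appearing on opposite pairs of sides).

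First I would recall the structure of squares in $X(\Gamma)$. By Proposition \ref{prop:RACGcc}, $X(\Gamma)$ is a CAT(0) cube complex, and a square is filled in exactly where its $1$-skeleton appears in the Cayley graph. Thus any square has vertex set of the form $\{g, gu, guv, gv\}$ for some $g \in C(\Gamma)$ and some pair of distinct generators $u,v \in V(\Gamma)$; the edges $(g,gu)$ and $(gu,guv)$ must be parallel, which forces $u$ and $v$ to commute in $C(\Gamma)$, i.e., $\{u,v\} \in E(\Gamma)$.

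Next I would observe that two opposite edges of this square, say $(g,gu)$ and $(gv,guv)$, are dual to $J_1$, while the other two opposite edges $(g,gv)$ and $(gu,guv)$ are dual to $J_2$. By Lemma \ref{lem:DescriptionHyp}, every edge of a hyperplane carries the same label, so $J_1$ is labelled by $u$ and $J_2$ is labelled by $v$. Since $\{u,v\}$ is an edge of $\Gamma$ as noted above, this yields the conclusion.

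I do not expect a real obstacle here: once the existence of the common square is invoked, the whole argument reduces to checking that the four corners of that square live in the Cayley graph of $C(\Gamma)$, which immediately forces the two labels to be adjacent in $\Gamma$. The only point one has to be slightly careful about is the standard fact that transverse hyperplanes meet along a midcube of some square (rather than only tangentially), but this is built into the very definition of transversality recalled in Section \ref{section:cc}.
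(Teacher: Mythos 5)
Your argument is correct and matches the paper's intent: the paper gives no written proof, stating only that the lemma ``follows easily from the definition of squares in $X(\Gamma)$,'' and your unwinding of that definition (transverse hyperplanes share a square, whose four corners $g, gu, guv, gv$ close up in the Cayley graph only if $u$ and $v$ commute, hence are adjacent in $\Gamma$) is exactly the easy argument being alluded to.
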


\begin{lemma}\label{lem:transverseiff}
Two tangent hyperplanes are transverse if and only if they are labelled by adjacent vertices.
\end{lemma}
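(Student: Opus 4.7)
The plan is to reduce to the nontrivial direction and then exhibit a square witnessing transversality. The easy direction (transverse implies adjacent labels) is already Lemma~\ref{lem:transverseimpliesadj}, so I would focus on the converse: assuming $J,H$ are two hyperplanes of $X(\Gamma)$ whose carriers meet and whose labels $u,v \in V(\Gamma)$ are adjacent in $\Gamma$, I want to show that $J$ and $H$ must cross.

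First I would invoke Lemma~\ref{lem:DescriptionHyp} to describe the carriers concretely. Up to translation one may write $J = gJ_u$ and $H = g'J_v$ for suitable $g,g' \in C(\Gamma)$. The explicit list of edges dual to $J$ given by Lemma~\ref{lem:DescriptionHyp} shows that the vertex-set of $N(J)$ is exactly $g\langle \mathrm{star}(u)\rangle$, and similarly $N(H) = g'\langle \mathrm{star}(v)\rangle$ on vertices. The crucial consequence is that, for every vertex $x \in N(J)$, the edge $(x, xu)$ is dual to $J$, and likewise $(x, xv)$ is dual to $H$ whenever $x \in N(H)$. Picking a vertex $x \in N(J) \cap N(H)$, both of these edges are therefore present at $x$.

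Now I would use adjacency: since $\{u,v\} \in E(\Gamma)$, the defining relation $[u,v]=1$ holds in $C(\Gamma)$, so $xuv = xvu$. Hence the four vertices $x, xu, xv, xuv$ span a $4$-cycle in $X(\Gamma)$, and they are pairwise distinct (for example, $x \ne xuv$ because the word $uv$ is reduced of length $2$ in the normal form). By Proposition~\ref{prop:RACGcc}, any subgraph of $X(\Gamma)$ isomorphic to the one-skeleton of a $2$-cube is filled in by a genuine square. The resulting square has opposite edges $(x,xu),(xv,xuv)$ both dual to $J$ and opposite edges $(x,xv),(xu,xuv)$ both dual to $H$, so $J$ and $H$ cross inside this square. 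This proves transversality and completes the converse.

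The argument is essentially a one-square construction, so the main (and only minor) obstacle is bookkeeping: one must verify that $x \in N(J) \cap N(H)$ really yields edges at $x$ dual to the two hyperplanes (this is exactly the content of the formula $N(J) = g\langle \mathrm{star}(u)\rangle$ extracted from Lemma~\ref{lem:DescriptionHyp}) and that the resulting $4$-cycle is non-degenerate so that Proposition~\ref{prop:RACGcc} actually provides a square. Both verifications are immediate from the normal form.
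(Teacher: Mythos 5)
Your argument is correct and is exactly the one-square construction the paper has in mind (the paper omits the proof, stating only that the lemma "follows easily from the definition of squares in $X(\Gamma)$"): the identification $N(gJ_u)=g\langle\mathrm{star}(u)\rangle$ is the same one the paper uses elsewhere (e.g.\ in the proof of Lemma~\ref{lem:halfspaces}), and the square on $x,xu,xv,xuv$ is the witness of transversality. The only quibble is attributional: the fact that a $4$-cycle is filled by a square comes from the definition of $X(\Gamma)$ rather than from Proposition~\ref{prop:RACGcc}.
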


\noindent
Our next lemma provides a description of halfspaces in $X(\Gamma)$.

\begin{lemma}\label{lem:halfspaces}
Let $g,h \in C(\Gamma)$ be two elements and $u \in V(\Gamma)$ a vertex. Assume that the tail of $g$ does not contain any vertex of $\mathrm{star}(u)$. The hyperplane $gJ_u$ separates $1$ from $h$ if and only if $gu$ is a prefix of $h$. 
\end{lemma}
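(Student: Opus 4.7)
My plan is to work directly with the normal form and the hyperplane description of Lemma \ref{lem:DescriptionHyp}. The first thing I would record is a consequence of the tail hypothesis used throughout: for a reduced expression $g_1 \cdots g_p$ of $g$, the word $g_1 \cdots g_p u$ is reduced, so $|gu| = |g| + 1$ and $\mathrm{tail}(gu) = \{u\}$. Indeed, any reduction or alternative tail would require some $g_i = u$ or a $g_i$ shuffling past $g_{i+1}, \ldots, g_p$ to land in $\mathrm{tail}(g) \cap \mathrm{star}(u)$, which is empty by assumption.

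For the easy direction, suppose $gu$ is a prefix of $h$. Then a reduced word $w_1 \cdots w_n$ for $h$ has its first $p+1$ letters forming a reduced word for $gu$; since $\mathrm{tail}(gu) = \{u\}$, necessarily $w_{p+1} = u$ and $w_1 \cdots w_p$ is a reduced word for $g$. By Lemma \ref{lem:geod}, the edge between the $p$-th and $(p+1)$-th vertices of the associated geodesic is $(g, gu)$, which is dual to $gJ_u$ by Lemma \ref{lem:DescriptionHyp}, so this hyperplane is crossed by a geodesic from $1$ to $h$ and hence separates $1$ from $h$.

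For the converse, I would first handle the case $g = 1$: the geodesic along a reduced word $w_1 \cdots w_n$ for $h$ crosses $J_u$ at an edge $(k, ku)$ with $k \in \langle \mathrm{star}(u) \rangle$, so $w_1, \ldots, w_{i-1} \in \mathrm{star}(u)$ and $w_i = u$; shuffling $w_i$ to the front via operation (O3) gives a reduced word for $h$ beginning with $u$. For general $g$, the main step is to show that $g$ and $1$ share a halfspace of $gJ_u$: otherwise $gJ_u$ would coincide with some $g_1 \cdots g_{i-1} J_{g_i}$ among the hyperplanes crossed by the geodesic $1, g_1, \ldots, g$, which would force $g_i = u$ and $g_{i+1} \cdots g_p \in \langle \mathrm{star}(u) \rangle$, hence $g_p \in \mathrm{star}(u) \cap \mathrm{tail}(g)$, a contradiction. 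Translating by $g^{-1}$, the hyperplane $J_u$ then separates $1$ from $g^{-1}h$, and the case $g = 1$ yields a reduced word $u v_1 \cdots v_m$ for $g^{-1}h$. Finally, I would verify that the concatenation $g_1 \cdots g_p u v_1 \cdots v_m$ is itself a reduced word for $h$ by ruling out each possible cancellation: any cancellation across the inserted $u$ would force some $g_i \in \mathrm{tail}(g) \cap \mathrm{star}(u)$, while a $v_j = u$ shuffling left to cancel the inserted $u$ would contradict the reducedness of $u v_1 \cdots v_m$. This reduced word has $g_1 \cdots g_p u$ as a prefix, exhibiting $gu$ as a prefix of $h$. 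The most delicate step is showing that $g$ lies on the $1$-side of $gJ_u$, as this is precisely where the tail hypothesis interacts non-trivially with the hyperplane structure; everything else is a systematic application of the normal form.
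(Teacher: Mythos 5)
Your proof is correct, and for the implication ``$gu$ is a prefix of $h$ $\Rightarrow$ $gJ_u$ separates $1$ from $h$'' it coincides with the paper's: both produce, via Lemma \ref{lem:geod}, a geodesic from $1$ to $h$ crossing the edge $(g,gu)$, and you are in fact slightly more careful in justifying that the prefix representing $gu$ may be taken to end with the letter $u$. Where you genuinely diverge is the converse. The paper argues geometrically: the tail hypothesis shows that every $k=gs\in N(gJ_u)=g\langle\mathrm{star}(u)\rangle$ satisfies $|k|=|g|+|s|$, so $g$ is the projection of $1$ onto $N(gJ_u)$ and $gu$ is the projection of $1$ onto the halfspace not containing $1$; the projection property of convex subcomplexes then immediately yields a geodesic from $1$ to $h$ through $gu$, hence the prefix. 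You instead stay entirely at the level of the normal form: you first place $g$ on the $1$-side of $gJ_u$ by identifying the hyperplanes crossed by the geodesic $1,g_1,g_1g_2,\ldots,g$ and showing $gJ_u$ is not among them, translate by $g^{-1}$ to reduce to the case $g=1$, and finally verify by a cancellation analysis that $g_1\cdots g_p\,u\,v_1\cdots v_m$ is reduced. Both routes are sound; yours is more elementary in that it avoids the projection machinery entirely, at the cost of the explicit case analysis in the final reducedness check, which is precisely the work that the convexity and projection lemmas do for the paper. Your closing remark is on target: the step ``$1$ and $g$ lie on the same side of $gJ_u$'' is the combinatorial counterpart of the paper's assertion that $g$ is the projection of $1$ onto the carrier, and in both proofs this is the only place where the tail hypothesis genuinely enters this direction.
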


\begin{proof}
Because $N(gJ_u)= g \langle \mathrm{star}(u) \rangle$, for every $k \in N(gJ_u)$ we can write $k=g s$ for some $s \in \langle \mathrm{star}(u) \rangle$. Notice that, because the tail of $g$ does not contain any vertex of $\mathrm{star}(u)$, the product $gs$ is reduced, hence 
$$d(1,k) = |gs| = |g|+ |s| > |g| = d(1,g).$$
Therefore, $g$ turns out to be the projection of $1$ onto $N(J)$, and we deduce that $gu$ (i.e., the neighbor of $g$ which is separated from it by $gJ_u$) is the projection of $1$ onto the halfspace delimited by $gJ_u$ which does not contain $1$. Consequently, if $gJ_u$ separates $1$ from $h$, then there exists a geodesic from $1$ to $h$ passing through $gu$. We conclude from Lemma \ref{lem:geod} that $gu$ is a prefix of $h$.

\medskip \noindent
Conversely, assume that $gu$ is a prefix of $h$. So $h$ can be written as a reduced word $g_1 \cdots g_n \cdot u \cdot h_1 \cdots h_m$ where $g_1 \cdots g_n$ represents $g$. We know from Lemma \ref{lem:geod} that
$$1, \ g_1, \ g_1g_2, \ldots, g_1 \cdots g_n=g, \ gu, \ guh_1, \ guh_1h_2, \ldots, guh_1 \cdots h_m=h$$
defines a geodesic from $1$ to $h$. But this geodesic passes through the edge $(g,gu)$ which is dual to $gJ_u$. Therefore, $gJ_u$ has to separate $1$ and $h$.
\end{proof}

\noindent
We conclude this section with a last preliminary lemma:

\begin{lemma}\label{lem:CrossingSquare}
Let $J, H$ be two distinct hyperplanes of $X(\Gamma)$. If $\Gamma$ has girth $\geq 5$ then the projection of $N(J)$ onto $N(H)$ is a single square, a single edge or a single vertex. 
\end{lemma}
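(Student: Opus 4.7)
I would analyse $L := \mathrm{proj}_{N(H)}(N(J))$ by determining exactly which hyperplanes of $X(\Gamma)$ cross it. By the standard property that the hyperplanes crossing a projection of convex subcomplexes in a CAT(0) cube complex are exactly those crossing both of the original subcomplexes, combined with Lemma~\ref{lem:DescriptionHyp} and Lemma~\ref{lem:transverseimpliesadj}, a hyperplane $K$ crosses a carrier $N(J)$ if and only if $K=J$ or $K$ is transverse to $J$. Let $u,v \in V(\Gamma)$ be the labels of $J$ and $H$.

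Suppose first that $J\perp H$, so $u,v$ are adjacent and both $J,H$ cross the two carriers. Any further hyperplane crossing $L$ would be transverse to both $J$ and $H$, so its label $w$ would be adjacent to both $u$ and $v$, producing a triangle $\{u,v,w\}$ and contradicting girth $\geq 5$. Hence $L$ is crossed by exactly two transverse hyperplanes, $J$ and $H$, and is a single square. From now on suppose $J\not\perp H$; then neither hyperplane crosses the other's carrier, and the set $\mathcal{K}$ of hyperplanes crossing $L$ consists entirely of hyperplanes transverse to both $J$ and $H$, with labels lying in $\mathrm{link}(u)\cap\mathrm{link}(v)$. Moreover any two members of $\mathcal{K}$ must be non-transverse: otherwise their labels $w_1,w_2$ would be adjacent and both adjacent to $u$, giving a triangle. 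Consequently $L$ is a $1$-dimensional convex subcomplex of $N(H)$, i.e., a tree.

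It remains to show $L$ has at most one edge, which I would do by proving that all members of $\mathcal{K}$ share a single label $w$. If $u\neq v$ and $u,v$ are adjacent, the existence of any common neighbour creates a triangle, so $\mathcal{K}=\varnothing$; if $u\neq v$ and $u,v$ are non-adjacent, two distinct common neighbours would close up to a $4$-cycle, so $|\mathrm{link}(u)\cap\mathrm{link}(v)|\leq 1$. In the remaining case $u=v$, since $H\neq J$ we can write $H=gJ$ with $g\notin\mathrm{stab}(J)=\langle\mathrm{star}(u)\rangle$, so any reduced word for $g$ must contain at least one letter $v'\notin\mathrm{star}(u)$. The hyperplane $K_{v'}$ labeled $v'$ crossed by the geodesic $1\to g$ does not cross $N(J)$ (its label is neither $u$ nor in $\mathrm{link}(u)$), and the same argument applied to $g^{-1}K_{v'}$ shows it does not cross $N(H)$ either; thus $K_{v'}$ separates the two carriers. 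Every element of $\mathcal{K}$ has dual edges in both $N(J)$ and $N(H)$, which sit in opposite halfspaces of $K_{v'}$, so every element of $\mathcal{K}$ is transverse to $K_{v'}$, and its label must therefore lie in $\mathrm{link}(u)\cap\mathrm{link}(v')$ --- again a set of cardinality at most one by girth $\geq 5$, since $v'\notin\mathrm{star}(u)$ means $u$ and $v'$ are distinct and non-adjacent.

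Finally, two distinct edges of the tree $L$ sharing the same label $w$ cannot share an endpoint (they would coincide as edges of $X(\Gamma)$), so such edges would be disjoint; connectedness of the tree $L$ therefore forces $|\mathcal{K}|\leq 1$, and $L$ is a single edge or a single vertex. The most delicate step is the case $u=v$: identifying the separating hyperplane $K_{v'}$ and verifying that it really does separate $N(J)$ from $N(H)$ (not just $1$ from $g$) is where the argument concentrates.
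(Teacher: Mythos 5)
Your proof is correct. It agrees with the paper's in the transverse case (where both identify the projection with $N(J)\cap N(H)$ and use two-dimensionality), but in the non-transverse case you take a genuinely different route. The paper argues by contradiction: if the projection were not a single cube, two \emph{non-transverse} hyperplanes $A,B$ would cross it, and since both are transverse to $J$ and $H$ one gets an induced $4$-cycle in the crossing graph; a cited result then converts this into a $4$-cycle in the link of a vertex, i.e.\ in $\Gamma$, contradicting girth $\geq 5$. You instead show directly that the hyperplanes crossing the projection are pairwise non-transverse and all carry the same label, so that the projection is a connected subcomplex all of whose edges have one label, hence has at most one edge. The price of avoiding the cited corollary is exactly the case you flag as delicate, namely when $J$ and $H$ have the same label: there the ``common neighbour'' argument gives nothing, and your construction of the separating hyperplane $K_{v'}$ (which every hyperplane of $\mathcal{K}$ must cross, forcing its label into $\mathrm{link}(u)\cap\mathrm{link}(v')$ with $u,v'$ non-adjacent) is doing by hand what the paper's citation does in general. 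In exchange your argument is self-contained modulo the standard description of which hyperplanes cross a gate-projection, and it yields the slightly sharper conclusion that in the non-transverse case the projection is at most an edge. Two cosmetic points: you should say explicitly that you translate so that $J=J_u$ before writing $\mathrm{stab}(J)=\langle\mathrm{star}(u)\rangle$, and when you assert that a hyperplane crossing $L$ is ``neither $J$ nor $H$'' in the non-transverse case it is worth noting that $J$ does not cross $N(H)$ and vice versa, which you do implicitly.
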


\begin{proof}
If $J$ and $H$ are transverse, then the projection of $N(J)$ onto $N(H)$ is $N(H) \cap N(J)$, which must be a square as $X(\Gamma)$ is two-dimensional. From now on, assume that $J$ and $H$ are not transverse. If the projection of $N(J)$ onto $N(H)$ is not a single cube, then there exist two non-transverse hyperplanes $A,B$ crossing the projection. Of course, $A$ and $B$ are transverse to $H$, but they are also transverse to $J$ as a consequence of \cite[Proposition 2.7]{Contracting}. It follows from \cite[Corollary 2.17]{coningoff} that $X(\Gamma)$ contains a vertex having a cycle of length four in its link, which contradicts the fact that $\Gamma$ has girth~$\geq 5$.
\end{proof}

\section{Reflection subgroups}\label{section:reflection}

\noindent
First of all, let us recall the definition of reflections in CAT(0) cube complexes: 

\begin{definition}
Let $X$ be a CAT(0) cube complex. A \emph{reflection} is an isometry $r$ of $X$ such that, for some hyperplane $J$, $r$ inverts each edge dual to $J$. A \emph{reflection group} is a subgroup of $\mathrm{Isom}(X)$ generated by finitely many reflections. 
\end{definition}

\noindent
For instance, given a simplicial graph $\Gamma$, the element $gug^{-1}$ of $C(\Gamma)$ defines a reflection of $X(\Gamma)$ along the hyperplane $gJ_u$ for every $u \in V(\Gamma)$ and $g \in C(\Gamma)$. It is worth noticing that, because $C(\Gamma)$ acts on $X(\Gamma)$ with trivial vertex-stabilisers, $gug^{-1}$ is the unique reflection of $C(\Gamma)$ along $gJ_u$. In other words, a hyperplane of $X(\Gamma)$ determines a unique reflection of $C(\Gamma)$.

\medskip \noindent
The goal of this first section is to understand reflection subgroups in right-angled Coxeter groups. More precisely, we begin by proving a general structure theorem in Subsection~\ref{section:pingpong}, and then we describe a few applications in the next three subsections.

\subsection{Playing ping-pong}\label{section:pingpong}

\noindent
This subsection is dedicated to the proof of a general structure theorem about groups acting on CAT(0) cube complexes with reflections. Before stating our theorem, we need to introduce the following definition:

\begin{definition}
Let $X$ be a CAT(0) cube complex and $x_0 \in X$ a basepoint. A collection of hyperplanes $\mathcal{J}$ is \emph{$x_0$-peripheral} if no hyperplane of $\mathcal{J}$ separates $x_0$ from another hyperplane of $\mathcal{J}$. 
\end{definition}

\noindent
When dealing with right-angled Coxeter groups and their canonical CAT(0) cube complexes, the basepoint will always be the vertex $1$, so that a $1$-peripheral collection of hyperplanes will be referred to as a \emph{peripheral} collection for short. 

\medskip \noindent
The main statement of this section is the following structure theorem:

\begin{thm}\label{thm:ReflectionGeneral}
Let $G$ be a group acting on a CAT(0) cube complex $X$ with trivial vertex-stabilisers. Assume that $\mathcal{J}$ is a $G$-invariant collection of hyperplanes such that $G$ contains a reflection $r_J$ along each hyperplane $J$ of $\mathcal{J}$. If $Y \subset X$ denotes the intersection of all the halfspaces which are delimited by hyperplanes of $\mathcal{J}$ and which contain a fixed basepoint $x_0 \in X$, then  
$$G = R \rtimes \mathrm{stab}(Y), \ \text{where} \ R = \langle r_J, \ J \in \mathcal{J} \rangle.$$ 
Moreover, $Y$ is a fundamental domain of $R \curvearrowright X$ and, if $\mathcal{J}_0$ denotes the maximal $x_0$-peripheral subcollection of $\mathcal{J}$ and if $\Delta$ denotes its crossing graph, then the map sending a vertex $J$ of $\Delta$ to the reflection $r_J$ of $R$ induces an isomorphism $C(\Delta) \to R$. 
\end{thm}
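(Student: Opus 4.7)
The plan is to apply a ping-pong argument — essentially the ping-pong lemma from \cite{Qm} — to the action $R \curvearrowright X$, establishing $Y$ as a strict fundamental domain; from this, both the semidirect product decomposition and the Coxeter presentation of $R$ will follow. I begin with two preliminary observations. First, $R$ is normal in $G$: since $G$ acts with trivial vertex-stabilisers, the reflection of $G$ along any given hyperplane is unique, and $G$-invariance of $\mathcal{J}$ therefore forces $g r_J g^{-1} = r_{gJ} \in R$ for every $g \in G$ and $J \in \mathcal{J}$. Second, $Y$ is convex as an intersection of halfspaces, and the same halfspace description yields the key identity $g' \in \mathrm{stab}(Y) \iff g' x_0 \in Y$ (valid for any $g' \in G$, using that $\mathcal{J}$ is $G$-invariant).

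The first substantive step is to show that $R \cdot Y = X$, by induction on the quantity $n(x) = \#\{J \in \mathcal{J} : J \text{ separates } x \text{ from } x_0\}$. If $n(x) > 0$, pick a hyperplane $J \in \mathcal{J}$ separating $x$ from $x_0$ that is innermost with respect to $x$ (no other hyperplane of $\mathcal{J}$ separating $x$ from $x_0$ lies between $J$ and $x$), and show by a case analysis on whether the other separating hyperplanes are transverse to $J$ or lie on the $x_0$-side of $J$, using Lemma \ref{lem:shortendist}, that $n(r_J x) < n(x)$. Iteration then sends $x$ into $Y$.

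The heart of the argument is the map $\phi : C(\Delta) \to R$ sending a generator $J \in \mathcal{J}_0$ to $r_J$. Well-definedness uses the involutivity of reflections and the fact that reflections along transverse hyperplanes commute. For surjectivity, I would prove by induction on $d(x_0, N(J))$ that $r_J \in \mathrm{im}(\phi)$ for every $J \in \mathcal{J}$: the base case $J \in \mathcal{J}_0$ is tautological, while if $J \notin \mathcal{J}_0$, choose $K \in \mathcal{J}$ separating $x_0$ from $J$, so that both $d(x_0, N(K)) < d(x_0, N(J))$ and $d(x_0, N(r_K J)) < d(x_0, N(J))$ (the latter by Lemma \ref{lem:shortendist}); the inductive hypothesis applied to $K$ and $r_K J$ then gives $r_J = r_K \cdot r_{r_K J} \cdot r_K \in \mathrm{im}(\phi)$. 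For injectivity, the ping-pong lemma from \cite{Qm} is invoked: given a reduced word $J_1 \cdots J_n$ in $C(\Delta)$, the hyperplane $r_{J_1} \cdots r_{J_{n-1}}(J_n) \in \mathcal{J}$ separates $x_0$ from $r_{J_1} \cdots r_{J_n}(x_0)$, certifying nontriviality after recalling that $G$ acts with trivial vertex-stabilisers.

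Strict fundamental domain is then immediate: any nontrivial $r \in R$ corresponds via $\phi$ to a nontrivial element of $C(\Delta)$, whose action on $x_0$ is separated from $x_0$ by a hyperplane of $\mathcal{J}$, so $rx_0 \notin Y$. The semidirect product decomposition follows next: for $g \in G$, choose $r \in R$ with $rg x_0 \in Y$ to obtain $g = r^{-1}(rg)$ with $rg \in \mathrm{stab}(Y)$; the triviality of $R \cap \mathrm{stab}(Y)$ (since any $r$ in this intersection satisfies $rx_0 \in Y$, forcing $r = 1$) combined with normality of $R$ yields $G = R \rtimes \mathrm{stab}(Y)$. The chief obstacle will be the ping-pong step, where one must verify that the hyperplane $r_{J_1} \cdots r_{J_{n-1}}(J_n)$ reliably separates $x_0$ from $r_{J_1} \cdots r_{J_n}(x_0)$; peripherality of $\mathcal{J}_0$ is essential here, ensuring the halfspace disjointness needed for a clean ping-pong partition.
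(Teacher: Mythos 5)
Your proposal is correct and follows essentially the same route as the paper: normality of $R$ via uniqueness of reflections, the ping-pong lemma (Proposition \ref{prop:pingpong}) applied to the halfspaces not containing $x_0$ — with $x_0$-peripherality of $\mathcal{J}_0$ supplying the required disjointness — to identify $R$ with $C(\Delta)$, a descent argument reducing generation to $\mathcal{J}_0$, and the fundamental-domain and semidirect-product conclusions exactly as in the text. The only differences are bookkeeping: you induct on the number of separating hyperplanes of $\mathcal{J}$ rather than on $d(x_0,\cdot)$ (which works, using $G$-invariance of $\mathcal{J}$ and the same projection computation as Lemma \ref{lem:shortendist}), and you certify nontriviality via the conjugated last letter $r_{J_1}\cdots r_{J_{n-1}}(J_n)$ rather than via the head of the word, which is the same statement applied to the inverse word.
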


\noindent
Theorem \ref{thm:ReflectionGeneral} can be thought of as a generalisation of \cite[Theorem G]{MR2413337} (whose proof is written for \emph{cube complexes with faces}) and is essentially contained in the proof of \cite[Theorem 10.54]{Qm} (written for \emph{quasi-median graphs}). However, as it is not an immediate consequence of one of these two theorems and that it is written in a different formalism, we provide a self-contained proof for the reader's convenience. The key tool to prove Theorem \ref{thm:ReflectionGeneral} is the following ping-pong lemma, which is extracted from the proof of \cite[Proposition 8.44]{Qm}. 

\begin{prop}\label{prop:pingpong}
Let $G$ be a group acting on a set $X$, $\Gamma$ a simplicial graph and $\mathcal{H}= \{r_v \mid v \in V(\Gamma) \}$ a collection of order-two elements of $G$. Assume that $\mathcal{H}$ generates $G$ and that $r_u$ and $r_v$ commute if $u$ and $v$ are two adjacent vertices of $\Gamma$. Next, assume that there exist a collection $\{ X_v \mid v \in V(\Gamma) \}$ of subsets of $X$ and a point $x_0 \in X \backslash \bigcup\limits_{v \in V(\Gamma)} X_v$ satisfying:
\begin{itemize}
	\item if $u,v \in V(\Gamma)$ are adjacent, then $r_u \cdot X_u \subset X_u$;
	\item if $u,v \in V(\Gamma)$ are not adjacent and distinct, then $r_v \cdot X_u \subset X_v$;
	\item for every $u \in V(\Gamma)$, $r_u \cdot x_0 \in X_u$.
\end{itemize}
Then the map $u \mapsto r_u$ induces an isomorphism $C(\Gamma) \to G$. 
\end{prop}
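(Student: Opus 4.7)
The plan is to set up the obvious candidate homomorphism and then verify injectivity by a normal-form argument. Since each $r_v$ has order two and $r_u r_v = r_v r_u$ whenever $\{u,v\} \in E(\Gamma)$, every defining relator of $C(\Gamma)$ is killed, so the assignment $u \mapsto r_u$ extends to a well-defined homomorphism $\phi \colon C(\Gamma) \to G$; it is surjective because $\mathcal{H}$ generates $G$. For injectivity, I will take a non-identity $g \in C(\Gamma)$, represent it by a reduced word $v_1 \cdots v_n$ with $n \geq 1$, and show that $\phi(g) \cdot x_0 = r_{v_1} \cdots r_{v_n} \cdot x_0 \neq x_0$. Since $x_0 \notin \bigcup_v X_v$, it is enough to exhibit some $u \in V(\Gamma)$ with $\phi(g) \cdot x_0 \in X_u$.

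The heart of the proof is the following inductive claim: for every reduced word $v_1 \cdots v_n$, there is some $u \in \mathrm{head}(v_1 \cdots v_n)$ (computed in $C(\Gamma)$) with $r_{v_1} \cdots r_{v_n} \cdot x_0 \in X_u$. The base case $n=1$ is exactly the third ping-pong hypothesis, since $v_1 \in \mathrm{head}(v_1)$. For the inductive step, note that $w' = v_2 \cdots v_n$ is also reduced, because any shortening of $w'$ would shorten $v_1 \cdots v_n$; hence the induction hypothesis gives $r_{v_2} \cdots r_{v_n} \cdot x_0 \in X_u$ for some $u \in \mathrm{head}(w')$.

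I then argue that $u \neq v_1$: otherwise $w'$ would admit a reduced expression $u \cdot w'' = v_1 \cdot w''$, and then $v_1 v_2 \cdots v_n = v_1 w' = v_1^2 w'' = w''$ would have length less than $n$, contradicting reducedness. With $u$ and $v_1$ distinct, the two ping-pong alternatives apply. If $u$ is adjacent to $v_1$, the first hypothesis gives $r_{v_1} \cdot X_u \subset X_u$, so the orbit point remains in $X_u$; moreover $u$ commutes with $v_1$, and from the reduced expression $w' = u w''$ one obtains $v_1 v_2 \cdots v_n = v_1 u w'' = u v_1 w''$, so $u$ still lies in the head of the whole word. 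If instead $u$ and $v_1$ are non-adjacent and distinct, the second hypothesis gives $r_{v_1} \cdot X_u \subset X_{v_1}$, and $v_1$ is trivially in the head of a word beginning with $v_1$. In both cases the inductive claim is preserved, and the induction closes.

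The main subtlety I expect is the head bookkeeping: one must track \emph{which} set $X_u$ contains the orbit point after each reflection is prepended, and the adjacent/non-adjacent dichotomy in the ping-pong hypotheses is designed precisely to mirror the shuffle move (O3) used to test reducedness of words in $C(\Gamma)$. Once this correspondence is in hand, the argument is purely combinatorial, and the conclusion $\phi(g) \cdot x_0 \in X_u \not\ni x_0$ follows with no further work.
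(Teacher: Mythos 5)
Your proposal is correct and follows essentially the same route as the paper's own proof: the same inductive claim that $r_{v_1}\cdots r_{v_n}\cdot x_0$ lands in $X_u$ for some $u$ in the head of the reduced word, the same observation that the first letter cannot lie in the head of the tail, and the same adjacent/non-adjacent case split tracking the head under the shuffle move. No substantive differences to report.
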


\begin{proof}
It follows from our assumptions on $\mathcal{H}$ that the map $r \mapsto r_u$ induces a surjective morphism $C(\Gamma) \twoheadrightarrow G$. In order to show that this morphism is also injective, we want to prove the following claim: for any non-empty reduced word $w$ of $C(\Gamma)$, thought of as an element of $G$, $w \cdot x_0$ belongs to $X_u$ where $u$ is a vertex of $\Gamma$ which belongs to the head of $w$. Notice that, since $x_0 \notin \bigcup\limits_{v \in V(\Gamma)} X_v$ by assumption, this implies that $w \cdot x_0 \neq x_0$, so that $w \neq 1$ in $G$. Proving this claim is sufficient to conclude the proof of our proposition. Also, an immediate consequence of the claim is the following fact, which we record for future use:

\begin{fact}\label{fact:pingpong}
For every non-trivial $g \in G$, we have $g \cdot x_0 \in \bigcup\limits_{u \in V(\Gamma)} X_u$.
\end{fact}

\noindent
So let us turn to the proof of our claim. We argue by induction on the length of $w$. If $w$ has length one, then $w =r_u$ for some $u \in V(\Gamma)$ and our third assumption implies that $w \cdot x_0 \in X_u$. Next, suppose that $w$ has length at least two. Write $w=r_uw'$ where $r_u$ is the first letter of $w$ and $w'$ the rest of the word. We know from our induction hypothesis that $w' \cdot x_0 \in X_v$ where $v$ is a vertex of $\Gamma$ which belongs to the head of $w'$. Notice that $u \neq v$ since otherwise the word $r_uw'$ would not be reduced. Two cases may happen: either $u$ and $v$ are not adjacent, so that our second assumption implies that $w \cdot x_0 \in r_u \cdot X_v \subset X_u$; or $u$ and $v$ are adjacent, so that our first assumption implies that $w \cdot x_0 \in r_u \cdot X_v \subset X_v$. It is worth noticing that, in the former case, $u$ clearly belongs to the head of $w$ since $r_u$ belongs to the head of $w$; in the latter case, we can write $w'$ as a reduced word $r_vw''$ and we have
$$w = r_uw'= r_ur_vw'' = r_vr_uw'',$$
so $r_v$ also belongs to the head of $w$, so that $v$ belongs to the head of $w$. This concludes the proof.
\end{proof}

\noindent
We are now ready to prove Theorem \ref{thm:ReflectionGeneral}.

\begin{proof}[Proof of Theorem \ref{thm:ReflectionGeneral}.]
We begin our proof by two elementary observations.

\begin{claim}
For every $J \in \mathcal{J}$, $r_J$ is an order-two element of $G$.
\end{claim}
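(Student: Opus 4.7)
The plan is to exploit the definition of a reflection together with the triviality of vertex-stabilisers. By hypothesis, $r_J$ inverts every edge dual to $J$. Fix such an edge $e=(x,y)$; then $r_J(x)=y$ and $r_J(y)=x$. Applying $r_J$ twice to $x$ therefore gives $r_J^{2}(x)=x$, so $r_J^{2}$ lies in the stabiliser of the vertex $x$.

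Since $G$ is assumed to act on $X$ with trivial vertex-stabilisers, this forces $r_J^{2}=1$ in $G$. It only remains to rule out $r_J=1$, but this is immediate: $r_J(x)=y\neq x$ because $x$ and $y$ are the two distinct endpoints of an edge of $X$. Hence $r_J$ is a non-trivial element of $G$ whose square is the identity, i.e.\ an element of order exactly two.

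There is no real obstacle here: the only thing one has to be slightly careful about is invoking the trivial-vertex-stabilisers hypothesis, which is precisely what rules out the \emph{a priori} possibility that $r_J$ has higher order while still inverting every edge dual to $J$ (a phenomenon that can occur for general actions but not for the actions considered in the theorem). The application of this fact shortly afterwards — for instance, to get uniqueness of the reflection $r_J$ along a given hyperplane — will rely on the same observation.
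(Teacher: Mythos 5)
Your proof is correct and follows essentially the same route as the paper: $r_J^2$ fixes the endpoints of an edge dual to $J$, so triviality of vertex-stabilisers forces $r_J^2=1$. Your additional remark that $r_J\neq 1$ (since it swaps the two endpoints of a dual edge) is a small but welcome completion that the paper leaves implicit.
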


\noindent
Because $r_J^2$ fixes pointwise each edge dual to $J$, it follows that $r_J^2=1$ from the fact that vertex-stabilisers are trivial.

\begin{claim}\label{claim:normal}
For every $g \in G$ and $J \in \mathcal{J}$, we have $gr_Jg^{-1}= r_{gJ}$.
\end{claim}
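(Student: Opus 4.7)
The plan is to show that $gr_Jg^{-1}$ is a reflection along $gJ$, and then invoke uniqueness of reflections along a given hyperplane (coming from the assumption that vertex-stabilisers are trivial) to identify it with $r_{gJ}$.

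First I would check that $r_{gJ}$ is even well-defined as an element of $G$. Since $\mathcal{J}$ is $G$-invariant, we have $gJ \in \mathcal{J}$, so by the hypothesis of the theorem $G$ contains some reflection along $gJ$, which we denote by $r_{gJ}$.

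Next I would verify that $gr_Jg^{-1}$ is itself a reflection along $gJ$. Let $e$ be an edge of $X$ dual to $gJ$. Then $g^{-1}e$ is an edge dual to $g^{-1}(gJ) = J$, since the hyperplane-structure is preserved by any cubical isometry. By definition of $r_J$, the edge $g^{-1}e$ is inverted by $r_J$ (its two endpoints are swapped). Applying $g$, we conclude that $(gr_Jg^{-1})(e) = e$ with the endpoints swapped. As this holds for every edge dual to $gJ$, the element $gr_Jg^{-1}$ is a reflection along $gJ$.

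The last step is uniqueness. Using the previous claim, $r_{gJ}^{-1} (gr_Jg^{-1})$ is a product of two reflections along $gJ$, hence fixes every endpoint of every edge dual to $gJ$: indeed, if $(x,y)$ is such an edge, both reflections exchange $x$ and $y$, so their product fixes $x$. In particular, $r_{gJ}^{-1}(gr_Jg^{-1})$ fixes some vertex of $X$, and since vertex-stabilisers are trivial we conclude $gr_Jg^{-1}= r_{gJ}$, as desired. There is no real obstacle here; the only point to be careful about is making sure that the uniqueness argument produces a fixed vertex rather than merely a stabilised set, but this is immediate since a reflection pointwise exchanges the endpoints of each dual edge.
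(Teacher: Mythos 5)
Your proof is correct and follows the same route as the paper: observe that $gr_Jg^{-1}$ and $r_{gJ}$ are both reflections along $gJ$, note that their product fixes a vertex (an endpoint of an edge dual to $gJ$), and conclude from the triviality of vertex-stabilisers. You simply spell out in more detail the well-definedness of $r_{gJ}$ (via $G$-invariance of $\mathcal{J}$) and the fact that conjugation carries a reflection along $J$ to one along $gJ$, both of which the paper leaves implicit.
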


\noindent
Notice that $gr_Jg^{-1}$ and $r_{gJ}$ are two reflections of $G$ along the same hyperplane, namely $gJ$. Consequently, $gr_Jg^{-1}r_{gJ}$ fixes pointwise each edge dual to $gJ$, and because vertex-stabilisers are trivial, we conclude that $r_{gJ}=gr_Jg^{-1}$. 

\medskip \noindent
Notice that, as a consequence of Claim \ref{claim:normal}, the subgroup $R$ is normal in $G$. 

\medskip \noindent
Let $g \in G$. Fix some $r \in R$ and suppose that $rg \cdot x_0 \notin Y$. Then there exists some $J \in \mathcal{J}$ which separates $rg \cdot x_0$ from $Y$. Let $a \in N(J)$ denote the projection of $rg \cdot x_0$ onto $N(J)$ and $e$ the edge dual to $J$ containing $a$. Notice that
$$\begin{array}{lcl} d(x_0,rg \cdot x_0) & = & d(rg \cdot x_0,a)+d(a,x_0) = d(r_Jrg \cdot x_0, r_J \cdot a) + d(x_0, r_J \cdot a)+1 \\ \\ & \geq & d(x_0, r_Jrg \cdot x_0)+1 \end{array}$$
Thus, if we choose some $r \in R$ such that
$$d(rg \cdot x_0,g \cdot x_0)= \min \{ d(sg \cdot x_0,g \cdot x_0) \mid s \in R \},$$ 
we deduce from the previous observation that $rg \cdot x_0 \in Y$. On the other hand, $G$ permutes the connected components of $X$ cut along the hyperplanes of $\mathcal{J}$, and $Y$ is precisely the connected component which contains $x_0$, so $rg \in \mathrm{stab}(Y)$. Therefore,
$$g \in r^{-1} \cdot \mathrm{stab}(Y) \subset R \cdot \mathrm{stab}(Y).$$
Thus, we have proved that $G= R \cdot \mathrm{stab}(Y)$. 

\medskip \noindent
Next, we want to apply Proposition \ref{prop:pingpong} in order to prove that $R$ is isomorphic to the right-angled Coxeter group $C(\Delta)$. 

\medskip \noindent
The first point to verify is that 

\begin{claim}
The equality $R= \langle r_J, \ J \in \mathcal{J}_0 \rangle$ holds. 
\end{claim}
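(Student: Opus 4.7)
The goal is to show $R = R_0$, where $R_0 := \langle r_J : J \in \mathcal{J}_0 \rangle$. One inclusion $R_0 \leq R$ is tautological, so the task reduces to proving that every generator $r_J$ with $J \in \mathcal{J}$ already lies in $R_0$. I plan to prove this by induction on the non-negative integer $d(x_0, N(J))$, with the engine of the induction being Lemma \ref{lem:shortendist}: reflecting $J$ across a suitable hyperplane shortens this distance, so repeatedly conjugating $r_J$ by elements of $R_0$ will eventually express it inside $R_0$.

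For the base case, when $x_0 \in N(J)$ no hyperplane of $\mathcal{J}$ can have $x_0$ and $N(J)$ in opposite halfspaces, so $J \in \mathcal{J}_0$ automatically and $r_J \in R_0$. More generally, whenever $J \in \mathcal{J}_0$ the conclusion is immediate. For the inductive step I treat $J \in \mathcal{J} \setminus \mathcal{J}_0$: among the hyperplanes of $\mathcal{J}$ that separate $x_0$ from $J$, I pick one, say $H$, that minimises $d(x_0, N(H))$. The crucial sub-step is to verify that this $H$ itself belongs to $\mathcal{J}_0$. Granted this, Lemma \ref{lem:shortendist} yields $d(x_0, N(r_H J)) < d(x_0, N(J))$; since $\mathcal{J}$ is $G$-invariant, $J' := r_H(J) \in \mathcal{J}$ and the inductive hypothesis gives $r_{J'} \in R_0$. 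Combining this with Claim \ref{claim:normal}, which provides $r_{J'} = r_{r_H J} = r_H r_J r_H^{-1} = r_H r_J r_H$, rearranges to $r_J = r_H r_{J'} r_H \in R_0$, closing the induction.

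The main obstacle, as I see it, is the sub-claim that the minimising $H$ is automatically peripheral. The intended argument is by contradiction: if some $K \in \mathcal{J}$ separated $x_0$ from $H$, then since $H$ itself separates $x_0$ from $J$, the three hyperplanes $K, H, J$ are pairwise non-transverse and occur in this order from $x_0$, which forces $K$ to separate $x_0$ from $J$ as well. Simultaneously, because $K$ strictly sits between $x_0$ and $N(H)$, the carrier $N(K)$ has vertices closer to $x_0$ than any vertex of $N(H)$, so $d(x_0, N(K)) < d(x_0, N(H))$, contradicting the choice of $H$. Writing out this nesting of halfspaces carefully is the only point that requires honest cube-complex bookkeeping; the rest is a straightforward application of the two preceding claims.
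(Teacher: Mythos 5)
Your argument is correct and follows essentially the same route as the paper: both are descent arguments in which one finds a hyperplane of $\mathcal{J}_0$ separating $x_0$ from a non-peripheral $J$ (you take the separator closest to $x_0$, the paper iterates the separator-finding until it lands in $\mathcal{J}_0$), reflects to strictly decrease $d(x_0,N(\cdot))$ as in Lemma \ref{lem:shortendist}, and concludes via the conjugation formula of Claim \ref{claim:normal}. The only difference is bookkeeping — you induct on $d(x_0,N(J))$ over $\mathcal{J}$, while the paper minimises $d(x_0,N(rJ_1))$ over the $\langle r_J,\ J\in\mathcal{J}_0\rangle$-orbit — and both are valid.
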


\noindent
Let $J_1 \in \mathcal{J}$. We want to prove that there exists some $r \in \langle r_J, \ J \in \mathcal{J}_0 \rangle$ such that $rJ_1 \in \mathcal{J}_0$, which is sufficient to deduce the previous equality. 

\medskip \noindent
Fix some $r \in \langle r_J, \ J \in \mathcal{J}_0 \rangle$, and suppose that $rJ_1 \notin \mathcal{J}_0$. Let $y_0$ denote the projection of $x_0$ onto $N(rJ_1)$. If no hyperplane of $\mathcal{J}$ separates $x_0$ and $y_0$, then $\mathcal{J}_0 \cup \{ rJ \}$ defines a new $x_0$-peripheral subcollection of $\mathcal{J}$, contradicting the maximality of $\mathcal{J}_0$. Therefore, there exists some hyperplane $J_2 \in \mathcal{J}$ separating $x_0$ and $y_0$. As a consequence of Lemma~\ref{lem:ProjSep}, $J_2$ also separates $rJ_1$ and $x_0$. Notice that, if $J_2 \notin \mathcal{J}_0$, then similarly there exists a third hyperplane separating $J_2$ and $x_0$, and so on. Since there exist only finitely many hyperplanes separating $rJ_1$ and $x_0$, we can suppose without loss of generality that $J_2 \in \mathcal{J}_0$. For convenience, set $s = r_{J_2}$. Notice that
$$\begin{array}{lcl} d(x_0,N(rJ_1)) & \geq & d(N(rJ_1),N(J_2)) + d(x_0,N(J_2)) +1 \\ \\ & \geq & d(N(srJ_1),N(J_2)) + d(x_0,N(J_2))+1 \\ \\ & \geq & d(x_0,N(srJ_1)) +1 \end{array}$$
Therefore, if we choose $r$ so that
$$d(x_0,N(rJ_1))= \min \left\{ d(x_0,N(sJ_1)) \mid s \in \langle  r_J, \ J \in \mathcal{J}_0 \rangle \right\},$$
then $rJ_1 \in \mathcal{J}_0$. This concludes the proof of our claim.

\medskip \noindent
Next, notice that 

\begin{claim}\label{Claim:TransverseCommute}
If $J,H \in \mathcal{J}$ are two transverse hyperplanes, then $r_J$ and $r_H$ commute.
\end{claim}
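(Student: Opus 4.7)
The plan is to reduce the commutativity to the single geometric fact that $r_J\cdot H = H$ whenever $J$ and $H$ are transverse. Once this is established, Claim \ref{claim:normal} gives
$$r_J r_H r_J^{-1} = r_{r_J H} = r_H,$$
and since $r_J = r_J^{-1}$ this rearranges to $r_J r_H = r_H r_J$.

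The only substantive step is therefore to verify $r_JH = H$, and I would do this using a dual square to the transverse pair $(J,H)$. By the very definition of transverse hyperplanes, the geometric realisations of $J$ and $H$ meet in a midcube of a common square $C$; the four edges of $C$ consist of two opposite edges dual to $J$ and two opposite edges dual to $H$. Because $r_J$ is a combinatorial isometry that inverts each $J$-edge, it must send $C$ to a square sharing its two $J$-edges with $C$, hence to $C$ itself, and this action swaps the two $H$-edges of $C$. Consequently $r_J$ sends an edge dual to $H$ to another edge dual to $H$, so the hyperplane $r_J H$ shares an edge with $H$ and must coincide with $H$.

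I do not foresee any real obstacle: the key geometric input is the standard picture of two transverse hyperplanes together with the elementary observation that $r_J$ preserves the carrier $N(J)$ and every square having two opposite edges in $J$, and everything else is bookkeeping with Claim \ref{claim:normal} and the fact that $r_J, r_H$ are involutions.
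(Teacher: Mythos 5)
Your argument is correct, but it is organised differently from the paper's. The paper's proof is a direct two-line computation: fixing a square $C$ crossed by both $J$ and $H$, it observes that $r_Jr_H$ and $r_Hr_J$ both send a corner of $C$ to the opposite corner, so the commutator $[r_J,r_H]$ fixes a vertex and is therefore trivial by the trivial-vertex-stabiliser hypothesis. You instead factor the argument through two intermediate facts: first the invariance $r_J\cdot H=H$ (obtained from the same square, using that $r_J$ swaps the two $H$-edges of $C$ and that a hyperplane is an equivalence class of edges), and then the conjugation formula $gr_Hg^{-1}=r_{gH}$ of Claim \ref{claim:normal}, which itself rests on the uniqueness of a reflection along a given hyperplane, again via trivial vertex-stabilisers. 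Both routes consume exactly the same hypotheses (the common square and trivial stabilisers), so neither is more general; the paper's is shorter, while yours isolates the reusable geometric fact that a reflection along $J$ stabilises every hyperplane transverse to $J$ --- which is essentially the content of Claim \ref{claim:HalfStab}, proved separately later in the same argument of the paper. One small point to make explicit: your step ``$r_J$ sends $C$ to a square sharing its two $J$-edges with $C$, hence to $C$ itself'' uses that in a CAT(0) cube complex two squares sharing a pair of opposite edges coincide (a consequence of links being simplicial); this is standard and is implicitly used by the paper as well, but it is worth naming. Alternatively you could bypass $r_JC=C$ entirely: $r_J$ sends the vertex pair of one $H$-edge of $C$ to that of the opposite $H$-edge, so it sends that edge to an edge of $H$ directly.
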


\noindent
Fix a square crossed by both $J$ and $H$, and let $a,b$ denote two of its opposite vertices. Then $r_Jr_H \cdot a = c = r_Hr_J \cdot a$, so $[r_J,r_H]$ fixes the vertex $a$. Because vertex-stabilisers are trivial, it follows that $r_J$ and $r_H$ commute, concluding the proof of the claim.

\medskip \noindent
Now, for every hyperplane $J \in \mathcal{J}_0$, let $X_J$ denote the halfspace delimited by $J$ which does not contain $x_0$. We want to prove that our collection of sets satisfies the three conditions of Proposition~\ref{prop:pingpong}. 

\medskip \noindent
The third condition is clear, and the second condition is a consequence of the fact that $\mathcal{J}_0$ is $x_0$-peripheral. So we only need to verify the first condition.

\begin{claim}\label{claim:HalfStab}
If $J,H \in \mathcal{J}_0$ are transverse, then $r_J \cdot X_H= X_H$.
\end{claim}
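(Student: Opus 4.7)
The plan is to establish the claim in two stages: first that $r_J$ stabilises $H$ setwise, and second that it preserves each of the two halfspaces delimited by $H$. Since $X_H$ is one of these halfspaces, the claim will follow immediately.

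For the first stage, I would appeal to Claims \ref{claim:normal} and \ref{Claim:TransverseCommute}. Since $r_J$ and $r_H$ commute, conjugation gives
$$r_H \; = \; r_J r_H r_J^{-1} \; = \; r_{r_J H}.$$
Because $G$ acts with trivial vertex-stabilisers, the reflection along a given hyperplane is unique (exactly as was observed at the start of Section \ref{section:reflection} for the action of $C(\Gamma)$ on $X(\Gamma)$: the composition of two reflections along the same hyperplane fixes every vertex of its carrier). Hence $r_J H = H$.

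For the second stage, I would exploit a square $S$ crossed by both $J$ and $H$. Label its vertices $a, b, c, d$ in cyclic order so that the edges $(a,b)$ and $(d,c)$ are dual to $J$ while $(a,d)$ and $(b,c)$ are dual to $H$. By definition, $r_J$ inverts the edge $(a,b)$, so $r_J(a) = b$. The vertices $a$ and $b$ lie in the same halfspace of $H$, because the edge joining them is not dual to $H$. Now $r_J$ is an isometry stabilising $H$ (by the first stage), so it either preserves each halfspace of $H$ or swaps them; the existence of a single vertex $a$ whose image lies in the same halfspace as $a$ rules out the swap. Therefore $r_J$ preserves each of the two halfspaces of $H$, one of which is $X_H$, giving $r_J \cdot X_H = X_H$.

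I do not anticipate any serious obstacle here: the argument reduces to the uniqueness of reflections coming from the trivial vertex-stabiliser hypothesis, together with a direct inspection of how $r_J$ acts on the vertices of a transverse square. The $x_0$-peripherality of $\mathcal{J}_0$ plays no role at this step; it will intervene only when the second hypothesis of Proposition~\ref{prop:pingpong} is verified, to guarantee that a reflection $r_J$ with $J \in \mathcal{J}_0$ sends $x_0$ into $X_J$ and not into some other $X_H$.
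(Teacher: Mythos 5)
Your argument is correct, but it takes a genuinely different route from the paper's. The paper also fixes a square $C$ crossed by $J$ and $H$, but then characterises $X_H$ as the set of vertices whose gate projection onto $C$ lands on the $J$-dual edge of $C$ contained in $X_H$ (using Lemma \ref{lem:ProjSep}: since $H$ crosses $C$, a vertex lies in $X_H$ if and only if its projection onto $C$ does), and concludes by equivariance of the projection under $r_J$, which stabilises $C$ and permutes the two endpoints of that edge. You replace the projection argument with the more elementary observation that an isometry stabilising $H$ setwise either preserves or swaps its two halfspaces, and that a single vertex $a$ whose image $r_J(a)=b$ stays on the same side of $H$ rules out the swap. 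This is arguably cleaner: it uses nothing beyond the combinatorics of the square, whereas the paper's version needs the gate-projection machinery. Your closing remark about where the $x_0$-peripherality actually enters is also accurate.

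One small point in your first stage deserves attention. From $r_{r_JH}=r_H$ you deduce $r_JH=H$ by citing the uniqueness of the reflection along a given hyperplane, but that statement goes in the wrong direction: it says the map $A\mapsto r_A$ is well defined, whereas you need that a single group element cannot be a reflection along two distinct hyperplanes. The fact you need is true and easy to supply: a reflection along a hyperplane $A$ swaps the two halfspaces of $A$, hence $A$ separates every vertex from its image; applying this to an endpoint $x$ of an edge dual to $H$ (so that $r_H(x)$ is the opposite endpoint), both $H$ and $r_JH$ would separate the adjacent vertices $x$ and $r_H(x)$, forcing $r_JH=H$. Alternatively, you can bypass the commutation argument altogether: $r_J$ maps the square $S$ to itself and sends each of its two $H$-dual edges to the other one, so it sends an edge of the equivalence class $H$ to an edge of $H$, giving $r_JH=H$ directly. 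With either repair the proof is complete.
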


\noindent
Fix a square $C$ crossed by both $J$ and $H$, and let $a,b,c,d$ denote its vertices so that $a,c$ are not adjacent but are both adjacent to $b$ and $c$. Assume that the edge $[a,d]$ is dual to $J$ and is contained in $X_H$. By noticing that $X_H$ contains exactly the vertices of $X$ whose projections onto $C$ are either $a$ or $d$, it follows that
$$r_J \cdot X_H= \{ r_J \cdot x \in X \mid \mathrm{proj}_C(x) \in \{a,d\} \}= \{ x \in X \mid \mathrm{proj}_C(r_J \cdot x) \in \{a,d\} \}.$$
But, because $r_J C= C$, we have $\mathrm{proj}_C(r_J \cdot x) = \mathrm{proj}_{r_JC}(r_J \cdot x)= \mathrm{proj}_C(x)$ for every $x \in X$, hence $r_J \cdot X_H=X_H$, concluding the proof of our claim.

\medskip \noindent
Thus, we have proved that Proposition \ref{prop:pingpong} applies, showing that the map $J \mapsto r_J$ induces an isomorphism $C(\Delta) \to R$.

\medskip \noindent
Because the hypotheses of Proposition \ref{prop:pingpong} hold, we also know from Fact \ref{fact:pingpong} that, if $g \in R$ is non-trivial, then $g \cdot x_0 \in X_{J}$ for some $J \in \mathcal{J}$. But $g$ permutes the connected components of $X$ cut along the hyperplanes of $\mathcal{J}$, $Y$ is precisely the connected component which contains $x_0$, and $X_J$ is a union of connected components disjoint from $Y$, so $g \cdot Y \cap Y= \emptyset$. As a consequence, $g$ does not stabilise the connected component $Y$, i.e., $g \notin \mathrm{stab}(Y)$. Thus, we have proved that $R \cap \mathrm{stab}(Y)= \{ 1 \}$. This concludes the proof of the decomposition $G = R \rtimes \mathrm{stab}(Y)$.

\medskip \noindent
Finally, it remains to show that $Y$ is a fundamental domain for the action $R \curvearrowright X$. We saw in the previous paragraph that $rY \cap Y= \emptyset$ for every non-trivial $r \in R$, so no two vertices of $Y$ belong to the same $R$-orbit. Now, let $x \in X$ be an arbitrary vertex. Fix a geodesic $[x_0,x]$ from $x_0$ to $x$, and let $Z_0=Y, Z_1, \ldots, Z_k$ denote the sequence of connected components of $X$ cut along $\mathcal{J}$ which are successively crossed by $[x_0,x]$. Notice that, for every $0 \leq i \leq k-1$, $Z_i$ and $Z_{i+1}$ are separated by a unique hyperplane of $\mathcal{J}$, so that there exists some reflection $r_i \in R$ such that $r_i Z_{i+1} = Z_i$. It follows that $r_0 \cdots r_{k-1} \cdot Z_k =Z_0$. In other words, the element $r:= r_0 \cdots r_{k-1}$ of $R$ sends the connected component $Z_k$ containing $x$ to the connected component $Z_0=Y$. Thus, we have proved that any vertex of $X$ has an $R$-translate which belongs to $Y$. 
\end{proof}

\subsection{Quasiconvexity of reflection subgroups}\label{section:QC}

\noindent
In this subsection, we apply Theorem \ref{thm:ReflectionGeneral} in order to prove that reflection subgroups are ``nicely embedded''. More precisely, given a finite simplicial graph $\Gamma$, a subgroup $H \leq C(\Gamma)$ is \emph{word-quasiconvex} if there exists some $K \geq 0$ such that any geodesic in the Cayley graph of $C(\Gamma)$, constructed from its canonical generating set, between two points of $H$ lies in $K$-neighborhood of $H$. Notice that, as a consequence of \cite[Theorem~H]{MR2413337}, a subgroup of $C(\Gamma)$ is word-quasiconvex if and only if it is \emph{convex-cocompact}, i.e., it acts cocompactly on a convex subcomplex of $X(\Gamma)$. Thanks to Theorem \ref{thm:ReflectionGeneral}, we are able to give a short proof of \cite[Theorem B]{RACGStallings}, which states that reflection subgroups are word-quasiconvex:

\begin{thm}\label{thm:ReflectionGroups}\emph{\cite{RACGStallings}}
Let $\Gamma$ be a finite simplicial graph. A reflection subgroup of $C(\Gamma)$ is convex-cocompact and is a virtual retract.
\end{thm}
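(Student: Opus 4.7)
The plan is to apply Theorem \ref{thm:ReflectionGeneral} to exhibit $H$ as acting cocompactly on an $H$-invariant convex subcomplex of $X(\Gamma)$, and then to conclude the virtual retract property from \cite[Theorem A]{MR2413337}.

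First I would let $\mathcal{J}$ denote the collection of hyperplanes $J$ of $X(\Gamma)$ with $r_J \in H$. By the conjugation formula $hr_Jh^{-1}=r_{hJ}$ established inside the proof of Theorem \ref{thm:ReflectionGeneral}, the collection $\mathcal{J}$ is $H$-invariant. Theorem \ref{thm:ReflectionGeneral} applied with $G=H$, $X=X(\Gamma)$ and basepoint $1$ then yields an isomorphism $H \cong C(\Delta)$, where $\Delta$ is the crossing graph of the peripheral subcollection $\mathcal{J}_0 \subseteq \mathcal{J}$, together with a fundamental domain $Y := \bigcap_{J \in \mathcal{J}_0} Y_J$ for $H \curvearrowright X(\Gamma)$, where $Y_J$ denotes the halfspace of $J$ containing $1$. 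Since $H$ is finitely generated, $\Delta$ and $\mathcal{J}_0$ are finite.

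Next I would take $C \subseteq X(\Gamma)$ to be the combinatorial convex hull of the orbit $H \cdot 1$. It is $H$-invariant and convex by construction, and since $Y$ is a fundamental domain for $H \curvearrowright X(\Gamma)$, cocompactness of $H \curvearrowright C$ amounts to showing that $Y \cap C$ is a finite subcomplex. My target would be to identify $Y \cap C$ with the cubical convex hull $\mathrm{Hull}(F)$ of the finite set $F := \{1\} \cup \{p_J : J \in \mathcal{J}_0\}$, where $p_J := \mathrm{proj}_{N(J)}(1)$. Each $p_J$ lies in $Y$ by combining Lemma \ref{lem:ProjSep} with peripherality: any hyperplane separating $1$ from $p_J$ would separate $1$ from $N(J) \supseteq J$, forcing a hyperplane of $\mathcal{J}_0$ to separate $1$ from another hyperplane of $\mathcal{J}_0$. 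The inclusion $\mathrm{Hull}(F) \subseteq Y \cap C$ is then immediate, as each $p_J$ lies on the geodesic from $1$ to $r_J \cdot 1 \in H \cdot 1$ of length $2d(1,N(J))+1$, and hence in $C$. The main obstacle is the reverse inclusion: supposing a hyperplane $K$ separates some $v \in Y \cap C$ from $F$, one first observes $K \notin \mathcal{J}$ (as $K$ crosses $Y$, while no hyperplane of $\mathcal{J}$ does), yet the hypothesis $v \in C$ yields some $h \in H$ on the $v$-side of $K$. Writing $h = r_{J_1} \cdots r_{J_n}$ and following the associated path $1 = w_0, \ldots, w_n = h$ in $X(\Gamma)$, at least one step $w_{i-1} \to w_i$ must cross $K$. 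Decomposing this step along the geodesic $w_{i-1} \to \mathrm{proj}_{N(w_{i-1}J_i)}(w_{i-1}) \to r_{w_{i-1}J_i}\mathrm{proj}_{N(w_{i-1}J_i)}(w_{i-1}) \to w_i$ and performing a case analysis of which of its constituent hyperplanes $K$ coincides with, using Lemma \ref{lem:shortendist} and the peripherality of $\mathcal{J}_0$, should produce a hyperplane of $\mathcal{J}_0$ separating $1$ from another hyperplane of $\mathcal{J}_0$, a contradiction.

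Once convex-cocompactness of $H$ is established, the virtual retract conclusion follows from \cite[Theorem A]{MR2413337}, which states that any convex-cocompact subgroup of a right-angled Coxeter group is a virtual retract.
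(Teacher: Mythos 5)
Your overall architecture (reduce to a peripheral collection via Theorem \ref{thm:ReflectionGeneral}, exhibit a convex cocompact core, then deduce the virtual retract property) is reasonable, but the step you yourself flag as ``the main obstacle'' is a genuine gap, not a proof. The inclusion $Y \cap C \subseteq \mathrm{Hull}(F)$ is exactly the hard content of convex-cocompactness here, and your argument for it ends with ``performing a case analysis \ldots should produce a hyperplane of $\mathcal{J}_0$ separating $1$ from another hyperplane of $\mathcal{J}_0$''. As written this does not go through: the concatenated path $1=w_0,\ldots,w_n=h$ is not a geodesic, so the hyperplane $K$ may be crossed by several of its steps (an odd number of times), and the constituent hyperplanes of the step from $w_{i-1}$ to $w_i$ are translates $w_{i-1}\,\mathcal{W}(1,N(J_i))$, $w_{i-1}J_i$ and $w_{i-1}r_{J_i}\mathcal{W}(1,N(J_i))$ for a group element $w_{i-1}$ that is itself a long product of reflections. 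Controlling how such a translated hyperplane can simultaneously cross $Y$ and miss every point of $F$ requires an induction on the word length of $h$ (tracking which translates of $Y$ the hyperplane can enter, using peripherality and something like Lemma \ref{lem:shortendist} at each stage); none of that is carried out, and it is not a routine verification. Until that inclusion is actually proved, you have not shown that $Y\cap C$ is finite, hence not that $H$ acts cocompactly on $C$.

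It is worth knowing that the paper sidesteps the orbit-hull computation entirely: it takes a finite convex subcomplex $W$ (an $\ell^\infty$-ball) crossed by every hyperplane of the peripheral collection and shows that $\bigcup_{h\in H}h\cdot W$ is convex by iterating Lemma \ref{lem:UnionConvex}, which handles one reflection at a time and is where the real geometric work is localized. Likewise, for the virtual retract property the paper does not invoke \cite{MR2413337}: it enlarges the peripheral collection by the sphere of hyperplanes at distance $R$ crossing $Y$, obtains a finite-index reflection overgroup $H^+$ with finite fundamental domain via Theorem \ref{thm:ReflectionGeneral}, and reads off the retraction $H^+\to H$ from the right-angled Coxeter presentation of $H^+$. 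Your appeal to \cite{MR2413337} is logically admissible as an external citation, but if you want a self-contained argument in the spirit of this paper, the $H^+$ construction is the route to take, and in any case it does not repair the missing cocompactness argument.
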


\noindent
Recall that, given a group $G$, a subgroup $H \leq G$ is a \emph{virtual retract} if there exists a finite-index subgroup $K \leq G$ containing $H$ and a morphism $r : K \to H$ (a \emph{retraction}) such that $r(h)=h$ for every $h \in H$. 

\medskip \noindent
We begin by proving a preliminary lemma:

\begin{lemma}\label{lem:UnionConvex}
Let $X$ be a CAT(0) cube complex, $Y \subset X$ a convex subcomplex, $J$ a hyperplane and $r \in \mathrm{Isom}(X)$ a reflection along $J$. If $J$ crosses $Y$ then $Y \cup rY$ is convex. 
\end{lemma}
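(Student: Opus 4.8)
The plan is to prove convexity in the combinatorial sense, namely that the vertex set of $Y \cup rY$ is geodesically convex. Since $r$ is an isometry and $Y$ is convex, $rY$ is convex as well, so a geodesic between two vertices both lying in $Y$ (resp.\ both in $rY$) stays in $Y$ (resp.\ $rY$). Hence it suffices to fix $a \in Y$ and $b \in rY$ and to show that the interval $[a,b]$ (the set of vertices lying on a geodesic from $a$ to $b$) is contained in $Y \cup rY$. First I would record three basic facts about $C := Y \cap rY$: it is convex (as an intersection of convex subcomplexes, $rY$ being convex), it is $r$-invariant (since $r(Y \cap rY) = rY \cap Y$), and it is non-empty. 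The last point is where the hypothesis that $J$ crosses $Y$ enters: $Y$ contains an edge $(x, rx)$ dual to $J$, and since both endpoints lie in $Y$ we get $x, rx \in Y \cap rY = C$.

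The engine of the argument is the following dichotomy. Let $p \in [a,b]$ and suppose, for contradiction, that $p \notin Y \cup rY$. Recall that $p \in [a,b]$ is equivalent to the statement that no hyperplane separates $p$ from both $a$ and $b$. Since $p \notin Y$, some hyperplane $K$ separates $p$ from $Y$ (Lemma~\ref{lem:ProjSep}), hence from $a$; by the interval criterion $K$ does not separate $p$ from $b$. I claim $K$ cannot be transverse to $J$: a reflection along $J$ fixes every hyperplane transverse to $J$ together with its two halfspaces, so if $K$ were transverse to $J$ then $K$ would cross $Y$ if and only if it crosses $rY$; but $K$ separates $p$ from $Y$ (so does not cross $Y$), while $b$ lying on the $p$-side of $K$ and $C \subseteq Y$ lying on the opposite side force $K$ to cross $rY$, a contradiction. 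Therefore $K$ is non-transverse to $J$, so it lies entirely in one halfspace of $J$ and separates $p$ from $J$; in particular $p$ and $b$ lie on the same side of $J$ as $K$. Running the symmetric argument with a hyperplane $K'$ separating $p$ from $rY$ shows that $a$, $b$ and $p$ all lie on one common side $J^{\eta}$ of $J$.

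The main obstacle is precisely this same-side configuration, and this is where I would invest the real work. Here I would project onto $C$: by the Helly-type Lemma~\ref{lem:DistInter} applied to the pair $Y, rY$ (which intersect), the hyperplanes separating $p$ from $C$ are exactly those of $\mathcal{W}(p,Y)$ together with those of $\mathcal{W}(p,rY)$, and by the dichotomy above all of them lie on the side $J^{\eta}$ and separate $p$ from $J$. I would then take such a hyperplane innermost toward $p$ and use the reflection to shorten distances in the spirit of Lemma~\ref{lem:shortendist}: reflecting the offending part of the configuration across $J$ produces a vertex strictly closer to $C$ while remaining in $[a,b]$, which by descent contradicts $p \notin Y \cup rY$. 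Establishing this descent cleanly --- keeping the moved point inside the interval $[a,b]$ while controlling the two separating families $\mathcal{W}(p,Y)$ and $\mathcal{W}(p,rY)$ simultaneously --- is the delicate point; once it is in place, every vertex of $[a,b]$ lies in $Y \cup rY$, proving that $Y \cup rY$ is convex.
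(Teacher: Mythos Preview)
The opening is solid: you correctly show that $C=Y\cap rY$ is nonempty and that every hyperplane $K$ separating $p$ from $Y$ (respectively from $rY$) is non-transverse to $J$ and in fact separates $p$ from $N(J)$; the same-side conclusion $a,b,p\in J^\eta$ follows. The gap is the descent. Once $a$, $b$ and $p$ all lie in $J^\eta$, the only reflection at your disposal is $r$, and it sends $J^\eta$ into $J^{-\eta}$; consequently $J$ separates any reflected vertex from both $a$ and $b$, so no reflection across $J$ can produce a vertex that remains in $[a,b]$. Reflecting along an ``innermost'' $K\in\mathcal W(p,C)$ instead is not available either, since there is no isometry of $X$ realising such a reflection. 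The shortening move you describe therefore does not exist, and you yourself flag it as ``the delicate point'' without supplying the mechanism.

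The paper sidesteps the descent entirely. It first records a small claim: if $x\in Y\cap N(J)$ then its $J$-neighbour also lies in $Y$ (this uses that $J$ crosses $Y$ and convexity of $Y$). It then projects $a$ and $b$ onto $N(J)$, obtaining $a',b'$; by Lemma~\ref{lem:InterProj} one has $a'\in rY$ and $b'\in Y$, and the claim upgrades $a'\in rY\cap N(J)$ to $a'\in Y$ (since $ra'\in Y\cap N(J)$). Thus $a',b',b$ all lie in $Y$, so the hyperplane $H$ separating $p$ from $Y$ (your $K$) cannot separate any pair among them; a short case check rules out $H$ separating $a$ from $a'$, whence $H$ does not separate $a$ from $b$ --- contradicting $p\in[a,b]$ directly. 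The missing ingredient in your approach is precisely this projection onto $N(J)$ together with the claim about $Y\cap N(J)$: these replace the unavailable reflections and deliver the contradiction in one stroke rather than by an iterative shortening.
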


\begin{proof}
First of all, let us notice that 

\begin{claim}\label{claim:Neighbor}
For every $x \in Y \cap N(J)$, the neighbor $y$ which is separated from $x$ by $J$ has to belong to $Y$. 
\end{claim}

\noindent
Indeed, as $J$ crosses $Y$, there exists a vertex $z \in Y$ such that $J$ separates $z$ and $x$. Then the concatenation of a geodesic $[z,y]$ from $z$ to $y$ with the edge between $y$ and $x$ does not cross twice a hyperplane (because $J$ does not cross $[z,y]$ by convexity of halfspaces), and so defines a geodesic. The convexity of $Y$ implies that $y$ must belong to $Y$, concluding the proof of our claim.

\medskip \noindent
Now let $a \in rY$ and $b \in Y$ be two vertices, and $[a,b]$ a geodesic between $a$ and $b$. Suppose for contradiction that there exists a vertex $c \in [a,b]$ which does not belong to $rY \cup Y$. Up to replacing $Y$ with $rY$, assume that $c \notin Y$. As a consequence of Lemma~\ref{lem:ProjSep}, there exists a hyperplane $H$ separating $c$ from $Y$. 

\medskip \noindent
Let $a'$ and $b'$ denote respectively the projections of $a$ and $b$ onto $N(J)$. As a consequence of Lemma \ref{lem:InterProj}, $a' \in rY$ and $b' \in Y$. Because $ra' \in Y$, it follows from Claim \ref{claim:Neighbor} that $a' \in Y$. Therefore, $H$ cannot separate $a'$ and $b'$ since it does not cross $Y$. For the same reason, $H$ cannot separate $b'$ and $b$ as a consequence of Lemma \ref{lem:ProjSep}. Finally, if $H$ separates $a$ and $a'$, then it separates $\{a,c\}$ and $\{b,b'\}$, and so it must be transverse to $J$, contradicting Lemma \ref{lem:ProjSep}. We conclude that $H$ does not separate $a$ and $b$.

\medskip \noindent
This contradicts the fact that $H$ separates $c$ and $b$ and that $c$ belongs to a geodesic between $a$ and $b$. 
\end{proof}

\begin{proof}[Proof of Theorem \ref{thm:ReflectionGroups}.]
Let $H \leq C(\Gamma)$ be a reflection subgroup and $\mathcal{J}$ a finite collection of hyperplanes such that $H$ is generated by reflections along hyperplanes of $\mathcal{J}$. As a consequence of Theorem \ref{thm:ReflectionGeneral}, we may suppose that $\mathcal{J}$ is peripheral. 

\medskip \noindent
Let $Y \subset X(\Gamma)$ denote the intersection of all the halfspaces containing $1$ which are delimited by hyperplanes of $\mathcal{J}$. Fix an $R \geq 1$ big enough so that every hyperplane of $\mathcal{J}$ intersects the ball $B(1,R)$, and let $\mathcal{H}$ denote the set of all the hyperplanes $J$ crossing $Y$ and satisfying $d(1,N(J))=R$. Notice that, as $X(\Gamma)$ is locally finite, necessarily $\mathcal{H}$ is finite. Moreover, $\mathcal{J}^+:= \mathcal{J} \cup \mathcal{H}$ is peripheral, and the intersection $Z$ of all the halfspaces containing $1$ which are delimited by hyperplanes of $\mathcal{J}^+$ is finite. 

\medskip \noindent
Let $H^+ \leq C(\Gamma)$ denote the subgroup generated by the reflections along the hyperplanes of $\mathcal{J}^+$. Clearly, $H^+$ contains $H$. Moreover, it follows from Theorem \ref{thm:ReflectionGeneral} that $Z$ is the fundamental domain of $H^+ \curvearrowright X(\Gamma)$, so $H^+$ must have finite index in $C(\Gamma)$. Also, if $\Delta$ denotes the crossing graph of $\mathcal{J}^+$, then $H^+$ is naturally isomorphic to the right-angled Coxeter group $C(\Delta)$ and the image of $H$ in $C(\Delta)$ is the subgroup generated by the subgraph of $\Delta$ corresponding to the crossing graph of $\mathcal{J}$, so that one gets a retraction $H^+ \to H$ by fixing the generators of $C(\Delta)$ corresponding to hyperplanes of $\mathcal{J}$ and by sending to $1$ the generators corresponding to hyperplanes of $\mathcal{J}^+ \backslash \mathcal{J}$. Thus, we have proved that $H$ is a virtual retract.

\medskip \noindent
Now, let $W$ be any finite convex subcomplex crossed by all the hyperplanes of $\mathcal{J}$. (For instance, take $W$ as the ball centered at $1$ of radius $R$ with respect to the $\ell^\infty$-metric.) By applying Lemma \ref{lem:UnionConvex} iteratively, we know that $\bigcup\limits_{h \in H} h \cdot W$ is convex. As $H$ clearly acts on it cocompactly, we conclude that $H$ is convex-cocompact, as desired. 
\end{proof}

\subsection{Embedding right-angled Coxeter groups}

\noindent
Embeddings between right-angled Coxeter groups can be constructed thanks to Theorem~\ref{thm:ReflectionGeneral}. Indeed, if $\Phi,\Psi$ are two simplicial graphs and if $X(\Psi)$ contains a peripheral collection of hyperplanes whose crossing graph is $\Phi$, then it follows from Theorem~\ref{thm:ReflectionGeneral} that the subgroup of $C(\Psi)$ generated by the reflections along these hyperplanes is isomorphic to $C(\Phi)$. This subsection is dedicated to examples of such constructions. We will see in Section \ref{section:Appli} that the converses of Corollary \ref{cor:EmbeddingExCycle}, Proposition \ref{prop:EmbeddingExTree} and Corollary \ref{cor:EmbeddingExSegment} hold. 

\begin{prop}\label{prop:double}
Let $\Gamma$ be a simplicial graph and $u \in V(\Gamma)$ a vertex. Let $\Psi$ denote the graph obtained by gluing two copies of $\Gamma \backslash \{u\}$ along $\mathrm{link}(u)$. Then $C(\Psi)$ is isomorphic to a subgroup of index two of $C(\Gamma)$. 
\end{prop}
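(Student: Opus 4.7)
The plan is to realise $C(\Psi)$ as a reflection subgroup of $C(\Gamma)$ and then read off both the isomorphism and the index from the fundamental domain provided by Theorem~\ref{thm:ReflectionGeneral}.

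First I would consider the collection of hyperplanes of $X(\Gamma)$
\[
\mathcal{J} = \{ J_v : v \in V(\Gamma),\ v \neq u \} \cup \{ uJ_v : v \in V(\Gamma) \setminus \mathrm{star}(u) \},
\]
corresponding to the reflections $v$ and $uvu$ in $C(\Gamma)$ (note that $uvu=v$ and $uJ_v = J_v$ when $v \in \mathrm{link}(u)$, so $\mathcal{J}$ has no redundancy). Peripherality with respect to the basepoint $1$ is then easy: each $J_v$ ($v \neq u$) is incident to $1$ so no hyperplane separates $1$ from it; and for $v \notin \mathrm{star}(u)$, the vertex $u$ is the projection of $1$ onto $N(uJ_v)$ (since no element of $\mathrm{star}(v)$ commutes with $u$), so by Lemma~\ref{lem:halfspaces} a hyperplane of $\mathcal{J}$ separates $1$ from $uJ_v$ iff it separates $1$ from $u$, which would force it to be $J_u \notin \mathcal{J}$.

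Second, I would identify the crossing graph $\Delta$ of $\mathcal{J}$ with $\Psi$. Matching $J_v$ with the vertex $v$ of the ``first'' copy of $\Gamma \setminus \{u\}$ and $uJ_v$ (for $v \notin \mathrm{star}(u)$) with the vertex $v$ of the ``second'' copy, the intra-copy crossings follow at once from Lemmas~\ref{lem:transverseimpliesadj}--\ref{lem:transverseiff} and $u$-equivariance. The cross-copy case, that $J_v$ and $uJ_w$ (with $v \neq u$, $w \notin \mathrm{star}(u)$) are transverse iff $v \in \mathrm{link}(u)$ and $v \sim w$, is the heart of the argument. If $v \in \mathrm{link}(u)$ and $v \sim w$, then $u \in N(J_v) \cap N(uJ_w)$ and the labels are adjacent, so Lemma~\ref{lem:transverseiff} applies (the square $\{u, uv, uw, uvw\}$ witnesses it). If instead $v \notin \mathrm{star}(u)$, then $N(J_v) \cap N(uJ_w) = \emptyset$: any point in the intersection would express $u$ as a product $s t^{-1}$ with $s \in \langle \mathrm{star}(v) \rangle$, $t \in \langle \mathrm{star}(w) \rangle$, but the reduced form of such a product cannot equal the single letter $u$ since $u$ appears in no word over $\mathrm{star}(v) \cup \mathrm{star}(w)$. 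Theorem~\ref{thm:ReflectionGeneral} now yields an isomorphism $C(\Psi) \cong R$ where $R$ is the reflection subgroup generated by $\mathcal{J}$.

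Third, I would compute the index via the fundamental domain $Y$ given by Theorem~\ref{thm:ReflectionGeneral}, namely the intersection of the halfspaces containing $1$ delimited by hyperplanes of $\mathcal{J}$. By Lemma~\ref{lem:halfspaces}, a vertex $h$ lies in $Y$ iff no reduced word representing $h$ begins with some $v \neq u$ and none begins with $uv$ for some $v \notin \mathrm{star}(u)$; reading off the first letter (and, if it is $u$, the second) of a reduced form immediately forces $h \in \{1, u\}$. Hence $Y$ is the edge $(1,u)$ (dual to $J_u \notin \mathcal{J}$), the stabiliser $\mathrm{stab}(Y)$ equals $\{1, u\}$, and the decomposition $C(\Gamma) = R \rtimes \mathrm{stab}(Y)$ gives $[C(\Gamma) : R] = 2$, completing the proof.

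The main obstacle is the cross-copy transversality analysis: the carriers of hyperplanes $J_v$ and $uJ_w$ could \emph{a priori} interact in delicate ways, and ruling out spurious intersections when $v \notin \mathrm{star}(u)$ rests on the normal-form observation above. Everything else is a bookkeeping exercise in applying Theorem~\ref{thm:ReflectionGeneral}.
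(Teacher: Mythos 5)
Your proof is correct and follows essentially the same route as the paper: the same collection $\{J_v : v\neq u\}\cup\{uJ_v : v\notin\mathrm{star}(u)\}$, the same observation that only $J_u$ could separate $1$ from a hyperplane adjacent to $u$ (giving peripherality), and the same normal-form argument ruling out cross-copy transversality, followed by an appeal to Theorem~\ref{thm:ReflectionGeneral}. You in fact go slightly further than the paper by explicitly computing the fundamental domain $Y=\{1,u\}$ to justify the index-two claim, which the paper leaves implicit; the only slip is the parenthetical ``no element of $\mathrm{star}(v)$ commutes with $u$'', which should read ``$u\notin\mathrm{star}(v)$''.
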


\noindent
Notice that our proposition is also a consequence of \cite[Lemma 21]{MR3010817}. Below, we give a short geometric proof based on Theorem \ref{thm:ReflectionGeneral}.

\begin{proof}[Proof of Proposition \ref{prop:double}.]
Set $\mathcal{J}= \{ J_v \mid v \in V(\Gamma) \backslash \{u\} \}$. We claim that $\mathcal{J} \cup u \mathcal{J}$ is a peripheral collection of hyperplanes whose crossing graph is isomorphic to $\Psi$, which is sufficient to conclude according to Theorem \ref{thm:ReflectionGeneral}. Notice that every hyperplane of $\mathcal{J}$ is adjacent to the vertex $1$, so $\mathcal{J}$ must be peripheral as well as $u \mathcal{J}$. If $\mathcal{J} \cup u \mathcal{J}$ is not peripheral, then a hyperplane of $\mathcal{J}$ has to separate $1$ from a hyperplane of $u \mathcal{J}$. As every hyperplane of $u \mathcal{J}$ is adjacent to the vertex $u$, our hyperplane of $\mathcal{J}$ has to separate $1$ from $u$, which is impossible as $J_u$ is the unique hyperplane separating $1$ and $u$ but it does not belong to $\mathcal{J}$. So $\mathcal{J} \cup u \mathcal{J}$ is peripheral. Next, notice that
$$u\mathcal{J}= \{ J_v \mid v \in \mathrm{link}(u) \} \sqcup \{ uJ_v \mid v \in \Gamma \backslash \mathrm{star}(u) \}.$$
Moreover, if $v,w \in \Gamma \backslash \mathrm{star}(u)$ then $J_v$ and $uJ_w$ are not transverse. Otherwise, there would exist a path from $1$ to $u$ decomposing as the concatenation of a path in $N(J_v)= \langle \mathrm{star}(v) \rangle$ with a path in $N(uJ_w)=u \langle \mathrm{star}(w) \rangle u^{-1}$, hence
$$u \in \langle \mathrm{star}(v) \rangle \cdot u \langle \mathrm{star}(w) \rangle u^{-1},$$
which is not possible as a consequence of the normal formed described in Section \ref{section:cc}. We conclude that the crossing graph of $\mathcal{J} \cup u \mathcal{J}$ is indeed isomorphic to $\Psi$. 
\end{proof}

\noindent
In the next statement, for every $n \geq 3$ we denote by $C_n$ the cycle of length $n$. 

\begin{cor}\label{cor:EmbeddingExCycle}
For every $p,q \geq 5$, the right-angled Coxeter group $C(C_p)$ is isomorphic to a subgroup of $C(C_q)$ if $q-4$ divides $p-4$. 
\end{cor}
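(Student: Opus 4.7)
My plan is to apply Theorem~\ref{thm:ReflectionGeneral}, for which it suffices to exhibit a peripheral collection of hyperplanes in $X(C_q)$ whose crossing graph is $C_p$. Writing $p = k(q-4)+4$ with $k \ge 1$, I build such a collection $\mathcal{J}_k$ by induction on $k$, modelled on the classical geometric picture of ``$k$ right-angled hyperbolic $q$-gons glued in a row''. The base case $k = 1$ is immediate: $\mathcal{J}_1 := \{J_v : v \in V(C_q)\}$ is peripheral around the basepoint $1$, with crossing graph $C_q$ by Lemmas~\ref{lem:transverseimpliesadj}--\ref{lem:transverseiff}.

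The geometric principle for the inductive step is that attaching one more $q$-gon to a row turns one boundary edge into an interior edge and adds $q-1$ new boundary edges; two of them meet the old boundary at straight angles $(\pi/2+\pi/2 = \pi)$ and hence extend existing boundary hyperplanes, for a net change of $q-4$. To realize this combinatorially, fix once and for all a sequence $u_1, u_2, \ldots \in V(C_q)$ with consecutive terms distinct and non-adjacent in $C_q$ (which exists since $q \ge 5$), set $w_i := u_1 \cdots u_i$ (with $w_0 := 1$), and define
\[
\mathcal{J}_{k+1} := \bigl(\mathcal{J}_k \setminus \{w_{k-1}J_{u_k}\}\bigr) \cup \bigl\{ w_k J_v \mid v \in V(C_q) \setminus \{u_k, u_k^-, u_k^+\}\bigr\},
\]
where $u_k^\pm$ denote the two $C_q$-neighbors of $u_k$. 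The identifications $w_k J_{u_k^\pm} = w_{k-1} J_{u_k^\pm}$, which hold because $u_k$ commutes with $u_k^\pm$ in $C(C_q)$, realize the ``straightening'' and give $|\mathcal{J}_{k+1}| = p_k + (q-4) = p_{k+1}$. The crossing graph is $C_{p_{k+1}}$: the two neighbors of the removed vertex $w_{k-1}J_{u_k}$ in $C_{p_k}$ are exactly $w_{k-1}J_{u_k^\pm}$ (by Lemma~\ref{lem:transverseimpliesadj}), so deleting it leaves a path with these endpoints; the $q-3$ new hyperplanes, together with those endpoints, form the boundary path of the new tile, whose crossing graph is $C_q \setminus \{u_k\}$, a path on $q-1$ vertices. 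Splicing these two paths closes the cycle at length $p_{k+1}$.

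The main obstacle is peripherality. I will argue inductively that $R_k := \bigcap_{H \in \mathcal{J}_k} H^+$ (with $H^+$ the halfspace of $H$ containing $1$) is a convex subcomplex containing all the vertices $w_0, \ldots, w_{k-1}$ and that every $H \in \mathcal{J}_k$ has a vertex of its carrier in $R_k$; peripherality then follows immediately, for given $H, H' \in \mathcal{J}_k$ any $x \in N(H') \cap R_k$ lies in $H^+$, so $H$ cannot separate $1$ from $H'$. Convexity of $R_k$ is automatic as an intersection of halfspaces; the delicate points, which I expect to be the heart of the argument, are (i)~showing that the new vertex $w_k$ really lies in $R_{k+1}$, which by Lemma~\ref{lem:halfspaces} becomes a prefix-condition calculation in which the non-adjacency of consecutive $u_i$'s prevents the row from backtracking, and (ii)~checking that every new hyperplane $w_k J_v$ bounds $R_{k+1}$, where the triangle-free structure of $C_q$ is essential in controlling the intersections of carriers. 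Absence of spurious transversalities among the hyperplanes of $\mathcal{J}_{k+1}$ is handled analogously via Lemma~\ref{lem:transverseiff} and the normal form of Section~\ref{section:cc}.
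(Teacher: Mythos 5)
Your construction is correct, but it takes a genuinely different route from the paper. The paper deduces Corollary \ref{cor:EmbeddingExCycle} in two lines from Proposition \ref{prop:double}: doubling $C_s$ along a vertex realises $C(C_{2(s-4)+4})$ as an index-two subgroup of $C(C_s)$, and one iterates. You instead build the peripheral collection for $C_{k(q-4)+4}$ directly, marching a row of tiles $w_0,\dots,w_k$ along a rigid geodesic and keeping only the exterior walls; both arguments ultimately feed a peripheral collection with crossing graph $C_p$ into Theorem \ref{thm:ReflectionGeneral}, but yours buys something concrete: iterating the index-two doubling literally only produces $p-4=2^j(q-4)$ (each doubling multiplies $s-4$ by $2$), so it misses, e.g., $k=3$, whereas your row of $k$ tiles reaches every multiple. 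Two comments on the points you flag as delicate. Peripherality is actually the easy part: since consecutive $u_i$ are distinct and non-adjacent, $u_1\cdots u_j$ is reduced with no possible shuffles, so by Lemma \ref{lem:halfspaces} the hyperplanes separating $1$ from $w_j$ are exactly $J_{u_1}, w_1J_{u_2},\dots,w_{j-1}J_{u_j}$ --- precisely the walls you delete --- hence every $w_j$ lies in $R_k$ and every member of $\mathcal{J}_k$ has some $w_j$ in its carrier. The part that genuinely needs care is the one you treat most briefly: ruling out coincidences and chords, e.g.\ checking that $w_{k-1}J_{u_k^{-}}$ and $w_{k-1}J_{u_k^{+}}$ really survive in $\mathcal{J}_k$ and that a wall of an early tile is never transverse to a wall of a much later tile except as a transversality inside a common tile. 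Via Lemmas \ref{lem:AlgoId} and \ref{lem:AlgoTransverse} and the rigidity of $u_{j+1}\cdots u_l$, all of these reduce to the observation that two consecutive $u_i$ can both lie in a single $\mathrm{star}(v)$ only if they are the two neighbours of $v$, in which case the relevant walls coincide rather than producing a chord. So your plan does close, but that bookkeeping, not peripherality, is where the remaining work sits.
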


\begin{proof}
Assume that $q-4$ divides $p-4$, i.e., $p=k(q-4)+4$ for some $k \geq 1$. As a consequence of Proposition \ref{prop:double}, $C(C_{2s-4})= C(C_{2(s-4)+4})$ is isomorphic to a subgroup of $C(C_s)$ for every $s \geq 5$. Therefore, by applying Proposition \ref{prop:double} $k$ times, we conclude that $C(C_p)$ embeds into $C(C_q)$. 
\end{proof}

\noindent
The next proposition is the main result of this section.

\begin{prop}\label{prop:EmbeddingExTree}
Let $R,S$ be two finite trees. Assume that there exists a graph morphism $\varphi : R \to S$ which sends a vertex of degree $2$ to a vertex of degree $\geq 2$ and a vertex of degree $\geq 3$ to a vertex of degree $\geq 3$. Then $C(R)$ is isomorphic to a subgroup of $C(S)$. 
\end{prop}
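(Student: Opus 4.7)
By Theorem~\ref{thm:ReflectionGeneral}, it suffices to exhibit a peripheral collection $\{K_v\}_{v \in V(R)}$ of hyperplanes in $X(S)$ whose crossing graph is isomorphic to $R$ and in which each $K_v$ is labeled by $\varphi(v)$; the desired embedding then comes from the reflections along these hyperplanes. I would proceed by induction on $|V(R)|$, the case $|V(R)|=1$ being immediate. For the inductive step, pick a leaf $v_0 \in V(R)$ with unique neighbor $w_0$, and first check that the restriction $\varphi|_{R \backslash v_0}$ still satisfies the degree condition: removing $v_0$ decreases $\deg(w_0)$ by exactly one, and whenever the hypothesis is non-vacuous in $R \backslash v_0$ (i.e.\ $\deg_{R \backslash v_0}(w_0) \geq 2$) we must have $\deg_R(w_0) \geq 3$, whence $\deg_S(\varphi(w_0)) \geq 3 \geq 2$. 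The induction hypothesis then yields a peripheral collection $\mathcal{K}' = \{K_u\}_{u \in V(R) \backslash \{v_0\}}$ in $X(S)$ realizing $R \backslash v_0$.

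The heart of the argument is to append a hyperplane $K_{v_0}$ labeled by $\varphi(v_0)$ such that (a) $K_{v_0}$ is transverse to $K_{w_0}$, (b) $K_{v_0}$ is not transverse to any other $K_u \in \mathcal{K}'$, and (c) $\mathcal{K}' \cup \{K_{v_0}\}$ remains peripheral. Since $S$ is triangle-free, Lemma~\ref{lem:DescriptionHyp} identifies $N(K_{w_0})$ with $T \times [0,1]$, where $T$ is the Cayley graph of $\langle \mathrm{link}(\varphi(w_0))\rangle$, namely a $d$-regular tree with $d = \deg_S(\varphi(w_0))$. The case $\deg_R(w_0) = 1$ collapses to $|V(R)|=2$ and is trivial, so we may assume $d \geq 2$ by the degree hypothesis. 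Hyperplanes of $X(S)$ transverse to $K_{w_0}$ correspond bijectively to edges of $T$, each labeled by an element of $\mathrm{link}(\varphi(w_0))$, and infinitely many such edges are labeled by $\varphi(v_0)$; each yields a candidate for $K_{v_0}$ satisfying (a). Condition (b) is automatic when $K_u$ is itself transverse to $K_{w_0}$: by Lemma~\ref{lem:transverseimpliesadj} we would need $\varphi(v_0)$ and $\varphi(u)$ adjacent in $S$, yet both lie in the independent set $\mathrm{link}(\varphi(w_0))$ since $S$ is triangle-free.

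The delicate step, which I expect to require the most care, is handling the $K_u$'s that are not transverse to $K_{w_0}$. For each such $K_u$, Lemma~\ref{lem:CrossingSquare} confines the projection of $N(K_u)$ onto $N(K_{w_0})$ to a single square, edge, or vertex; combined with Lemma~\ref{lem:ProjSep}, this restricts the ``influence'' of $K_u$ to a bounded subset of $T$. Hence only finitely many edges of $T$ can yield a candidate $K_{v_0}$ that is transverse to $K_u$, separated from $1$ by $K_u$, or that separates $K_u$ from $1$. Taking the finite union of these forbidden sets over all $K_u \in \mathcal{K}' \backslash \{K_{w_0}\}$ and invoking the infinitude of edges of $T$ labeled by $\varphi(v_0)$, one extracts a $K_{v_0}$ satisfying (a)--(c) simultaneously. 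This completes the induction, and Theorem~\ref{thm:ReflectionGeneral} then produces the embedding $C(R) \hookrightarrow C(S)$.
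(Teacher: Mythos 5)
Your overall strategy (build a peripheral collection with prescribed labels by induction and invoke Theorem~\ref{thm:ReflectionGeneral}) is the paper's, and your reduction of the degree condition to $R\setminus v_0$ is fine. But the inductive step has a genuine gap: the induction hypothesis only hands you \emph{some} peripheral collection realizing $R\setminus v_0$, and such a collection need not be extendable. The culprit is exactly the case you set aside as ``automatic'': for a $K_u$ that \emph{is} transverse to $K_{w_0}$, non-transversality with the new $K_{v_0}$ is indeed free, but peripherality also requires that $K_u$ not separate $1$ from $K_{v_0}$. Each such $K_u$ forbids an entire halftree of $T$ (all edges beyond $e_u$ as seen from the projection of $1$), not finitely many edges, and finitely many such halftrees can exhaust $T$. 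Concretely, let $S$ be the star with centre $c$ and leaves $a_1,a_2,a_3$, let $R$ be the star with centre $w_0$ and four leaves, and let $\varphi$ send $w_0\mapsto c$ and the leaves to $a_1,a_2,a_3,a_1$; the degree hypothesis holds. Your induction may legitimately produce the collection $\{J_c,J_{a_1},J_{a_2},J_{a_3}\}$ for $R\setminus v_0$ (indeed, adding leaves one at a time with the natural choices yields exactly this). Here $T$ is the $3$-regular tree and the three used edges are precisely the edges incident to the projection of $1$, so \emph{every} other edge of $T$ is separated from $1$ by one of $J_{a_1},J_{a_2},J_{a_3}$: no fifth hyperplane can be added, even though $C(R)$ does embed in $C(S)$ (via, e.g., $\{J_c,\,a_1J_{a_2},\,a_1J_{a_3},\,a_2J_{a_1},\,a_3J_{a_1}\}$).

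The paper avoids this by structuring the induction differently: it chooses a vertex $x$ that is a leaf of the tree obtained from $R$ by deleting all leaves, removes \emph{all} leaves adjacent to $x$ at once, and applies the induction to the resulting $R_0$. In $R_0$ the vertex $x$ has degree one, so its hyperplane $J$ has a \emph{single} transverse hyperplane $H$ in the inherited collection; only one halftree of $\partial J$ is forbidden, the complementary component is an infinite subtree with room to spare, and all the new leaf-hyperplanes are inserted there simultaneously (far from the basepoint and pairwise non-separating), using Lemma~\ref{lem:CrossingSquare} exactly as you do for the non-transverse hyperplanes. To repair your argument you would either have to adopt this ``remove all leaves at a penultimate vertex'' scheme, or strengthen your induction hypothesis so that the collection produced for $R\setminus v_0$ is guaranteed to leave room in $T$ for a further edge labelled $\varphi(v_0)$; as written, the claim that the forbidden set of edges is finite is false.
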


\begin{proof}
We claim that $X(S)$ contains a peripheral collection of hyperplanes $\mathcal{J}$ whose crossing graph is $R$ and such that the hyperplane of $\mathcal{J}$ corresponding to a vertex $u \in V(R)$ is labelled in $C(S)$ by $\varphi(u)$. We argue by induction over the number of vertices of $R$.

\medskip \noindent 
If $R$ has at most one vertex, the conclusion is clear. If all the vertices of $R$ are leaves, then $R$ must be a single edge $[a,b]$ and the collection $\{J_{\varphi(a)},J_{\varphi(b)}\}$ allows us to conclude.

\medskip \noindent
From now on, assume that $R$ is a finite tree with at least two vertices, one of them not being a leaf. Fix a vertex $x \in V(R)$ which is adjacent to at least one leaf and adjacent to exactly one vertex $y \in V(R)$ which is not a leaf. (In other words, $x$ is a leaf in the tree obtained from $R$ by removing all the leaves.) Let $R_0$ denote the tree obtained from $R$ by removing all the leaves adjacent to $x$. By induction, we know that there exists a peripheral collection of hyperplanes $\mathcal{J}$ whose crossing graph is $R_0$ and such that the hyperplane of $\mathcal{J}$ corresponding to a vertex $u \in V(R_0)$ is labelled in $C(S)$ by $\varphi(u)$. Let $J,H \in \mathcal{J}$ denote the hyperplanes corresponding respectively to the vertices $x,y$ of $R_0$, and let $\partial J$ denote the intersection of $N(J)$ with the halfspace delimited by $J$ which contains $1$. Up to translating $\mathcal{J}$ by an element of $C(S)$, we suppose without loss of generality that $1 \in \partial J \cap N(H)$. Notice that $H$ is the unique hyperplane of $\mathcal{J}$ transverse to $J$. We distinguish two cases.

\medskip \noindent
First, assume that $x$ has degree two in $R$. Let $z$ denote the neighbor of $x$ distinct from $y$. By assumptions, $\varphi(x)$ has degree at least two. 

\medskip \noindent
If $\varphi(y) \neq \varphi(z)$, then $\langle \varphi(y), \varphi(z) \rangle$ is a bi-infinite line in $N(J)$ crossed by $H$, so that it contains a subray $r$ in $\partial J$. It follows from Lemma \ref{lem:CrossingSquare} that the projection of a hyperplane of $\mathcal{J} \backslash \{J\}$ onto $\partial J$ is either a single vertex or a single edge. Therefore, if $e \subset r$ is an edge labelled by $\varphi(z)$ which is sufficiently far away from $1$, then the hyperplane $E$ dual to $e$ is not transverse to any hyperplane of $\mathcal{J} \backslash \{J\}$ and does not separate $1$ from any of these hyperplanes. In other words, $\mathcal{J} \cup \{E\}$ is the desired peripheral collection with $R= R_0 \cup \{z\}$ as its crossing graph. 

\medskip \noindent
Otherwise, if $\varphi(y)=\varphi(z)$, then because $\varphi(x)$ has degree at least two there must exist a vertex $w \in V(S)$ which is adjacent to $\varphi(x)$ but distinct from $\varphi(z)$. As before, $\langle \varphi(z),w \rangle= \langle \varphi(y),w \rangle$ is a bi-infinite line in $N(J)$ crossed by $H$. By reproducing the previous argument word for word, we conclude that there exists a hyperplane $E$ labelled by $\varphi(z)$ such that $\mathcal{J}\cup \{E\}$ is the desired peripheral with $R$ as its crossing graph.

\medskip \noindent
Next, assume that $x$ has degree at least three in $R$. Let $z_1, \ldots, z_N$ denote the neighbors of $x$ distinct from $y$. By assumptions, $\varphi(x)$ has degree at least three. So there exists a neighbor $w$ of $\varphi(x)$ such that $\{\varphi(y),\varphi(z_1), \ldots, \varphi(z_N),w\}$ has cardinality at least three. (If $\{\varphi(y), \varphi(z_1), \ldots, \varphi(z_N)\}$ has cardinality at least three, set $w= \varphi(y)$; and otherwise, set $w$ has a neighbor of $x$ which does not belong to $\{\varphi(y), \varphi(z_1), \ldots, \varphi(z_N) \}$.) Then $\langle \varphi(y), \varphi(z_1), \ldots, \varphi(z_N),w \rangle$ is a $k$-regular tree ($k\geq 3$) which is contained in $N(J)$ and crossed by $H$ along a single edge $e$. Fix an integer $D \geq 1$ and $N$ pairwise distinct edges $e_1, \ldots, e_N \subset \partial J$ such that:
\begin{itemize}
	\item these edges lie in the connected component of $\partial J \backslash \{e\}$ which contains the projection $p$ of $1$ onto $N(J)$;
	\item for every $1 \leq i \leq N$, $e_i$ is labelled by $\varphi(z_i)$;
	\item our edges lie outside the ball $B(p,D)$;
	\item $e_i$ does not separate $p$ from $e_j$ for every $1 \leq i,j \leq N$;
\end{itemize}
It follows from Lemma \ref{lem:CrossingSquare} that the projection of a hyperplane of $\mathcal{J} \backslash \{J\}$ onto $\partial J$ is either a single vertex or a single edge. Therefore, if $D$ is chosen sufficiently large compared to the cardinality of $\mathcal{J}$, then an $e_i$ cannot separate $p$ from a point of the projection onto $\partial J$ of a hyperplane of $\mathcal{J} \backslash \{J\}$. Consequently, if we denote by $J_i$ the hyperplane dual to $e_i$ for every $1 \leq i \leq N$, the collection $\mathcal{J} \cup \{J_i, 1 \leq i \leq N\}$ is peripheral and its crossing graph is a tree obtained from $R_0$ by adding $N$ neighbors to $x$, i.e., the crossing graph is $R$ as desired.

\medskip \noindent
This concludes the proof of our claim, and we deduce from it and from Theorem \ref{thm:ReflectionGeneral} that $C(R)$ is isomorphic to a subgroup of $C(S)$.
\end{proof}

\noindent
Below, we record a few easy consequences of Proposition \ref{prop:EmbeddingExTree}.

\begin{cor}\label{cor:ForestFromTree}
Let $F$ be a finite forest and $T$ a finite tree which contains at least three vertices. Let $T_1, \ldots, T_k$ denote the components of $F$. Then $C(F)$ is isomorphic to a subgroup of $C(T)$ if and only if $C(T_i)$ is isomorphic to a subgroup of $C(T)$ for every $1 \leq i \leq k$.
\end{cor}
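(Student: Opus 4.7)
The forward direction is immediate. Since $F = T_1 \sqcup \cdots \sqcup T_k$ contains no edges between distinct components, the standard presentation of $C(F)$ decomposes as a free product $C(F) \cong C(T_1) \ast \cdots \ast C(T_k)$, so each factor $C(T_i)$ is a free factor, and in particular a subgroup, of $C(F)$. Restricting any embedding $C(F) \hookrightarrow C(T)$ to these factors produces the required embeddings $C(T_i) \hookrightarrow C(T)$ for every $i$.

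For the backward direction, the strategy is to produce a peripheral collection of hyperplanes in $X(T)$ whose crossing graph is $F$ and conclude via Theorem \ref{thm:ReflectionGeneral}. Each given abstract embedding $C(T_i) \hookrightarrow C(T)$ yields, via Theorem \ref{thm2} combined with the inductive construction in the proof of Proposition \ref{prop:EmbeddingExTree}, a peripheral collection $\mathcal{J}_i \subset X(T)$ whose crossing graph is isomorphic to $T_i$. (Concretely, one extracts from the embedding a graph morphism $\varphi_i : T_i \to T$ satisfying the degree conditions of Proposition \ref{prop:EmbeddingExTree} and then runs the construction of the proof of that proposition to produce $\mathcal{J}_i$.)

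It remains to combine the $\mathcal{J}_i$'s into a single peripheral collection with crossing graph $F$. I would proceed inductively: having placed $\mathcal{J}_1, g_2\mathcal{J}_2, \ldots, g_{i-1}\mathcal{J}_{i-1}$ in pairwise ``non-interacting'' position (no transversality between different pieces and no hyperplane of one piece separating $1$ from a hyperplane of another), I would choose $g_i \in C(T)$ pushing $\mathcal{J}_i$ deep into a halfspace of $X(T)$ bounded by a peripheral hyperplane disjoint from, and non-transverse to, every hyperplane already placed. The hypothesis that $T$ contains at least three vertices guarantees that $X(T)$ admits sufficiently many distinct ``directions'' at the basepoint $1$ to accommodate each new translate. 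Once the total collection $\mathcal{J} := \mathcal{J}_1 \cup g_2\mathcal{J}_2 \cup \cdots \cup g_k \mathcal{J}_k$ is assembled, its crossing graph is $T_1 \sqcup \cdots \sqcup T_k = F$, and Theorem \ref{thm:ReflectionGeneral} then provides the embedding $C(F) \hookrightarrow C(T)$.

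The main obstacle is the geometric placement step: one must verify that at each stage the conjugator $g_i$ can genuinely be chosen so that no hyperplane of $g_i\mathcal{J}_i$ is transverse to, nor separates $1$ from, any hyperplane of $\mathcal{J}_1 \cup g_2\mathcal{J}_2 \cup \cdots \cup g_{i-1}\mathcal{J}_{i-1}$. This is the delicate part of the argument and relies on a careful use of Lemma \ref{lem:ProjSep} together with the assumption that $|V(T)| \geq 3$, which is precisely what ensures enough room in $X(T)$ to fit each translated collection into a region well-separated from the previously placed ones.
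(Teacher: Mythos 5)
Your forward direction is fine. The backward direction, however, rests entirely on the inductive ``placement step'' that you explicitly leave unproved, and this is not a detail that can be filled in: under the stated hypotheses the step actually fails. Take $T=S_2$ (the path on three vertices) and $F=S_2\sqcup S_2$. Each component trivially embeds into $C(T)$, yet $C(T)\cong D_\infty\times\mathbb{Z}/2\mathbb{Z}$ is virtually cyclic while $C(F)\cong C(S_2)\ast C(S_2)$ contains a nonabelian free group, so no embedding $C(F)\hookrightarrow C(T)$ exists. Geometrically, $X(S_2)$ is an infinite strip of squares with a \emph{single} hyperplane labelled by the middle vertex, and every edge of the crossing graph of $X(S_2)$ involves that one hyperplane, so one cannot place two disjoint induced copies of $S_2$ in it no matter how the translates $g_i\mathcal{J}_i$ are chosen. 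Your assertion that $|V(T)|\geq 3$ ``guarantees enough room'' is therefore false, and the missing lemma you defer to is unprovable as stated. (The same example shows the corollary needs a stronger hypothesis on $T$; the paper's own argument implicitly requires a vertex of degree at least~$3$ adjacent to a vertex of degree at least~$2$.) A secondary issue is that you convert the abstract embeddings $C(T_i)\hookrightarrow C(T)$ into peripheral collections by invoking Theorem~\ref{thm2} (equivalently Corollary~\ref{cor:Sub}), which is proved much later and rests on the full classification of morphisms; this is not circular, but it imports far heavier machinery than the corollary requires.

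For comparison, the paper's proof is purely algebraic and avoids any assembly of hyperplane collections. It reduces everything to the claim that $C(T)$ contains the free product of $k$ copies of itself: one attaches a length-two path $[x,y,z]$ to $T$ by identifying $z$ with a suitable vertex $a$, observes that the resulting tree $T_+$ admits a graph morphism onto $T$ satisfying the degree conditions of Proposition~\ref{prop:EmbeddingExTree} (so $C(T_+)\hookrightarrow C(T)$), and notes that the parabolic subgroup of $C(T_+)$ on $\{x\}\cup V(T)$ is $(\mathbb{Z}/2\mathbb{Z})\ast C(T)$, which contains $C(T)\ast C(T)$; iterating gives the claim. Then $C(F)\cong C(T_1)\ast\cdots\ast C(T_k)$ embeds factorwise into the free product of $k$ copies of $C(T)$, hence into $C(T)$. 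If you want to keep your geometric route you must both strengthen the hypothesis on $T$ and actually prove the placement lemma; otherwise the free-product reduction is shorter and self-contained at this point of the paper.
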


\begin{proof}
The ``only if'' direction is clear. Conversely, assume that $C(T_i)$ embeds into $C(T)$ for every $1 \leq i \leq k$. The desired conclusion is an immediate consequence of the following statement:

\begin{claim}
For every $k \geq 1$, $C(T)$ contains a subgroup isomorphic to the free product of $k$ copies of itself. 
\end{claim}

\noindent
Because $T$ contains at least three vertices, there exist two adjacent vertices $a,b \in T$ such that $b$ has degree at least two. Let $T_+$ denote the tree obtained gluing $T$ and a path $[x,y,z]$ of length two by identifying $z$ and $a$. Then
$$\left\{ \begin{array}{ccc} T_+ & \to & T \\ u & \mapsto & \left\{ \begin{array}{cl} u & \text{if $u \in T$} \\ b & \text{if $u=y$} \\ a & \text{if $u=x$} \end{array} \right. \end{array} \right.$$
defines a graph morphism. As a consequence of Proposition \ref{prop:EmbeddingExTree}, $C(T_+)$ is isomorphic to a subgroup of $C(T)$. Notice that the subgroup $\langle x \cup V(T) \rangle$ of $C(T_+)$ is isomorphic to $(\mathbb{Z}/2\mathbb{Z}) \ast C(T)$, which contains $C(T) \ast C(T)$. By applying this observation $k$ times, we get the desired conclusion.
\end{proof}

\noindent
In the next statement, we denote by $\Xi$ the \emph{double-star}, i.e., the smallest tree containing two adjacent vertices of degree $3$. 

\begin{cor}\label{cor:EmbeddingExTree}
For any finite forest $F$, $C(F)$ is isomorphic to a subgroup of $C(\Xi)$. 
\end{cor}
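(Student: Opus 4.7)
The plan is to combine Corollary~\ref{cor:ForestFromTree} with Proposition~\ref{prop:EmbeddingExTree}, using a very simple graph morphism built from the bipartition of each tree component of $F$.

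First, since $\Xi$ has six vertices (in particular at least three), Corollary~\ref{cor:ForestFromTree} applies with $T = \Xi$, reducing the statement to the claim that $C(T) \hookrightarrow C(\Xi)$ for every finite tree $T$ (each component of $F$ being a finite tree). To handle a single tree $T$, the idea is to exhibit a graph morphism $\varphi : T \to \Xi$ satisfying the degree hypothesis of Proposition~\ref{prop:EmbeddingExTree}.

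The key observation is that the two non-leaf vertices $a, b$ of $\Xi$ are adjacent and both have degree $3$ in $\Xi$. Since $T$ is a tree, it is bipartite; fix a proper $2$-coloring $\chi : V(T) \to \{0,1\}$ and define $\varphi$ by $\varphi(v) = a$ when $\chi(v) = 0$ and $\varphi(v) = b$ otherwise. Adjacent vertices of $T$ receive different colors and $\{a,b\}$ is an edge of $\Xi$, so $\varphi$ is a graph morphism. Every vertex of $T$ is sent to $\{a,b\}$, and both $a$ and $b$ have degree $3$ in $\Xi$; hence the two degree conditions of Proposition~\ref{prop:EmbeddingExTree} (degree $2$ mapped to degree $\geq 2$, degree $\geq 3$ mapped to degree $\geq 3$) are satisfied for trivial reasons. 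Proposition~\ref{prop:EmbeddingExTree} then yields $C(T) \hookrightarrow C(\Xi)$, and Corollary~\ref{cor:ForestFromTree} finishes the argument.

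There is essentially no obstacle: the whole proof rests on the fact that the central edge $\{a,b\}$ of $\Xi$ has degree $3$ at both endpoints, which is exactly what Proposition~\ref{prop:EmbeddingExTree} needs to absorb arbitrarily branchy internal vertices, and the bipartiteness of $T$ turns any proper $2$-coloring into a valid graph morphism onto this edge. The only mild point to notice is that degenerate cases (a component with one or two vertices) are handled uniformly by the same construction, since the degree hypothesis is vacuous at leaves.
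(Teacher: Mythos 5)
Your proof is correct and follows essentially the same route as the paper: reduce to a single tree via Corollary~\ref{cor:ForestFromTree}, then map $T$ onto the central edge $\{a,b\}$ of $\Xi$ using the bipartition (the paper phrases the $2$-coloring as parity of distance to a basepoint, which is the same thing) and invoke Proposition~\ref{prop:EmbeddingExTree}. No gaps.
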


\begin{proof}
According to Corollary \ref{cor:ForestFromTree}, it suffices to show that, for every finite tree $T$, $C(T)$ embeds into $C(\Xi)$. Let $a,b \in \Xi$ the two adjacent vertices of degree $3$. Fixing a basepoint $x \in T$, notice that
$$\left\{ \begin{array}{ccc} T & \to & \Xi \\ u & \mapsto & \left\{ \begin{array}{cl} a & \text{if $d_T(x,u)$ is even} \\ b & \text{if $d_T(x,u)$ is odd} \end{array} \right. \end{array} \right.$$
defines a graph morphism. We conclude from Proposition \ref{prop:EmbeddingExTree} that $C(T)$ embeds into~$C(\Xi)$. 
\end{proof}

\noindent
In the next statement, for every $n \geq 0$ we denote by $S_n$ the path of length $n$ (which contains $n+1$ vertices).

\begin{cor}\label{cor:EmbeddingExSegment}
For every $n \geq 0$, $C(S_n)$ is isomorphic to a subgroup of $C(S_3)$.
\end{cor}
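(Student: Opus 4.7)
The plan is to apply Proposition \ref{prop:EmbeddingExTree} by exhibiting an explicit graph morphism $\varphi \colon S_n \to S_3$. Observe that $S_n$ contains no vertex of degree $\geq 3$ (every internal vertex has degree $2$ and the two endpoints have degree $1$), so the only hypothesis of that proposition to verify is that every degree-$2$ vertex of $S_n$ is sent to a vertex of degree $\geq 2$ in $S_3$, i.e., to one of the two interior vertices of $S_3$.

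Label the path $S_n = [v_0, v_1, \ldots, v_n]$ along its edges and $S_3 = [a,b,c,d]$ similarly, so that the two degree-$2$ vertices of $S_3$ are $b$ and $c$, while $a$ and $d$ are the leaves. For $n \leq 1$ there is nothing to check and any non-degenerate simplicial map works. For $n \geq 2$, I would define
\[
\varphi(v_0) = a, \qquad \varphi(v_i) = \begin{cases} b & \text{if $i \geq 1$ is odd,} \\ c & \text{if $i \geq 2$ is even.} \end{cases}
\]
Geometrically, $\varphi$ sends the first edge of $S_n$ to the edge $\{a,b\}$ of $S_3$ and then folds the remaining edges of $S_n$ back and forth along the central edge $\{b,c\}$ of $S_3$.

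The verifications are routine: consecutive images are always one of $\{a,b\}$ or $\{b,c\}$, both edges of $S_3$, so $\varphi$ is a graph morphism; and every interior vertex $v_i$ (for $1 \leq i \leq n-1$) is sent into $\{b,c\}$, hence has image of degree $2$ in $S_3$. Proposition \ref{prop:EmbeddingExTree} then directly yields the embedding $C(S_n) \hookrightarrow C(S_3)$. I do not expect any real obstacle here: this is essentially an immediate application of the preceding proposition, the only content being the choice of the folding pattern.
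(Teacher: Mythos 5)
Your proposal is correct and follows essentially the same route as the paper: both reduce the statement to Proposition \ref{prop:EmbeddingExTree} by exhibiting an explicit graph morphism $S_n \to S_3$ that folds the path back and forth across the central edge, sending every degree-$2$ vertex to one of the two interior vertices (the paper merely splits into even/odd cases and uses a slightly different pattern, which changes nothing). Your verification that the degree conditions are vacuous or satisfied is accurate, so the argument goes through.
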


\begin{proof}
If $n \leq 3$, then $S_n$ is an induced subgraph of $S_3$, so there is nothing to prove. From now on, assume that $n \geq 4$. Let $a,b,c,d$ denote the consecutive vertices of $S_3$ and $x_1, \ldots, x_{n+1}$ those of $S_n$. If $n=2p$ for some $p \geq 2$, then
$$\left\{ \begin{array}{ccc} S_n & \to & S_3 \\ x_i & \mapsto & \left\{ \begin{array}{cl} a & \text{if $i=1$ or $i=n+1$} \\ b & \text{if $i$ is even} \\ c & \text{if $1<i<n+1$ is odd} \end{array} \right. \end{array} \right.$$
defines a graph morphism; and if $n=2p-1$ for some $p \geq 3$, then
$$\left\{ \begin{array}{ccc} S_n & \to & S_3 \\ x_i & \mapsto & \left\{ \begin{array}{cl} a & \text{if $i=1$} \\ b & \text{if $i<n+1$ is even} \\ c & \text{if $1<i$ is odd} \\ d & \text{if $i=n+1$} \end{array} \right. \end{array} \right.$$
defines a graph morphism as well. The desired conclusion follows from Proposition~\ref{prop:EmbeddingExTree}. 
\end{proof}

\noindent
We conclude this section by illustrating Proposition \ref{prop:EmbeddingExTree} with an eplicit example.

\begin{ex}\label{ex:GraphMorphism}
Let $R$ and $S$ be the two finite trees given by Figure \ref{GraphMorphism}. The map $R \to S$ provided by the same figure satisfies the assumptions of Proposition \ref{prop:EmbeddingExTree}, so $C(R)$ contains a subgroup isomorphic to $C(S)$. 
\begin{figure}
\begin{center}
\includegraphics[scale=0.5]{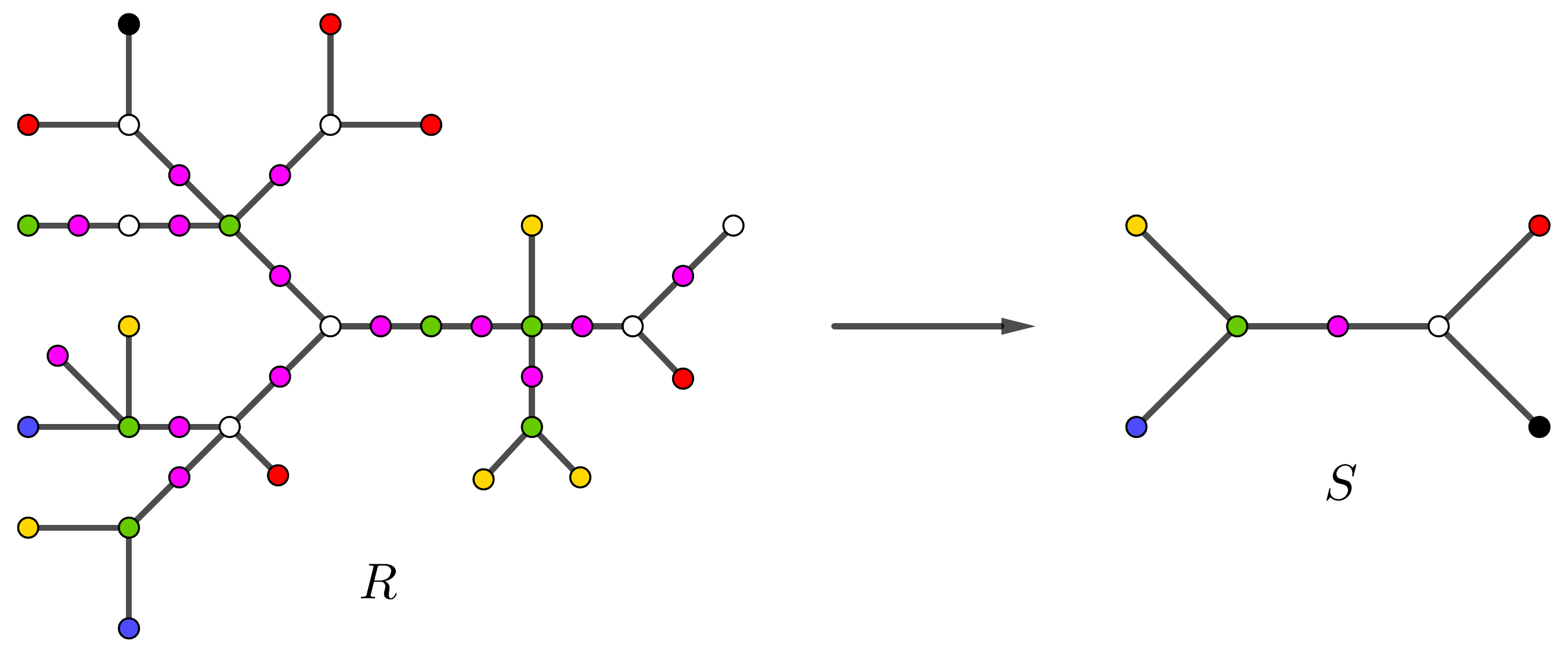}
\caption{A graph morphism $R \to S$ satisfying the assumptions of Proposition \ref{prop:EmbeddingExTree}.}
\label{GraphMorphism}
\end{center}
\end{figure}
\end{ex}

\subsection{Constructing finite-index subgroups.}\label{section:FI}

\noindent
Theorem \ref{thm:ReflectionGeneral} is particularly useful to construct finite-index subgroups in right-angled Coxeter groups. For instance, if $\Gamma$ is a finite simplicial graph and $Y$ a finite convex subcomplex of $X(\Gamma)$, then it follows from Theorem \ref{thm:ReflectionGeneral} that the subgroup $H \leq C(\Gamma)$ generated by the reflections along hyperplanes tangent to $Y$ has $F$ as a fundamental domain; in particular, $H$ has finite index in $C(\Gamma)$. As an easy application, it can be proved that $C(\Gamma)$ is residually finite. More generally, we are able to give a short proof of \cite[Theorem A]{MR2413337}, which states that convex-cocompact subgroups of $C(\Gamma)$ are \emph{separable}. (Recall that, given a group $G$, a subgroup $H \leq G$ is \emph{separable} if, for every $g \notin H$, there exists a finite-index subgroup $K \leq G$ which contains $H$ but not $g$.)

\begin{thm}\emph{\cite{MR2413337}}
Let $\Gamma$ be a finite simplicial graph. Convex-cocompact subgroups of $C(\Gamma)$ are virtual retracts.
\end{thm}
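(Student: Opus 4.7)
My strategy is to apply Theorem~\ref{thm:ReflectionGeneral} to a suitable enlargement of $H$ inside $C(\Gamma)$, then to combine the resulting retraction with two finite-index estimates. Since $H$ is convex-cocompact, it stabilizes and acts cocompactly on some convex subcomplex $Y \subseteq X(\Gamma)$. Let $\mathcal{J}$ denote the collection of hyperplanes $J$ whose carrier meets $Y$ but which do not cross $Y$. Because $Y$ is $H$-invariant, so is $\mathcal{J}$. Setting $R := \langle r_J \mid J \in \mathcal{J} \rangle$, the family $\mathcal{J}$ is $R$-invariant since $g r_J g^{-1} = r_{gJ}$ for any $g \in C(\Gamma)$, and hence $\mathcal{J}$ becomes $G$-invariant for $G := \langle H, R \rangle \leq C(\Gamma)$.

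I would then apply Theorem~\ref{thm:ReflectionGeneral} to $G \curvearrowright X(\Gamma)$ with basepoint $x_0 \in Y$. The key observation is that the intersection $Y'$ of all halfspaces delimited by $\mathcal{J}$ and containing $x_0$ coincides with $Y$: no hyperplane of $\mathcal{J}$ crosses $Y$, so $Y \subseteq Y'$; conversely, given $v \notin Y$, the last hyperplane $J$ crossed along a geodesic from $v$ to its projection $p$ onto $Y$ has $p$ on its carrier and, by Lemma~\ref{lem:ProjSep}, does not cross $Y$, hence belongs to $\mathcal{J}$ and separates $v$ from $x_0$. Theorem~\ref{thm:ReflectionGeneral} therefore yields a semidirect product decomposition $G = R \rtimes S$ with $S := G \cap \mathrm{stab}(Y) \supseteq H$, together with a canonical retraction $\pi : G \twoheadrightarrow S$, which is the identity on $H$.

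The final step is to transfer this retraction to a finite-index subgroup of all of $C(\Gamma)$. Because $C(\Gamma)$ acts freely on $X(\Gamma)$, both $H$ and $S$ act freely and cocompactly on the connected complex $Y$ (cocompactness of $S$ is inherited from $H \subseteq S$, which forces the number of $S$-orbits to be no larger than the number of $H$-orbits), so the covering map $H \backslash Y \to S \backslash Y$ has finite degree equal to $[S:H]$. Moreover, Theorem~\ref{thm:ReflectionGeneral} tells us that $Y$ is an $R$-fundamental domain for $X(\Gamma)$, so a finite $S$-fundamental domain inside $Y$ is also a $G$-fundamental domain on $X(\Gamma)$, making $G$ act freely and cocompactly on $X(\Gamma)$; comparing with the analogous action of $C(\Gamma)$ yields $[C(\Gamma) : G] < \infty$. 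Setting $K := \pi^{-1}(H)$, we obtain $H \leq K$, $[C(\Gamma):K] = [C(\Gamma):G]\cdot[S:H] < \infty$, and $\pi|_K : K \twoheadrightarrow H$ is a retraction, proving that $H$ is a virtual retract of $C(\Gamma)$. The main subtle point is the identification $Y' = Y$, as it requires verifying that the ``outer tangent'' hyperplanes genuinely separate every exterior vertex from $Y$; this is where the gate/projection machinery of Section~\ref{section:cc}, and specifically Lemma~\ref{lem:ProjSep}, plays the decisive role.
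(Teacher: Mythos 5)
Your proposal is correct and follows essentially the same route as the paper's proof: take the hyperplanes tangent to $Y$, form the reflection subgroup $R$ and the enlargement $G=\langle H,R\rangle$, apply Theorem~\ref{thm:ReflectionGeneral} to obtain $G=R\rtimes\mathrm{stab}_G(Y)$ together with the fact that $Y$ is an $R$-fundamental domain, deduce $[C(\Gamma):G]<\infty$ from cocompactness, and retract onto $H$ by projecting away the $R$-factor. Two points of comparison. First, where the paper proves the sharper identity $\mathrm{stab}_{G}(Y)=H$ outright (writing $h=h_1r_1\cdots h_kr_k$, pushing the $r_i$ into $R$ by normality, and using $R\cap\mathrm{stab}(Y)=\{1\}$), you only establish $[\mathrm{stab}_G(Y):H]<\infty$ via the covering $H\backslash Y\to \mathrm{stab}_G(Y)\backslash Y$ and then pass to $K=\pi^{-1}(H)=R\rtimes H$; this is a legitimate, slightly weaker workaround that still yields the virtual retract (the paper's version in fact shows $K=G$). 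Second, your justification that $\mathcal{J}$ is $G$-invariant is a non sequitur: the identity $gr_Jg^{-1}=r_{gJ}$ says nothing about whether $gJ$ is again tangent to $Y$, and in general the collection of hyperplanes tangent to $Y$ is \emph{not} $R$-invariant (take $Y$ a single vertex of a tree; a reflection in one incident edge sends the other incident edges to hyperplanes far from $Y$). The fix, which the paper also leaves implicit, is to replace $\mathcal{J}$ by its $G$-orbit; since $H$ preserves $\mathcal{J}$, the subgroup $R$ is normal in $G$ and unchanged by this enlargement, and one checks that the intersection of halfspaces containing the basepoint is still $Y$. On the plus side, your verification that $Y'=Y$ for the tangent collection, via the projection and Lemma~\ref{lem:ProjSep}, is correct and is a detail the paper glosses over.
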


\begin{proof}
Let $H \leq C(\Gamma)$ be a subgroup acting cocompactly on a convex subcomplex $Y \subset X(\Gamma)$. Without loss of generality, suppose that $Y$ contains the vertex $1$. Let $\mathcal{J}$ be the collection of all the hyperplanes tangent to $Y$ and let $R$ denote the subgroup of $C(\Gamma)$ generated by the reflections along hyperplanes of $\mathcal{J}$. Set $H^+= \langle H, R \rangle$. Notice that $\mathcal{J}$ is peripheral, so that Theorem \ref{thm:ReflectionGeneral} applies to $R$ and shows that $Y$ is a fundamental domain of $R \curvearrowright X(\Gamma)$. As $H$ acts cocompactly on $Y$, it follows that $H^+$ acts cocompactly on $X(\Gamma)$. Therefore, $H^+$ is a finite-index subgroup of $C(\Gamma)$. 

\medskip \noindent
Next, by applying Theorem \ref{thm:ReflectionGeneral} to $H^+$ and $\mathcal{J}$, it follows that $H^+$ decomposes as $R \rtimes \mathrm{stab}_{H^+}(Y)$. Notice that $H \subset \mathrm{stab}_{H^+}(Y)$. Conversely, let $h$ be an element of $\mathrm{stab}_{H^+}(Y)$. As $R$ and $H$ generate $H^+$, we can write $h=h_1 r_1 \cdots h_kr_k$ for some $r_1,\ldots, r_k \in R$ and $h_1, \ldots, h_k \in H$. Then
$$h = \left( h_1r_1h_1^{-1} \right) \cdot \left( h_1h_2r_2h_2^{-1}h_1^{-1} \right) \cdots \left( h_1 \cdots h_k r_k h_k^{-1} \cdots h_1^{-1} \right) \cdot h_1 \cdots h_k,$$
hence $h \in R \cdot h_1 \cdots h_k$ because $R$ is a normal subgroup in $H^+$. But $h$ belongs to $\mathrm{stab}_{H^+}(Y)$ in the semidirect product $R \rtimes \mathrm{stab}_{H^+}(Y)$, so we must have $h = h_1 \cdots h_k \in H$. 

\medskip \noindent
Thus, we have proved that the finite-index subgroup $H^+$ decomposes as a semidirect product $R \rtimes H$. The projection onto the second factor $R \rtimes H \to H$ provides a retraction $H^+ \to H$, as desired.
\end{proof}

\noindent
Because virtual retracts are automatically separable, it follows that:

\begin{cor}
Let $\Gamma$ be a finite simplicial graph. Convex-cocompact subgroups of $C(\Gamma)$ are separable.
\end{cor}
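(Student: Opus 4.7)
The plan is to combine the previous theorem---convex-cocompact subgroups of $C(\Gamma)$ are virtual retracts---with the standard fact that a virtual retract in a residually finite group is separable. Since $C(\Gamma)$ is residually finite (noted at the start of this subsection, and in any case a consequence of linearity of Coxeter groups), it is enough to carry out the separability argument in that purely group-theoretic setting.

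So let $H \leq C(\Gamma)$ be convex-cocompact and fix $g \in C(\Gamma) \setminus H$. I would apply the preceding theorem to obtain a finite-index subgroup $K \leq C(\Gamma)$ with $H \leq K$ together with a retraction $r : K \to H$. If $g \notin K$, then $K$ itself is already a finite-index subgroup containing $H$ but not $g$, and we are done. Otherwise $g \in K \setminus H$, and because $r$ restricts to the identity on $H$, the element $g \cdot r(g)^{-1}$ lies in $\ker r$ and is nontrivial (else $g = r(g) \in H$).

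Using residual finiteness of $K$ (inherited from $C(\Gamma)$), I would choose a finite-index normal subgroup $N \trianglelefteq K$ avoiding $g \cdot r(g)^{-1}$, and set
$$L := (N \cap \ker r) \cdot H.$$
Because $r$ is a retraction, one has the semidirect decomposition $K = \ker(r) \rtimes H$; in particular $\ker r$ is normal in $K$, so $N \cap \ker r$ has finite index in $\ker r$, whence $L$ has finite index in $K$, and therefore in $C(\Gamma)$. Clearly $H \leq L$. To see $g \notin L$: if $g = n h$ with $n \in N \cap \ker r$ and $h \in H$, then applying $r$ gives $h = r(g)$, so $n = g \cdot r(g)^{-1}$, contradicting the choice of $N$.

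The argument is essentially formal once the previous theorem is in hand; the only external input is residual finiteness of $C(\Gamma)$, which is standard. The only mild obstacle is bookkeeping around the semidirect product structure $K = \ker r \rtimes H$ when verifying that $L$ has finite index and that no decomposition of $g$ can lie in $(N \cap \ker r) \cdot H$.
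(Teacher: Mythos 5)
Your proposal is correct and follows exactly the route the paper takes: the corollary is deduced from the preceding theorem (convex-cocompact subgroups are virtual retracts) together with the standard fact that a virtual retract of a residually finite group is separable, a fact the paper cites without proof and you prove in full (your verification that $L=(N\cap\ker r)\cdot H$ has finite index and misses $g$ is the standard argument and is sound). You are also right to make the residual finiteness of $C(\Gamma)$ explicit, since the paper's phrasing ``virtual retracts are automatically separable'' tacitly relies on it (it is recorded earlier in the same subsection).
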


\section{Morphisms between right-angled Coxeter groups}

\subsection{Reflections and morphisms}\label{section:ReMo}

\noindent
In order to state the main result of this section, we need to introduce the following definition:

\begin{definition}
Let $\Phi,\Psi$ be two simplicial graphs. A morphism $\rho : C(\Phi) \to C(\Psi)$ is a \emph{folding} if there exists an equivalence class $\sim$ on the vertices of $\Phi$ such that $\Psi = \Phi / \sim$ and $\rho$ sends each generator of $\Phi$ to the generator of $\Psi$ corresponding to the equivalence class of the associated vertex of $\Phi$.
\end{definition}

\noindent
The main step towards the description of morphisms between two-dimensional right-angled Coxeter groups is following theorem, which is the main statement of the section. We emphasize that, in this statement, the graph $\Gamma_2$ is not assumed triangle-free.

\begin{thm}\label{thm:MorphismReflection}
Let $\Gamma_1,\Gamma_2$ be two finite simplicial graphs and $\rho : C(\Gamma_1) \to C(\Gamma_2)$ a morphism. Assume that $\rho(u)$ is a reflection for every $u \in V(\Gamma_1)$. There exist 
\begin{itemize}
	\item a sequence of graphs $\Lambda_0=\Gamma_1, \Lambda_1, \ldots, \Lambda_k$;
	\item partial conjugations $\alpha_1 \in \mathrm{Aut}(C(\Lambda_1)), \ldots, \alpha_k \in \mathrm{Aut}(C(\Lambda_k))$;
	\item foldings $\pi_1 : C(\Lambda_0) \to C(\Lambda_1), \ldots, \pi_k : C(\Lambda_{k-1}) \to C(\Lambda_k)$;
	\item and a peripheral embedding $\bar{\rho} : C(\Lambda_k) \hookrightarrow C(\Gamma_2)$
\end{itemize}
such that $\rho = \bar{\rho} \circ \pi_k \circ \alpha_{k-1} \circ \pi_{k-1} \circ \cdots \circ \alpha_1 \circ \pi_1$.
\end{thm}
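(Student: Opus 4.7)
The strategy is to proceed by induction on the complexity measure
\[
N(\rho) := \sum_{u \in V(\Gamma_1)} d\bigl(1, N(J(u))\bigr),
\]
where for each $u \in V(\Gamma_1)$ I write $\rho(u) = r_{J(u)}$ with $J(u)$ a hyperplane of $X(\Gamma_2)$, paired lexicographically with $\#V(\Gamma_1)$ so that foldings also contribute progress. The base case, where $\rho$ already sends the generators to distinct reflections along a peripheral collection of hyperplanes, is immediate: $\rho$ is itself a peripheral embedding and the factorization is trivial (take $k=0$, or equivalently insert trivial foldings).

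The inductive step splits into two reductions. If two distinct generators have the same image, let $\sim$ be the equivalence relation on $V(\Gamma_1)$ generated by $u \sim v \iff \rho(u) = \rho(v)$; the associated folding $\pi : C(\Gamma_1) \to C(\Gamma_1/\sim)$ is well-defined, and $\rho$ factors as $\rho_1 \circ \pi$ for some $\rho_1$ with strictly fewer generators. The inductive hypothesis applies to $\rho_1$, and prepending $\pi$ yields the desired factorization of $\rho$.

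Otherwise all images are distinct and the collection $\mathcal{J} := \{J(u) : u \in V(\Gamma_1)\}$ fails to be peripheral. I plan to pick $v \in V(\Gamma_1)$ \emph{extremally}, meaning that $J(v)$ is not separated from $1$ by any other hyperplane of $\mathcal{J}$, yet $J(v)$ separates $1$ from some $J(u) \in \mathcal{J}$. Such a $v$ exists by chasing the chain of successive separating hyperplanes toward $1$, which terminates since $\mathcal{J}$ is finite. The vertices $u$ and $v$ are then non-adjacent in $\Gamma_1$, because adjacency would force $\rho(u)$ and $\rho(v)$ to commute and hence $J(u), J(v)$ to be equal or transverse, contradicting the separation. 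I then apply the partial conjugation $\alpha \in \mathrm{Aut}(C(\Gamma_1))$ by $v$ on the connected component $K$ of $\Gamma_1 \setminus \mathrm{star}(v)$ containing $u$, obtaining a new morphism $\rho \circ \alpha$ to which the inductive hypothesis will be applied; the decomposition of $\rho$ is then obtained by prepending $\alpha^{-1} = \alpha$ (note $v^2 = 1$) and, if the preceding step was already a partial conjugation, an intermediate trivial folding to respect the alternating format of the statement.

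The main obstacle is verifying that $N(\rho \circ \alpha) < N(\rho)$. Lemma \ref{lem:shortendist} gives strict decrease at $u$ itself, and the reflection $r_{J(v)}$ preserves any $J(w)$ which is either equal or transverse to $J(v)$; what must be excluded is the ``bad'' configuration in which $J(v)$ and $J(w)$ are disjoint with $J(w)$ lying strictly on the near side of $J(v)$, for then the reflection would move $J(w)$ away from $1$. My plan is to propagate along a path $u = u_0, u_1, \ldots, u_n = w$ inside $K$: consecutive hyperplanes $J(u_i), J(u_{i+1})$ are equal or transverse because $u_i, u_{i+1}$ commute in $C(\Gamma_2)$, and the extremality of $v$ prevents any $J(u_i)$ from being disjoint from $J(v)$ on its near side, since such a hyperplane would itself separate $J(v)$ from $1$. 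Propagating through the mixed case where $J(v)$ is transverse to an intermediate $J(u_i)$ is the most delicate combinatorial step and is where I expect the bulk of the technical work to sit; it may require refining the choice of $v$, interleaving additional foldings to absorb problematic hyperplanes, or invoking the global structure of the reflection subgroup $\rho(C(\Gamma_1))$ provided by Theorem \ref{thm:ReflectionGeneral} to control the interactions between $\mathcal{J}$ and its maximal peripheral subcollection $\mathcal{J}_0$.
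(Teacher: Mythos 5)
Your overall architecture is the same as the paper's: fold whenever two generators share an image, and otherwise apply a partial conjugation chosen to strictly decrease the complexity $\sum_{u} d(1,N(J(u)))$, concluding by induction (the paper runs this induction via its Lemma \ref{lem:InductionComplexity}). The base case, the folding step, and the observation that $u$ and $v$ are non-adjacent all match the paper. The problem is that the one step carrying essentially all of the mathematical content --- the strict inequality $N(\rho\circ\alpha)<N(\rho)$ --- is precisely the step you leave open. In the paper this is exactly Lemma \ref{lem:InductionComplexity}, whose proof reduces to a single ``Fact'': for \emph{every} $w$ in the conjugated component $\Lambda$, the hyperplane $J_a$ separates $1$ from $J_w$, so that Lemma \ref{lem:shortendist} applies to every conjugated generator simultaneously. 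The paper does not choose $a$ extremally; it takes any pair $a,b$ with $J_a$ separating $1$ from $J_b$, joins $J_w$ to $J_b$ by the chain of hyperplanes coming from a path $w=u_1,\ldots,u_n=b$ inside $\Lambda$ (consecutive hyperplanes are equal or transverse because consecutive generators commute), argues that no $J_{u_i}$ is transverse to $J_a$, and concludes that the entire chain lies in the halfspace of $J_a$ not containing $1$. Without some version of this Fact your induction does not close, so the proposal as written is incomplete.

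Two further points. First, the sub-argument you do offer for the non-mixed case is incorrect: a hyperplane $J(u_i)$ disjoint from $J(v)$ and lying in the halfspace of $J(v)$ that contains $1$ need not separate $J(v)$ from $1$ --- the two hyperplanes can simply face each other across the basepoint --- so extremality of $v$ does not exclude this configuration, and reflecting such a $J(u_i)$ across $J(v)$ strictly \emph{increases} its distance to $1$, which is exactly what would break the complexity argument. Second, your worry about the ``mixed case'' (an intermediate $J(u_i)$ transverse to $J(v)$) is precisely the case the paper's Fact must rule out; the paper does so by invoking Lemma \ref{lem:transverseimpliesadj}, and you should examine that invocation carefully, since that lemma concerns labels in $\Gamma_2$ while the non-adjacency at your disposal is in $\Gamma_1$ (in the automorphism setting of the source the paper cites the two coincide, but for a general morphism they do not). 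In short, you have correctly located the crux of the proof, but identifying the difficulty is not the same as resolving it: what is needed, and what your write-up does not supply, is a proof that after the partial conjugation no conjugated hyperplane moves farther from $1$.
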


\noindent
Let us recall the definition of partial conjugations used in the previous statement:

\begin{definition}
Let $\Gamma$ be a simplicial graph, $u \in V(\Gamma)$ a vertex, and $\Lambda$ a connected component of $\Gamma \backslash \mathrm{star}(u)$ (i.e., of the complement of $u$ and all its neighbors). The map
$$w \mapsto \left\{ \begin{array}{cl} w & \text{if $w \notin V(\Lambda)$} \\ uwu^{-1} & \text{if $w \in V(\Lambda)$} \end{array} \right.$$
induces an automorphism of $C(\Gamma)$, referred to as a \emph{partial conjugation}. 
\end{definition}

\noindent
Before turning to the proof of Theorem \ref{thm:MorphismReflection}, we emphasize that the alternating sequence of foldings and partial conjugations in the statement cannot be replaced with the concatenation of a well-chosen folding and a well-chosen partial conjugation. We refer to Example \ref{ex:alternating} for a justification. 

\medskip \noindent
For the purpose of the proof of Theorem \ref{thm:MorphismReflection}, we introduce the following definition:

\begin{definition}
Let $\Phi,\Psi$ be two finite simplicial graphs. For any morphism $\varphi : C(\Phi) \to C(\Psi)$ sending each generator $u$ of $C(\Phi)$ to a reflection along a hyperplane $J_u$ of $X(\Psi)$, the \emph{complexity} of $\varphi$ is $\displaystyle \chi(\varphi):= \sum\limits_{u \in V(\Phi)} d(1,N(J_u))$. 
\end{definition}

\medskip \noindent
Combined with our next lemma, the complexity will be used in order to argue by induction. 

\begin{lemma}\label{lem:InductionComplexity}
Let $\Phi,\Psi$ be two finite simplicial graphs and $\varphi : C(\Phi) \to C(\Psi)$ a morphism sending each generator $u$ of $C(\Phi)$ to a reflection $r_u$ along a hyperplane $J_u$ of $X(\Psi)$. If $\mathcal{J}:= \{ J_u \mid u \in V(\Phi)\}$ is not peripheral, then there exists a partial conjugation $\alpha \in \mathrm{Aut}(C(\Phi))$ such that $\chi(\varphi \alpha)< \chi(\varphi)$. 
\end{lemma}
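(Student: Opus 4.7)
The plan is to exhibit an explicit partial conjugation that decreases complexity. Since $\mathcal{J}$ is not peripheral, I can pick generators $u, v \in V(\Phi)$ such that $J_u$ and $J_v$ are distinct and $J_u$ separates the vertex $1$ from $J_v$. I first observe that $v \notin \mathrm{star}(u)$: were $u$ and $v$ equal or adjacent in $\Phi$, the images $\varphi(u)$ and $\varphi(v)$ would commute in $C(\Psi)$, forcing $J_u$ and $J_v$ to be equal or transverse and contradicting the separation. Hence $v$ lies in some connected component $\Lambda$ of $\Phi \setminus \mathrm{star}(u)$, and I take $\alpha$ to be the partial conjugation by $u$ supported on $\Lambda$.

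A direct computation shows that $\varphi\alpha$ agrees with $\varphi$ outside $\Lambda$, while for $w \in \Lambda$ it sends $w$ to the reflection along $r_u J_w$. Consequently
$$\chi(\varphi \alpha) - \chi(\varphi) = \sum_{w \in \Lambda} \bigl( d(1, N(r_u J_w)) - d(1, N(J_w)) \bigr),$$
so the lemma reduces to showing that this sum is strictly negative. The term indexed by $v$ already contributes a strict decrease via Lemma~\ref{lem:shortendist} applied to $(J_u, J_v)$.

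The core of the argument is to control the remaining terms. I aim to propagate along paths $v = w_0, w_1, \ldots, w_n = w$ in $\Phi \setminus \mathrm{star}(u)$ the following dichotomy: for every $w \in \Lambda$, either $J_u$ separates $1$ from $J_w$, or $J_u$ is equal or transverse to $J_w$. Adjacency of $w_i, w_{i+1}$ in $\Phi$ forces $J_{w_i}, J_{w_{i+1}}$ to be equal or transverse (since their reflections commute in $C(\Psi)$); hence $J_{w_{i+1}}$ must meet $J_{w_i}$, inheriting a point in the halfspace of $J_u$ opposite to $1$. From this I conclude that $J_u$ either coincides with $J_{w_{i+1}}$, is transverse to $J_{w_{i+1}}$, or separates $1$ from $J_{w_{i+1}}$. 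Under the dichotomy, each term in the sum is nonpositive: in the separation case Lemma~\ref{lem:shortendist} yields a strict decrease, while in the transverse or equal case $r_u$ stabilises $J_{w_{i+1}}$ and the summand vanishes. The delicate point, and the main obstacle, is to rule out the configuration in which $J_w$ becomes disjoint from $J_u$ on the side of $1$, which would produce a positive contribution; handling this robustly along the inductive step likely requires selecting the pair $(u,v)$ with additional minimality, for instance minimising $d(1, N(J_u))$ among all valid separators, so that such a pathological $J_w$ cannot appear within $\Lambda$.
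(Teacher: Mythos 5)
Your setup is the same as the paper's: the same choice of a separating pair $(u,v)$ with $J_u,J_v$ non-transverse and $v\notin\mathrm{star}(u)$, the same partial conjugation supported on the component $\Lambda$ of $\Phi\setminus\mathrm{star}(u)$ containing $v$, and the same reduction of the lemma to estimating $d(1,N(r_uJ_w))-d(1,N(J_w))$ for each $w\in\Lambda$. The problem is that the argument stops exactly where all the content of the lemma lies. You correctly observe that adjacency in $\Phi$ only forces consecutive hyperplanes $J_{w_i},J_{w_{i+1}}$ along a path in $\Lambda$ to be equal or transverse, and that once some $J_{w_i}$ in the chain is transverse to $J_u$, the next hyperplane can a priori land strictly inside the halfspace of $J_u$ containing $1$; for such a $w$ the reflection $r_u$ pushes $J_w$ to the far side, the summand $d(1,N(r_uJ_w))-d(1,N(J_w))$ is strictly positive, and it can cancel or outweigh the gain coming from $v$. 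You name this as ``the delicate point'' and then propose, without carrying it out, that some minimality condition on the pair $(u,v)$ should exclude it. That is not a proof: nothing in the proposal shows that such a pathological $w$ cannot occur, nor that a minimising choice of $(u,v)$ avoids it (there may be only one separating pair available, so there is no room to minimise). Worse, the configuration you are worried about is genuinely realisable by your own propagation mechanism: a chain of pairwise transverse hyperplanes can leave the far halfspace of $J_u$ precisely by passing through a hyperplane transverse to $J_u$ (for instance one whose carrier is all of $X(\Psi)$), so the exclusion cannot come from the chain argument alone. The decisive inequality is therefore reduced to a claim that is neither proved nor obviously true, and this is a genuine gap rather than a routine verification left to the reader.

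For comparison, the paper's proof does not work with your three-way dichotomy at all: it asserts that no hyperplane $J_w$ with $w\in\Lambda$ is transverse to $J_u$, concludes that the entire chain stays in the far halfspace of $J_u$, and gets a strict decrease of every summand from Lemma~\ref{lem:shortendist}. Be aware, however, that the justification offered there is an appeal to Lemma~\ref{lem:transverseimpliesadj}, which concerns adjacency of the labels in $\Psi$ and says nothing about adjacency in $\Phi$; so you cannot close your gap by simply importing that citation, and your more cautious case analysis (which admits the transverse case) is in fact the honest starting point. To complete the proof you would need an actual argument excluding the near-side configuration --- or a different complexity, or a different choice of automorphism --- and none is supplied.
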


\noindent
This lemma is essentially contained in the proof of \cite[Lemma 4.12]{AutGP}. We write a complete proof for the reader's convenience.

\begin{proof}[Proof of Lemma \ref{lem:InductionComplexity}.]
Because $\mathcal{J}$ is not peripheral, there exist two distinct vertices $a,b \in V(\Phi)$ such that $J_a$ separates $1$ from $J_b$. Notice that $a$ and $b$ are not adjacent in $\Phi$ as the hyperplanes $J_a$ and $J_b$ are not transverse. Now, let $\alpha$ denote the partial conjugation of $C(\Phi)$ which conjugates by $a$ the generators of the connected component $\Lambda$ of $\Phi \backslash \mathrm{star}(a)$ which contains $b$. We claim that $\chi( \varphi \alpha ) < \chi ( \varphi )$. 

\medskip \noindent
Notice that, for every $u \notin \Lambda$, $\varphi \alpha (u)= \varphi (u)$ is the reflection $r_u$ along $J_u$; and that, for every $u \in \Lambda$, $\varphi \alpha (u) = \varphi ( aua^{-1}) = r_a r_u r_a^{-1}$ is a reflection along $r_aJ_u$. Therefore, in order to prove the inequality $\chi( \varphi \alpha ) < \chi( \varphi )$, it is sufficient to show that $d(1,N(r_a J_u))< d(1,N(J_u))$ for every $u \in \Lambda$. It is a consequence of Lemma \ref{lem:shortendist} and of the following observation:

\begin{fact}
For every $u \in \Lambda$, $J_a$ separates $1$ from $J_u$.
\end{fact}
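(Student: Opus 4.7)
The plan is to prove the Fact by induction on the combinatorial distance $d_\Lambda(b,u)$ from $b$ to $u$ in the graph $\Lambda$. The base case $u=b$ holds by the hypothesis that $J_a$ separates $1$ from $J_b$, which is exactly how we chose the pair $(a,b)$ when $\mathcal{J}$ fails to be peripheral.

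For the inductive step, let $u\in\Lambda$ and pick a neighbor $v\in\Lambda$ of $u$ with $d_\Lambda(b,v)<d_\Lambda(b,u)$, so that by induction $J_a$ separates $1$ from $J_v$. Since $u$ and $v$ are adjacent in $\Phi$, the relation $[u,v]=1$ of $C(\Phi)$ forces $r_u=\varphi(u)$ and $r_v=\varphi(v)$ to commute in $C(\Psi)$. I would then argue that, in the cube complex $X(\Psi)$ associated to a right-angled Coxeter group (acted on with trivial vertex-stabilisers), two commuting reflections are either equal or have transverse hyperplanes: indeed, $r_u r_v r_u^{-1}=r_v$ means $r_u$ stabilises $J_v$, and Lemma \ref{lem:transverseiff} together with the disjoint/tangent analysis rules out every case except $J_u=J_v$ or $J_u$ transverse to $J_v$. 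The case $J_u=J_v$ reduces the conclusion to the inductive hypothesis.

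In the remaining case $J_u$ transverse to $J_v$, the carriers meet in a square $s\subset N(J_u)\cap N(J_v)$. Because $J_a$ separates $1$ from $J_v$, the whole carrier $N(J_v)$ lies in the closed halfspace $J_a^-$ not containing $1$; hence $s\subset J_a^-$ and so at least one vertex of $N(J_u)$ sits in $J_a^-$. To upgrade this to ``$J_u\subset J_a^-$'' (which, together with $J_u\neq J_a$, gives that $J_a$ separates $1$ from $J_u$), I would show that $J_u$ is \emph{not} transverse to $J_a$: if $J_u$ were transverse to $J_a$, then $r_a$ would stabilise $J_u$, so $r_a$ and $r_u$ would commute. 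The key ingredient here is that $u\in\Lambda\subset\Phi\setminus\mathrm{star}(a)$, so $u\neq a$ and $u$ is not adjacent to $a$ in $\Phi$; combined with an analysis of when two reflections in $C(\Psi)$ along distinct hyperplanes of $X(\Psi)$ can commute, this should rule out transversality with $J_a$.

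The main obstacle will be precisely this last step: the morphism $\varphi$ is allowed to collapse non-relations of $C(\Phi)$, so a priori $r_a,r_u$ could commute even though $a,u$ are non-adjacent. I expect to handle this either by showing that, under our hypothesis that $J_a$ separates $1$ from $J_v$ and $J_v$ is transverse to $J_u$, transversality of $J_u$ and $J_a$ is geometrically incompatible (any square crossed by both $J_u$ and $J_a$ would force $N(J_u)$ to reach the halfspace $J_a^+$, contradicting that $N(J_u)$ contains the square $s\subset J_a^-$ while all of $J_u$'s crossings with $J_v$ occur there), or, if that fails, by weakening the Fact to ``either $J_a$ separates $1$ from $J_u$ or $J_a$ is transverse to $J_u$''---in the transverse case $r_aJ_u=J_u$ so the term $d(1,N(J_u))$ is unchanged under $\alpha$, while the $u=b$ term strictly drops by Lemma~\ref{lem:shortendist}, still yielding $\chi(\varphi\alpha)<\chi(\varphi)$.
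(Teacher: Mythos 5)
Your skeleton---propagating the separation statement from $b$ outwards along a path in $\Lambda$, using that adjacent generators of $\Phi$ have commuting images and hence equal or transverse hyperplanes---is essentially the same as the paper's, which takes a path $u=u_1,\ldots,u_n=b$ in $\Phi\setminus\mathrm{star}(a)$ and argues that the corresponding chain of hyperplanes stays in the halfspace of $J_a$ not containing $1$. The difference lies in how one prevents the chain from meeting $J_a$: the paper asserts, ``as a consequence of Lemma \ref{lem:transverseimpliesadj}'', that no $J_{u_i}$ is transverse to $J_a$. That deduction would need the labels \emph{in $\Psi$} of the hyperplanes $J_{u_i}$ and $J_a$ to be non-adjacent, whereas the hypothesis $u_i\notin\mathrm{star}(a)$ only concerns adjacency \emph{in $\Phi$}, and $\varphi$ need not relate the two. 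So the obstacle you single out is not an artifact of your route: it is exactly the point the paper's own argument glosses over.

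Unfortunately, neither of your proposed repairs closes the gap, because the Fact is false as stated. Take $\Psi$ the path $x-y-z$, so $C(\Psi)\cong D_\infty\times\mathbb{Z}/2$ with $y$ central and the unique hyperplane $J_y$ transverse to every hyperplane labelled $x$ or $z$; let $\Phi$ have vertices $a,b,u$ with single edge $\{b,u\}$ and let $\varphi$ send $a\mapsto x$, $b\mapsto (xz)x(xz)^{-1}$, $u\mapsto y$. Then $J_a=J_x$ separates $1$ from $J_b=xzJ_x$ and $\Lambda=\{b,u\}$, yet $J_u=J_y$ is transverse to $J_a$; so your first repair (showing transversality with $J_a$ is geometrically impossible) cannot succeed. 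Your fallback ``$J_a$ separates $1$ from $J_u$ or is transverse to $J_u$'' also fails: with $\Psi=C_4$ (so $X(\Psi)=\mathbb{R}^2$), $\Phi=\{a\}\sqcup(b-v-u)$, and $\varphi$ sending $a,b,v,u$ to the reflections along the vertical lines $\{x=1.5\}$, $\{x=3.5\}$, the horizontal line $\{y=0.5\}$ and the vertical line $\{x=-1.5\}$ respectively, the hyperplane $J_u$ lies strictly on the same side of $J_a$ as $1$ and is not transverse to it. In that example conjugating by $r_a$ sends $J_u$ to $\{x=4.5\}$, so $d(1,N(J_u))$ increases from $1$ to $4$ while the $b$-term only drops from $3$ to $0$, and $\chi$ does not decrease at all---so the weakened Fact cannot rescue the complexity argument either. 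Any correct treatment has to either impose additional hypotheses linking adjacency in $\Phi$ to the labels in $\Psi$ (as in the automorphism setting the paper cites) or reformulate the Fact and the induction of Lemma \ref{lem:InductionComplexity}; as written, your proof (like the paper's) cannot be completed.
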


\noindent
Let $u \in \Lambda$ be a vertex. By definition of $\Lambda$, there exists a path 
$$u_1=u, \ u_2, \ldots, u_{n-1}, \ u_n=b$$
in $\Phi$ that is disjoint from $\mathrm{star}(a)$. This yields a path
$$J_{u_1}=J_u, \ J_{u_2}, \ldots, J_{u_{n-1}}, \ J_{u_n}=J_b$$
in the crossing graph of $X(\Gamma)$. As a consequence of Lemma \ref{lem:transverseimpliesadj}, none of these hyperplanes are transverse to $J_a$, which implies that they are all contained in the same halfspace delimited by $J_a$, namely the one containing $J_b$. As $J_a$ separates $1$ from $J_b$, this concludes the proof of our fact, and finally to our lemma. 
\end{proof}

\begin{proof}[Proof of Theorem \ref{thm:MorphismReflection}.]
Let $\Lambda_1$ denote the graph obtained from $\Gamma_1$ by identifying any two vertices having the same image under $\rho$. Notice that $\rho$ factors through $C(\Lambda_1)$, i.e., there exists a morphism $\rho_1 : C(\Lambda_1) \to C(\Gamma_2)$ such that $\rho = \rho_1 \circ \pi_1$ where $\pi_1$ is the folding $C(\Gamma_1) \to C(\Lambda_1)$. By construction, $\rho_1$ sends the generators of $C(\Lambda_1)$ to pairwise distinct reflections. Next, if $\rho_1$ sends the generators of $C(\Lambda_1)$ to reflections along a peripheral collection of hyperplanes, then we are done. If it is not the case, then we apply Lemma~\ref{lem:InductionComplexity} and find a partial conjugation $\beta_1 \in \mathrm{Aut}(C(\Lambda_1))$ such that $\chi(\rho_1 \circ \beta_1)< \chi(\rho_1)$. Notice that $\rho = (\rho \circ \beta_1) \circ \beta_1^{-1} \circ \pi_1$. 

\medskip \noindent
The desired conclusion follows by induction on the complexity.
\end{proof}

\subsection{Diagonal embeddings}\label{section:Diagonal}

\noindent
In this section, our goal is to describe the morphisms which exist between two right-angled Coxeter groups of arbitrary dimensions. Before stating our main result in this direction, we need the following definition:

\begin{definition}
Let $\Phi$ and $\Psi$ be two simplicial graph. A morphism $\rho : C(\Phi) \to C(\Psi)$ is a \emph{diagonal} if it sends each generator of $C(\Phi)$ to a product of pairwise commuting generators of $C(\Psi)$. 
\end{definition}

\noindent
Our goal is to prove the following statement:

\begin{thm}\label{thm:MorphismAnyDim}
Let $\Phi, \Psi$ be two finite graphs and $\rho : C(\Phi) \to C(\Psi)$ a morphism. Then there exist 
\begin{itemize}
	\item a sequence of graphs $\Lambda_1, \ldots, \Lambda_p$;
	\item a diagonal morphism $\delta : C(\Phi) \hookrightarrow C(\Lambda_1)$;
	\item partial conjugations $\alpha_1 \in \mathrm{Aut}(C(\Lambda_1)), \ldots, \alpha_{p-1} \in \mathrm{Aut}(C(\Lambda_{p-1}))$;
	\item foldings $\pi_1 : C(\Lambda_1) \to C(\Lambda_2), \ldots, \pi_{p-1} : C(\Lambda_{p-1}) \to C(\Lambda_p)$;
	\item and a peripheral embedding $\bar{\rho} : C(\Lambda_p) \hookrightarrow C(\Psi)$
\end{itemize}
such that $\rho = \bar{\rho} \circ \pi_{p-1} \circ \alpha_{p-1} \circ \cdots \circ \pi_1 \circ \alpha_1 \circ \delta$. 
\end{thm}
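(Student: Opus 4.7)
The plan is to reduce Theorem \ref{thm:MorphismAnyDim} to Theorem \ref{thm:MorphismReflection} by extracting a diagonal morphism from $\rho$ that absorbs the fact that $\rho(u)$ may fail to be a reflection. Specifically, I would build an intermediate graph $\Lambda_1$ together with a diagonal embedding $\delta : C(\Phi) \hookrightarrow C(\Lambda_1)$ and a morphism $\rho_1 : C(\Lambda_1) \to C(\Psi)$ sending every generator to a reflection, such that $\rho = \rho_1 \circ \delta$. Applying Theorem \ref{thm:MorphismReflection} to $\rho_1$ then yields the remaining sequence $\Lambda_2, \ldots, \Lambda_p$ of graphs together with the alternating partial conjugations and foldings and the final peripheral embedding, and composing with $\delta$ produces the desired factorisation of $\rho$.

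The crucial input is a canonical decomposition of each $\rho(u)$ as a product of pairwise commuting reflections. Since $u$ has order two, so does $\rho(u)$, hence $\langle \rho(u) \rangle$ is a finite subgroup of $C(\Psi)$ and fixes some cube $C_u$ in $X(\Psi)$ by the Bruhat--Tits fixed point theorem. The involution $\rho(u)$ then acts on $C_u$ by inverting a well-defined subfamily $J_{u,1}, \ldots, J_{u,k_u}$ of the hyperplanes crossing $C_u$, and since vertex-stabilisers in $C(\Psi)$ are trivial and a hyperplane determines the reflection along it uniquely, one concludes that $\rho(u) = r_{J_{u,1}} \cdots r_{J_{u,k_u}}$ where the hyperplanes are pairwise transverse (so that the reflections pairwise commute) and the set $\{J_{u,1}, \ldots, J_{u,k_u}\}$ is intrinsically determined by $\rho(u)$.

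The key compatibility property I would establish next is: if $u, v \in V(\Phi)$ are adjacent, then every hyperplane inverted by $\rho(u)$ is equal or transverse to every hyperplane inverted by $\rho(v)$. Indeed, $\rho(u)$ and $\rho(v)$ commute, so $\langle \rho(u), \rho(v) \rangle$ is a finite abelian $2$-group and fixes a common cube $C_{u,v}$; the inverted hyperplanes of both $\rho(u)$ and $\rho(v)$ all cross $C_{u,v}$, hence are mutually transverse, and the corresponding reflections pairwise commute in $C(\Psi)$.

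With this in hand, I would define $\Lambda_1$ on the vertex set $V(\Lambda_1) = \bigsqcup_{u \in V(\Phi)} \{w_{u,1}, \ldots, w_{u,k_u}\}$, declaring $w_{u,i}$ and $w_{v,j}$ to be adjacent exactly when either $u = v$ (with $i \neq j$) or $u, v$ are distinct and adjacent in $\Phi$. Then $\delta : u \mapsto w_{u,1} \cdots w_{u,k_u}$ and $\rho_1 : w_{u,i} \mapsto r_{J_{u,i}}$ are well-defined morphisms: $\delta$ respects adjacency in $\Phi$ by construction, while $\rho_1$ respects adjacency in $\Lambda_1$ thanks to the within-fibre commutation of the decomposition reflections and the cross-fibre compatibility property. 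One checks that $\delta$ is injective by noting that for non-adjacent $u, v \in V(\Phi)$ there is no cross-fibre edge in $\Lambda_1$, so $\delta(u)$ and $\delta(v)$ do not commute in $C(\Lambda_1)$, while $\rho_1 \circ \delta = \rho$ holds by direct computation on generators. Applying Theorem \ref{thm:MorphismReflection} to $\rho_1$ and prepending $\delta$ completes the argument. The main obstacle I anticipate is the compatibility property, i.e.\ the common fixed cube argument together with the canonical nature of the inverted-hyperplane decomposition, which requires a careful analysis of involutions of CAT(0) cube complexes leveraging the triviality of vertex-stabilisers in $C(\Psi)$.
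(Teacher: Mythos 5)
Your proposal is correct and follows essentially the same route as the paper: the paper likewise writes each $\rho(u)$ as a product of pairwise commuting reflections (quoting the fact that an involution of $C(\Psi)$ is conjugate to a product of pairwise commuting generators, where you instead derive this from a fixed-cube argument), factors $\rho$ as a diagonal morphism into an intermediate right-angled Coxeter group --- there taken to be the crossing graph of the resulting hyperplane collection rather than your abstract blow-up of $\Phi$ --- followed by a morphism sending generators to reflections, and then invokes Theorem \ref{thm:MorphismReflection}. The compatibility lemma for commuting involutions and the well-definedness and injectivity checks for $\delta$ that you spell out are precisely the points the paper's one-paragraph proof leaves implicit, so your write-up is if anything more complete.
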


\begin{proof}
For every generator $u \in C(\Phi)$, $\rho(u)$ must have order two in $C(\Psi)$ so it can be written as $gs_1 \cdots s_k g^{-1}$ where $g \in C(\Psi)$ and where $s_1, \ldots, s_k$ are pairwise commuting generators. In other words, $\rho(u)$ is the product of the reflections $gs_1g^{-1}, \ldots, gs_kg^{-1}$ along the hyperplanes $gJ_{s_1},\ldots, gJ_{s_k}$. Let $\mathcal{J}$ be the collection of all these hyperplanes where $u$ runs over the generators of $C(\Phi)$. If $\Lambda_1$ denotes the crossing graph of $\mathcal{J}$, then the morphism $\rho : C(\Phi) \to C(\Psi)$ factors through $C(\Lambda_1)$, i.e., there exist two morphisms $\delta : C(\Phi) \to C(\Lambda_1)$ and $\varphi : C(\Lambda_1) \to C(\Psi)$ such that $\rho= \varphi \circ \delta$, such that $\delta$ sends each generator of $C(\Phi)$ to a product of pairwise commuting generators of $C(\Lambda_1)$, and such that $\varphi$ sends each generator of $C(\Lambda_1)$ to a reflection of $C(\Psi)$. Applying Theorem \ref{thm:MorphismReflection} to $\varphi : C(\Lambda_1) \to C(\Psi)$ yields the desired conclusion. 
\end{proof}

\subsection{Morphisms in dimension two}\label{section:MainThm}

\noindent
When restricting ourselves to two-dimensional right-angled Coxeter groups, it is possible to simplify Theorem \ref{thm:MorphismAnyDim} by replacing the diagonal morphism with a composition of even simpler morphisms. More precisely, the main statement of this section is:

\begin{thm}\label{thm:Morphisms}
Let $\Phi,\Psi$ be two finite simplicial graphs and $\rho : C(\Phi) \to C(\Psi)$ a morphism. Assume that $\Psi$ is triangle-free. There exist a sequence of graphs $\Lambda_0= \Phi, \Lambda_1, \ldots, \Lambda_p$ and morphisms $\bar{\rho} : C(\Lambda_p) \to C(\Psi)$ and $\alpha_i : C(\Lambda_i) \to C(\Lambda_{i+1})$ for $0 \leq i \leq p-1$ such that
\begin{itemize}
	\item for every $0 \leq i \leq p-1$, $\alpha_i$ is an automorphism, a folding, an ingestion or an erasing;
	\item $\Lambda_p$ decomposes as the disjoint union of $q$ isolated vertices with a graph $\Gamma$ without isolated vertex, and $\bar{\rho}$ is a free product $\psi_1 \ast \cdots \ast \psi_q \ast \psi$ where $\psi : C(\Gamma) \to C(\Psi)$ is a peripheral embedding and $\psi_i : \mathbb{Z}/2\mathbb{Z} \to C(\Psi)$ for $1 \leq i \leq q$;
	\item the following diagram is commutative:
\[
\xymatrix{C(\Phi) = C(\Lambda_0) \ar[rr]^\rho \ar[d]_{\alpha_0} & & C(\Psi) \\ C(\Lambda_1) \ar[d]_{\alpha_1} & & \\ \vdots \ar[d]_{\alpha_{p-1}} & & \\ C(\Lambda_p)= (\mathbb{Z}/2 \mathbb{Z}) \ast \cdots \ast (\mathbb{Z}/2\mathbb{Z}) \ast C(\Gamma) \ar[rruuu]_{\bar{\rho}= \psi_1 \ast \cdots \ast \psi_q \ast \psi} & & }
\]
\end{itemize}
\end{thm}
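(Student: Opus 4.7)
The plan is to reduce to Theorem \ref{thm:MorphismAnyDim} and then decompose the diagonal morphism at the head of that decomposition into elementary pieces, exploiting the triangle-freeness of $\Psi$. I would start by applying Theorem \ref{thm:MorphismAnyDim} to $\rho$, producing a commutative diagram
$$\rho = \bar{\rho} \circ \pi_{p-1} \circ \alpha_{p-1} \circ \cdots \circ \pi_1 \circ \alpha_1 \circ \delta,$$
where $\delta : C(\Phi) \to C(\Lambda_1)$ is diagonal, each $\pi_i$ is a folding, each $\alpha_i$ is a partial conjugation (hence an automorphism in the sense of the theorem), and $\bar{\rho}$ is a peripheral embedding. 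The foldings and partial conjugations are already among the permitted types of elementary morphism, so the heart of the argument is to rewrite $\delta$ itself as a composition of erasings and ingestions.

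The key observation is that, since $\Psi$ is triangle-free, Proposition \ref{prop:RACGcc} gives $\dim X(\Psi) \leq 2$, so no three hyperplanes of $X(\Psi)$ are pairwise transverse. Because the graph $\Lambda_1$ produced in the proof of Theorem \ref{thm:MorphismAnyDim} is the crossing graph of a finite family of hyperplanes in $X(\Psi)$, it is itself triangle-free. Since $\delta(u)$ is, by definition, a product of pairwise commuting generators of $C(\Lambda_1)$ (i.e., a clique of $\Lambda_1$), the absence of triangles forces $\delta(u)$ to be the identity, a single generator, or a product $ab$ of two adjacent generators of $\Lambda_1$.

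I would then decompose $\delta$ by processing the generators of $\Phi$ one at a time. When $\delta(u)$ is already a single generator of $\Lambda_1$, there is nothing to do at $u$. When $\delta(u) = 1$, the vertex $u$ is deleted and the corresponding map of right-angled Coxeter groups sending $u \mapsto 1$ and fixing the other generators is an erasing. When $\delta(u) = ab$ with $a,b$ adjacent, one performs an ingestion that replaces $u$ by a fresh pair of adjacent vertices mapping to $a$ and $b$. Iterating over the vertices of $\Phi$ yields a finite sequence of ingestions and erasings whose composition coincides with $\delta$. Inserting this sequence at the head of the diagram provided by Theorem \ref{thm:MorphismAnyDim} produces a composition where every map is of the four permitted types.

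It then remains to separate the isolated vertices of the final graph. If $v_1, \ldots, v_q$ are the isolated vertices of $\Lambda_p$ and $\Gamma$ is the induced subgraph on the remaining vertices, the standard splitting of a right-angled Coxeter group over a disconnected graph yields $C(\Lambda_p) = (\mathbb{Z}/2\mathbb{Z})^{\ast q} \ast C(\Gamma)$; the peripheral embedding $\bar{\rho}$ restricts on each free factor, giving the desired free-product decomposition $\bar{\rho} = \psi_1 \ast \cdots \ast \psi_q \ast \psi$ with $\psi$ a peripheral embedding of $C(\Gamma)$. The main obstacle I anticipate is making sure that the intermediate graphs in step three stay simplicial and triangle-free as one processes the vertices, and that the ingestion step at $u$ is compatible with whatever adjacencies $u$ already has in the ambient graph; this is where the two-dimensionality of $X(\Psi)$, and hence of $\Lambda_1$, is used in an essential way, as each substitution $u \mapsto ab$ is a local move along a single edge and cannot create a $3$-clique.
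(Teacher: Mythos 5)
Your overall strategy---factor through Theorem \ref{thm:MorphismAnyDim} and then break the diagonal morphism $\delta : C(\Phi) \to C(\Lambda_1)$ into ingestions and erasings---has a genuine gap at the decomposition step, and the gap is exactly where the real content of the theorem lives. All four permitted elementary morphisms (automorphism, folding, ingestion, erasing) map $C(\Gamma)$ to $C(\Gamma')$ where $\Gamma'$ is either $\Gamma$, a quotient of $\Gamma$, or a subgraph of $\Gamma$; in particular none of them increases the number of generators. The diagonal $\delta$ of Theorem \ref{thm:MorphismAnyDim}, by contrast, maps into the crossing graph of a family of hyperplanes and typically \emph{increases} the number of generators (already for $\Phi$ a single vertex with $\rho(u)=ab$, the graph $\Lambda_1$ has two vertices). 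Your move ``replace $u$ by a fresh pair of adjacent vertices mapping to $a$ and $b$'' is therefore not an ingestion in the sense of the paper---an ingestion deletes a vertex $a$ having \emph{exactly two} neighbors $b,c$ and sends $a \mapsto bc$---and it is not among the permitted operations, so the composition you build does not satisfy the first bullet of the statement.

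The obstacle you flag at the end (``compatibility of the ingestion step with whatever adjacencies $u$ already has'') is not a technicality to be checked but the heart of the proof, and it cannot be discharged in the generality you assume. To realize $u \mapsto ab$ by an ingestion one needs $u$ to have exactly two neighbors in the current graph whose images are precisely the reflections $a$ and $b$; the paper obtains this from Lemma \ref{lem:Commute}, a rigidity statement for commuting (products of) reflections in a two-dimensional cube complex, combined with injectivity on generators after an initial erasing and folding. That lemma shows a ``bad'' non-isolated vertex has at most two neighbors, and the two cases are handled differently: two neighbors gives an ingestion, but \emph{one} neighbor requires a transvection (an automorphism), a case absent from your scheme. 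A bad vertex with \emph{zero} neighbors cannot be handled at all, which is precisely why the statement carves out $q$ isolated vertices with unconstrained maps $\psi_i : \mathbb{Z}/2\mathbb{Z} \to C(\Psi)$ rather than asserting that $\bar\rho$ is a peripheral embedding on all of $C(\Lambda_p)$; your route through Theorem \ref{thm:MorphismAnyDim} would force such a vertex into the diagonal and leave you stuck. The paper's actual proof does not pass through Theorem \ref{thm:MorphismAnyDim}: it first proves Proposition \ref{prop:Morphisms} directly (erase generators sent to $1$, fold generators with equal images, then eliminate bad vertices one at a time via Lemma \ref{lem:Commute} using transvections and ingestions), and only then applies the reflection theorem \ref{thm:MorphismReflection} to the non-isolated part. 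I would encourage you to redo the argument along those lines.
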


\noindent
Before turning to the proof, we need to define some of the terms used in the previous statement.

\begin{definition}
Let $\Phi,\Psi$ be two simplicial graphs. A morphism $\rho : C(\Phi) \to C(\Psi)$ is an \emph{erasing} if $\Psi$ is a subgraph of $\Phi$ and $\rho$ fixes each generator corresponding to a vertex of $\Psi$ and sends to $1$ all the other generators.
\end{definition}

\begin{definition}
Let $\Phi,\Psi$ be two simplicial graphs. A morphism $\rho : C(\Phi) \to C(\Psi)$ is an \emph{ingestion} if there exists a vertex $a \in V(\Phi)$ having exactly two neighbors $b,c \in V(\Phi)$ such that $\Psi = \Gamma \backslash \{a\}$ and such that $\rho$ fixes each generator corresponding to a vertex of $\Psi$ and sends $a$ to $bc$. 
\end{definition}

\noindent
Theorem \ref{thm:Morphisms} will be an easy consequence of the combination of Theorem \ref{thm:MorphismReflection} from the previous section with the following proposition:

\begin{prop}\label{prop:Morphisms}
Let $\Phi,\Psi$ be two finite simplicial graphs and $\rho : C(\Phi) \to C(\Psi)$ a morphism. Assume that $\Psi$ is triangle-free. There exist an erasing $\alpha : C(\Phi) \to C(\Phi')$, a folding $\beta : C(\Phi') \to C(\Phi'')$, a product of ingestions and transvections $\gamma : C(\Phi'') \to C(\Phi''')$, and a morphism $\bar{\rho} : C(\Phi''') \to C(\Psi)$ such that the diagram 
\[
\xymatrix{C(\Phi) \ar[rr]^\rho \ar[d]_\alpha & & C(\Psi)  \\ C(\Phi') \ar[d]_\beta & & \\ C(\Phi'') \ar[d]_\gamma & & \\ C(\Phi''') \ar[rruuu]_{\bar{\rho}} & & }
\]
is commutative, and such that $\bar{\rho}$ sends each generator of $C(\Phi''')$ which does not correspond to an isolated vertex of $\Phi'''$ to a reflection of $C(\Psi)$. 
\end{prop}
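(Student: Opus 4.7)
The plan is to decompose $\rho$ by analysing the image of each generator and handling the possible cases in turn. Since $\Psi$ is triangle-free, the cube complex $X(\Psi)$ is two-dimensional, so every involution in $C(\Psi)$ is either $1$, a single reflection $gsg^{-1}$, or a product $gstg^{-1}$ of two commuting reflections coming from an edge $\{s,t\}$ of $\Psi$. This yields a partition $V(\Phi) = V_0 \sqcup V_1 \sqcup V_2$ according to the type of $\rho(u)$.

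First, handle $V_0$ via the erasing. Set $\Phi' := \Phi \backslash V_0$ as an induced subgraph and let $\alpha : C(\Phi) \to C(\Phi')$ send each generator of $V_0$ to $1$ and fix the others; all defining relations of $C(\Phi)$ remain valid after this substitution, so $\alpha$ is well-defined, and $\rho$ factors as $\rho = \rho' \circ \alpha$ for a unique $\rho' : C(\Phi') \to C(\Psi)$ whose image of every generator is a non-trivial involution. Second, fold together generators of $C(\Phi')$ with identical image under $\rho'$: declare $u \sim v$ on $V(\Phi')$ iff $\rho'(u) = \rho'(v)$, set $\Phi'' := \Phi'/\sim$, and let $\beta : C(\Phi') \to C(\Phi'')$ be the induced folding. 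This is well-defined because the images already satisfy the required Coxeter and commutation relations in $C(\Psi)$, and the resulting $\rho'' : C(\Phi'') \to C(\Psi)$ sends distinct generators to distinct non-trivial involutions.

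The final stage uses ingestions, possibly preceded by transvections, to dispose of every generator $u \in V(\Phi'')$ whose image under $\rho''$ is a product $r_1 r_2$ of two reflections. The idea is to express each such $u$ as the product $v_1 v_2$ of two generators of $\Phi'''$ which are adjacent there and are mapped under $\bar{\rho}$ to $r_1$ and $r_2$ respectively. When $u$ has two neighbours in $\Phi''$ mapping to $r_1$ and $r_2$ that are themselves adjacent in $\Phi''$, an ingestion removes $u$ and identifies it with their product. When the required adjacency is missing, a transvection is applied first to introduce the edge, so that the subsequent ingestion becomes well-defined. When $u$ does not have such a pair of neighbours at all (because one of the reflections $r_1, r_2$ is not the image of any other generator of $\Phi''$), $u$ survives as an isolated vertex of $\Phi'''$, which is permitted since $\bar{\rho}$ is only required to send non-isolated generators to reflections.

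The main obstacle will be orchestrating these transvections and ingestions compatibly across all vertices of $V_2 \cap V(\Phi'')$, and verifying that the composite $\bar{\rho} \circ \gamma \circ \beta \circ \alpha$ equals $\rho$. This should be handled by induction on $\#(V_2 \cap V(\Phi''))$, processing one such vertex at a time and carefully tracking how each transvection and ingestion modifies both the graph and the morphism, so that at every stage the intermediate morphism remains well-defined and the final output $\bar{\rho}$ has the required property of sending non-isolated generators to reflections of $C(\Psi)$.
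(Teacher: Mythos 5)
Your first two stages (the erasing onto $\Phi' = \Phi \setminus V_0$ and the folding identifying generators with equal image) match the paper's proof exactly, and the observation that triangle-freeness forces each image to be trivial, a single reflection, or a product of two reflections along transverse hyperplanes is the right starting point. The third stage, however, has genuine gaps. A transvection is an automorphism of $C(\Gamma)$ for a \emph{fixed} graph $\Gamma$; it does not modify the graph and in particular cannot ``introduce an edge'' so that a subsequent ingestion becomes well-defined. In the paper the transvections play a completely different role: when a bad vertex $u$ (non-isolated, with $\rho''(u)$ a product of two reflections) has exactly \emph{one} neighbour $z$, one shows that $\rho''(z)$ must be one of the two reflections constituting $\rho''(u)$, and precomposing with the transvection $u \mapsto uz$ turns the image of $u$ into a single reflection. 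Your proposal has no mechanism at all for a bad vertex with a single neighbour.

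More fundamentally, you never constrain the images of the neighbours of a bad vertex. The paper's key tool here is Lemma \ref{lem:Commute}: any element commuting with $\rho''(u)=r_1r_2$ and equal to a reflection or a product of two reflections along transverse hyperplanes must be a reflection along one of the two hyperplanes of $r_1r_2$ (proved using trivial vertex-stabilisers and the uniqueness, in dimension two, of the square stabilised by $r_1r_2$). This is what forces a bad vertex to have at most two neighbours after the folding (three neighbours would yield two generators with the same image, contradicting the injectivity on generators arranged by $\beta$), guarantees that in the two-neighbour case the neighbours are sent precisely to $r_1$ and $r_2$ so that the ingestion applies, and rules out your third scenario. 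Indeed, none of the allowed operations (erasing, folding, ingestion, transvection) can turn a non-isolated vertex into an isolated one, so asserting that a bad vertex without suitable neighbours ``survives as an isolated vertex of $\Phi'''$'' contradicts the very conclusion you are trying to prove; the point is that this configuration cannot arise. Your closing paragraph defers to an unspecified induction exactly the part of the argument where all the content lies.
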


\noindent
Let us recall the definition of transvections, used in the previous statement:

\begin{definition}
Let $\Gamma$ be a simplicial graph and $u,v \in V(\Gamma)$ two adjacent vertices such that any neighbor of $u$ distinct from $v$ is also a neighbor of $v$. The map $$\left\{ \begin{array}{ccc} V(\Gamma) & \to & V(\Gamma) \\ w & \mapsto & \left\{ \begin{array}{cl} w & \text{if $w \neq u$} \\ wv & \text{if $w=u$} \end{array} \right. \end{array} \right.$$ induces an automorphism $C(\Gamma) \to C(\Gamma)$, referred to as a \emph{transvection}. 
\end{definition}

\noindent
Before turning to the proof of Proposition \ref{prop:Morphisms}, we begin by proving a preliminary lemma.

\begin{lemma}\label{lem:Commute}
Let $G$ be a group acting on a two-dimensional CAT(0) cube complex $X$ with trivial vertex-stabilisers. Fix two distinct elements $a,b \in G$, and assume that $a$ is the product of two reflections along two transverse hyperplanes $J, H$ and that $b$ is either a reflection or the product of two reflections along two transverse hyperplanes. If $a$ and $b$ commute, then $b$ is a reflection along either $J$ or $H$. 
\end{lemma}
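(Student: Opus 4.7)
The plan is to analyse the CAT(0) fixed-point set of $a$, show that it reduces to a single point, and then observe that commutation with $a$ forces $b$ to fix this point, severely constraining $b$.

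First, I would establish that, in the CAT(0) metric on $X$, $\mathrm{Fix}(a) = J \cap H$ consists of a single point $p$, namely the centre of the unique square of $X$ crossed by both $J$ and $H$. Indeed, $J$ and $H$ are convex subcomplexes, so $J \cap H$ is convex; in a two-dimensional cube complex, distinct hyperplanes can only meet at centres of squares they both cross, so $J \cap H$ is a discrete convex subset of a CAT(0) space, hence a single point (transversality of $J$ and $H$ guaranteeing non-emptiness). To identify $\mathrm{Fix}(a)$ with $J \cap H$, the containment $J \cap H \subseteq \mathrm{Fix}(a)$ is immediate; conversely, if $a(x)=x$ then $r_J(x)=r_H(x)=:y$, and the midpoint of the geodesic $[x,y]$ is fixed by both $r_J$ and $r_H$, hence lies in $J \cap H = \{p\}$. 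A local computation in the Euclidean square $C$ (in which $r_J$ and $r_H$ act as reflections across two orthogonal midlines, so that $a$ is the central symmetry about $p$) then forces $x = p$.

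Since $a$ and $b$ commute, $b$ preserves $\mathrm{Fix}(a) = \{p\}$, so $b(p)=p$, and the conclusion follows by splitting on the form of $b$. If $b = r_K$ is a single reflection, then $p \in \mathrm{Fix}(r_K) = K$; since $p$ lies in the interior of the square $C$, membership of $p$ in $K$ forces $K$ to cross $C$, and a square in a two-dimensional cube complex is crossed by exactly two hyperplanes, namely $J$ and $H$, so $K \in \{J, H\}$. If instead $b = r_{K_1} r_{K_2}$ with $K_1, K_2$ transverse, the same analysis applied to $b$ yields $\mathrm{Fix}(b) = \{q\}$ with $q$ the centre of the unique square crossed by both $K_1$ and $K_2$; the equality $b(p)=p$ then forces $p = q$, whence this square equals $C$, and so $\{K_1,K_2\} = \{J,H\}$, making $b = a$ and contradicting $a \neq b$.

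The main obstacle I anticipate is the clean identification $\mathrm{Fix}(a) = \{p\}$: it requires combining the CAT(0) convexity of fixed-point sets with the specifically two-dimensional geometry (discreteness of intersections of transverse hyperplanes), and implicitly uses the trivial vertex-stabiliser hypothesis to ensure that $r_J, r_H$ genuinely have order two and that $a$ is a non-trivial involution whose fixed-point set is controlled. Once this is in hand, everything else reduces to the simple observation that a square in a two-dimensional cube complex is crossed by exactly two hyperplanes.
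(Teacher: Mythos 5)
Your argument is correct, but it is a metric reformulation of what the paper does combinatorially. The paper never leaves the combinatorial framework: it first records that trivial vertex-stabilisers force the reflection of $G$ along a given hyperplane to be unique, then shows that the square $S$ crossed by both $J$ and $H$ is the \emph{unique} square stabilised by $a=r_Jr_H$ (any square not crossed by $J$ is moved, because $r_H$ preserves the two halfspaces of $J$; and $N(J)\cap N(H)=S$ since $X$ is two-dimensional), deduces from $ab=ba$ that $bS=S$, and concludes exactly as you do by counting the hyperplanes crossing a square. Your proof replaces the ``unique stabilised square'' with the ``unique CAT(0) fixed point'' $p=$ centre of $S$; the skeleton (a canonical finite invariant of $a$ that $b$ must preserve) is identical. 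What the paper's version buys is that it uses only notions already set up in the article (halfspaces, carriers, the combinatorial metric), whereas yours additionally imports the CAT(0) length metric, the convexity of fixed-point sets, and the convexity of the geometric realisations of hyperplanes --- all standard, but none proved or cited in the paper. Two places in your write-up deserve a sentence more: the step forcing $\mathrm{Fix}(a)=\{p\}$ is cleanest if you say that $\mathrm{Fix}(a)$ is convex, contains $p$, and that $a$ acts near $p$ (whose neighbourhood lies entirely in the maximal cube $C$) as the central symmetry of $C$, which fixes no point of $C\setminus\{p\}$ near $p$; and the final implication $\{K_1,K_2\}=\{J,H\}\Rightarrow b=a$ is not automatic but requires the uniqueness of the reflection of $G$ along a given hyperplane, which is precisely where the trivial vertex-stabiliser hypothesis is used (the paper isolates this as a standalone fact, and you should too rather than leaving it implicit).
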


\begin{proof}
We begin by proving a few elementary observations.

\begin{fact}\label{fact:Unique}
If $r_1,r_2$ are two reflections of $G$ along the same hyperplane $J$, then $r_1=r_2$.
\end{fact}

\noindent
Indeed, $r_1r_2^{-1}$ fixes pointwise each edge dual to $J$, hence $r_1=r_2$ as $G$ acts with trivial vertex-stabilisers.

\begin{fact}\label{fact:Square}
Let $r_A,r_B$ be two reflections respectively along two transverse hyperplanes $A,B$, and let $S$ be a square crossed by both $A$ and $B$. Then $S$ is the unique square stabilised by $r_Ar_B$.
\end{fact}

\noindent
It is clear that $S$ is stabilised by $r_Ar_B$. Moreover, if a square $S'$ is not crossed by $A$ then $r_AS'$ and $S'$ are separated by $A$, and because $r_B$ stabilises each halfspace delimited by $A$ as a consequence of Claim \ref{claim:HalfStab}, $r_Br_AS'$ must be distinct from $S'$. Because $r_A$ and $r_B$ commute as a consequence of Claim \ref{Claim:TransverseCommute}, we conclude that $r_Ar_BS' \neq S'$. Similarly, if $S'$ is a square which is not crossed by $B$, then $r_Ar_BS' \neq S'$. Thus, we have proved that any square left invariant by $r_Ar_B$ must be included into $N(A) \cap N(B)$. The desired conclusion follows from the observation that, as $N(A) \cap N(B)$ cannot be crossed by a hyperplane distinct from $A$ and $B$ (otherwise, $X$ would contain three pairwise transverse hyperplane, contradicting the fact that $X$ is two-dimensional), we must have $N(A) \cap N(B)=S$.

\medskip \noindent
Now let us go back to the proof of our lemma. Let $S$ denote the unique square crossed by both $J$ and $H$. Because $a$ and $b$ commute and that $a$ stabilises $S$, it follows that $a$ also stabilises $bS$, hence $bS=S$ as a consequence of the previous fact. Consequently, if $b$ is the product of two reflections along two transverse hyperplanes, then it follows from Fact \ref{fact:Square} that these hyperplanes must be $J,H$. Then Fact \ref{fact:Unique} implies that $b=a$, contradicting our assumption. Therefore, $b$ has to be a reflection along some hyperplane, but since we also know that $b$ stabilises $S$, this hyperplane must be $J$ or $H$, concluding the proof of our lemma. 
\end{proof}

\begin{proof}[Proof of Proposition \ref{prop:Morphisms}.]
Let $\Phi'$ be the graph obtained from $\Phi$ by removing all the vertices whose associated generators of $C(\Phi)$ are sent to $1$ by $\rho$. Then there exist a morphism $\rho' : C(\Phi') \to C(\Psi)$ and an erasing $\alpha : C(\Phi) \to C(\Phi')$ such that $\rho = \rho' \circ \alpha$ and such that $\rho'$ does not send any generator of $C(\Phi')$ to $1$. Next, let $\Phi''$ be the graph obtained from $\Phi'$ by identifying any two vertices whose associated generators of $C(\Phi')$ have the same image under $\rho'$. Then there exist a morphism $\rho'' : C(\Phi'') \to C(\Psi)$ and a folding $\beta : C(\Phi') \to C(\Phi'')$ such that $\rho' = \rho'' \circ \beta$ and such that the generators of $C(\Phi'')$ are sent by $\rho''$ to pairwise distinct non-trivial elements of $C(\Psi)$. 

\medskip \noindent
Because $\Psi$ is triangle-free, for every $u \in V(\Phi'')$ two cases may happen: either $\rho''(u)$ is a reflection (i.e., $\rho''(u)=gvg^{-1}$ for some $v \in V(\Psi)$ and $g \in C(\Psi)$), or $\rho''(u)$ is the product of two reflections along two transverse hyperplanes (i.e., $\rho''(u)=gvwg^{-1}$ for some $v,w \in V(\Psi)$ adjacent and $g \in C(\Psi)$). Call a vertex of $\Phi''$ \emph{bad} if it is not isolated and if its image under $\rho''$ is not a reflection. If $\Phi''$ does not contain any bad vertex, we are done. From now on, assume that there exists at least one bad vertex $u \in V(\Phi'')$. Let $v,w \in V(\Psi)$ and $g \in C(\Psi)$ be such that $\rho''(u)=gvwg^{-1}$. 

\medskip \noindent
Notice that, as a consequence of Lemma \ref{lem:Commute}, if $u$ has at least three neighbors in $\Phi''$, then two of them must be sent by $\rho''$ to the same reflection (along $gJ_v$ or $gJ_w$), which would contradicts the fact that $\rho''$ is injective on the generators. Consequently, $u$ has either one or two neighbors.

\medskip \noindent
Assume first that $u$ has exactly one neighbor. Let $z \in \Phi''$ denote this neighbor and let $\gamma \in \mathrm{Aut}(C(\Phi'')$ be the transvection satisfying $\gamma(u)=uz$. It follows from Lemma \ref{lem:Commute} that $\rho''(z)$ is equal to either $gvg^{-1}$ or $gwg^{-1}$. Up to swapping $v$ and $w$, we assume that we are in the former case. Notice that $\rho'' \circ \gamma$ agrees with $\rho''$ for all the generators of $C(\Phi'')$ different from $u$ and that $$\rho'' \circ \gamma(u) = \rho''(u) \cdot \rho''(z) = gvwg^{-1} \cdot gvg^{-1}= gwg^{-1}.$$
Therefore, we have $\rho'' = (\rho'' \circ \gamma) \circ \gamma$ where $\gamma$ is a transvection and where $\rho'' \circ \gamma$ has less bad vertices than $\rho''$. 

\medskip \noindent
Now, assume that $u$ has exactly two neighbors. Let $x,y \in \Phi''$ denote these neighbors. As a consequence of Lemma \ref{lem:Commute}, we have $\rho''(x)=gvg^{-1}$ and $\rho''(y)=gwg^{-1}$ (up to swapping $v$ and $w$). It follows that $x$ and $y$ must be adjacent and that $\rho''(u)=\rho''(xy)$. Therefore, if $\gamma : C(\Phi'') \to C(\Phi'' \backslash \{u\})$ denotes the ingestion sending $u$ to $xy$, there exists a morphism $\rho''' : C(\Phi'' \backslash \{u\}) \to C(\Psi)$ such that $\rho'' = \rho''' \circ \gamma$. Notice that $\rho'''$ and $\rho''$ agree on $\Gamma \backslash \{u\}$, so that $\rho'''$ has less bad vertices than $\rho''$.

\medskip \noindent
By induction on the number of bad vertices of $\rho''$, we conclude that there exist a product of transvections and ingestions $\gamma : C(\Phi'') \to C(\Phi''')$ and a morphism $\rho''' : C(\Phi''') \to C(\Psi)$ such that $\rho'' = \rho''' \circ \gamma$ and such that $\rho'''$ has no bad vertex, i.e., every generator of $C(\Phi''')$ which does not correspond to an isolated vertex is sent by $\rho'''$ to a reflection. 
\end{proof}

\begin{proof}[Proof of Theorem \ref{thm:Morphisms}.]
First, we apply Proposition \ref{prop:Morphisms} and we find an erasing $\alpha : C(\Phi) \to C(\Phi')$, a folding $\beta : C(\Phi') \to C(\Phi'')$, a product of ingestions and transvections $\gamma : C(\Phi'') \to C(\Phi''')$, and a morphism $\bar{\rho} : C(\Phi''') \to C(\Psi)$ such that $\rho = \bar{\rho} \circ \gamma \circ \beta \circ \alpha$ and such that $\bar{\rho}$ sends each generator of $C(\Phi''')$ which does not correspond to an isolated vertex of $\Phi'''$ to a reflection of $C(\Psi)$. Decompose $\Phi'''$ as the disjoint union of $q$ isolated vertices and a graph $\Xi$ which does not contain any isolated vertex. As $C(\Phi''')$ decomposes as the free product $(\mathbb{Z}/2\mathbb{Z}) \ast \cdots \ast (\mathbb{Z} /2 \mathbb{Z}) \ast C(\Xi)$, the morphism $\bar{\rho}$ decomposes as a free product $\psi_1 \ast \cdots \ast \psi_q \ast \varphi$ where $\varphi : C(\Xi) \to C(\Psi)$ and $\psi_i : \mathbb{Z}/2 \mathbb{Z} \to C(\Psi)$ for $1 \leq i \leq q$. 

\medskip \noindent
Now, we apply Theorem \ref{thm:MorphismReflection} to $\varphi$ and we find
\begin{itemize}
	\item a sequence of graphs $\Lambda_0=\Xi, \Lambda_1, \ldots, \Lambda_k$;
	\item partial conjugations $\alpha_1 \in \mathrm{Aut}(C(\Lambda_1)), \ldots, \alpha_k \in \mathrm{Aut}(C(\Lambda_k))$;
	\item foldings $\pi_1 : C(\Lambda_0) \to C(\Lambda_1), \ldots, \pi_k : C(\Lambda_{k-1}) \to C(\Lambda_k)$;
	\item and a peripheral embedding $\bar{\varphi} : C(\Lambda_k) \hookrightarrow C(\Psi)$
\end{itemize}
such that $\varphi = \bar{\varphi} \circ \pi_k \circ \alpha_{k-1} \circ \pi_{k-1} \circ \cdots \circ \alpha_1 \circ \pi_1$.

\medskip \noindent
For every $0 \leq i \leq k$, let $\Lambda_i'$ denote the disjoint union of $\Lambda_i$ with $q$ isolated vertices. Notice that $\Lambda_0$ can be identified to $\Phi''$. It is worth noticing that, for every $1 \leq i \leq k$, $\alpha_i$ naturally defines a partial conjugation of $C(\Lambda_i')$. More precisely, there exists a partial conjugation $\alpha_i' \in \mathrm{Aut}(C(\Lambda_i'))$ which agrees with $\alpha_i$ on $\Lambda_i$ and which fixes the other generators. Similarly, there exists a folding $\pi_i' : C(\Lambda_{i-1}') \to C(\Lambda_i')$ which agrees with $\pi_i$ on $\Lambda_{i-1}$ and which fixes the other generators. 

\medskip \noindent
We finally have
$$\begin{array}{lcl} \rho & = & \bar{\rho} \circ \gamma \circ \beta \circ \alpha = (\psi_1 \ast \cdots \ast \psi_q \ast \varphi) \circ \gamma \circ \beta \circ \alpha \\ \\ & = &  \left( \psi_1 \ast \cdots \ast \psi_q \ast \left( \bar{\varphi} \circ \pi_k \circ \alpha_{k-1} \circ \pi_{k-1} \circ \cdots \circ \alpha_1 \circ \pi_1 \right) \right) \circ \gamma \circ \beta \circ \alpha \\ \\ & = & (\psi_1 \ast \cdots \ast \psi_q \ast \bar{\varphi}) \circ \pi_k' \circ \alpha_{k-1}' \circ \pi_{k-1}' \circ \cdots \circ \alpha_1' \circ \pi_1' \circ \gamma \circ \beta \circ \alpha \end{array}$$
which is the desired decomposition of $\rho$. 
\end{proof}

\noindent
Because foldings, ingestions and erasing have non-trivial kernels, an immediate consequence of Theorem \ref{thm:Morphisms} is:

\begin{cor}\label{cor:Embedding}
Let $\Phi,\Psi$ be two finite simplicial graphs and $\rho : C(\Phi) \hookrightarrow C(\Psi)$ an injective morphism. Assume that $\Psi$ is triangle-free and that $\Phi$ has no isolated vertex. There exists an automorphism $\alpha \in \mathrm{Aut}(C(\Phi))$ such that $\rho \circ \alpha : C(\Phi) \hookrightarrow C(\Psi)$ is a peripheral embedding. 
\end{cor}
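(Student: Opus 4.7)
The plan is to invoke Theorem \ref{thm:Morphisms} and exploit injectivity of $\rho$ to collapse all the elementary morphisms in the decomposition down to automorphisms, leaving only a peripheral embedding.

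First, apply Theorem \ref{thm:Morphisms} to $\rho$ to obtain a sequence of graphs $\Lambda_0 = \Phi, \Lambda_1, \ldots, \Lambda_p$ together with morphisms $\alpha_i : C(\Lambda_i) \to C(\Lambda_{i+1})$ (each an automorphism, folding, ingestion, or erasing) and $\bar{\rho} = \psi_1 \ast \cdots \ast \psi_q \ast \psi : C(\Lambda_p) \to C(\Psi)$, with $\psi : C(\Gamma) \hookrightarrow C(\Psi)$ a peripheral embedding and $\Lambda_p = \Gamma \sqcup \{q \text{ isolated vertices}\}$, such that $\rho = \bar{\rho} \circ \gamma$ where $\gamma := \alpha_{p-1} \circ \cdots \circ \alpha_0$.

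Second, I would verify the hint that every non-trivial folding, ingestion, or erasing has non-trivial kernel. Indeed, a folding that identifies two distinct generators $u \sim v$ sends the non-trivial element $uv$ to $u^2 = 1$; an ingestion sending $a$ to $bc$ (where $b,c$ are the two neighbors of $a$, which are necessarily adjacent in $\Phi$ for the morphism to be well-defined) sends the reduced length-$3$ word $abc$ to $(bc)^2 = 1$; and a non-trivial erasing kills any generator it removes. Since each such elementary morphism is surjective, the composition $\gamma$ is surjective too, so $\bar{\rho} \circ \gamma$ being injective forces both $\bar{\rho}$ and $\gamma$ to be injective; successively the same surjectivity argument forces each individual $\alpha_i$ to be injective, hence to be an automorphism.

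Third, since automorphisms preserve the underlying graph (a partial conjugation or transvection of $C(\Lambda_i)$ keeps $\Lambda_{i+1} = \Lambda_i$), we conclude $\Lambda_p = \Phi$. Because $\Phi$ has no isolated vertex by assumption, necessarily $q = 0$ and $\Gamma = \Phi$, so $\bar{\rho} = \psi : C(\Phi) \hookrightarrow C(\Psi)$ is a peripheral embedding. Setting $\alpha := \gamma^{-1} \in \mathrm{Aut}(C(\Phi))$ then yields $\rho \circ \alpha = \psi$, the desired peripheral embedding.

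There is no real obstacle here: once Theorem \ref{thm:Morphisms} is in hand, the proof is essentially a matter of bookkeeping. The only subtle point is insisting that surjectivity of the elementary morphisms lets one propagate injectivity of the composition back to injectivity of each individual $\alpha_i$, rather than merely of the total composition $\gamma$.
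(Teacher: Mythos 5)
Your argument is correct and is exactly the one the paper intends: the paper derives Corollary \ref{cor:Embedding} from Theorem \ref{thm:Morphisms} with the single remark that foldings, ingestions and erasings have non-trivial kernels, and you have simply filled in the bookkeeping (explicit kernel elements, propagating injectivity through the surjective elementary maps, and using the no-isolated-vertex hypothesis to force $q=0$). No discrepancy with the paper's approach.
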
\qed

\noindent
Notice that Corollary \ref{cor:Embedding} generalises \cite[Theorem C]{RACGStallings}. As a straightforward application, we deduce the following solution to the embedding problem among two-dimensional right-angled Coxeter groups:

\begin{cor}\label{cor:Sub}
Let $\Phi,\Psi$ be two finite simplicial graphs. Assume that $\Psi$ is triangle-free. Then $C(\Phi)$ is isomorphic to a subgroup of $C(\Psi)$ if and only if $X(\Psi)$ contains a peripheral collection of hyperplanes whose crossing graph is isomorphic to $\Phi$. 
\end{cor}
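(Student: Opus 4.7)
The proof splits naturally into the two implications. The \emph{if} direction is immediate from Theorem \ref{thm:ReflectionGeneral}: given a peripheral collection $\mathcal{J}$ of hyperplanes in $X(\Psi)$ with crossing graph isomorphic to $\Phi$, one has $\mathcal{J}_0 = \mathcal{J}$, and the theorem identifies the reflection subgroup $\langle r_J : J \in \mathcal{J}\rangle \leq C(\Psi)$ with $C(\Phi)$, giving the desired embedding.

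For the \emph{only if} direction, let $\rho : C(\Phi) \hookrightarrow C(\Psi)$ be an injective morphism. The strategy is to reduce $\rho$ to a peripheral embedding via the structural results of the previous sections. First, I would apply Theorem \ref{thm:Morphisms} to decompose $\rho$. Since foldings, ingestions and erasings all have non-trivial kernels, injectivity of $\rho$ forces every intermediate map in the decomposition to be an automorphism, so $\Lambda_p = \Phi$ and $\rho = \bar\rho \circ \alpha$ with $\alpha \in \mathrm{Aut}(C(\Phi))$ and $\bar\rho = \psi_1 \ast \cdots \ast \psi_q \ast \psi$ as in that statement. In particular, $\bar\rho$ is injective and sends every generator of $C(\Phi)$ to a reflection of $C(\Psi)$. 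Then I would apply Theorem \ref{thm:MorphismReflection} to $\bar\rho$: the same injectivity argument rules out any non-trivial folding in the resulting decomposition, producing a factorisation $\bar\rho = \bar{\bar\rho} \circ \alpha'$ in which $\alpha'$ is a composition of partial conjugations and $\bar{\bar\rho} : C(\Phi) \hookrightarrow C(\Psi)$ is a peripheral embedding.

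From $\bar{\bar\rho}$ one reads off the desired peripheral collection $\mathcal{J}$: the generators of $C(\Phi)$ map to pairwise distinct reflections along $\mathcal{J}$, and adjacencies in $\Phi$ correspond to transverse hyperplanes by Claim \ref{Claim:TransverseCommute}. The main delicate point, which is exactly what the injectivity hypothesis controls throughout the two reductions, is that non-adjacent generators of $C(\Phi)$ must also map to non-transverse hyperplanes of $\mathcal{J}$: otherwise the reflections along those transverse hyperplanes would commute, so Theorem \ref{thm:ReflectionGeneral} would identify the reflection subgroup with a strict quotient of $C(\Phi)$, contradicting the injectivity of $\bar{\bar\rho}$. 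Hence the crossing graph of $\mathcal{J}$ is isomorphic to $\Phi$, completing the argument.
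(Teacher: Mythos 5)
Your \emph{if} direction and your treatment of the case where $\Phi$ has no isolated vertex are correct and coincide with the paper's route: that case is exactly Corollary \ref{cor:Embedding}, which the paper also derives from Theorem \ref{thm:Morphisms} by observing that foldings, ingestions and erasings have non-trivial kernels. (Your re-application of Theorem \ref{thm:MorphismReflection} afterwards is redundant, since Theorem \ref{thm:Morphisms} already delivers a peripheral embedding on the non-isolated part, but that is harmless.)

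The genuine gap is the case where $\Phi$ has isolated vertices, and your argument breaks precisely at the sentence ``$\bar\rho$ is injective and sends every generator of $C(\Phi)$ to a reflection of $C(\Psi)$.'' Theorem \ref{thm:Morphisms} does \emph{not} assert this for the generators corresponding to isolated vertices: it only says that the restriction $\psi$ to the non-isolated part $C(\Gamma)$ is a peripheral embedding, while each $\psi_i : \mathbb{Z}/2\mathbb{Z} \to C(\Psi)$ is an arbitrary morphism. Indeed, in Proposition \ref{prop:Morphisms} a vertex is declared \emph{bad} only if it is \emph{not isolated} and its image is not a reflection; an isolated vertex whose image is a product of two reflections along transverse hyperplanes (i.e.\ $gvwg^{-1}$ with $v,w$ adjacent in $\Psi$) is left untouched by the induction, because the transvection/ingestion moves used to repair bad vertices require the vertex to have neighbours. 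So injectivity of $\rho$ does not force the isolated generators onto reflections, and even when it accidentally does, nothing guarantees that the resulting hyperplanes together with those of $\psi$ form a peripheral collection with crossing graph $\Phi$. This is why Corollary \ref{cor:Embedding} carries the hypothesis that $\Phi$ has no isolated vertex, and why the paper's proof of Corollary \ref{cor:Sub} devotes most of its length to two further cases: when the non-isolated part $\Gamma$ is non-empty, it embeds the larger group $C(\Phi')$ obtained by replacing each isolated vertex with an isolated edge (using that $(\mathbb{Z}/2\mathbb{Z}) \ast C(\Gamma)$ contains a free product of copies of $C(\Gamma)$) and then extracts the subcollection corresponding to $\Phi$; when $\Gamma$ is empty, it produces $n$ pairwise non-transverse hyperplanes forming a peripheral collection directly from the join decomposition of $\Psi$. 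Your proof needs an argument of this kind to be complete.
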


\begin{proof}
Assume that $C(\Phi)$ is isomorphic to a subgroup of $C(\Psi)$. Decompose $\Phi$ as the disjoint union of $n$ isolated vertices with a graph $\Gamma$ which does not contain any isolated vertex. So $C(\Phi)$ decomposes as the free product of $n$ copies of $\mathbb{Z}/2\mathbb{Z}$ with $C(\Gamma)$. If $n=0$, then the desired conclusion follows from Corollary \ref{cor:Embedding}, so we suppose that $n \geq 1$. We distinguish two cases.

\medskip \noindent
First, assume that $\Gamma$ is non-empty, so that $C(\Gamma)$ contains a subgroup isomorphic to $(\mathbb{Z}/2\mathbb{Z}) \oplus (\mathbb{Z}/2 \mathbb{Z})$. Let $\Phi'$ denote the disjoint union of $n$ isolated edges with $\Gamma$. By noticing that $(\mathbb{Z}/2\mathbb{Z}) \ast C(\Gamma)$ contains a subgroup isomorphic to the free product of $n+1$ copies of $C(\Gamma)$, it follows that $C(\Phi')$ is isomorphic to a subgroup of $C(\Phi)$, and so of $C(\Psi)$. Because $\Phi'$ has no isolated vertex, we deduce from Corollary \ref{cor:Embedding} that $X(\Psi)$ contains a peripheral collection of hyperplanes whose crossing graph is $\Phi'$. The desired conclusion follows by extracting the subcollection corresponding to the subgraph $\Phi$ of~$\Phi'$. 

\medskip \noindent
Next, assume that $\Gamma$ is empty, so that $C(\Phi)$ is isomorphic to a free product of $n$ copies of $\mathbb{Z}/2\mathbb{Z}$. If $n=1$, just pick a single hyperplane of $X(\Psi)$ (such a hyperplane exists because the fact that $C(\Phi)$ embeds into $C(\Psi)$ implies that $C(\Psi)$ is non-trivial, so $\Psi$ cannot be empty); and if $n=2$, just pick two non-transverse hyperplanes of $X(\Psi)$ (such hyperplanes exist because the fact that $C(\Phi)$ embeds into $C(\Psi)$ implies that $C(\Psi)$ is infinite, so $\Psi$ has to contain two non-adjacent vertices). From now on, assume that $n \geq 3$. Decompose $\Psi$ as the join of a complete graph $\Psi_0$ with subgraphs $\Psi_1, \ldots, \Psi_k$ such that each $\Psi_i$ does not decompose as a join and is not reduced to a single vertex. Notice that $C(\Psi)$ decomposes as the direct sum $C(\Psi_0)\oplus C(\Psi_1) \oplus \cdots \oplus C(\Psi_k)$, where $C(\Psi_0)$ is finite and $C(\Psi_1),\ldots, C(\Psi_k)$ infinite. Also, notice that there exists some $1 \leq i \leq k$ such that $\Psi_i$ contains at least three vertices. Otherwise, each $\Psi_i$ would be the disjoint union of two isolated vertices, and $C(\Psi)$ would be virtually abelian, which is impossible since it contains the free product $C(\Phi)$ of at least three copies of $\mathbb{Z}/2\mathbb{Z}$. Fix such an index $i$. Because $\Psi_i$ does not decompose as a join, its opposite graph $\Psi_i^{\mathrm{opp}}$ (i.e., the graph whose vertices are the vertices of $\Psi_i$ and whose edges link two vertices of $\Psi_i$ if they are not linked by an edge) must be connected. Fix two adjacent vertices $u,v$ of $\Psi_i^{\mathrm{opp}}$. Since $\Psi_i$ contains at least three vertices, there must exist a third vertex $w$ adjacent to either $u$ or $v$. Without loss of generality, suppose that we are in the latter case. Let $\Lambda$ denote the subgraph of $\Psi$ spanned by $u,v,w$. Notice that either $\Lambda$ is a disjoint union of three isolated vertices (if $u$ and $w$ are not adjacent) or the disjoint union of an isolated vertex with an isolated edge (if $u$ and $w$ are adjacent). It is clear that $C(\Lambda)$ contains a peripheral collection of $n$ pairwise non-transverse hyperplanes, and because $C(\Lambda)$ naturally embeds into $C(\Psi)$ as a convex subcomplex, it follows that $C(\Psi)$ also must contain such a collection.

\medskip \noindent
Thus, we have proved that, if $C(\Phi)$ is isomorphic to a subgroup of $C(\Psi)$, then $X(\Psi)$ contains a peripheral collection of hyperplanes whose crossing graph is isomorphic to $\Phi$. The converse follows from Theorem \ref{thm:ReflectionGeneral}.
\end{proof}

\noindent
However, this solution is not completely satisfying as we need to check infinitely many configurations of hyperplanes in order to show that an embedding is impossible. This problem will be addressed in Section \ref{section:peripheral}. Nevertheless, Corollary \ref{cor:Sub} has nice applications. Some of them are given in the next section.

\subsection{A few applications}\label{section:Appli}

\noindent
This subsection is dedicated to consequences of Corollary \ref{cor:Sub}. First, we prove the converse of Proposition \ref{prop:EmbeddingExTree}, providing a solution to the embedding problem among right-angled Coxeter groups defined by finite trees.

\begin{thm}\label{thm:EmbeddingTree}
Let $R,S$ be two finite trees. Then $C(R)$ is isomorphic to a subgroup of $C(S)$ if and only if there exists a graph morphism $\varphi : R \to S$ which sends a vertex of degree $2$ to a vertex of degree $\geq 2$ and a vertex of degree $\geq 3$ to a vertex of degree $\geq 3$. 
\end{thm}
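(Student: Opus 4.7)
Since Proposition~\ref{prop:EmbeddingExTree} already supplies the ``if'' direction, my plan is to prove the converse. The starting point is Corollary~\ref{cor:Sub}: because $S$ is triangle-free, an embedding $C(R) \hookrightarrow C(S)$ produces a peripheral collection of hyperplanes $\mathcal{J} = \{J_u \mid u \in V(R)\}$ in $X(S)$ whose crossing graph is $R$. I then define $\varphi : V(R) \to V(S)$ by letting $\varphi(u)$ be the label of $J_u$. That $\varphi$ is a graph morphism follows directly from Lemma~\ref{lem:transverseimpliesadj}: adjacent vertices of $R$ correspond to transverse hyperplanes of $\mathcal{J}$, whose labels must be adjacent in $S$ (and thus distinct, as $S$ is simple).

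The main task is to verify the degree condition. I would fix $u \in V(R)$ with neighbors $v_1,\ldots,v_k$, set $s := \varphi(u)$ and $d := \deg_S(s)$, and exploit the following structural observation: since $S$ is triangle-free, $\mathrm{star}_S(s)$ is a star graph, so $\langle \mathrm{star}_S(s) \rangle \cong \mathbb{Z}/2\mathbb{Z} \times (\mathbb{Z}/2\mathbb{Z})^{\ast d}$ and the carrier $N(J_u)$ is isomorphic as a cube complex to the product $I \times T_d$ of an edge $I$ with the $d$-regular tree $T_d$. Under this identification, the $k$ hyperplanes of $\mathcal{J}$ transverse to $J_u$ correspond to $k$ distinct edges $e_1,\ldots,e_k$ of $T_d$.

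The key step is to translate the peripheral condition into a combinatorial constraint on these edges. Letting $v_0$ be the $T_d$-coordinate of the projection $p$ of $1$ onto $N(J_u)$, I claim that $J_{v_i}$ separates $1$ from $J_{v_j}$ in $X(S)$ if and only if $e_i$ strictly separates $v_0$ from $e_j$ in $T_d$. For the forward direction, Lemma~\ref{lem:ProjSep} ensures that $1$ and $p$ lie on the same side of every hyperplane transverse to $J_u$, so any separation in $X(S)$ realised by $J_{v_i}$ transfers correctly to the second factor $T_d$; the converse is an immediate check since the endpoints of $e_j$ lift to vertices of $N(J_{v_j})$ inside $N(J_u)$. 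Hence the peripherality of $\mathcal{J}$ translates precisely into the condition that $e_1,\ldots,e_k$ form an antichain with respect to $v_0$, meaning no $e_i$ strictly separates $v_0$ from any $e_j$.

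A straightforward case analysis on $d$ then completes the argument. The maximum size of such an antichain in $T_d$ is $0$ if $d=0$, $1$ if $d=1$, $2$ if $d=2$ (one edge on each side of $v_0$ on the line $T_2$), and at least $d \geq 3$ if $d \geq 3$ (take all edges incident to $v_0$). Consequently $k = 2$ forces $d \geq 2$, and $k \geq 3$ forces $d \geq 3$, which is exactly the degree condition. I expect the main obstacle to be the clean geometric translation of peripherality into the antichain condition on $T_d$; everything else is routine bookkeeping.
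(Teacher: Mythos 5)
Your proposal is correct and follows essentially the same route as the paper: Corollary~\ref{cor:Sub} produces the peripheral collection, the labelling map gives $\varphi$, Lemma~\ref{lem:transverseimpliesadj} makes it a graph morphism, and the degree condition is read off from the structure of the carrier $N(J_u) \cong I \times T_d$. The only (minor) difference is that you make the peripherality constraint quantitative, as an antichain bound in $T_d$ relative to the projection of $1$, where the paper argues qualitatively that two transverse neighbours force $J_u$ to be unbounded and that three of them form a facing triple, which $I \times T_d$ cannot contain when $d \leq 2$.
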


\noindent
We emphasize that the embedding problem for right-angled Coxeter groups defined by finite forests reduces to the case of trees as a consequence of Corollary \ref{cor:ForestFromTree}. 

\begin{proof}[Proof of Theorem \ref{thm:EmbeddingTree}.]
Assume that $C(R)$ is isomorphic to a subgroup of $C(S)$. According to Corollary \ref{cor:Sub}, $X(S)$ contains a peripheral collection of hyperplanes $\mathcal{J}$ whose crossing graph is $R$. Let $\varphi : R \to S$ denote the map sending each vertex $r$ of $R$ to the vertex of $S$ which labels the hyperplanes $J_r$ of $\mathcal{J}$ corresponding to $r$. It follows from Lemma \ref{lem:transverseimpliesadj} that $\varphi$ is a graph morphism. If $r \in V(R)$ has degree at least two, then there exist two distinct and non-transverse hyperplanes of $\mathcal{J}$ which are transverse to $J_r$. This observation implies that $J_r$ is an unbounded hyperplane of $X(S)$, which implies that the vertex of $S$ labelling $J_r$ must have degree at least two as well. If $r$ has degree at least three, then there exist three pairwise distinct and non-transverse hyperplanes of $\mathcal{J}$ which are transverse to $J_r$. Notice that, because $\mathcal{J}$ is peripheral, none of these three hyperplanes separates one from another other, i.e., they define a \emph{facing triple}. As a consequence, the vertex of $S$ which labels $J_r$ must have degree at least three, since otherwise $N(J_r)$ would be the product of an edge $[0,1]$ with either another edge $[0,1]$ or a bi-infinite line so that $N(J_r)$ would not contain a facing triple. Thus, we have proved that $\varphi : R \to S$ is a graph morphism which sends a vertex of degree $2$ to a vertex of degree $\geq 2$ and a vertex of degree $\geq 3$ to a vertex of degree $\geq 3$. The converse is proved by Proposition \ref{prop:EmbeddingExTree}.
\end{proof}

\noindent
The next example provides an explicit application of Theorem \ref{thm:EmbeddingTree}. (See also Example~\ref{ex:GraphMorphism}.)

\begin{figure}
\begin{center}
\includegraphics[scale=0.45]{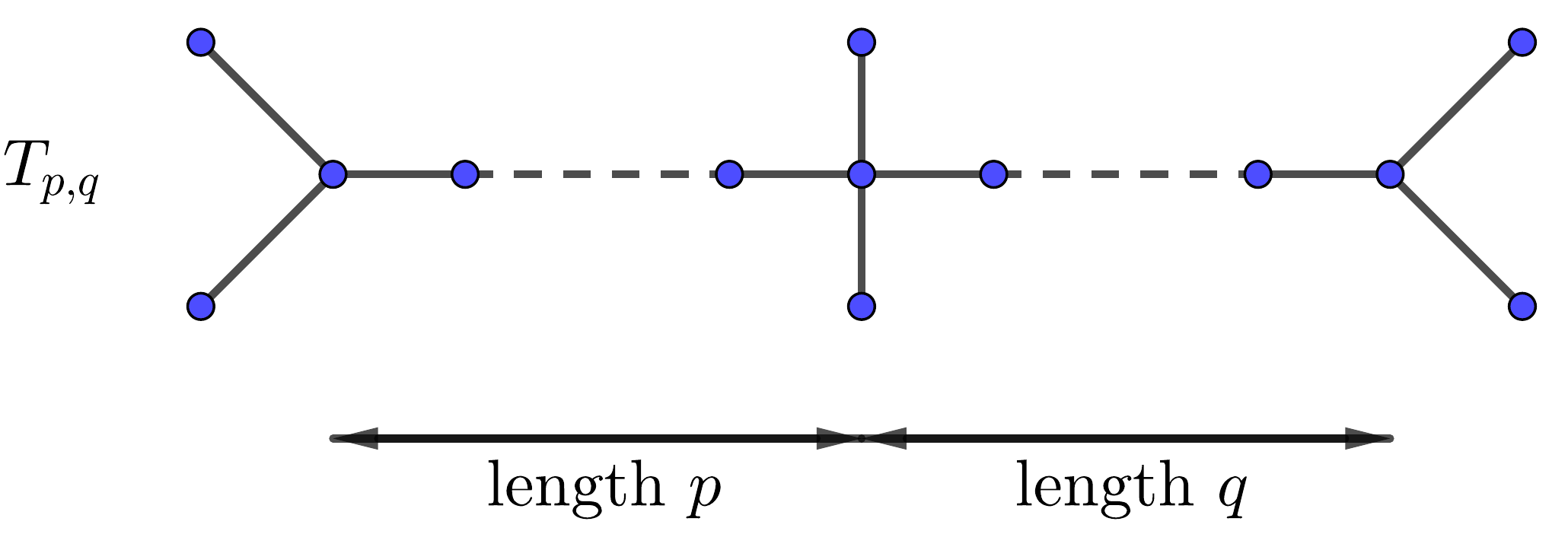}
\caption{The graph $T_{p,q}$ from Example \ref{ex:EmbeddingTree}.}
\label{TPQ}
\end{center}
\end{figure}
\begin{ex}\label{ex:EmbeddingTree}
For every $p,q \geq 1$, let $T_{p,q}$ denote the tree illustrated by Figure \ref{TPQ}. Fixing some $p,q,r,s \geq 1$, we want to determine when $C(T_{p,q})$ is isomorphic to a subgroup of $C(T_{r,s})$. We distinguish several cases.
\begin{enumerate}
	\item Assume that $p$ and $q$ are both even. The graph morphism $(1)$ given by Figure~\ref{Graphs} can be easily generalised to a graph morphism $T_{p,q} \to T_{r,s}$ which satisfies the assumption of Theorem \ref{thm:EmbeddingTree}. So $C(T_{p,q})$ embeds into $C(T_{r,s})$.
	\item Assume that $r$ and $s$ are both even but that $p$ or $q$ is odd. A graph morphism $T_{p,q} \to T_{r,s}$ satisfying the assumptions of Theorem \ref{thm:EmbeddingTree} has to send two vertices of degree $\geq 3$ at odd distance apart to two vertices of degree $\geq 3$ at odd distance apart. So such a graph morphism cannot exist, and we conclude that $C(T_{p,q})$ does not embed into $C(T_{r,s})$. 
	\item Assume that $p$ is odd, $q$ is even, $r$ is odd, and $s$ is even. From the general observation made in case 2, combined with the fact that a graph morphism is $1$-Lipschitz, it follows that $C(T_{p,q})$ does not embed into $C(T_{r,s})$ if $p<r$. If $p \geq r$, then the graph morphism $(2)$ given by Figure \ref{Graphs} can be easily generalised to a graph morphism $T_{p,q} \to T_{r,s}$ which satisfies the assumption of Theorem \ref{thm:EmbeddingTree}. So $C(T_{p,q})$ embeds into $C(T_{r,s})$.
	\item Assume that $p$ is odd, $q$ is even, $r$ and $s$ are both odd. The same argument as in the previous case shows that $C(T_{p,q})$ embeds into $C(T_{r,s})$ if and only if $p \geq \min(r,s)$.
	\item Assume that $p$ and $q$ are both odd, $r$ is odd, and $s$ is even. The same argument as in case 3 shows that $C(T_{p,q})$ does not embed into $C(T_{r,s})$ if $\max(p,q)<r$. Otherwise, if $\max(p,q) \geq r$, the graph morphism $(3)$ given by Figure \ref{Graphs} can be easily generalised to a graph morphism $T_{p,q} \to T_{r,s}$ which satisfies the assumption of Theorem \ref{thm:EmbeddingTree}. So $C(T_{p,q})$ embeds into $C(T_{r,s})$.
	\item Assume that $p,q,r,s$ are all odd. The same argument as in case 3 shows that $C(T_{p,q})$ does not embed into $C(T_{r,s})$ if $\max(p,q)< \min(r,s)$. Otherwise, if $\max(p,q) \geq \min(r,s)$, the graph morphism $(4)$ given by Figure \ref{Graphs} can be easily generalised to a graph morphism $T_{p,q} \to T_{r,s}$ which satisfies the assumption of Theorem \ref{thm:EmbeddingTree}. So $C(T_{p,q})$ embeds into $C(T_{r,s})$.
\end{enumerate}
Every possible configuration is symmetric (up to switching $p$ and $q$ or $r$ and $s$) to one the cases consided above.
\begin{figure}
\begin{center}
\includegraphics[scale=0.35]{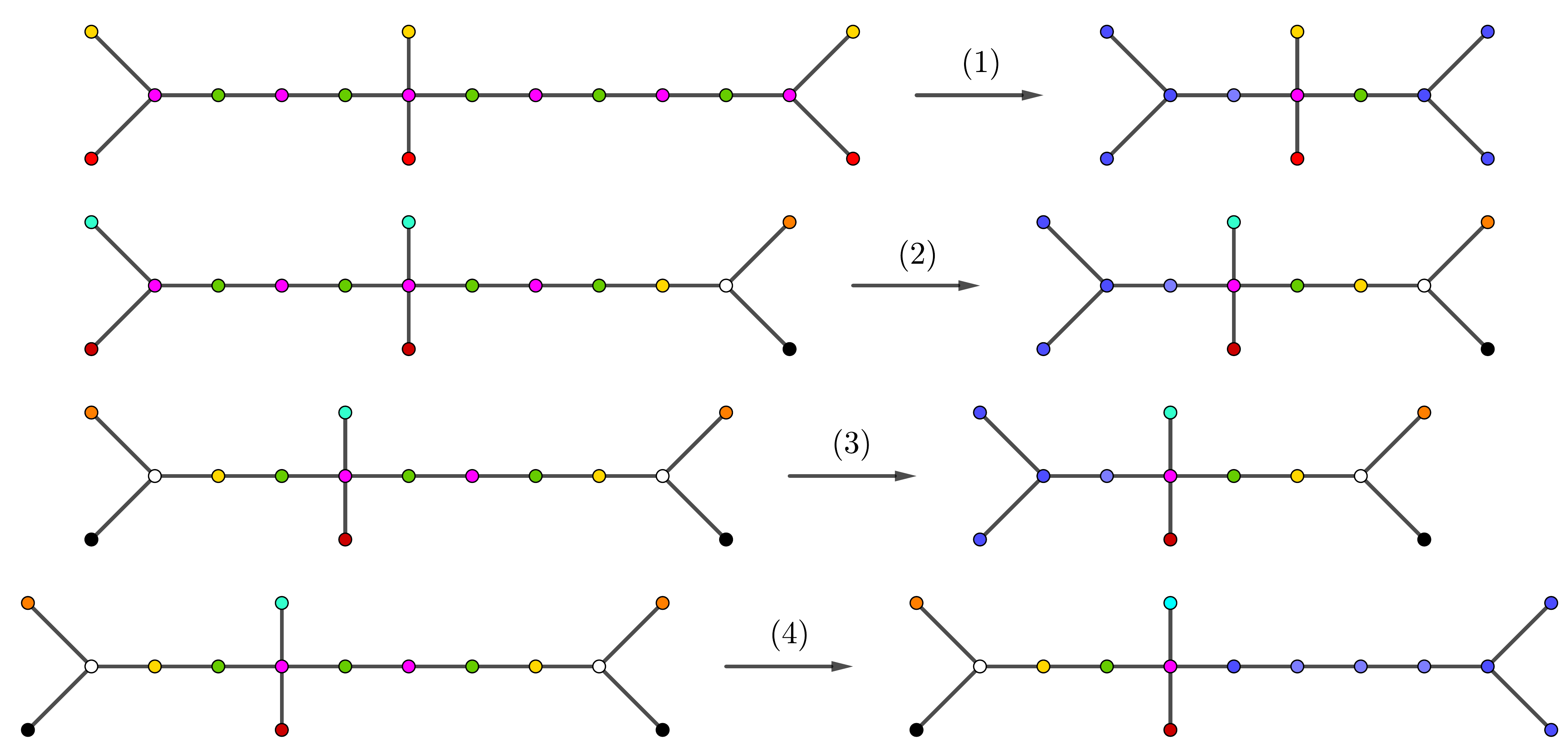}
\caption{Graph morphisms from Example \ref{ex:EmbeddingTree}.}
\label{Graphs}
\end{center}
\end{figure}
\end{ex}

\noindent
In our next statement, we determine which right-angled Coxeter groups embed into the right-angled Coxeter group defined by a cycle. In particular, one gets a converse of Corollary \ref{cor:EmbeddingExCycle}. 

\begin{prop}
Let $\Gamma$ be a finite simplicial graph and let $C_n$ denote the cycle of length $n \geq 5$. Then $C(\Gamma)$ is isomorphic to a subgroup of $C(C_n)$ if and only if $\Gamma$ is either a disjoint union of segments or a single cycle of length divisible by $n-4$. 
\end{prop}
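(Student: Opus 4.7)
The strategy is to translate the embedding question, via Corollary \ref{cor:Sub}, into the combinatorial question of which graphs $\Gamma$ can be realised as the crossing graph of a peripheral collection of hyperplanes $\mathcal{J}$ in $X(C_n)$, and then to exploit the very rigid geometry of $X(C_n)$ (a tessellation of the hyperbolic plane by right-angled $n$-gons when $n\geq 5$) to pin down the possible $\Gamma$.

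First I would handle the sufficiency. If $\Gamma=C_m$ is a cycle whose length satisfies the divisibility condition, then $C(\Gamma)\hookrightarrow C(C_n)$ is given directly by Corollary \ref{cor:EmbeddingExCycle} (iterating the doubling Proposition \ref{prop:double}). If $\Gamma$ is a disjoint union of segments $S_{k_1}\sqcup\cdots\sqcup S_{k_r}$, I would build the required peripheral collection by hand: each segment $S_{k_i}$ embeds into $C(S_3)$ by Corollary \ref{cor:EmbeddingExSegment}, and $S_3$ sits as an induced subgraph of $C_n$ for $n\geq 5$, which yields a peripheral collection of hyperplanes in $X(C_n)$ with crossing graph $S_{k_i}$. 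To combine several such collections into one peripheral collection with crossing graph the disjoint union, I would translate each of them by elements of $C(C_n)$ placing it deep inside pairwise disjoint halfspaces delimited by hyperplanes adjacent to $1$, so the resulting collection remains peripheral and has no new crossings; this is where Theorem \ref{thm:ReflectionGeneral} then provides the embedding.

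For the necessity, by Corollary \ref{cor:Sub} I may assume $X(C_n)$ contains a peripheral collection $\mathcal{J}$ whose crossing graph is $\Gamma$. The key structural fact is that every hyperplane $J$ of $X(C_n)$ is a bi-infinite line with carrier $N(J)$ a strip (two parallel lines with connecting squares), where squares alternate between the two types corresponding to the two neighbors of the label of $J$ in $C_n$. Using Lemma \ref{lem:CrossingSquare} (since $C_n$ has girth $n\geq 5$), any two distinct hyperplanes project onto each other in at most one square. Step (a): I would show every vertex of $\Gamma$ has degree $\leq 2$. If $J\in\mathcal{J}$ had three pairwise non-transverse hyperplanes $H_1,H_2,H_3\in\mathcal{J}$ crossing it, they would correspond to three disjoint squares on the strip $N(J)$; but because $\mathcal{J}$ is peripheral, the projection of $1$ onto $N(J)$ must lie between any two of these squares, which is impossible for three disjoint squares on a line. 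Hence $\Gamma$ is a disjoint union of paths and cycles. Step (b): I would show that if one component of $\Gamma$ is a cycle $C_m$, it must be the only component. The cycle of hyperplanes bounds a finite convex region of $X(C_n)$ containing $1$, and adding any further peripheral hyperplane would either cross one of the cycle's hyperplanes or be separated from $1$ by them. Step (c): given a peripheral cycle $J_1,\ldots,J_m,J_1$ in $\mathcal{J}$, I would look at the boundary walk of the convex region it bounds inside $X(C_n)$; this walk projects, label by label, onto a closed combinatorial loop in $C_n$, and counting how many $n$-gons of the tessellation fit together along this loop forces the required divisibility between $m$ and $n-4$.

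The main obstacle will be step (c): extracting the exact divisibility condition. The argument amounts to computing an Euler-type invariant of the finite convex region in $X(C_n)$ bounded by the cycle of hyperplanes, using the fact that the interior consists of right-angled $n$-gons glued along edges, with the boundary passing through exactly $m$ hyperplanes and turning at each crossing. The curvature/defect accounting should yield a relation of the form $m=4+k(n-4)$ for some integer $k\geq 1$, matching the divisibility statement (and reconciling with Corollary \ref{cor:EmbeddingExCycle}). The other steps are essentially formal consequences of the geometry of peripheral families developed in Section \ref{section:pingpong} together with the projection lemmas of Section \ref{section:cc}.
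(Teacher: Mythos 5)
Your overall strategy coincides with the paper's: reduce to peripheral collections of hyperplanes via Corollary \ref{cor:Sub}, then exploit the planarity of $X(C_n)$. Your steps (a) and (b) are essentially the paper's arguments in a different order (the paper rules out degree $\geq 3$ by showing $(\mathbb{Z}/2\mathbb{Z})\oplus((\mathbb{Z}/2\mathbb{Z})\ast(\mathbb{Z}/2\mathbb{Z})\ast(\mathbb{Z}/2\mathbb{Z}))$ does not embed, which is exactly your ``one of three pairwise non-transverse hyperplanes meeting $N(J)$ must separate another from $1$'' observation), and your handling of the sufficiency for disjoint unions of segments is if anything more explicit than the paper's.

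The genuine gap is step (c), which is the heart of the proposition and which you yourself flag as unexecuted: ``the curvature/defect accounting should yield $m=4+k(n-4)$'' is a plan, not a proof. To make it rigorous you would need to verify that the intersection of halfspaces bounded by the cycle of hyperplanes is a compact disk tiled by full tiles, that every interior vertex carries the full angle defect, and that the boundary contributions vanish except at the $m$ corners --- none of which is set up in the paper's framework. The paper avoids all of this with a short induction: if the region bounded by the $m$ hyperplanes contains no edge, the carriers pairwise intersect and Lemmas \ref{lem:transverseimpliesadj} and \ref{lem:transverseiff} force the labels to span an induced $C_m\subset C_n$, so $m=n$; otherwise the hyperplane dual to an interior edge is transverse to exactly two hyperplanes of the cycle and splits the region into two smaller cycle configurations of lengths $p,q$ with $m=p+q-4$, whence $n-4\mid m-4$ by induction since $(m-4)=(p-4)+(q-4)$. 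Your Gauss--Bonnet count is consistent with this (it is what the induction computes in closed form), but as written the divisibility is asserted rather than derived; you should either carry out the curvature computation in full or replace step (c) with the cutting induction.
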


\begin{proof}
Assume that $C(\Gamma)$ is isomorphic to a subgroup of $C(C_n)$. According to Corollary~\ref{cor:Sub}, there exists a peripheral collection of hyperplanes $\mathcal{H}$ in $X(C_n)$ whose crossing graph is $\Gamma$. It is well-known that $X(C_n)$ is a square tessellation of the hyperbolic plane; in particular, it is a planar CAT(0) square complex. As a consequence, if $\Gamma$ contains a cycle, then the corresponding cycle of hyperplanes of $\mathcal{H}$ has to enclose a region of $X(C_n)$ which contains the vertex $1$. As $\mathcal{H}$ is peripheral, this observation implies that $\mathcal{H}$ does not contain any other hyperplane. Thus, we have proved that $\Gamma$ is either a forest or a single cycle. The following claim shows that each vertex of such a forest must have degree at most two, or equivalently, that it must be a disjoint union of segments.

\begin{claim}
The group $(\mathbb{Z}/2\mathbb{Z}) \oplus ((\mathbb{Z}/2\mathbb{Z}) \ast (\mathbb{Z}/2\mathbb{Z}) \ast (\mathbb{Z}/2\mathbb{Z}))$ is not isomorphic to a subgroup of $C(C_n)$.
\end{claim}

\noindent
If it were, according to Corollary \ref{cor:Sub} there would exist a peripheral collection of four hyperplanes $\{A,B,C,D\}$ such that $A$ is transverse to $B$, $C$ and $D$, and such that $B,C,D$ are pairwise non-transverse. But the planarity of $X(C_n)$ implies that one hyperplane among $B,C,D$ has to separate the other two, contradicting the fact that the collection is peripheral.

\begin{claim}
If $C(C_m)$ is isomorphic to a subgroup of $C(C_n)$ for some $m \geq 5$, then $n-4$ divides $m-4$.
\end{claim}

\noindent
According to Corollary \ref{cor:Sub}, there exists a peripheral collection of hyperplanes $\mathcal{J}$ in $X(C_n)$ whose crossing graph is a cycle of length $m$. It is well-known that $X(C_n)$ is a square tessellation of the hyperbolic plane, so these hyperplanes are bi-infinite lines delimiting a region which contains $1$. If this region does not contain an edge, then the carriers of the hyperplanes of $\mathcal{J}$ pairwise intersect, and we deduce from Lemmas \ref{lem:transverseimpliesadj} and~\ref{lem:transverseiff} that the vertices of $C_n$ labelling the hyperplanes of $\mathcal{J}$ span an induced subgraph of $C_n$ isomorphic to $C_m$, hence $m=n$. Otherwise, if the region contains an edge, then the hyperplane dual to this edge defines a bi-infinite line which is transverse to exactly two hyperplanes of $\mathcal{J}$ and which cuts our region into two new regions corresponding to two other peripheral collections of hyperplanes whose crossing graphs are cycles. Let $p,q$ denote the lengths of these cycles. Notice that $m= p+q-4$. By induction, we may suppose that $n-4$ divides $p-4$ and $q-4$, so that $m-4= (p-4)+(q-4)$ also must be divisible by $n-4$. This concludes the proof of our claim. 

\medskip \noindent
Thus, we have proved that, if $C(\Gamma)$ is isomorphic to a subgroup of $C(C_n)$ then $\Gamma$ is either a disjoint union of segments or a single cycle of length divisible by $n-4$. The converse follows from Corollary \ref{cor:EmbeddingExCycle}.
\end{proof}

\noindent
We collect a few other observations in the following two propositions. In the first proposition, we refer to the \emph{double-star} as the smallest tree containing two adjacent vertices of degree $3$. 

\begin{prop}
Let $\Gamma$ be a triangle-free simplicial graph. The following assertions are equivalent:
\begin{itemize}
	\item[(i)] $\Gamma$ contains two adjacent vertices of degree $\geq 3$;
	\item[(ii)] $C(\Xi)$ is isomorphic to a subgroup of $C(\Gamma)$, where $\Xi$ denotes the double-star;
	\item[(iii)] for every finite forest $F$, $C(F)$ is isomorphic to a subgroup of $C(\Gamma)$.
\end{itemize}
\end{prop}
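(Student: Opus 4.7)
The plan is to establish the four implications $(\mathrm{iii}) \Rightarrow (\mathrm{ii})$, $(\mathrm{ii}) \Rightarrow (\mathrm{iii})$, $(\mathrm{ii}) \Rightarrow (\mathrm{i})$ and $(\mathrm{i}) \Rightarrow (\mathrm{ii})$. The first is immediate since $\Xi$ itself is a finite forest, and the second follows by composing an assumed embedding $C(\Xi) \hookrightarrow C(\Gamma)$ from $(\mathrm{ii})$ with the embedding $C(F) \hookrightarrow C(\Xi)$ furnished by Corollary~\ref{cor:EmbeddingExTree} for an arbitrary finite forest $F$. So the essential content is the equivalence $(\mathrm{i}) \Leftrightarrow (\mathrm{ii})$.

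For $(\mathrm{ii}) \Rightarrow (\mathrm{i})$, I would apply Corollary~\ref{cor:Sub} to turn the hypothesis into a peripheral collection $\mathcal{J}$ of hyperplanes in $X(\Gamma)$ whose crossing graph is $\Xi$. The two degree-three vertices of $\Xi$ correspond to two transverse hyperplanes $J, H \in \mathcal{J}$, whose labels $u,v \in V(\Gamma)$ are therefore adjacent by Lemma~\ref{lem:transverseimpliesadj}. That $u$ and $v$ additionally have degree $\geq 3$ in $\Gamma$ is then extracted by reproducing the facing-triple argument from the end of the proof of Theorem~\ref{thm:EmbeddingTree}: each of $J, H$ is transverse to three pairwise non-transverse hyperplanes of $\mathcal{J}$ and these must form a facing triple inside the carrier $N(J) = g\langle\mathrm{star}(u)\rangle$; when $u$ has degree $\leq 2$ this carrier reduces to a single square or to a bi-infinite strip $[0,1]\times\mathbb{R}$, neither of which admits such a triple.

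For $(\mathrm{i}) \Rightarrow (\mathrm{ii})$, fix two adjacent vertices $a, b \in V(\Gamma)$ of degree $\geq 3$, and pick $x_1, x_2 \in N(a) \setminus \{b\}$ together with $y_1, y_2 \in N(b) \setminus \{a\}$. I would then consider the collection $\mathcal{J} := \{J_a, J_b, J_{x_1}, J_{x_2}, J_{y_1}, J_{y_2}\}$ of hyperplanes all tangent to the basepoint $1$. Peripherality of $\mathcal{J}$ is automatic because every carrier contains the vertex~$1$. By Lemmas~\ref{lem:transverseimpliesadj} and~\ref{lem:transverseiff}, two such tangent hyperplanes are transverse if and only if their labels are adjacent in $\Gamma$, and the triangle-freeness of $\Gamma$ kills the potential adjacencies $x_i \sim b$, $y_j \sim a$, $x_1 \sim x_2$ and $y_1 \sim y_2$. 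The crossing graph of $\mathcal{J}$ is thus exactly the double-star $\Xi$, provided the $x_i$ and $y_j$ are chosen so that no $x_i$ is adjacent to any $y_j$ in $\Gamma$. An application of Theorem~\ref{thm:ReflectionGeneral} to $\mathcal{J}$ then produces a peripheral embedding $C(\Xi) \hookrightarrow C(\Gamma)$.

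The main obstacle is precisely this last side condition: arranging $x_i \not\sim y_j$ for all $i, j$, so that the crossing graph lands on $\Xi$ rather than on a strictly bigger bipartite overgraph. When the neighborhoods of $a$ and $b$ are large or loosely connected to each other the choice is immediate, but in tightly interwoven configurations one may be forced to replace some of the $J_{y_j}$ by carefully chosen translates $gJ_{y_j}$ sitting sufficiently deep in $X(\Gamma)$ to break the parasitic transversalities without destroying peripherality; the cuts-and-pastes machinery of Section~\ref{section:peripheral} appears to be exactly the right tool to systematize this adjustment.
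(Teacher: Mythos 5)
Three of your four implications are sound: $(\mathrm{iii})\Rightarrow(\mathrm{ii})$ and $(\mathrm{ii})\Rightarrow(\mathrm{iii})$ via Corollary~\ref{cor:EmbeddingExTree} are correct, and your $(\mathrm{ii})\Rightarrow(\mathrm{i})$ is essentially the paper's own argument (Corollary~\ref{cor:Sub}, adjacency of labels via Lemma~\ref{lem:transverseimpliesadj}, and the facing-triple analysis of $N(J)$ ruling out degree $\le 2$). The problem is $(\mathrm{i})\Rightarrow(\mathrm{ii})$, which you correctly identify as the essential content and then do not prove.

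The difficulty you flag is not a corner case that ``may'' arise: it can be unavoidable for your choice of hyperplanes. Take $\Gamma=K_{3,3}$, which is triangle-free with every vertex of degree $3$. For any adjacent $a,b$, the vertices $x_1,x_2$ are the other two vertices in $b$'s part and $y_1,y_2$ the other two in $a$'s part, so \emph{every} $x_i$ is adjacent to \emph{every} $y_j$; the crossing graph of $\{J_a,J_b,J_{x_1},J_{x_2},J_{y_1},J_{y_2}\}$ is then all of $K_{3,3}$, not $\Xi$, and no alternative choice of $x_i,y_j$ among the vertices of $\Gamma$ helps. Worse, the repair you propose cannot work even in principle: Lemma~\ref{lem:CutCrossing} says that a cut-and-paste \emph{preserves} the crossing graph of a peripheral collection --- that operation is designed to pull hyperplanes closer to the basepoint without changing the combinatorics, so it can never ``break parasitic transversalities.'' What is actually needed is to realise the leaf hyperplanes as translates $gJ_{x_i}$ with $g\in\langle\mathrm{star}(a)\rangle$ (resp.\ $gJ_{y_j}$ with $g\in\langle\mathrm{star}(b)\rangle$) chosen deep inside the carriers $N(J_a)$ and $N(J_b)$, and to verify peripherality and the absence of extra crossings by hand; this is exactly the kind of inductive construction carried out in the proof of Proposition~\ref{prop:EmbeddingExTree}. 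The paper sidesteps the issue entirely by proving $(\mathrm{i})\Rightarrow(\mathrm{iii})$ through the graph-morphism machinery (the two-colouring of a tree by $a$ and $b$ as in Corollary~\ref{cor:EmbeddingExTree} together with Proposition~\ref{prop:EmbeddingExTree}), rather than by exhibiting a peripheral collection based at the identity. As written, your proposal establishes $(\mathrm{ii})\Leftrightarrow(\mathrm{iii})$ and $(\mathrm{ii})\Rightarrow(\mathrm{i})$ but leaves the cycle open.
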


\begin{proof}
The implication $(i) \Rightarrow (iii)$ follows from Proposition \ref{cor:EmbeddingExTree} and $(iii) \Rightarrow (ii)$ is clear. 

\medskip \noindent
Now, suppose that $C(\Xi)$ is isomorphic to a subgroup of $C(\Gamma)$. According to Corollary~\ref{cor:Sub}, $X(\Gamma)$ contains a peripheral collection of hyperplanes $\mathcal{J}$ whose crossing graph is isomorphic to $\Xi$. Let $J$ and $H$ be the two hyperplanes corresponding to the two interior vertices $a$ and $b$ of $\Xi$, and let $x,y \in V(\Gamma)$ denote the two vertices of $\Gamma$ labelling $J$ and $H$ respectively. Notice that, because $J$ and $H$ are transverse, it follows from Lemma \ref{lem:transverseimpliesadj} that $x$ and $y$ are adjacent in $\Gamma$. We claim that $x$ and $y$ have degree $\geq 3$, which will conclude the proof of the implication $(ii) \Rightarrow (i)$. 

\medskip \noindent
Because $\mathcal{J}$ is peripheral, the three neighbors of $a$ in $\Xi$ must correspond to three hyperplanes transverse to $J$ such no one separates the other two, namely they define a \emph{facing triple}. This implies that $\mathrm{link}(x)$ cannot be empty (otherwise $N(J)$ would be a single edge) nor a single vertex (otherwise $N(J)$ would be a single square) nor two isolated vertices (otherwise $N(J)$ would decompose as the product of an edge with a bi-infinite line). Thus, $x$ must have degree at least three. The same argument shows that $y$ must have degree at least three as well, concluding the proof of our claim.
\end{proof}

\begin{prop}
Let $\Gamma$ be a triangle-free simplicial graph. For every $n \geq 0$, let $S_n$ denote the segment of length $n$. The following assertions are equivalent:
\begin{itemize}
	\item[(i)] The segment $S_3$ is an induced subgraph of $\Gamma$;
	\item[(ii)] $C(S_3)$ is isomorphic to a subgroup of $C(\Gamma)$;
	\item[(iii)] for every $n \geq 0$, $C(S_n)$ is isomorphic to a subgroup of $C(\Gamma)$.
\end{itemize}
\end{prop}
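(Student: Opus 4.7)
The plan is to establish the cycle $(i) \Rightarrow (iii) \Rightarrow (ii) \Rightarrow (i)$. The step $(iii) \Rightarrow (ii)$ is immediate (take $n = 3$). For $(i) \Rightarrow (iii)$, I would use the standard fact that the inclusion of an induced subgraph $\Lambda \subseteq \Gamma$ yields a canonical embedding $C(\Lambda) \hookrightarrow C(\Gamma)$. Applied with $\Lambda = S_3$ and composed with the embeddings $C(S_n) \hookrightarrow C(S_3)$ supplied by Corollary~\ref{cor:EmbeddingExSegment}, this gives $C(S_n) \hookrightarrow C(\Gamma)$ for every $n \geq 0$.

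For the substantive implication $(ii) \Rightarrow (i)$, I would argue by contrapositive. Suppose $\Gamma$ is triangle-free and contains no induced $S_3$, i.e.\ is simultaneously $P_4$-free and $K_3$-free. The classical cograph structure theorem (every $P_4$-free graph is built from singletons by iterated disjoint unions and joins) combined with triangle-freeness (at each join step both sides must be edgeless, lest a triangle arise) forces $\Gamma$ to be a disjoint union of complete bipartite graphs $K_{m_i, n_i}$, with degenerate cases such as independent sets allowed. Consequently, $C(\Gamma)$ is the free product of the groups $C(K_{m_i, n_i}) = F_{m_i} \times F_{n_i}$, where $F_k := (\mathbb{Z}/2)^{\ast k}$ denotes the right-angled Coxeter group on $k$ isolated vertices.

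It then suffices to show that $C(S_3)$ does not embed into such a free product. I would first verify that $C(S_3)$ is freely indecomposable: if $C(S_3) = A \ast B$ were a non-trivial free product decomposition, then on the Bass--Serre tree (whose edge stabilisers are trivial) each of the order-two generators $a, b, c, d$ would fix a unique vertex, and the finite subgroup $\langle a, b \rangle \cong (\mathbb{Z}/2)^2$ would fix a common vertex; propagating along the commuting chain $a$--$b$--$c$--$d$ would place all four generators in a single vertex stabiliser, contradicting $B \neq \{1\}$. By Kurosh's subgroup theorem, any embedding of $C(S_3)$ into the free product of the $F_{m_i} \times F_{n_i}$'s must then factor through a conjugate of a single factor $F_m \times F_n$. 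In $F_k$ every finite subgroup is conjugate into a $\mathbb{Z}/2$ factor, so two commuting order-two elements must be equal; projecting any hypothetical embedding $\rho : C(S_3) \hookrightarrow F_m \times F_n$ onto each factor and invoking the relations $[a,b] = [b,c] = [c,d] = 1$, the non-trivial images among $\rho(a), \rho(b), \rho(c), \rho(d)$ must all coincide in each factor, so the four images lie in $\{1, x\} \times \{1, y\}$ for some $x \in F_m$, $y \in F_n$. Excluding $(1,1)$, since the generators of $C(S_3)$ have order two, leaves only three possibilities for four distinct images, contradicting injectivity. The main obstacle is the free indecomposability step, which requires the Bass--Serre argument to carefully couple the commuting chain with the rigidity of finite-subgroup fixed-point sets.
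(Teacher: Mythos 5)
Your reduction of $(ii)\Rightarrow(i)$ is a genuinely different, purely algebraic route: the paper instead invokes Corollary \ref{cor:Sub} to get a peripheral collection of hyperplanes with crossing graph $S_3$, minimises a sum of distances between carriers, and reads off an induced $S_3$ in $\Gamma$ from the labels. Your reduction itself is sound: a triangle-free graph with no induced $S_3$ is indeed a disjoint union of complete bipartite graphs, the Bass--Serre argument for free indecomposability of $C(S_3)$ is correct, and Kurosh does confine a hypothetical copy of $C(S_3)$ to a single factor $W_m\times W_n$ with $W_k=(\mathbb{Z}/2)^{\ast k}$.

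The gap is in the last step. From $[a_1,b_1]=[b_1,c_1]=[c_1,d_1]=1$ (first coordinates) you conclude that the non-trivial elements among $a_1,b_1,c_1,d_1$ all coincide; this is false, because a \emph{trivial} coordinate in the middle of the chain breaks the propagation. For instance $a_1=p$, $b_1=1$, $c_1=d_1=r$ with $p\neq r$ non-commuting involutions satisfies all three relations. Concretely, taking non-commuting involutions $p,r\in W_m$ and $q,s\in W_n$, the assignment
$$a\mapsto (p,q),\quad b\mapsto (1,q),\quad c\mapsto (r,1),\quad d\mapsto (r,s)$$
defines a homomorphism $C(S_3)\to W_m\times W_n$ sending the four generators to four distinct involutions with exactly the right pairwise commutation pattern, so your counting contradiction (``at most three possible images'') never materialises. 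This homomorphism is in fact non-injective, but for a different reason: its image lies in $\langle p,r\rangle\times\langle q,s\rangle\cong D_\infty\times D_\infty$, which is virtually $\mathbb{Z}^2$, whereas $C(S_3)$ is virtually free of negative Euler characteristic and hence contains $F_2$. To repair the proof you should run the case analysis correctly: each of the non-commuting pairs $\{a,c\}$ and $\{b,d\}$ must be witnessed in some coordinate, no single coordinate can witness both (witnessing $\{a,c\}$ forces $b_1=1$ and $c_1\neq 1$, witnessing $\{b,d\}$ forces $c_1=1$), so after swapping coordinates one lands exactly in the configuration above, and then the virtually abelian image versus $F_2$ argument finishes the job. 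As written, the proof of $(ii)\Rightarrow(i)$ is not complete.
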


\begin{proof}
The implication $(i) \Rightarrow (iii)$ follows from Proposition \ref{cor:EmbeddingExSegment} and $(iii) \Rightarrow (ii)$ is clear.

\medskip \noindent
Now, suppose that $C(S_3)$ is isomorphic to a subgroup of $C(\Gamma)$. According to Corollary~\ref{cor:Sub}, $X(\Gamma)$ contains a peripheral collection of hyperplanes $\mathcal{J}$ whose crossing graph is isomorphic to $S_3$. Let $I_1, I_2$ be the two hyperplanes corresponding to the two interior vertices of $S_3$, and $J_1,J_2$ the two hyperplanes corresponding the endpoints of $S_3$ so that $J_1$ is transverse to $I_1$ and $J_2$ to $I_2$. Let $p$ denote the projection of $1$ onto $N(I_1) \cap N(I_2)$, and $p_1,p_2$ the projection of $p$ respectively onto $N(J_1),N(J_2)$. Assume that we chose our collection $\mathcal{J}$ so that $d(p,p_1)+d(p,p_2)$ is minimal.

\medskip \noindent
Notice that, because $X(\Gamma)$ is two-dimensional, the hyperplanes separating $p$ from $p_1$ (or $p_2$) are pairwise non-transverse. Let $H_1$ (resp. $H_2$) denote the last hyperplane separating $p$ from $p_1$ (resp. $p_2$). As a consequence of the minimality condition satisfied by $\mathcal{J}$, we know that $H_1$ is transverse to both $H_2$ and $J_2$, and that similarly $H_2$ is transverse to both $H_1$ and $J_1$. But $H_1$ is not transverse to $J_1$ as a consequence of Lemma \ref{lem:ProjSep}, and similarly $H_2$ is not transverse to $J_2$. Therefore, the crossing graph of $\{J_1,J_2,H_1,H_2\}$ is a segment of length three. Moreover, by construction, the carriers $N(J_1)$, $N(J_2)$, $N(H_1)$ and $N(H_2)$ pairwise intersect. We deduce from Lemmas \ref{lem:transverseimpliesadj} and~\ref{lem:transverseiff} that the four vertices of $\Gamma$ labelling $J_1,J_2,H_1,H_2$ span an induced segment of length three, concluding the proof of the implication $(ii) \Rightarrow (i)$. 
\end{proof}

\begin{prop}
Let $\Gamma$ be a triangle-free simplicial graph. For every $n \geq 3$, we denote by $C_n$ the cycle of length $n$. The following assertions are equivalent:
\begin{itemize}
	\item[(i)] The cycle $C_5$ is an induced subgraph of $\Gamma$;
	\item[(ii)] $C(C_5)$ is isomorphic to a subgroup of $C(\Gamma)$;
	\item[(iii)] for every $n \geq 3$, $C(C_n)$ is isomorphic to a subgroup of $C(\Gamma)$.
\end{itemize}
\end{prop}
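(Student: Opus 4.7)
My plan is to establish the equivalence by proving the implications $(i) \Rightarrow (iii) \Rightarrow (ii) \Rightarrow (i)$, of which only the last requires substantial argument. The implication $(iii) \Rightarrow (ii)$ is immediate, obtained by specialising to $n = 5$. For $(i) \Rightarrow (iii)$, if $C_5$ is an induced subgraph of $\Gamma$ then $C(C_5)$ naturally embeds in $C(\Gamma)$, and Corollary~\ref{cor:EmbeddingExCycle} applied with $q = 5$ furnishes, for every $n \geq 5$, a further embedding $C(C_n) \hookrightarrow C(C_5)$ (since $5 - 4 = 1$ divides $n - 4$); composing yields $C(C_n) \hookrightarrow C(\Gamma)$.

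The core of the proposition is the implication $(ii) \Rightarrow (i)$. I would begin by applying Corollary~\ref{cor:Sub} to the assumed embedding $C(C_5) \hookrightarrow C(\Gamma)$ in order to produce a peripheral collection of hyperplanes $\mathcal{J} = \{J_1, \ldots, J_5\}$ of $X(\Gamma)$ whose crossing graph is $C_5$, indexed cyclically so that $J_i$ and $J_{i+1}$ are transverse (indices modulo $5$). Letting $v_i \in V(\Gamma)$ denote the label of $J_i$, Lemma~\ref{lem:transverseimpliesadj} guarantees that $v_i$ and $v_{i+1}$ are adjacent in $\Gamma$, and in particular distinct since $\Gamma$ is simplicial. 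Therefore $v_1, v_2, v_3, v_4, v_5, v_1$ is a closed walk of odd length $5$ in $\Gamma$.

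At this point the argument reduces to the purely graph-theoretic observation that every closed walk of odd length $\ell$ in a simple graph contains an odd cycle of length at most $\ell$. This follows by an easy induction on $\ell$: if some vertex of the walk is repeated, split the walk at this vertex into two strictly shorter closed subwalks, at least one of which must be of odd length, and recurse. Applied to our walk of length $5$ in the triangle-free graph $\Gamma$, this produces an odd cycle of length $3$ or $5$; triangle-freeness rules out length $3$, hence $\Gamma$ contains a $5$-cycle $\sigma$. Finally, $\sigma$ is automatically induced, for if two vertices of $\sigma$ at cyclic distance $2$ were adjacent in $\Gamma$, then together with the vertex of $\sigma$ lying between them they would span a triangle, contradicting the standing hypothesis.

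I do not anticipate any substantial obstacle. In contrast with the analogous propositions for $S_3$ and for the double-star $\Xi$, no minimisation argument over peripheral collections is needed here: the odd length of the natural closed walk in $\Gamma$ coming from the labels of $\mathcal{J}$ already forces, via triangle-freeness, the existence of an induced copy of $C_5$.
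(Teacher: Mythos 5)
Your proof is correct, and for the substantive implication $(ii) \Rightarrow (i)$ it takes a genuinely different and noticeably shorter route than the paper. The paper chooses the peripheral $5$-cycle of hyperplanes minimising $\sum_{1 \leq i,j \leq 5} d(N(J_i),N(J_j))$, analyses how a hyperplane separating $1$ from one of the carriers can meet the cycle (using two-dimensionality and Lemma \ref{lem:shortendist}), and after finitely many reflections arrives at a cycle of hyperplanes $J_a,\ldots,J_e$ all of whose carriers contain the vertex $1$, from which Lemmas \ref{lem:transverseimpliesadj} and \ref{lem:transverseiff} read off a $5$-cycle in $\Gamma$. You bypass all of this: Lemma \ref{lem:transverseimpliesadj} alone makes the labelling map a graph homomorphism from the crossing graph $C_5$ to $\Gamma$, its image is a closed walk of odd length $5$, every odd closed walk contains an odd cycle of no greater length, triangle-freeness excludes length $3$, and a $5$-cycle in a triangle-free graph is automatically induced. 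This is sound --- any coincidence $v_i = v_j$ between non-consecutive labels leaves behind a closed walk of length $3$, i.e., a triangle --- it does not even use peripherality of the collection for this direction, and it is in the spirit of the labelling-morphism argument the paper itself uses for trees in Theorem \ref{thm:EmbeddingTree}. The parity trick is of course special to odd cycles, which is presumably why the neighbouring propositions (for $S_3$ and for the double star) require the geometric minimisation; but for $C_5$ your argument suffices and is preferable. One caveat you share with the paper: item (iii) ranges over all $n \geq 3$ while Corollary \ref{cor:EmbeddingExCycle} only produces embeddings for $n \geq 5$; in fact (iii) is false as literally stated, since $C(C_3) \cong (\mathbb{Z}/2\mathbb{Z})^3$ has order $8$ and cannot embed in a two-dimensional right-angled Coxeter group (whose finite subgroups have order at most $4$), and $C(C_4)$ contains $\mathbb{Z}^2$ and so cannot embed in the hyperbolic group $C(C_5)$ when $\Gamma = C_5$. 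The statement should read $n \geq 5$, and with that correction both proofs of $(i) \Rightarrow (iii)$ are complete.
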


\begin{proof}
The implication $(i) \Rightarrow (iii)$ follows from Corollary \ref{cor:EmbeddingExCycle} and $(iii) \Rightarrow (ii)$ is clear.

\medskip \noindent
Now, assume that $C(C_5)$ is isomorphic to a subgroup of $C(\Gamma)$. As a consequence of Corollary \ref{cor:Sub}, there exists a cycle $J_1, \ldots, J_5$ in the crossing graph of $X(\Gamma)$ such that $\{J_1, \ldots, J_5\}$ is peripheral. Without loss of generality, suppose that we chose our cycle in order to minimise the quantity $\displaystyle \sum\limits_{1 \leq i,j \leq 5} d(N(J_i),N(J_j))$. 

\medskip \noindent
Assume first that the vertex $1$ does not belong to the carrier of one of these five hyperplanes, say $J_1$. Then there exists a hyperplane $J$ separating $1$ from $N(J_1)$. Because $X(\Gamma)$ is two-dimensional, $J$ may be transverse to at most two hyperplanes among $J_1, \ldots, J_5$, and, because $J_1, \ldots, J_5$ is a cycle, $J$ cannot be transverse to exactly one hyperplane among $J_1, \ldots, J_5$. Moreover, if $J$ intersects exactly two hyperplanes among $J_1, \ldots, J_5$, say $J_2$ and $J_4$ (the other cases being $J_2$ and $J_5$, and $J_3$ and $J_5$), then $J$ separates $J_3$ from $J_1$ and $J_5$. Consequently, replacing $J_3$ with $J$ would create a new peripheral collection (possibly after a translation) such that
$$\sum\limits_{i \neq 3} d(N(J),N(J_i)) + \sum\limits_{i,j \neq 3} d(N(J_i),N(J_j)) < \sum\limits_{1 \leq i,j \leq 5} d(N(J_i),N(J_j)),$$
contradicting the choice of our cycle. Therefore, $J$ cannot be transverse to exactly two hyperplanes among $J_1, \ldots, J_5$ either. The only remaining case is that $J$ is transverse to none of the hyperplanes $J_1, \ldots, J_5$, i.e., $J$ separates $1$ from $J_1, \ldots, J_5$. Replacing our cycle with its image under a reflection along $J$ decreases the sum of the distances from $1$ to the carriers of $J_1, \ldots, J_5$, as a consequence of Lemma \ref{lem:shortendist}. Consequently, after finitely many iterations, we find a cycle of five hyperplanes such that all of them contain the vertex $1$ in their carriers. Such a cycle must be of the form $J_a, J_b, J_c, J_d, J_e$ where $a,b,c,d,e \in V(\Gamma)$. It follows from Lemmas \ref{lem:transverseimpliesadj} and \ref{lem:transverseiff} that $a,b,c,d,e$ defines a cycle of length five in $\Gamma$, concluding the proof of $(ii) \Rightarrow (i)$. 
\end{proof}

\section{The embedding problem in dimension two}

\subsection{Peripheral collections and cut-and-paste}\label{section:peripheral}

\noindent
We saw with Corollary \ref{cor:Sub} a characterisation of right-angled Coxeter groups $C(\Gamma_1)$ embeddable into a given two-dimensional right-angled Coxeter group $C(\Gamma_2)$. The goal of this section is to show that we only need to check finitely many configurations of hyperplanes in order to determine whether or not $C(\Gamma_1)$ is isomorphic to a subgroup of $C(\Gamma_2)$. We begin by introducing the following definition:

\begin{definition}
Let $\Gamma$ be a simplicial graph, and let $A$ and $B$ be two distinct and non-transverse hyperplanes of $X(\Gamma)$ in the same $C(\Gamma)$-orbit. Assume that $A$ separates $1$ from $B$. Let $p \in N(A)$ denote the projection of $1$ onto $\mathrm{proj}_{N(A)}(N(B))$, $q \in N(B)$ the projection of $p$ onto $N(B)$, and $r \in N(A)$ the neighbor of $p$ which is separated from it by $A$. The \emph{translation from $B$ to $A$} is the element $t(A,B) = rq^{-1}$.
\end{definition}

\noindent
The definition of $t(A,B)$ may seem to be obscure, but, as we shall see in the proof of Lemma \ref{lem:shortenagain} below, $t(A,B)$ is a product of the reflections along the hyperplanes separating $A$ and $B$ which sends the halfspace containing $1$ and delimited by $B$ to the halfspace containing $1$ and delimited by $A$. Loosely speaking, $t(A,B)$ is a canonical representative among the elements of $C(\Gamma)$ sending $B$ to $A$. 

\begin{lemma}\label{lem:shortenagain}
Let $A,B$ be two hyperplanes in the same $C(\Gamma)$-orbit such that $A$ separates $1$ from $B$, and set $t=t(A,B)$. Then:
\begin{itemize}
	\item $t$ sends the halfspace containing $1$ and delimited by $B$ to the halfspace containing $1$ and delimited by $A$.
	\item For every hyperplane $J$ separated from $1$ by $B$, we have $d(1,N(tJ))< d(1,N(J))$. 
	\item For every hyperplane $J$ transverse to both $A$ and $B$, $t$ stabilises each halfspace delimited by $J$.
\end{itemize}
\end{lemma}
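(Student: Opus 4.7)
The first bullet is by direct computation. Since $A$ and $B$ lie in the same $C(\Gamma)$-orbit, they share a common label $u \in V(\Gamma)$, so $r = pu$, the edge $(p,r)$ is dual to $A$, and the edge $(q,qu)$ is dual to $B$. Computing $t \cdot q = puq^{-1} \cdot q = r$ and $t \cdot qu = pu \cdot u = p$ shows that $t$ maps the edge of $B$ at $q$ onto the edge of $A$ at $p$, whence $tB = A$. Moreover $N(A) \subset B^{-}$ because $A$ and $B$ are non-transverse with $A \subset B^{-}$, so $p \in B^{-}$; by convexity of halfspaces, $q = \mathrm{proj}_{N(B)}(p)$ also lies in $B^{-}$. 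Symmetrically $r \in A^{-}$. Since $t(q) = r$, this forces $t B^{-} = A^{-}$.

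For the remaining two bullets, the plan is to realise $t$ explicitly as a product of reflections. Let $K_1, \ldots, K_m$ be the hyperplanes successively crossed by a combinatorial geodesic from $q$ to $p$. Each $K_i$ separates $p$ from $q$, so by Lemma~\ref{lem:ProjSep} separates $p$ from $N(B)$; symmetrically each $K_i$ separates $q$ from $N(A)$. It follows that no $K_i$ is transverse to $A$ or to $B$, and each $K_i$ strictly separates $N(A)$ from $N(B)$ inside $B^{-} \cap A^{+}$. Following the geodesic step by step, $r_A r_{K_m} \cdots r_{K_1}$ sends $q$ to $r$; since $t$ also sends $q$ to $r$ and the action has trivial vertex stabilisers, I would conclude $t = r_A r_{K_m} \cdots r_{K_1}$.

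The third bullet then follows quickly: if $J$ is transverse to both $A$ and $B$, then $J$ meets $N(A)$ and $N(B)$, which lie on opposite sides of every $K_i$, so $J$ is transverse to every $K_i$ as well. A reflection along a hyperplane transverse to $J$ stabilises each halfspace of $J$ (as already used inside the proof of Theorem~\ref{thm:ReflectionGeneral}), so each factor of $t$, and therefore $t$, stabilises each halfspace of $J$. The second bullet I would deduce from the chain
\[
d(1,N(J)) \;\geq\; d(1,N(B)) + 1 + d(N(B),N(J)) \;\geq\; d(1,N(A)) + 2 + d(N(A),N(tJ)) \;\geq\; d(1,N(tJ)) + 1,
\]
where the first step uses that any geodesic from $1$ to $N(J)$ must cross $B$; the middle step combines the isometry identity $d(N(B),N(J)) = d(N(A),N(tJ))$ (since $t$ sends $N(B)$ to $N(A)$ and $N(J)$ to $N(tJ)$) with $d(1,N(B)) \geq d(1,N(A)) + 1$ (every hyperplane separating $1$ from $N(A)$ is non-transverse to $A$ and therefore also separates $1$ from $N(B)$, which additionally contains $A$); and the last step uses the upper bound $d(1,N(tJ)) \leq d(1,N(A)) + 1 + d(N(A),N(tJ))$. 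The main obstacle is this last upper bound, which I would establish by an explicit path from $1$ through the gate projection onto $N(A)$, crossing $A$, and then reaching $N(tJ)$ along a minimising segment.
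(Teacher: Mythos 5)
Your treatment of the first and third bullets is correct and essentially identical to the paper's: you write $t$ as the product of the reflections dual to the edges of a geodesic from $q$ through $p$ to $r$, note that all factors except the last (which is the reflection along $A$) are reflections along hyperplanes separating $N(A)$ from $N(B)$, and deduce the statement about halfspaces of a $J$ transverse to both $A$ and $B$. (The paper cites an external proposition where you use Lemma~\ref{lem:ProjSep} twice; both are fine.) The second bullet, however, contains a genuine gap, and it sits exactly where you flagged ``the main obstacle'': the inequality $d(1,N(tJ)) \leq d(1,N(A)) + 1 + d(N(A),N(tJ))$ is \emph{false} in general. The path you propose goes from $1$ to the gate of $1$ in $N(A)$, crosses $A$, and then follows a segment realising $d(N(A),N(tJ))$; but that minimising segment need not start anywhere near the gate of $1$, and the detour inside $N(A)$ is unaccounted for. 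Counting hyperplanes, one has the exact identity
$$d(1,N(tJ)) = d(1,N(A)) + 1 + d(N(A),N(tJ)) + \#\mathcal{D},$$
where $\mathcal{D}$ is the set of hyperplanes transverse to $A$ separating $1$ from $N(tJ)$, and $\mathcal{D}$ can be nonempty.

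Concretely, take $\Gamma=C_5$ with vertices $a,b,c,d,e$ in cyclic order, $A=J_a$, $B=acJ_a$ and $J=acabJ_d$. Then $A$ separates $1$ from $B$ and $B$ separates $1$ from $N(J)=acab\langle c,d,e\rangle$; one computes $p=a$, $q=ac$, $r=1$, hence $t=ca$ and $tJ=abJ_d$. Here $d(1,N(A))=0$ and $d(N(A),N(tJ))=0$ (the vertex $ab$ lies in both carriers), so your claimed upper bound equals $1$, whereas $d(1,N(tJ))=2$: both $J_a$ and $J_b$ separate $1$ from $N(tJ)=ab\langle c,d,e\rangle$, and $J_b$ is transverse to $A$. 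Your chain then reads $4\geq 3\geq 2\geq 3$, which is inconsistent (the conclusion $d(1,N(tJ))=2<4=d(1,N(J))$ is of course still true). The reason the paper's argument survives is that it never passes to carrier-to-carrier distances: it first observes that $q$ is the projection of $1$ onto $N(B)$ and that $r$ and $q$ lie on a geodesic from $1$ to $N(J)$, so that $d(1,N(J))=d(1,r)+d(r,q)+d(q,N(J))$; since $tq=r$ the last term equals $d(r,N(tJ))$, and the plain triangle inequality $d(1,N(tJ))\leq d(1,r)+d(r,N(tJ))$ together with $d(r,q)\geq 1$ concludes. Replacing your distance chain by this pointwise bookkeeping through $r$ and $q$ repairs the proof.
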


\begin{proof}
Let $p \in N(A)$ denote the projection of $1$ onto $\mathrm{proj}_{N(A)}(N(B))$, $q \in N(B)$ the projection of $p$ onto $N(B)$, and $r \in N(A)$ the neighbor of $p$ which is separated from it by $A$. By definition of $t(A,B)$, we have $t=rq^{-1}$. Let $u \in V(\Gamma)$ denote the common label of $A$ and $B$. The first point of our lemma follows from the observation that $rq^{-1}$ sends the edge $[q,qu]$ (which is dual to $B$) to the edge $[r,ru]$ (which is dual to $A$).

\medskip \noindent
Next, fix a geodesic $\gamma$ from $q$ to $r$ passing through $p$, and let $s_1,\ldots, s_k$ denote the reflections of $C(\Gamma)$ associated to the edges of $\gamma$ (read from $q$ to $r$). Notice that 
$$s_k \cdots s_2s_1 \cdot q =r = t \cdot q \ \text{hence} \ \ t= s_k \cdots s_1.$$
Notice that, because $p$ and $q$ minimises the distance between $N(A)$ and $N(B)$, every hyperplane separating $p$ and $q$ separates $N(A)$ and $N(B)$ according to \cite[Proposition 2.7]{coningoff}. Therefore, $s_1, \ldots, s_{k-1}$ are reflections along hyperplanes separating $A$ and $B$. So, if $J$ is transverse to both $A$ and $B$, then it has to be transverse to all these hyperplanes as well, which implies that $s_1, \ldots, s_{k-1}$ all stabilise the halfspaces delimited by $J$. This proves the third point of our lemma.

\medskip \noindent
Finally, let $J$ be a hyperplane separated from $1$ by $B$. A key observation is that $q$ coincides with the projection of $1$ onto $N(B)$. Consequently,
$$\begin{array}{lcl} d(1,N(J)) & = & d(1,r)+d(r,q)+d(q,N(J)) = d(1,r)+d(r,q) + d(t \cdot q, N(tJ)) \\ \\ & \geq & d(1,r)+d(r,N(tJ)) +1 > d(1,N(tJ)), \end{array}$$
proving the second point of our lemma.
\end{proof}

\noindent
The key definition of the section is:

\begin{definition}\label{def:cut}
Let $\Gamma$ be a simplicial graph and $\mathscr{C}$ a collection of hyperplanes of $X(\Gamma)$. Fix two distinct non-transverse hyperplanes $A,B$ of $X(\Gamma)$ in the same $C(\Gamma)$-orbit. Assume that $A$ and $B$ are transverse to the same hyperplanes of $\mathscr{C}$, that no hyperplane of $\mathscr{C}$ lies between $A$ and $B$, and that $A$ separates $1$ from $B$. If $\mathscr{C}_0$ denotes the subcollection of $\mathscr{C}$ containing the hyperplanes separated from $1$ by $B$, one says that $(\mathscr{C} \backslash \mathscr{C}_0) \cup t(A,B) \mathscr{C}_0$ is obtained from $\mathscr{C}$ by a \emph{cut-and-paste}. 
\end{definition}

\noindent
This definition may seem technical, but the idea behind it is quite simple. Let $Y$ denote the intersection of all the halfspaces containing $1$ delimited by hyperplanes of $\mathscr{C}$ and let $Q$ denote its intersection with the subspace delimited by $A$ and $B$. The key observation is that $Q$ decomposes as a product. This follows from the fact that $Q$ lies inside the \emph{bridge} of $A$ and $B$ (i.e., the convex hull of the union of all the pairs of vertices minimising the distance between $N(A)$ and $N(B)$), which itself decomposes as a product $T \times I(x,y)$ where $T \subset N(A)$ is a convex subcomplex whose hyperplanes coincide with the hyperplanes crossing both $A$ and $B$ and where $I(x,y)$ is the union of all the geodesics between two vertices $x \in N(A)$ and $y \in N(B)$ minimising the distance between $N(A)$ and $N(B)$. We refer to \cite[Lemma 2.18]{CIF} for more information on bridges. Consequently, $Q$ decomposes as the product $(Q \cap N(A)) \times I(x,y)$. Moreover, if $z$ denotes the vertex adjacent to $y$ and separated from it by $B$, then $t(A,B)=xz^{-1}$. Notice that $t(A,B)$ acts on $Q = (Q \cap N(A)) \times I(x,y)$ as a translation along the second coordinate. Therefore, when replacing $\mathscr{C}_0$ with $t(A,B) \mathscr{C}_0$, what we are doing is cutting $Y$ along $A$ and $B$, removing the middle part $Q$, and gluing $N(B)$ with $N(A)$. See Figure~\ref{cut}. 
\begin{figure}
\begin{center}
\includegraphics[scale=0.38]{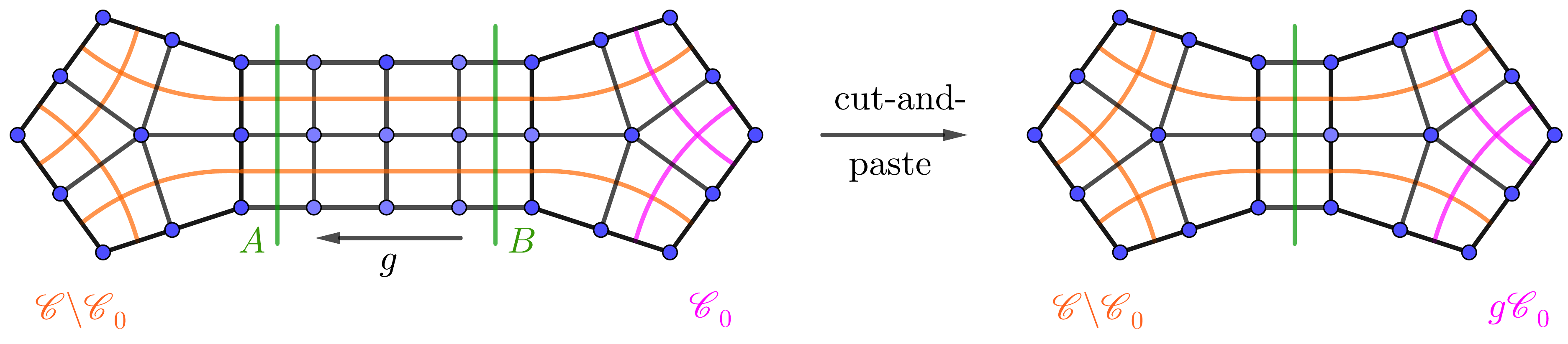}
\caption{Example of a cut-and-paste.}
\label{cut}
\end{center}
\end{figure}

\medskip \noindent
First of all, we claim that a cut-and-paste creates a peripheral collection from a peripheral collection with the same crossing graph. 

\begin{lemma}\label{lem:CutCrossing}
Let $\Gamma$ be a simplicial graph and $\mathscr{C}, \mathscr{C}'$ two collections of hyperplanes of $X(\Gamma)$. Assume that $\mathscr{C}$ is peripheral and that $\mathscr{C}'$ can be obtained from $\mathcal{C}$ by a cut-and-paste. Then $\mathscr{C}'$ is also peripheral and its crossing graph is isomorphic to the crossing graph of $\mathscr{C}$.
\end{lemma}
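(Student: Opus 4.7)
The plan is to analyze the positions of the hyperplanes of $\mathscr{C}$ relative to $A$ and $B$, build the canonical bijection $\mathscr{C} \to \mathscr{C}'$ given by $J \mapsto J$ for $J \in \mathscr{C} \setminus \mathscr{C}_0$ and $J \mapsto tJ$ for $J \in \mathscr{C}_0$ (where $t := t(A,B)$), and then check separately that this bijection preserves transversality and that $\mathscr{C}'$ is peripheral. A preliminary observation is that $A, B \notin \mathscr{C}$: as $\mathscr{C}_0$ lies in $B^- \subset A^-$, having either $A$ or $B$ in $\mathscr{C}$ would contradict peripherality of $\mathscr{C}$ as soon as $\mathscr{C}_0 \neq \emptyset$ (the case $\mathscr{C}_0 = \emptyset$ is trivial since then $\mathscr{C}' = \mathscr{C}$). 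Combined with the defining assumption that no hyperplane of $\mathscr{C}$ lies between $A$ and $B$, the hyperplanes of $\mathscr{C}$ split into three disjoint types: (T) those transverse to both $A$ and $B$, (N) those lying in $A^+$, and (F) those lying in $B^-$, with $\mathscr{C}_0 = \mathrm{(F)}$. By Lemma \ref{lem:shortenagain}, $t$ maps $B^+$ to $A^+$ and hence $B^-$ to $A^-$, so $t\mathscr{C}_0$ lies in $A^-$ and is disjoint from $\mathrm{(T)} \cup \mathrm{(N)}$; the bijection is well-defined.

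For transversality, pairs inside $\mathscr{C} \setminus \mathscr{C}_0$ and pairs inside $\mathscr{C}_0$ are handled by the identity and by $t$ being an isometry, respectively. For a mixed pair $J_1 \in \mathscr{C} \setminus \mathscr{C}_0$, $J_2 \in \mathscr{C}_0$: if $J_1$ is of type (T), Lemma \ref{lem:shortenagain} gives that $t$ stabilises each halfspace of $J_1$, so $tJ_1 = J_1$ and transversality of $J_2$ with $J_1$ coincides with that of $tJ_2$ with $J_1$; if $J_1$ is of type (N), then $J_1 \subset A^+ \subset B^+$ whereas $J_2 \subset B^-$ and $tJ_2 \subset A^-$, so both pairs are separated (by $B$ and by $A$ respectively) and neither is transverse.

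For peripherality, the same trichotomy applies. Pairs inside $\mathscr{C} \setminus \mathscr{C}_0$ inherit peripherality directly from $\mathscr{C}$. The decisive case is a pair $tJ_1, tJ_2 \in t\mathscr{C}_0$: since $1 \in B^+$ and $t(B^+) = A^+$ we have $t \cdot 1 \in A^+$, and since $tJ_1 \subset A^-$ is non-transverse to $A$, all of $A^+$ sits in a single halfspace of $tJ_1$; thus $1$ and $t \cdot 1$ lie in the same halfspace of $tJ_1$, so the halfspace of $tJ_1$ containing $1$ coincides with the $t$-image of the halfspace of $J_1$ containing $1$. Peripherality of $\mathscr{C}$ places $J_2$ in the latter, so $tJ_2$ lies in the former. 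For a mixed pair $J_1 \in \mathscr{C} \setminus \mathscr{C}_0$, $J_2 \in \mathscr{C}_0$: if $J_1$ is of type (T), $t$ preserves the halfspace of $J_1$ containing $1$, so $tJ_2$ lies in the same halfspace of $J_1$ as $J_2$, which contains $1$ by peripherality of $\mathscr{C}$; if $J_1$ is of type (N), peripherality of $\mathscr{C}$ applied to $J_1$ together with any element of $\mathscr{C}_0$ places $1$ in the halfspace of $J_1$ containing $A^-$, and $tJ_2 \subset A^-$ lies in that same halfspace.

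I expect the main obstacle to be the peripherality case for $tJ_1, tJ_2 \in t\mathscr{C}_0$: one needs to show that applying $t$ does not create any new separation of the basepoint, and the key observation is that both $1$ and $t \cdot 1$ sit in $A^+$ and therefore on the same side of every $tJ_1 \subset A^-$. This reflects the geometric essence of a cut-and-paste, which slides the region beyond $B$ onto the region just past $A$ in a manner compatible with the basepoint's position.
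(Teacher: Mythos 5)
Your overall strategy is the same as the paper's: decompose $\mathscr{C}$ into the hyperplanes transverse to $A$ and $B$, those on the $1$-side of $A$, and those beyond $B$ (which is $\mathscr{C}_0$), use the three bullet points of Lemma \ref{lem:shortenagain} to control how $t=t(A,B)$ moves each type, and run a case analysis both for the crossing graph and for peripherality. All the cases you do treat are argued correctly, including the key observation that $1$ and $t\cdot 1$ both lie in the $1$-side halfspace of $A$ and hence on the same side of every hyperplane of $t\mathscr{C}_0$.

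There is, however, one ordered sub-case missing from your peripherality check. The relation ``$K$ separates $1$ from $L$'' is not symmetric, so for a mixed pair you must rule out both directions. You show that $J_1\in\mathscr{C}\setminus\mathscr{C}_0$ does not separate $1$ from $tJ_2$, but you never address whether $tJ_2$ could separate $1$ from $J_1$. The paper's proof does cover this: it is the sub-case where the separating hyperplane $J$ lies in $t(A,B)\mathscr{C}_0$ and the separated hyperplane $H$ lies in $\mathscr{C}\setminus\mathscr{C}_0$, and the paper shows it cannot occur because $A$ separates $1$ from $J$, hence from $H$, which forces $H$ into $t(A,B)\mathscr{C}_0$ after all. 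The gap is easily closed with the tools you already set up: $tJ_2$ lies in the halfspace of $A$ not containing $1$ and is not transverse to $A$, so the halfspace of $tJ_2$ containing $1$ contains the entire $1$-side of $A$; a type (N) hyperplane $J_1$ is contained in that side, and a type (T) hyperplane meets it (its carrier has vertices on both sides of $A$), so in neither case can $tJ_2$ separate $1$ from $J_1$. Add that sentence and your proof is complete.
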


\begin{proof}
Because $\mathscr{C}'$ can be obtained from $\mathscr{C}$ by cut-and-paste, there exist two non-transverse hyperplanes $A,B$ in the same $C(\Gamma)$-orbit which are transverse to the same hyperplanes of $\mathscr{C}$, such that no hyperplane of $\mathscr{C}$ lie between $A$ and $B$, and such that $A$ separates $1$ from $B$, so that, if $\mathscr{C}_0$ denotes the subcollection of $\mathscr{C}$ containing the hyperplanes separated from $1$ by $B$, then $\mathscr{C'}=(\mathscr{C} \backslash \mathscr{C}_0) \cup t(A,B) \mathscr{C}_0$. We claim that, if $\mathscr{C}'$ is not peripheral, then $\mathcal{C}$ cannot be peripheral as well.

\medskip \noindent
So let $J,H \in \mathscr{C}'$ be two hyperplanes such that $J$ separates $1$ from $H$. 

\medskip \noindent
First, assume that $J \in t(A,B) \mathscr{C}_0$. Because $A$ separates $1$ from $J$, necessarily $A$ also separates $1$ from $H$. Consequently, if $H \in \mathscr{C}$ then $B$ has to separate $1$ from $H$, because $H$ cannot lie between $A$ and $B$ and because if $H$ is transverse to $B$ then it has to be transverse to $A$ as well. Hence $H \in \mathscr{C}_0$. Consequently, because $H$ cannot belong to $\mathscr{C} \backslash \mathscr{C}_0$, it has to belong to $t(A,B) \mathscr{C}_0$. So $t(A,B)^{-1}J$ separates $1$ from $t(A,B)^{-1}H$, and since these two hyperplanes belong to $\mathscr{C}_0 \subset \mathscr{C}$, it follows that $\mathscr{C}$ is not peripheral.

\medskip \noindent
Next, assume that $J \in \mathscr{C} \backslash \mathscr{C}_0$. It is clear that, if $H$ also belongs to $\mathscr{C} \backslash \mathscr{C}_0$, then $\mathscr{C}$ is not peripheral. So assume that $H \in t(A,B) \mathscr{C}_0$. If $J$ is transverse to $A$, it follows from Lemma~\ref{lem:shortenagain} that $t(A,B)^{-1}$ stabilises $J$ and leave $H$ inside the halfspace delimited by $J$ which does not contain $1$. Because $t(A,B)^{-1}H$ belongs to $\mathscr{C}_0 \subset \mathscr{C}$, it follows that $\mathscr{C}$ is not peripheral. Now, assume that $J$ is not transverse to $A$. Necessarily, $A$ separates $J$ from $H$, so $B$ separates $J$ from $t(A,B)^{-1}H$. We also conclude that $\mathscr{C}$ is not peripheral. 

\medskip \noindent
Thus, we have proved that, if $\mathscr{C}$ is peripheral, then $\mathscr{C}'$ is peripheral as well. It remains to show that $\mathscr{C}$ and $\mathscr{C}'$ have isomorphic crossing graphs. Set
$$\varphi : \left\{ \begin{array}{ccc} \mathscr{C} & \to & \mathscr{C}' \\ J & \mapsto & \left\{ \begin{array}{cl} J & \text{if $J \notin \mathscr{C}_0$} \\ t(A,B) J & \text{if $J \in \mathscr{C}_0$} \end{array} \right. \end{array} \right..$$
We claim that $\varphi$ induces an isomorphism between the crossing graphs of $\mathscr{C}$ and $\mathscr{C}'$. Let $J,H \in \mathscr{C}$ be two hyperplanes. If they both belong to $\mathscr{C} \backslash \mathscr{C}_0$ or both to $\mathscr{C}_0$, it is clear that $J$ and $H$ are transverse if and only if $\varphi(J)$ and $\varphi(H)$ are transverse. So without loss of generality, assume that $J$ belongs to $\mathscr{C} \backslash \mathscr{C}_0$ and $H$ to $\mathscr{C}_0$. If $J$ is not transverse to $B$, then $J$ and $H$ cannot be transverse, and $A$ separates $J$ and $t(A,B) H$ so that $\varphi(J)=J$ and $\varphi(H)= t(A,B) H$ cannot be transverse either. If $J$ is transverse to $B$, it also has to be transverse to $A$, and Lemma \ref{lem:shortenagain} implies that $H$ is transverse to $J$ if and only if $t(A,B) H=\varphi(H)$ is transverse to $J=\varphi(J)$. This concludes the proof.
\end{proof}

\noindent
The main statement of this section is the following proposition:

\begin{prop}\label{prop:MainCut}
Let $\Gamma$ be a finite simplicial graph and $\mathscr{C}$ a finite peripheral collection of hyperplanes of $X(\Gamma)$. There exists a collection $\mathscr{C}'$ obtained from $\mathscr{C}$ by cuts-and-pastes such that each hyperplane of $\mathscr{C}'$ intersects the ball of radius $2(1+(1+2 \cdot \# \mathscr{C}) \cdot \# V(\Gamma) )$ centered at the vertex $1$. 
\end{prop}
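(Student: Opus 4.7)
The plan is a descent argument on the total cost $\Sigma(\mathscr{C}) := \sum_{J \in \mathscr{C}} d(1, N(J))$. I will show that whenever some $H \in \mathscr{C}$ satisfies $d(1, N(H)) \geq R_0 := 1 + (1+2\#\mathscr{C}) \cdot \#V(\Gamma)$, an appropriate cut-and-paste is available and strictly decreases $\Sigma$. Since $\Sigma$ takes values in $\mathbb{N}$, iterating terminates after finitely many steps at a collection $\mathscr{C}'$, still peripheral with isomorphic crossing graph by Lemma \ref{lem:CutCrossing}, which must then satisfy $d(1, N(J)) < R_0$ for every $J \in \mathscr{C}'$. Any such hyperplane carries an edge with both endpoints in $B(1, 2R_0)$, so every hyperplane of $\mathscr{C}'$ intersects the ball of radius $2R_0 = 2(1+(1+2\#\mathscr{C})\cdot \#V(\Gamma))$.

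To produce the cut-and-paste, pick $H \in \mathscr{C}$ realising the maximum distance from $1$, let $p = \mathrm{proj}_{N(H)}(1)$, and fix a geodesic $\gamma$ from $1$ to $p$ of length $\geq R_0$. The edges of $\gamma$ are dual to pairwise distinct hyperplanes, so pigeonhole over the $\#V(\Gamma)$ possible labels produces a subfamily $L_1, \ldots, L_s$ crossed by $\gamma$ and sharing a common label $u$, with $s \geq \lceil R_0/\#V(\Gamma)\rceil \geq 2 + 2\#\mathscr{C}$. List them in order of distance from $1$. Since hyperplanes with the same label belong to the same $C(\Gamma)$-orbit and must be pairwise non-transverse (Lemma \ref{lem:transverseimpliesadj}), the $L_i$ are co-orbital and $L_i$ separates $1$ from $L_j$ whenever $i<j$.

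The combinatorial heart of the argument is to find a consecutive pair $(L_i, L_{i+1})$ which is \emph{good}, i.e., transverse to the same hyperplanes of $\mathscr{C}$ and with no hyperplane of $\mathscr{C}$ lying strictly between them. For each $J \in \mathscr{C}$, the set $T(J) := \{i : L_i \text{ transverse to } J\}$ is an interval of $\{1, \ldots, s\}$: if $L_i, L_j \in T(J)$ with $i<j$, then $N(J)$ is convex and contains vertices on $L_i$ and $L_j$, and any intermediate $L_k$ separates these two vertices, hence crosses $N(J)$ and is transverse to $J$. Moreover, if $J$ lies strictly between some $L_k$ and $L_{k+1}$, then $J$ is confined to the strip between them, and an analogous argument shows $T(J) = \emptyset$: any square witnessing transversality of $J$ with some $L_i$ (with $i \neq k, k+1$) would be a single $2$-cube disjoint from $L_k$, hence lying entirely in the halfspace of $L_k$ containing $L_i$, which is the opposite halfspace from $J$, contradiction. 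Consequently each $J \in \mathscr{C}$ contributes at most two bad pairs, being either the two boundaries of its interval $T(J)$ when $T(J) \neq \emptyset$, or the unique ``between'' pair when $T(J) = \emptyset$. The total number of bad pairs is thus at most $2\#\mathscr{C}$, while the number of consecutive pairs is $s - 1 \geq 1 + 2\#\mathscr{C}$, forcing a good pair to exist.

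Performing the cut-and-paste at $(A, B) := (L_i, L_{i+1})$ is then permitted; by Lemma \ref{lem:shortenagain}, every element of $\mathscr{C}_0 = \{J \in \mathscr{C} : B \text{ separates } 1 \text{ from } J\}$ has its distance to $1$ strictly decreased, and since $B$ lies on $\gamma$ strictly before $p$ we have $H \in \mathscr{C}_0$, so $\Sigma$ strictly drops, completing the induction. The main obstacle I expect is the twin convexity claim that $T(J)$ is an interval and that $T(J) = \emptyset$ whenever $J$ sits strictly between two $L_k$'s; together these ensure the crucial dichotomy limiting each $J$ to at most two bad pairs, which is what makes the counting match the stated constant.
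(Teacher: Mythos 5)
Your proof is correct, and it follows the same overall strategy as the paper: a descent on $\sum_{J\in\mathscr{C}}d(1,N(J))$, driven by locating two nested co-orbital hyperplanes $A,B$ admissible for a cut-and-paste via a counting argument in which each hyperplane of $\mathscr{C}$ is responsible for at most two ``bad'' transitions. Where you genuinely diverge is in how the candidate family is produced. The paper takes a \emph{maximal chain of pairwise non-transverse} hyperplanes separating $1$ from a far hyperplane, invokes Lemma \ref{lem:dinfty} to bound its length from below, tracks the sets $\mathscr{C}_i$ of hyperplanes of $\mathscr{C}$ crossing each successive strip, finds a run of length $>\#V(\Gamma)$ on which these sets are constant, and only then pigeonholes on orbits to extract $A,B$. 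You instead pigeonhole on labels \emph{first}, along a geodesic from $1$ to $\mathrm{proj}_{N(H)}(1)$: this immediately yields $s\geq 2+2\#\mathscr{C}$ co-orbital, pairwise nested hyperplanes, after which your interval/dichotomy claims for $T(J)$ (both of which are sound, given that no $L_k$ can itself lie in $\mathscr{C}$ by peripherality) give the $2\#\mathscr{C}$ bound on bad consecutive pairs directly. Your route buys two things: it never passes through the $\ell^\infty$-metric, so no factor of $\dim X(\Gamma)=\mathrm{clique}(\Gamma)$ enters and the stated constant works uniformly in all dimensions (the paper's deduction of $k>(1+2\#\mathscr{C})\#V(\Gamma)$ from Lemma \ref{lem:dinfty} tacitly uses that this dimension is at most $2$); and it delivers the slightly stronger conclusion $d(1,N(J))<1+(1+2\#\mathscr{C})\#V(\Gamma)$ for every $J\in\mathscr{C}'$, of which the stated radius $2(1+(1+2\#\mathscr{C})\#V(\Gamma))$ is a generous consequence. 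The paper's version, in exchange, is marginally lighter on the carrier-convexity arguments you need for the interval claim.
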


\begin{proof}
Assume that there exists a hyperplane $J \in \mathscr{C}$ which does not cross the ball centered at $1$ of radius $2(1+(1+2 \cdot \# \mathscr{C}) \cdot \# V(\Gamma) )$. Let $J_1, \ldots, J_k$ be a maximal collection of pairwise non-transverse hyperplanes separating $1$ from $J$. Without loss of generality, suppose that $J_i$ separates $1$ from $J_{i+1}$ for every $1 \leq i \leq k-1$. Notice that, for every $1 \leq i \leq k-1$, $J_i$ and $J_{i+1}$ are transverse, and that, as a consequence of Lemma \ref{lem:dinfty}, $k > (1+2 \cdot \# \mathscr{C}) \cdot \# V(\Gamma)$. For every $1 \leq i \leq k$, let $\mathscr{C}_i$ denote the set of the hyperplanes of $\mathscr{C}$ crossing the subspace delimited by $J_i$ and $J_{i+1}$. 

\medskip \noindent
If $\mathscr{C}_i \neq \mathscr{C}_{i+1}$ for some $1 \leq i \leq k-1$, then either one hyperplane of $\mathscr{C}_i$ is not transverse to $J_{i+1}$, and if so such a hyperplane cannot belong to $\mathscr{C}_j$ for $j \geq i+1$; or one hyperplane of $\mathscr{C}_{i+1}$ is not transverse to $J_{i+1}$, and if so such a hyperplane cannot belong to $\mathscr{C}_j$ for $j \leq i$. Consequently, there exist at most $2 \cdot \# \mathscr{C}$ indices $i$ such that $\mathscr{C}_i \neq \mathscr{C}_{i+1}$, which implies that there exist at least $1+\#V(\Gamma)$ consecutive indices for which the set $\mathscr{C}_i$ remains the same.

\medskip \noindent
In other words, there exist two indices $1 \leq i < j \leq k$ satisfying $j \geq i+ \# V(\Gamma)$ such that the hyperplanes $J_i, \ldots, J_j$ intersect exactly the same hyperplanes of $\mathscr{C}$ and such that no hyperplane of $\mathscr{C}$ lies between two hyperplanes among $J_i, \ldots, J_j$. Because the number of $C(\Gamma)$-orbits of hyperplane of $X(\Gamma)$ is $\#V(\Gamma)$, there must exist two indices $i \leq r< s \leq j$ such that $J_r$ and $J_s$ belong to the same $C(\Gamma)$-orbit. So, if we denote by $\mathscr{C}_0$ the subcollection of $\mathscr{C}$ containing the hyperplanes separated from $1$ by $J_s$, then $\mathscr{C}'= (\mathscr{C} \backslash \mathscr{C}_0) \cup  t(J_r,J_s) \mathscr{C}_0$ is obtained from $\mathscr{C}$ by a cut-and-paste. Moreover, it follows from Lemma \ref{lem:shortenagain} that
$$\sum\limits_{J \in \mathscr{C}'} d(1,N(J)) < \sum\limits_{J \in \mathscr{C}} d(1,N(J)).$$
Therefore, by iterating the process, we eventually find a collection of hyperplanes as desired. 
\end{proof}

\noindent
The combination of Corollary \ref{cor:Sub} with Proposition \ref{prop:MainCut} yields the following statement:

\begin{cor}\label{cor:SubAlgo}
Let $\Phi,\Psi$ be two finite simplicial graphs. Assume that $\Psi$ is triangle-free. Then $C(\Phi)$ is isomorphic to a subgroup of $C(\Psi)$ if and only if $X(\Psi)$ contains a peripheral collection of hyperplanes whose crossing graph is isomorphic to $\Phi$ and all of whose hyperplanes intersect the ball of radius $2(1+(1+2 \cdot \# V(\Phi)) \cdot \# V(\Psi) )$ centered at the vertex $1$. 
\end{cor}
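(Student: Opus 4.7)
The statement is essentially a combination of Corollary \ref{cor:Sub} (the qualitative characterisation of embeddings in the triangle-free case) with Proposition \ref{prop:MainCut} (the cut-and-paste mechanism that confines peripheral collections to a bounded region). My plan is therefore to prove the two directions separately, invoking these two results respectively and checking that the numerical parameters line up.

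For the ``if'' direction, I would simply invoke Theorem \ref{thm:ReflectionGeneral}. If $\mathcal{J}$ is a peripheral collection of hyperplanes of $X(\Psi)$ whose crossing graph is isomorphic to $\Phi$, then the maximal peripheral subcollection $\mathcal{J}_0$ of $\mathcal{J}$ is just $\mathcal{J}$ itself, so the theorem says that the subgroup of $C(\Psi)$ generated by the reflections along the hyperplanes of $\mathcal{J}$ is isomorphic to $C(\Delta)$, where $\Delta$ is the crossing graph of $\mathcal{J}$, i.e., $C(\Phi)$. The ball condition is not needed for this direction.

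For the ``only if'' direction, I would first apply Corollary \ref{cor:Sub}: if $C(\Phi)$ embeds into $C(\Psi)$, then $X(\Psi)$ contains a peripheral collection of hyperplanes $\mathscr{C}$ whose crossing graph is isomorphic to $\Phi$. In particular $\#\mathscr{C} = \#V(\Phi)$. I would then apply Proposition \ref{prop:MainCut} (which applies since $\Psi$ is finite and $\mathscr{C}$ is finite): after finitely many cuts-and-pastes, I obtain a new collection $\mathscr{C}'$ all of whose hyperplanes cross the ball of radius $2(1+(1+2 \cdot \#\mathscr{C}) \cdot \#V(\Psi))$ centered at $1$. Substituting $\#\mathscr{C}=\#V(\Phi)$ gives the stated radius $2(1+(1+2 \cdot \#V(\Phi)) \cdot \#V(\Psi))$. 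The last thing to check is that $\mathscr{C}'$ is still peripheral and its crossing graph is still isomorphic to $\Phi$, but this is exactly the content of Lemma \ref{lem:CutCrossing} (applied iteratively, once per cut-and-paste).

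There is no genuine obstacle here; the proof is a one-line assembly. The only point to be mindful of is the parameter bookkeeping: in Proposition \ref{prop:MainCut} the ambient graph is denoted $\Gamma$ and the collection is $\mathscr{C}$, and I must make sure that upon substituting $\Gamma = \Psi$ and $\mathscr{C} = $ the crossing collection with $\#\mathscr{C} = \#V(\Phi)$, the radius $2(1+(1+2\cdot\#\mathscr{C})\cdot\#V(\Gamma))$ matches the one in the statement. That substitution is direct, so the proof is genuinely a two-line reduction.
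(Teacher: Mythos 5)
Your proof is correct and is essentially the paper's own argument: the paper derives Corollary \ref{cor:SubAlgo} precisely as the combination of Corollary \ref{cor:Sub} with Proposition \ref{prop:MainCut} (with Lemma \ref{lem:CutCrossing} guaranteeing, as you note, that cuts-and-pastes preserve peripherality and the crossing graph), and your bookkeeping $\#\mathscr{C}=\#V(\Phi)$ is exactly what makes the radius match.
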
\qed

\subsection{The algorithm}\label{section:AlgoFinal}

\noindent
In this section, we show how to use Corollary \ref{cor:SubAlgo} in order to determine algorithmically whether or not one two-dimensional right-angled Coxeter group is isomorphic to a subgroup of another one. 

\begin{thm}\label{thm:Algo}
There exists an algorithm determining, given two finite and triangle-free simplicial graphs $\Phi, \Psi$, whether or not $C(\Phi)$ is isomorphic to a subgroup of $C(\Psi)$. If so, an explicit basis is provided. 
\end{thm}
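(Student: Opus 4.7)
The plan is to turn Corollary \ref{cor:SubAlgo} into an effective procedure by enumerating hyperplanes in a bounded region. Set $R = 2(1+(1+2\#V(\Phi))\#V(\Psi))$. First I would enumerate the ball $B(1,R)$ in $X(\Psi)$: since $C(\Psi)$ is finitely presented with effective normal form (from the operations (O1)--(O3) of Section \ref{section:cc}), one can list all reduced words of length at most $R$ in the generators of $C(\Psi)$ and retain one representative per element. Having this finite list of vertices, one reads off the finitely many hyperplanes that cross $B(1,R)$: each edge of the induced subcomplex gives an edge whose hyperplane label and orbit are determined by the vertex in $V(\Psi)$ labelling it (Lemma \ref{lem:DescriptionHyp}), and two edges are dual to the same hyperplane iff they lie in a common square traversed along parallel sides. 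Call this finite set of hyperplanes $\mathcal{H}$.

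Next I would iterate over every subset $\mathcal{J} \subset \mathcal{H}$ of cardinality $\#V(\Phi)$ and test two conditions: (a) $\mathcal{J}$ is peripheral; (b) the crossing graph of $\mathcal{J}$ is isomorphic to $\Phi$. For (a), given two hyperplanes $A,B \in \mathcal{J}$, one can compute $d(1,N(A))$ from the normal form (shortest reduced word whose associated geodesic ends on an edge of $A$) and decide whether $A$ separates $1$ from $B$ by comparing the halfspace sides of $1$ and of a vertex of $N(B)$ relative to $A$; these are finite checks since all the relevant vertices lie in $B(1,R)$. For (b), transversality of two hyperplanes in $\mathcal{J}$ is determined combinatorially by whether their carriers share a square in $B(1,R+1)$, so the crossing graph of $\mathcal{J}$ is computable and can be tested for isomorphism against $\Phi$ by brute force (both graphs having at most $\#V(\Phi)$ vertices).

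If some $\mathcal{J}$ satisfies (a) and (b), output ``yes'' and return the list of reflections $\{r_J : J \in \mathcal{J}\}$, where each $r_J$ is expressed in the generators of $C(\Psi)$ as $g u g^{-1}$ using Lemma \ref{lem:DescriptionHyp}; by Theorem \ref{thm:ReflectionGeneral} this is a basis of a subgroup isomorphic to $C(\Phi)$ via the isomorphism $C(\Phi) \to \langle r_J, J \in \mathcal{J}\rangle$ induced by an isomorphism $\Phi \to $ crossing graph of $\mathcal{J}$ (picked during the isomorphism test in step (b)). If no subset $\mathcal{J}$ works, then by the ``only if'' direction of Corollary \ref{cor:SubAlgo} no embedding $C(\Phi) \hookrightarrow C(\Psi)$ exists, so output ``no''.

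The main conceptual obstacle was already overcome in Section \ref{section:peripheral}: reducing an \emph{a priori} infinite search over peripheral configurations to the bounded radius $R$. What remains is entirely routine bookkeeping, namely that all the geometric predicates (membership in a ball, the halfspace that contains a given vertex, carriers of hyperplanes, transversality of two hyperplanes, crossing graph isomorphism) are effectively computable from the reduced-word normal form of $C(\Psi)$; these all reduce to inspecting a finite subcomplex of $X(\Psi)$ of bounded diameter, so termination and correctness follow directly from Corollary \ref{cor:SubAlgo}.
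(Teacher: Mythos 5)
Your overall architecture is exactly the paper's: Corollary \ref{cor:SubAlgo} reduces the problem to a bounded search, and one then enumerates candidate collections of hyperplanes meeting the ball of radius $R$, testing peripherality and the crossing graph, and outputs the corresponding reflections as a basis via Theorem \ref{thm:ReflectionGeneral}. The paper implements this by enumerating tuples $(g_1,\dots,g_n,u_1,\dots,u_n)$ with $|g_i|\le R$ and deciding everything through the algebraic criteria of Lemmas \ref{lem:AlgoId}, \ref{lem:AlgoTransverse} and \ref{lem:AlgoPeripheral}. The difficulty is that two of the predicates you dismiss as routine bookkeeping are implemented incorrectly as sketched, and both can change the output of the algorithm.

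First, "dual to the same hyperplane'' is the \emph{transitive closure} of the square-parallelism relation, and computing that closure inside $B(1,R)$ can give a strictly finer relation than the true one: combinatorial balls in a CAT(0) cube complex are not convex (already in the standard square tiling of the plane), so two edges of $B(1,R)$ dual to the same hyperplane $J$ need not be joined by a chain of squares staying in $B(1,R)$. If one hyperplane is thereby counted twice, a candidate set $\mathcal{J}$ of ``$n$ hyperplanes'' may contain only $n-1$ distinct ones; it still passes your peripherality test (a hyperplane does not separate $1$ from itself), its computed crossing graph can be isomorphic to $\Phi$, and the algorithm then returns a ``basis'' with a repeated reflection, which does not generate a copy of $C(\Phi)$. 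Second, two transverse hyperplanes that each cross $B(1,R)$ need not cross \emph{each other} inside $B(1,R+1)$: Lemma \ref{lem:DistInter} only gives $d(1,N(A)\cap N(B))\le d(1,N(A))+d(1,N(B))$, which can be of order $2R$, and in hyperbolic examples such as $X(C_5)$ this is essentially attained. So your transversality test can report false negatives, hence a wrong crossing graph; this can make the algorithm reject the very collection whose existence Corollary \ref{cor:SubAlgo} guarantees, or accept a wrong one. (Relatedly, testing whether $A$ separates $1$ from $B$ against an \emph{arbitrary} vertex of $N(B)$ is only valid once transversality and distinctness have been settled correctly, which is why Lemma \ref{lem:AlgoPeripheral} works with the projection of $1$ onto $N(B)$.) All of this is repairable: replace the square-chasing by the normal-form criteria the paper proves for exactly this purpose --- equality of $gJ_u$ and $hJ_v$ iff $u=v$ and $g^{-1}h\in\langle\mathrm{star}(u)\rangle$, transversality iff $u,v$ are adjacent and $g^{-1}h\in\langle\mathrm{star}(u)\rangle\langle\mathrm{star}(v)\rangle$ --- but as written the ``routine'' layer is where your proof is incomplete.
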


\noindent
A \emph{basis} of a right-angled Coxeter group is defined as follows:

\begin{definition}
Let $\Gamma$ be a simplicial graph. A \emph{basis} of $C(\Gamma)$ is a generating set $S \subset C(\Gamma)$ such that, if $\Delta$ denotes the graph whose vertex-set is $S$ and whose edges link two commuting elements, then the map $C(\Delta) \to C(\Gamma)$ defined by sending a vertex of $\Delta$ to the element of $S$ it represents induces an isomorphism. 
\end{definition}

\noindent
We begin by showing that a few elementary decision problems can be solved algorithmically:

\begin{lemma}\label{lem:AlgoId}
Let $\Gamma$ be a simplicial graph, $g,h \in C(\Gamma)$ two words of generators and $u,v \in V(\Gamma)$ two vertices. The hyperplanes $gJ_u$ and $hJ_v$ coincide if and only if $u=v$ and the reduction of $g^{-1}h$ contains only vertices of $\mathrm{star}(u)$.
\end{lemma}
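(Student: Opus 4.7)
The plan is to reduce everything to Lemma \ref{lem:DescriptionHyp}, which already tells us that a hyperplane of $X(\Gamma)$ is completely determined by a label $u \in V(\Gamma)$ and a coset of the form $g\langle \mathrm{star}(u)\rangle$. Concretely, the dual edges of $gJ_u$ are $\{(gk,gku) \mid k \in \langle \mathrm{star}(u)\rangle\}$ and the common label of these edges is $u$.

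First I would show the forward direction. If $gJ_u = hJ_v$, then these two hyperplanes share all their dual edges, which forces the labels of their dual edges to agree: by Lemma \ref{lem:DescriptionHyp}, this means $u = v$. Once labels agree, the coincidence of the edge sets $\{(gk,gku) \mid k \in \langle \mathrm{star}(u)\rangle\} = \{(hk,hku) \mid k \in \langle \mathrm{star}(u)\rangle\}$ translates to the equality of cosets $g\langle \mathrm{star}(u)\rangle = h\langle \mathrm{star}(u)\rangle$, i.e.\ $g^{-1}h \in \langle \mathrm{star}(u)\rangle$. Conversely, if $u = v$ and $g^{-1}h \in \langle \mathrm{star}(u)\rangle$, then the same coset equality holds and the two hyperplanes have identical dual edge sets, hence coincide.

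It then remains to characterize membership in $\langle \mathrm{star}(u)\rangle$ in terms of reduced words: an element $w \in C(\Gamma)$ lies in $\langle \mathrm{star}(u)\rangle$ if and only if the letters appearing in a reduced word representing $w$ all lie in $\mathrm{star}(u)$. This is the standard fact that parabolic subgroups of right-angled Coxeter groups are closed under the normal form, and it follows directly from the description in Section \ref{section:cc}: the three elementary moves (O1), (O2), (O3) do not introduce new letters, so any reduction of a word over $\mathrm{star}(u)$ stays over $\mathrm{star}(u)$; conversely, any reduced word of $w$ is obtained from any other word representing $w$ by these same moves, so if one reduced word of $w$ uses only letters of $\mathrm{star}(u)$, then so does any other. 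Applying this to $w = g^{-1}h$ finishes the proof.

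No real obstacle is expected here; the statement is essentially a translation of Lemma \ref{lem:DescriptionHyp} together with the normal form. The only point to be careful about is that the characterization of parabolic membership through reduced words must be invoked explicitly, since the statement of the lemma is phrased in terms of ``the reduction of $g^{-1}h$''.
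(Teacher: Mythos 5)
Your proposal is correct and follows exactly the route the paper intends: the paper's own proof is the single sentence that the lemma is an immediate consequence of Lemma \ref{lem:DescriptionHyp}, and you have simply written out the details (equality of labels, equality of the cosets $g\langle \mathrm{star}(u)\rangle = h\langle \mathrm{star}(u)\rangle$, and the translation of parabolic membership into the reduced-word condition via the moves (O1)--(O3)). Nothing is missing.
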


\begin{proof}
The lemma is an immediate consequence of the description of the hyperplanes of $X(\Gamma)$ provided by Lemma \ref{lem:DescriptionHyp}.
\end{proof}

\begin{lemma}\label{lem:AlgoTransverse}
Let $\Gamma$ be a simplicial graph, $g,h \in C(\Gamma)$ two words of generators and $u,v \in V(\Gamma)$ two vertices. The hyperplanes $gJ_u$ and $hJ_v$ are transverse if and only if $u$ and $v$ are adjacent vertices and the reduction of $g^{-1}h$ is the concatenation of a word whose letters are vertices of $\mathrm{star}(u)$ with a word whose letters are vertices of $\mathrm{star}(v)$.
\end{lemma}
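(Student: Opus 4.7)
The plan is to reduce the statement to a purely combinatorial condition on carriers, then translate that condition into the claimed word condition by a length-minimisation argument.

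First I would show that $gJ_u$ and $hJ_v$ are transverse if and only if $u,v$ are adjacent and the carriers $N(gJ_u)=g\langle\mathrm{star}(u)\rangle$ and $N(hJ_v)=h\langle\mathrm{star}(v)\rangle$ (Lemma~\ref{lem:DescriptionHyp}) share a vertex. This is immediate from Lemma~\ref{lem:transverseimpliesadj} (labels of transverse hyperplanes are adjacent) together with Lemma~\ref{lem:transverseiff} (two hyperplanes whose carriers meet and whose labels are adjacent are transverse). Distinctness of $gJ_u$ and $hJ_v$ under the assumption that $u,v$ are adjacent follows from Lemma~\ref{lem:AlgoId}, since $u\neq v$. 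So the lemma reduces to showing that $g\langle\mathrm{star}(u)\rangle \cap h\langle\mathrm{star}(v)\rangle \neq \emptyset$ if and only if the reduction of $g^{-1}h$ splits as a concatenation $w_1w_2$ with $w_1$ a word in $\mathrm{star}(u)$ and $w_2$ a word in $\mathrm{star}(v)$.

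The ``if'' direction is trivial: given such a reduction $w_1w_2$ for $g^{-1}h$, the vertex $gw_1 = hw_2^{-1}$ lies in both carriers.

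For the ``only if'' direction, suppose $gs = ht$ for some $s \in \langle\mathrm{star}(u)\rangle$ and $t \in \langle\mathrm{star}(v)\rangle$, so that $g^{-1}h = s\cdot t^{-1}$ with $s \in \langle\mathrm{star}(u)\rangle$ and $t^{-1} \in \langle\mathrm{star}(v)\rangle$. Among all such expressions, choose $s$ and $t^{-1}$ that are individually reduced and minimise $|s|+|t^{-1}|$. The key claim is that the concatenation $s\cdot t^{-1}$ is already a reduced word; once established, this concatenation is a reduced representative of $g^{-1}h$ of the desired form, which is what we want. Suppose for contradiction that $s\cdot t^{-1}$ is not reduced. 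Then after some sequence of O3 moves a cancellation O2 applies. Since $s$ and $t^{-1}$ are individually reduced, the cancelling pair must involve one letter $g_0$ from $s$ at position $i$ and one from $t^{-1}$ at position $j$, with all intervening letters commuting with $g_0$. In particular, $g_0$ shuffles to the last position of $s$, writing $s = s' g_0$ with $s'$ a reduced word in $\mathrm{star}(u)$, and $g_0$ shuffles to the first position of $t^{-1}$, writing $t^{-1} = g_0 r'$ with $r'$ a reduced word in $\mathrm{star}(v)$. Then $g^{-1}h = s' g_0 g_0 r' = s'\cdot r'$ is a strictly shorter expression of the required form, contradicting minimality.

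The main obstacle is this minimisation argument: one must verify carefully that any shortening of $s\cdot t^{-1}$ must straddle the boundary between the two blocks and can be realised by moves that respect the decomposition into a $\mathrm{star}(u)$-word and a $\mathrm{star}(v)$-word. This is the only delicate point; everything else is a direct application of the preceding lemmas and the normal-form machinery recalled in Section~\ref{section:cc}.
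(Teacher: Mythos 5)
Your proof is correct, and it reaches the same intermediate waypoint as the paper --- namely that transversality of $gJ_u$ and $hJ_v$ is equivalent to adjacency of $u,v$ together with the carriers $g\langle\mathrm{star}(u)\rangle$ and $h\langle\mathrm{star}(v)\rangle$ sharing a vertex --- but the two halves are executed differently. For the forward direction the paper stays geometric: it takes a geodesic from $1$ to $g^{-1}h$ passing through the intersection of the two carriers (using convexity and Lemma~\ref{lem:InterProj}) and reads off the reduced word decomposition directly from Lemma~\ref{lem:geod}, so the word produced is reduced for free. You instead extract only the group-theoretic identity $g^{-1}h=st^{-1}$ from the intersection of carriers and then upgrade it to a reduced concatenation by a length-minimisation argument resting on the deletion condition for graph products; this is more combinatorial and self-contained on the normal-form side, at the cost of the extra (correctly handled) step showing that any cancellation must straddle the two blocks. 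For the converse the paper argues by an explicit translation computation ($aJ_u=J_u$ and $aJ_v=abJ_v=g^{-1}hJ_v$ are transverse because $J_u,J_v$ are), whereas you exhibit a common vertex of the carriers and invoke Lemma~\ref{lem:transverseiff}; note that the literal statement of that lemma concerns \emph{tangent} hyperplanes, which by the paper's definition are already non-transverse, so you are relying on the (clearly intended, and elsewhere used) reading ``hyperplanes with intersecting carriers are transverse iff their labels are adjacent'' --- it would be worth saying this explicitly or substituting the paper's one-line translation argument. Neither difference is a gap; both routes are sound.
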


\begin{proof}
Assume that  $gJ_u$ and $hJ_v$ are transverse. Then there exists a geodesic from $1 \in N(J_u)$ to $g^{-1}h \in N(g^{-1}hJ_v)$ which is the concatenation of a geodesic in $N(J_u)= \langle \mathrm{star}(u) \rangle$ with a geodesic in $N(g^{-1}hJ_v)$. As the edges of $N(g^{-1}hJ_v)$ are all labelled by vertices of $\mathrm{star}(v)$, it follows from Lemma \ref{lem:geod} that the reduction of $g^{-1}h$ is the concatenation of a word whose letters are vertices of $\mathrm{star}(u)$ with a word whose letters are vertices of $\mathrm{star}(v)$. We also know from Lemma \ref{lem:transverseimpliesadj} that $u$ and $v$ must be adjacent.

\medskip \noindent
Conversely, assume that $u$ and $v$ are adjacent and that $g^{-1}h$ belongs to $\langle \mathrm{star}(u) \rangle \langle \mathrm{star}(v) \rangle$. So there exist $a \in \langle \mathrm{star}(u) \rangle$ and $b \in \langle \mathrm{star}(v) \rangle$ such that $g^{-1}h=ab$. Because $u$ and $v$ are adjacent, the hyperplanes $J_u$ and $J_v$ must be transverse. It follows that $aJ_u=J_u$ and $aJ_v=abJ_v=g^{-1}hJ_v$ are transverse, and finally that $gJ_u$ and $hJ_v$ are transverse, concluding the proof.
\end{proof}

\begin{lemma}\label{lem:AlgoPeripheral}
Let $\Gamma$ be a simplicial graph, $g,h \in C(\Gamma)$ two words of generators and $u,v \in V(\Gamma)$ two vertices. Let $g'$ (resp. $h'$) denote the word of generators obtained from the reduction of $g$ (resp. $h$) by removing the letters of the tail of $g$ (resp. $h$) which are vertices of $\mathrm{star}(u)$ (resp. $\mathrm{star}(v)$). The hyperplane $gJ_u$ separates $1$ from $hJ_v$ if and only if $g'u$ is a prefix of $h'$ in $C(\Gamma)$.
\end{lemma}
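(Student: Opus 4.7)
The strategy is to reduce the statement to the already-proved Lemma \ref{lem:halfspaces} by first showing that the hyperplanes $gJ_u$ and $hJ_v$ coincide with $g'J_u$ and $h'J_v$ respectively. Indeed, by construction, $g$ differs from $g'$ by right-multiplication by a word $s$ whose letters all belong to $\mathrm{star}(u)$; since $s \in \langle \mathrm{star}(u) \rangle = \mathrm{stab}(J_u)$ by Lemma \ref{lem:DescriptionHyp}, one gets $gJ_u = g'J_u$, and symmetrically $hJ_v = h'J_v$. Moreover, by the very definition of $g'$ and $h'$, the tail of $g'$ contains no vertex of $\mathrm{star}(u)$ and the tail of $h'$ contains no vertex of $\mathrm{star}(v)$; so the proof of Lemma \ref{lem:halfspaces} applies and identifies $g'$ as the projection of $1$ onto $N(g'J_u) = N(gJ_u)$, and analogously $h'$ as the projection of $1$ onto $N(hJ_v)$.

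Once this is set up, the plan is to show the chain of equivalences: $gJ_u$ separates $1$ from $hJ_v$ iff $g'J_u$ separates $1$ from $h'$ iff $g'u$ is a prefix of $h'$. The second equivalence is immediate from Lemma \ref{lem:halfspaces} applied to $g'$, $h'$ and $u$ (since the tail condition on $g'$ is satisfied). For the first equivalence, the forward direction uses that $h' \in N(hJ_v)$, so any hyperplane separating $1$ from $hJ_v$ in particular separates $1$ from $h'$; the reverse direction uses Lemma \ref{lem:ProjSep}: every hyperplane separating $1$ from its projection $h'$ onto the convex subcomplex $N(hJ_v)$ separates $1$ from all of $N(hJ_v)$, hence from the hyperplane $hJ_v$ itself.

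The only subtle point, and the one I expect to be the main technical obstacle, is the edge case where $gJ_u$ equals or is transverse to $hJ_v$, in which the phrase ``$gJ_u$ separates $1$ from $hJ_v$'' vacuously fails and one needs to check that the prefix condition also fails. If $gJ_u = hJ_v$ then $g'J_u = h'J_v$, which forces $u = v$ and, by uniqueness of the projection of $1$ onto $N(gJ_u)$, gives $g' = h'$; the word $g'u$ is strictly longer than $g' = h'$ (its reduction differs from that of $h'$ by the extra letter $u$, which cannot be absorbed since the tail of $g'$ contains no vertex of $\mathrm{star}(u)$), so $g'u$ is not a prefix of $h'$. If instead $gJ_u$ and $hJ_v$ are transverse, then $g'u$ cannot be a prefix of $h'$ either, for otherwise Lemma \ref{lem:halfspaces} would force $g'J_u$ to separate $1$ from $h'$, contradicting the fact that $h' \in N(hJ_v)$ lies in the halfspace of $g'J_u$ which contains $1$ (this halfspace meets $N(hJ_v)$ since the two hyperplanes are transverse and $h'$ is the closest point of $N(hJ_v)$ to $1$). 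With these cases dispatched, the chain of equivalences established above completes the proof.
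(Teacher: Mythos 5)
Your proof is correct and follows essentially the same route as the paper: identify $h'$ as the projection of $1$ onto $N(hJ_v)$, use Lemma \ref{lem:ProjSep} to reduce ``$gJ_u$ separates $1$ from $hJ_v$'' to ``$gJ_u$ separates $1$ from $h'$'', and conclude with Lemma \ref{lem:halfspaces}. The separate edge-case analysis (when $gJ_u$ equals or is transverse to $hJ_v$) is already subsumed by your chain of equivalences, so it is redundant, though not incorrect.
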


\begin{proof}
Notice that $hJ_v = h'J_v$. Moreover, if a vertex $x$ belongs to $h'J_v$, then $x=h'y$ for some $y \in \langle \mathrm{star}(v) \rangle$, and the product $h'y$ must be reduced because the tail of $h'$ does not contain vertices of $\mathrm{star}(v)$. It follows from Lemma \ref{lem:geod} that $h'$ belongs to a geodesic between $1$ and $x$. In other words, $h'$ minimises the distance from $1$ in $N(hJ_v)$, i.e., $h'$ is the projection of $1$ onto $N(hJ_v)$. Therefore, as a consequence of Lemma \ref{lem:ProjSep}, $gJ_u$ separates $1$ from $hJ_v$ if and only if it separates $1$ from $h'$, which happens if and only if $g'u$ is a prefix of $h'$ according to Lemma \ref{lem:halfspaces}.
\end{proof}

\noindent
We are now ready to prove Theorem \ref{thm:Algo}.

\begin{proof}[Proof of Theorem \ref{thm:Algo}.]
For convenience, let $n$ denote the number of vertices of $\Phi$. Apply the following operations to a tuple $(g_1,\ldots, g_n, u_1, \ldots, u_n)$ where $g_1, \ldots, g_n \in C(\Psi)$ are words of generators of length $\leq 2(1+(1+2 \cdot \# V(\Phi)) \cdot \# V(\Psi) )$ and $u_1, \ldots, u_n \in V(\Psi)$:
\begin{itemize}
	\item For each $1 \leq i \leq n$, determine whether the word $g_i$ is reduced. If at some point the answer is no, stop and test another tuple; otherwise, apply the next step.
	\item For each $1 \leq i < j \leq n$, apply Lemma \ref{lem:AlgoId} to determine whether the hyperplanes $g_iJ_{u_i}$ and $g_jJ_{u_j}$ are distinct. If at some point the answer is no, stop and test another tuple; otherwise, apply the next step.
	\item For each $1 \leq i < j \leq n$, apply Lemma \ref{lem:AlgoPeripheral} to determine whether $g_iJ_{u_i}$ separates $1$ from $g_jJ_{u_j}$. If at some point the answer is yes, stop and test another tuple; otherwise, the collection $\mathcal{J}= \{ g_1J_{u_1}, \ldots, g_nJ_{u_n} \}$ is peripheral.
	\item Apply Lemma \ref{lem:AlgoTransverse} to determine the crossing graph of $\mathcal{J}$. If it is isomorphic to $\Phi$, the algorithm stops and gives $(g_1u_1g_1^{-1}, \ldots, g_nu_ng_n^{-1})$ as a basis of a subgroup of $C(\Psi)$ isomorphic to $C(\Phi)$ (as justified by Theorem \ref{thm:ReflectionGeneral}). Otherwise, test another tuple.
\end{itemize}
If all the tuples have been tested and the algorithm did not stop to give a basis of a subgroup, then it follows from Corollary \ref{cor:SubAlgo} that $C(\Psi)$ is not isomorphic to a subgroup of $C(\Phi)$.
\end{proof}

\begin{remark}
In the proof of Theorem \ref{thm:Algo}, we did not try to write the most optimal algorithm. A rough upper bound on the number of operations required for the most naive application of the process described there is 
$$5 \cdot 10^8 \cdot 8^p \cdot p^{14+p} \cdot q^{8+2p} \cdot \mathrm{iso}(p),$$ 
or $(pq)^{3p} \cdot \mathrm{iso}(p)$ for large values of $p,q$, where $\mathrm{iso}(p)$ corresponds to the number of operations needed in order to determine whether or not two graphs with $p$ vertices are isomorphic. 
\end{remark}

\noindent
Related to Theorem \ref{thm:Algo} is \cite[Theorem D]{RACGStallings}, which proves that there exists an algorithm determining whether or not a one-ended right-angled Coxeter group is isomorphic to a finite-index subgroup of another one. It turns out that this theorem also follows from the results proved in Sections \ref{section:MainThm} and \ref{section:peripheral}.

\begin{thm}\label{thm:AlgoFI}\emph{\cite{RACGStallings}}
There exists an algorithm determining, given two finite and triangle-free simplicial graphs $\Phi, \Psi$ where $\Phi$ has no isolated vertex, whether or not $C(\Phi)$ is isomorphic to a finite-index subgroup of $C(\Psi)$. If so, an explicit basis is provided and the index is computed. 
\end{thm}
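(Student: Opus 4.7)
The plan is to concatenate the algorithm of Theorem~\ref{thm:Algo} with a finite-index test on each peripheral embedding it produces. Concretely, enumerate as in the proof of Theorem~\ref{thm:Algo} all tuples $(g_1,\ldots,g_n,u_1,\ldots,u_n)$ with $|g_i|\leq N := 2(1+(1+2\#V(\Phi))\#V(\Psi))$ giving a peripheral collection $\mathcal{J} = \{g_iJ_{u_i}\}$ with crossing graph $\Phi$, and for each compute the fundamental domain $Y := \bigcap_{J\in\mathcal{J}} J^+$. Applying Theorem~\ref{thm:ReflectionGeneral} to the $R$-closure of $\mathcal{J}$ (whose maximal peripheral subcollection remains $\mathcal{J}$ itself), $Y$ is a fundamental domain for $R := \langle g_iu_ig_i^{-1}\rangle \cong C(\Phi)$ acting on $X(\Psi)$, so $[C(\Psi):R] = \#V(Y)$ and $R$ has finite index in $C(\Psi)$ if and only if $Y$ is finite.

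For completeness, if $C(\Phi)$ embeds as a finite-index subgroup of $C(\Psi)$ at all, some peripheral collection in $B(1,N)$ must witness this. By Corollary~\ref{cor:Embedding} one may assume the embedding is a peripheral embedding (after an automorphism of $C(\Phi)$), and iterated cuts-and-pastes (Proposition~\ref{prop:MainCut}) bring its collection into the ball. The point to verify is that cut-and-paste preserves finite-indexness: if $\mathscr{C}'$ is obtained from $\mathscr{C}$ by cutting along two hyperplanes $A, B$, then for every $J \in \mathscr{C}_0$ the translated hyperplane $t(A,B)J$ is strictly closer to $1$ than $J$ by Lemma~\ref{lem:shortenagain} and sits in the same halfspace (since $J \subseteq B^-$ implies $t(A,B)J \subseteq A^-$), giving $(t(A,B)J)^+ \subseteq J^+$; hence $Y' \subseteq Y$ and $\#V(Y') \leq \#V(Y)$, so going through a cut-and-paste can only decrease the index.

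To compute $Y$ algorithmically, I would BFS from $1$ in $X(\Psi)$, refusing to cross any edge whose dual hyperplane lies in $\mathcal{J}$ (decidable via Lemmas~\ref{lem:AlgoId} and \ref{lem:AlgoPeripheral}). A computable upper bound on $\#V(Y)$ in the finite case — needed to stop the BFS — comes from the rational Euler characteristic: for triangle-free $\Gamma$, $\chi(C(\Gamma)) = 1 - \#V(\Gamma)/2 + \#E(\Gamma)/4$, so if $C(\Phi)$ embeds with finite index $n$ then $\chi(C(\Phi)) = n\chi(C(\Psi))$, which forces $n = \chi(C(\Phi))/\chi(C(\Psi))$ — a positive integer or else the answer is NO — whenever $\chi(C(\Psi)) \neq 0$; the BFS then halts after $n+1$ vertices. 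The main obstacle is the residual family of $\Psi$ with $\chi(C(\Psi)) = 0$ (equivalently $\#E(\Psi) = 2\#V(\Psi) - 4$), for which the Euler characteristic fails to bound the index and one needs a separate argument to cap the BFS, for instance by exploiting the product decomposition of $C(\Psi)$ as a direct factor and the explicit structure of $X(\Psi)$ in these low-complexity cases. With the BFS capped appropriately, outputting the basis $(g_1u_1g_1^{-1},\ldots,g_nu_ng_n^{-1})$ and the index $\#V(Y)$ whenever BFS terminates completes the algorithm.
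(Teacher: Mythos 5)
Your overall architecture is the same as the paper's: enumerate the tuples from Theorem~\ref{thm:Algo}, check that cut-and-paste cannot destroy finite covolume (so that Corollary~\ref{cor:SubAlgo} upgrades to a finite-index version), and then decide finiteness of, and compute, the covolume of the fundamental domain $Y$. The genuine gap is in the last step, namely the termination of your BFS. Your stopping criterion is the rational Euler characteristic, and you correctly note that it fails exactly when $\chi(C(\Psi))=1-\#V(\Psi)/2+\#E(\Psi)/4=0$; but this residual case is not a fringe of ``low-complexity'' product groups that can be dispatched by inspecting a direct factor. For instance $K_{3,3}$ minus an edge is triangle-free, admits no join decomposition, and satisfies $\#E=2\#V-4$, and more generally $\chi(C(\Psi))=0$ is compatible with $C(\Psi)$ being one-ended and irreducible, and with finite-index copies of $C(\Phi)$ occurring in infinitely many distinct indices (already $D_\infty\times D_\infty$ contains itself with index $nm$ for all $n,m$), so no numerical invariant of $(\Phi,\Psi)$ alone can bound the index. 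The paper closes this gap with a geometric bound (Lemma~\ref{lem:ControlVolume}): if the covolume of $\{g_1J_{u_1},\ldots,g_nJ_{u_n}\}$ is finite, then $Y$ lies in the ball of radius $\mathrm{clique}(\Psi)+\sum_i|g_i|$, a quantity computable from the tuple itself. With that lemma your BFS can be capped uniformly, one decides finiteness by testing whether some vertex at distance $3+\sum_i|g_i|$ lies in $Y$, and the Euler characteristic is not needed at all. You should prove such a bound (the paper's argument: take $x\in Y$ farthest from $1$, observe that every vertex of $\Psi$ not labelling a hyperplane of the collection tangent to $x$ must lie in the tail of $x$, hence those vertices span a clique, and conclude via Lemma~\ref{lem:DistInter}) rather than case-split on $\chi$.

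A secondary, more minor issue: your monotonicity argument for cut-and-paste asserts $(t(A,B)J)^+\subseteq J^+$ for $J\in\mathscr{C}_0$, which amounts to claiming that $t(A,B)J$ separates $1$ from $J$ (or equals it); this is not justified and is stronger than needed. The paper's Lemma~\ref{lem:Covolume} instead decomposes $Y=P\sqcup Q\sqcup R$ along $A$ and $B$ and observes $Y'=P\cup t(A,B)R$, which gives $\#Y'\leq\#Y$ (and preservation of both finiteness and infiniteness of the covolume, the latter via the product structure of $Q$ inside the bridge of $A$ and $B$) without any inclusion $Y'\subseteq Y$.
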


\noindent
The link between the index of a subgroup and our peripheral collections of hyperplanes is made by the notion of \emph{covolume}:

\begin{definition}
Let $\Gamma$ be a simplicial graph and $\mathscr{C}$ a peripheral collection of hyperplanes of $X(\Gamma)$. The \emph{covolume} is the number of vertices (possibly infinite) in the intersection of all the halfspaces containing $1$ delimited by hyperplanes of $\mathscr{C}$. 
\end{definition}

\noindent
Notice that, as a consequence of Theorem \ref{thm:ReflectionGeneral}, this intersection of halfspaces $Y$ is a fundamental domain for the action on $X(\Gamma)$ of the subgroup $H$ generated by the reflections along hyperplanes of $\mathscr{C}$. Therefore, the covolume of $\mathscr{C}$ coincides with the index of $H$ in the right-angled Coxeter group $C(\Gamma)$.

\medskip \noindent
First of all, we need to study how the covolume may be modified by a cut-and-paste.

\begin{lemma}\label{lem:Covolume}
Let $\Gamma$ be a simplicial graph and $\mathscr{C}, \mathscr{C}'$ two collections of hyperplanes of $X(\Gamma)$. Assume that $\mathscr{C}$ is peripheral and that $\mathscr{C}'$ can be obtained from $\mathcal{C}$ by a cut-and-paste. If the covolume of $\mathscr{C}$ is infinite, then so is the covolume of $\mathscr{C}'$; otherwise, the covolume of $\mathscr{C}'$ is finite and smaller than the covolume of $\mathscr{C}$.
\end{lemma}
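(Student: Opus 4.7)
My plan is to establish the covolume identity $|Y'| = |Y| - |Y_{\mathrm{slab}}|$, where $Y$ and $Y'$ denote the covolume regions (the intersections of halfspaces containing $1$ delimited by hyperplanes of $\mathscr{C}$ and $\mathscr{C}'$ respectively), $A^+$ is the halfspace of $A$ containing $1$ and $A^-$ its complement (with similar notation for $B$), and $Y_{\mathrm{slab}} := Y \cap A^- \cap B^+$ is the part of $Y$ strictly between $A$ and $B$. Both conclusions of the lemma will follow, with a small convexity argument needed in the sub-case where $|Y_{\mathrm{slab}}| = \infty$ alone accounts for $|Y| = \infty$.

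The identity will come from proving $Y' = (Y \cap A^+) \sqcup t(Y \cap B^-)$ with $t = t(A, B)$. I would first reduce to the non-trivial case $\mathscr{C}_0 \neq \emptyset$, which forces $A, B \notin \mathscr{C}$ by peripherality: if $A \in \mathscr{C}$, then $A$ would separate $1$ from any $J \in \mathscr{C}_0 \subset B^- \subset A^-$, and similarly for $B$. Next, I would show $Y \cap A^+ = Y' \cap A^+$: for every $J \in \mathscr{C}_0$, the halfspace of $J$ containing $1$ contains $B^+ \supset A^+$ (since $J \subset B^-$), and by Lemma \ref{lem:shortenagain} the image of this halfspace under $t$ contains $t(B^+) = A^+ \ni 1$, hence coincides with the halfspace of $tJ$ containing $1$. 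So none of the hyperplanes of $\mathscr{C}' \setminus \mathscr{C}$ constrain $A^+$.

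For the bijection $t : Y \cap B^- \to Y' \cap A^-$, I would verify for $w \in B^-$ and $v = t(w)$ that $v \in Y' \iff w \in Y$ hyperplane by hyperplane. The identity of halfspaces above handles $J \in \mathscr{C}_0$. For $J \in \mathscr{C} \setminus \mathscr{C}_0$, two sub-cases arise: if $J$ is transverse to both $A$ and $B$, Lemma \ref{lem:shortenagain}(3) gives $t$-invariance of the halfspaces of $J$, so the equivalence is immediate. I expect the main technical step is the other sub-case, $J$ non-transverse to $B$: combining peripherality with the absence of $\mathscr{C}$-hyperplanes strictly between $A$ and $B$ forces $J \subset A^+$ and the halfspace of $J$ containing $1$ to contain $A^-$, because otherwise $J$ would separate $1$ from every $K \in \mathscr{C}_0 \subset A^-$, contradicting peripherality. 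Then both $v$ and $w$ lie in $A^-$ and hence automatically on the $1$-side of $J$.

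The finite case is then direct: $|Y'| = |Y \cap A^+| + |Y \cap B^-| \leq |Y|$, so $|Y| < \infty$ implies $|Y'| < \infty$. In the infinite case, either $|Y \cap A^+|$ or $|Y \cap B^-|$ is infinite (whence so is $|Y'|$), or else only $|Y_{\mathrm{slab}}| = \infty$; in that last case, I would project $Y_{\mathrm{slab}}$ onto $N(A) \cap A^-$. The image lies in $Y_{\mathrm{slab}} \cap N(A)$ by Lemma \ref{lem:InterProj} (the projection stays in $Y$ because no hyperplane of $\mathscr{C}$ is crossed on the way), and the fibers have cardinality bounded by the number of hyperplanes parallel to $A$ strictly between $A$ and $B$; consequently $|Y_{\mathrm{slab}} \cap N(A)| = \infty$, and since $A \notin \mathscr{C}$, sending each such vertex to its neighbor across $A$ gives an injection into $Y \cap A^+$, forcing $|Y \cap A^+| = \infty$ and yielding a contradiction.
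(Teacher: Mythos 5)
Your decomposition is the same one the paper uses: writing $Y=P\sqcup Q\sqcup R$ with $P=Y\cap A^+$, $Q=Y_{\mathrm{slab}}$, $R=Y\cap B^-$ and proving $Y'=P\sqcup t(R)$. Your verification of that identity (splitting into $J\in\mathscr{C}_0$, $J$ transverse to both $A$ and $B$, and $J$ disjoint from both, and using Lemma \ref{lem:shortenagain} plus peripherality in each case) is actually more detailed than the paper's, which essentially asserts the identity; that part is fine. Two remarks on the easy half: the lemma asks for the covolume to be \emph{smaller}, so in the finite case you should add that $Y_{\mathrm{slab}}\neq\emptyset$ whenever $\mathscr{C}_0\neq\emptyset$ (take $J\in\mathscr{C}_0$; by peripherality no hyperplane of $\mathscr{C}$ separates $1$ from the gate of $1$ in $N(J)$, so the geodesic from $1$ to that gate stays in $Y$ and must pass through the slab), which upgrades your $\leq$ to $<$.

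The genuine gap is the fiber-finiteness claim in the infinite case. A hyperplane $H$ separating $x\in Y_{\mathrm{slab}}$ from its projection onto $N(A)\cap A^-$ is, by Lemma \ref{lem:ProjSep}, merely disjoint from $N(A)$, contained in $A^-$, and not contained in $B^-$; nothing you have said forces $H$ to separate $N(A)$ from $N(B)$ --- it could separate $x$ from \emph{both} carriers, or be transverse to $B$ but not to $A$. Consequently the set of hyperplanes ``parallel to $A$ strictly between $A$ and $B$'' that you invoke is, if read as ``contained in the slab'', typically infinite, and even for a finite set of $k$ admissible separating hyperplanes the fiber would only be bounded by something like $2^k$, not $k$. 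Worse, the assertion is not a technical detail but the entire content of the step: in the only scenario where it is needed ($P$ and $R$ finite, $Q$ infinite), the image of your projection is automatically finite, since each vertex of $Y\cap N(A)\cap A^-$ has its neighbour across $A$ in $P$ (your own last observation), so $\#(Y_{\mathrm{slab}}\cap N(A))\leq\# P<\infty$ and hence \emph{some} fiber must be infinite unless the scenario is impossible. In other words, ``the fibers are finite'' is equivalent to what you are trying to prove, and you have given no argument for it. What is missing is a structural input about $Q$: the paper supplies it by claiming that $Q$ lies in the bridge of $N(A)$ and $N(B)$, so that $Q$ decomposes as a product of $Q\cap N(A)$ with a finite complex; in your language this amounts to showing that every hyperplane separating a point of $Y_{\mathrm{slab}}$ from $N(A)$ separates $N(A)$ from $N(B)$ (there are at most $d(N(A),N(B))$ of those). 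You would need to prove such a statement, using the hypotheses of Definition \ref{def:cut} on $\mathscr{C}$, $A$ and $B$, before the projection argument can close.
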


\begin{proof}
Because $\mathscr{C}'$ can be obtained from $\mathscr{C}$ by cut-and-paste, there exist two distinct non-transverse hyperplanes $A,B$ in the same $C(\Gamma)$-orbit which are transverse to the same hyperplanes of $\mathscr{C}$, such that no hyperplane of $\mathscr{C}$ lie between $A$ and $B$, and such that $A$ separates $1$ from $B$, and there exists an element $g \in C(\Gamma)$ which sends the halfspace delimited by $B$ containing $1$ to the halfspace delimited by $A$ containing $1$, such that, if $\mathscr{C}_0$ denotes the subcollection of $\mathscr{C}$ containing the hyperplanes separated from $1$ by $B$, then $\mathscr{C'}=(\mathscr{C} \backslash \mathscr{C}_0) \cup g \mathscr{C}_0$. Let $Y$ (resp. $Y'$) denote the intersection of all the halfspaces delimited by hyperplanes of $\mathscr{C}$ (resp. $\mathscr{C}'$) which contain $1$.

\medskip \noindent
Let $P$ denote the intersection of $Y$ with the halfspace delimited by $A$ which contains $1$, $Q$ the intersection of $Y$ with the subspace delimited by $A$ and $B$, and $R$ the intersection of $Y$ with the halfspace delimited by $B$ which does not contain $1$. Notice that $Y=P \sqcup Q \sqcup R$. Moreover, we have $Y'=P \cup gR$. As a consequence, if $Y$ is finite, then $Y'$ has to be finite as well and its cardinality is smaller than the cardinality of $Y$ (since the fact that $A$ and $B$ are distinct implies that $Q$ is non-empty). This proves the second assertion of our lemma. 

\medskip \noindent
In order to prove the first assertion, it suffices to show that, if $Q$ is infinite, then so is $P$. Notice that, because $Y$ is convex and because $A$ crosses $Y$, if $x$ is a vertex of $Q \cap N(A)$ then the vertex adjacent to $x$ separated from it by $J$ has to belong to $Q$. So, in fact, it is sufficient to show that $Q \cap N(A)$ is infinite. However, as observed right after Definition~\ref{def:cut}, $Q$ decomposes as a Cartesian product between $Q \cap N(A)$ and a finite subcomplex, which concludes the proof. 
\end{proof}

\noindent
As a consequence, we deduce the following analogue of Corollary \ref{cor:SubAlgo} for finite-index subgroups:

\begin{cor}\label{cor:SubAlgoFI}
Let $\Phi,\Psi$ be two finite simplicial graphs. Assume that $\Psi$ is triangle-free and that $\Phi$ has no isolated vertex. Then $C(\Phi)$ is isomorphic to a finite-index subgroup of $C(\Psi)$ if and only if $X(\Psi)$ contains a peripheral collection of hyperplanes whose covolume is finite, whose crossing graph is isomorphic to $\Phi$, and all of whose hyperplanes intersect the ball of radius $2(1+(1+2 \cdot \# V(\Phi)) \cdot \# V(\Psi) )$ centered at the vertex $1$. 
\end{cor}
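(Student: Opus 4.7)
The plan is to mirror the proof of Corollary \ref{cor:SubAlgo}, but keeping track of the covolume at each step so that finite-index-ness is preserved. The key observation that makes this work is the remark immediately following Definition of covolume: by Theorem \ref{thm:ReflectionGeneral}, if $\mathscr{C}$ is a peripheral collection of hyperplanes in $X(\Gamma)$, then the intersection $Y$ of the halfspaces delimited by $\mathscr{C}$ containing $1$ is a fundamental domain for the action of the reflection subgroup $H = \langle r_J,\ J\in\mathscr{C}\rangle$ on $X(\Gamma)$, so the covolume of $\mathscr{C}$ coincides with the index $[C(\Gamma):H]$.

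For the ``if'' direction, suppose $X(\Psi)$ contains such a peripheral collection $\mathscr{C}$. By Theorem \ref{thm:ReflectionGeneral}, the subgroup $H\leq C(\Psi)$ generated by the reflections along hyperplanes of $\mathscr{C}$ is isomorphic to $C(\Delta)$, where $\Delta$ is the crossing graph of $\mathscr{C}$; since $\Delta\cong\Phi$, we get $H\cong C(\Phi)$, and by the observation above $[C(\Psi):H]$ equals the covolume of $\mathscr{C}$, which is finite. Thus $C(\Phi)$ embeds as a finite-index subgroup of $C(\Psi)$.

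For the ``only if'' direction, assume $C(\Phi)$ embeds as a finite-index subgroup of $C(\Psi)$ via some morphism $\rho$. Since $\Phi$ has no isolated vertex and $\Psi$ is triangle-free, Corollary \ref{cor:Embedding} provides $\alpha\in\mathrm{Aut}(C(\Phi))$ such that $\rho\circ\alpha$ is a peripheral embedding. Let $\mathscr{C}_0$ be the corresponding peripheral collection of hyperplanes in $X(\Psi)$; its crossing graph is isomorphic to $\Phi$, and the reflection subgroup it generates coincides with $\rho\circ\alpha(C(\Phi))=\rho(C(\Phi))$, which has finite index in $C(\Psi)$. Hence by the observation above, the covolume of $\mathscr{C}_0$ is finite. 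Now apply Proposition \ref{prop:MainCut} to produce, through a sequence of cuts-and-pastes, a collection $\mathscr{C}$ all of whose hyperplanes cross the ball of radius $2(1+(1+2\cdot\#V(\Phi))\cdot\#V(\Psi))$ centered at $1$. At each step, Lemma \ref{lem:CutCrossing} guarantees that the new collection is still peripheral and has the same crossing graph, while Lemma \ref{lem:Covolume} guarantees that its covolume remains finite. Thus the final collection $\mathscr{C}$ has all the desired properties.

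The proof is almost entirely a repackaging of results already established, so the only real content is that all three invariants — peripherality, crossing graph, and finiteness of covolume — are simultaneously preserved by a cut-and-paste operation. Of these, the peripherality/crossing graph part is Lemma \ref{lem:CutCrossing}, and the covolume part is Lemma \ref{lem:Covolume}; the main (mild) subtlety is just recognizing that the covolume equals the index, which is where Theorem \ref{thm:ReflectionGeneral} is invoked a second time to translate the algebraic condition ``finite index'' into a geometric condition on the fundamental domain. No new estimate on the ball radius is needed beyond the one already produced in Proposition \ref{prop:MainCut}.
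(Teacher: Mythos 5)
Your proposal is correct and follows essentially the same route as the paper: Corollary \ref{cor:Embedding} to obtain a peripheral collection with finite covolume, then Proposition \ref{prop:MainCut} combined with Lemma \ref{lem:CutCrossing} and Lemma \ref{lem:Covolume} to move the hyperplanes into the stated ball while preserving peripherality, the crossing graph, and finiteness of the covolume. The only difference is cosmetic: you spell out the ``if'' direction and the identification of covolume with index (via Theorem \ref{thm:ReflectionGeneral}), which the paper leaves implicit in the remark following the definition of covolume.
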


\begin{proof}
Assume that $C(\Phi)$ is isomorphic to a finite-index subgroup of $C(\Psi)$. As a consequence of Corollary \ref{cor:Embedding}, there exists a peripheral collection of hyperplanes $\mathcal{J}$ such that the image of $C(\Phi)$ in $C(\Psi)$ is generated by the reflections along hyperplanes of $\mathcal{J}$. Necessarily, the crossing graph of $\mathcal{J}$ is isomorphic to $\Phi$ and its covolume is finite. By applying Proposition \ref{prop:MainCut}, we know that we can construct a new peripheral collection $\mathcal{H}$ by cuts-and-pastes all of whose hyperplanes intersect the ball of radius $2(1+(1+2 \cdot \# V(\Phi)) \cdot \# V(\Psi) )$ centered at the vertex $1$. According to Lemma \ref{lem:CutCrossing} and Corollary \ref{lem:Covolume}, the crossing graph of $\mathcal{H}$ is still isomorphic to $\Phi$ and its covolume is also finite (and smaller than the covolume of $\mathcal{J}$). 
\end{proof}

\noindent
Finally, when a peripheral collection of hyperplanes has finite covolume, we want to be able to compute its covolume algorithmically. This is the purpose of our next lemma.

\begin{lemma}\label{lem:ControlVolume}
Let $\Gamma$ be a finite simplicial graph and $\mathscr{C} = \{g_1J_{u_1}, \ldots, g_nJ_{u_n} \}$ a peripheral collection of hyperplanes of $X(\Gamma)$. Let $Y$ denote the intersection of all the halfspaces containing $1$ delimited by hyperplanes of $\mathscr{C}$. If $\mathscr{C}$ has finite covolume, then $Y$ lies in the ball centered at $1$ of radius $$\mathrm{clique}(\Gamma) + \sum\limits_{i=1}^n |g_i|,$$ where $\mathrm{clique}(\Gamma)$ denotes the maximal size of a complete subgraph of $\Gamma$. 
\end{lemma}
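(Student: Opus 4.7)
The plan is to bound $d(1, y)$ for a vertex $y \in Y$ at maximum distance from $1$; such a $y$ exists since $Y$ is finite. Set $W := V(\Gamma) \setminus \mathrm{tail}(y)$. The letters of $\mathrm{tail}(y)$ can all be shuffled to the end of a reduced word for $y$, so they pairwise commute and $\mathrm{tail}(y)$ is a clique of $\Gamma$; in particular $|\mathrm{tail}(y)| \leq \mathrm{clique}(\Gamma)$. For every $v \in W$, the neighbour $yv$ of $y$ satisfies $d(1, yv) = d(1, y) + 1$, so by maximality $yv \notin Y$ and some hyperplane $g_{i(v)} J_{u_{i(v)}} \in \mathscr{C}$ separates $1$ from $yv$. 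This hyperplane does not separate $1$ from $y \in Y$; being dual to the edge $(y, yv)$, it must satisfy $u_{i(v)} = v$, so that $y \in N(g_{i(v)} J_v)$. For distinct $v, v' \in W$, the resulting hyperplanes $g_{i(v)} J_v$ and $g_{i(v')} J_{v'}$ have distinct labels, hence are distinct elements of $\mathscr{C}$.

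Consider the convex subcomplex
\[
P := \bigcap_{v \in W} N(g_{i(v)} J_v).
\]
The carriers pairwise intersect at $y$, so Lemma~\ref{lem:DistInter} applies and yields
\[
d(1, P) \;\leq\; \sum_{v \in W} d(1, N(g_{i(v)} J_v)) \;\leq\; \sum_{v \in W} |g_{i(v)}| \;\leq\; \sum_{i=1}^n |g_i|.
\]
I claim $P = y \langle S \rangle$ for $S := \bigcap_{v \in W} \mathrm{star}(v)$. Indeed, by Lemma~\ref{lem:DescriptionHyp} one has $N(g_{i(v)} J_v) = g_{i(v)} \langle \mathrm{star}(v) \rangle$, so writing $y = g_{i(v)} b_v$ with $b_v \in \langle \mathrm{star}(v) \rangle$ transforms the condition $w \in P$ into $y^{-1} w \in \bigcap_{v \in W} \langle \mathrm{star}(v) \rangle$. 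By the normal form from Section~\ref{section:cc}, an element of $C(\Gamma)$ belongs to a standard parabolic $\langle A \rangle$ iff its reduced word uses only letters of $A$; this intersection therefore equals $\langle S \rangle$.

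Finally, $S$ is a clique in $\Gamma$: for distinct $u_1, u_2 \in S$, either both lie in the clique $\mathrm{tail}(y)$, or at least one of them, say $u_1$, lies in $W$, and then $u_2 \in S \subseteq \mathrm{star}(u_1)$ together with $u_2 \neq u_1$ forces $u_2 \in \mathrm{link}(u_1)$. Hence $|S| \leq \mathrm{clique}(\Gamma)$, and $P = y \langle S \rangle$ is a cube of dimension $|S|$ with combinatorial diameter $|S|$. The projection proposition from Section~\ref{section:cc} finally gives
\[
d(1, y) = d(1, \mathrm{proj}_P(1)) + d(\mathrm{proj}_P(1), y) \leq d(1, P) + \mathrm{diam}(P) \leq \sum_{i=1}^n |g_i| + \mathrm{clique}(\Gamma),
\]
which is the desired bound. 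The main delicate step is the identification $P = y \langle S \rangle$ together with the verification that $S$ is a clique; once these are in hand, Lemma~\ref{lem:DistInter} closes the argument.
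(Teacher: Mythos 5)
Your proof is correct and follows essentially the same strategy as the paper's: pick a vertex of $Y$ at maximal distance from $1$, observe that every generator outside its tail must label a hyperplane of $\mathscr{C}$ tangent to that vertex, bound the distance from $1$ to the intersection of the corresponding carriers via Lemma~\ref{lem:DistInter}, and absorb the remaining distance into $\mathrm{clique}(\Gamma)$. The only cosmetic difference is that you identify the intersection of carriers explicitly as the cube $y\langle S\rangle$, whereas the paper bounds $d(x,p)$ by examining the labels of the separating hyperplanes; the two computations are interchangeable.
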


\begin{proof}
Fix a vertex $x \in Y$ such that $d(1,x)$ is maximal. Let $\mathcal{J}$ denote the collection of the hyperplanes of $\mathscr{C}$ which are tangent to $x$, and $p$ the projection of $1$ onto $\bigcap\limits_{J \in \mathcal{J}} N(J)$.

\medskip \noindent
First, we claim that $d(p,x) \leq \mathrm{clique}(\Gamma)$. Let $\Gamma_1$ denote the set of vertices of $\Gamma$ labelling the hyperplanes of $\mathcal{J}$. So, for every $u \in \Gamma_1$, a hyperplane of $\mathscr{C}$ separates $x$ from $xu$. But if $u \notin \Gamma_1$, then $xu$ still belongs to $Y$, hence $d(1,xu)<d(1,x)$ by our choice of $x$. Therefore, the product $xu$ cannot be reduced, or equivalently, $u$ belongs to the tail of $x$. Because the vertices of the tail of $x$ must be pairwise adjacent, it follows that the vertices of $\Gamma_1^c$ are pairwise adjacent. Now, the hyperplanes separating $x$ and $p$ must be transverse to each hyperplane of $\mathcal{J}$, so that Lemma \ref{lem:transverseimpliesadj} implies that they are labelled by vertices of $\Gamma_1^c$. Therefore, we can write $p=xa_1 \cdots a_k$ for some pairwise adjacent vertices $a_1, \ldots, a_k \in \Gamma$, which proves that $d(p,x) \leq k \leq \mathrm{clique}(\Gamma)$ as desired.

\medskip \noindent
Next, it follows from Lemma \ref{lem:DistInter} that 
$$d(1,p) \leq \sum\limits_{J \in \mathcal{J}} d(1,N(J)) \leq \sum\limits_{i=1}^n d(1,g_i) = \sum\limits_{i=1}^n |g_i|.$$
We conclude that
$$d(1,x) \leq d(1,p)+d(p,x) \leq \mathrm{clique}(\Gamma) + \sum\limits_{i=1}^n |g_i|,$$
as desired.
\end{proof}

\noindent
We now have everything we need in order to prove Theorem \ref{thm:AlgoFI}.

\begin{proof}[Proof of Theorem \ref{thm:AlgoFI}.]
For convenience, let $n$ denote the number of vertices of $\Phi$. Apply the following operations to a tuple $(g_1,\ldots, g_n, u_1, \ldots, u_n)$ where $g_1, \ldots, g_n \in C(\Psi)$ are words of generators of length $\leq 2(1+(1+2 \cdot \# V(\Phi)) \cdot \# V(\Psi) )$ and $u_1, \ldots, u_n \in V(\Psi)$:
\begin{itemize}
	\item For each $1 \leq i \leq n$, determine whether the word $g_i$ is reduced. If at some point the answer is no, stop and test another tuple; otherwise, apply the next step.
	\item For each $1 \leq i < j \leq n$, apply Lemma \ref{lem:AlgoId} to determine whether the hyperplanes $g_iJ_{u_i}$ and $g_jJ_{u_j}$ are distinct. If at some point the answer is no, stop and test another tuple; otherwise, apply the next step.
	\item For each $1 \leq i < j \leq n$, apply Lemma \ref{lem:AlgoPeripheral} to determine whether $g_iJ_{u_i}$ separates $1$ from $g_jJ_{u_j}$. If at some point the answer is yes, stop and test another tuple; otherwise, the collection $\mathcal{J}= \{ g_1J_{u_1}, \ldots, g_nJ_{u_n} \}$ is peripheral.
	\item Apply Lemma \ref{lem:AlgoTransverse} to determine the crossing graph of $\mathcal{J}$. If it is isomorphic to $\Phi$, apply the next step; otherwise, test another tuple.
	\item For each $1 \leq i \leq n$ and for each word of generators $x \in C(\Psi)$ of length $3+|g_1|+\cdots + |g_n|$, apply Lemma \ref{lem:halfspaces} to determine whether or not $g_iJ_{u_i}$ separates $1$ from $x$. If at some point the answer is no for a fixed $x$ and for each $i$, then the covolume of $\mathcal{J}$ is infinite (according to Lemma \ref{lem:ControlVolume}), so stop and test another tuple. Otherwise, apply the next step.
	\item Use Lemma \ref{lem:halfspaces} to list the words of generators $x \in C(\Psi)$ of length $\leq 2+|g_1|+ \cdots + |g_n|$ such that $g_iJ_{u_i}$ does not separate $1$ from $x$ for every $1 \leq i \leq n$. In that list $\{x_1, \ldots, x_k\}$, for every $1 \leq i \leq k$, if there exists $i < j \leq k$ such that $x_i$ and $x_j$ are equal in $C(\Psi)$, remove $x_j$. The cardinality $N$ of the final list we get is the covolume of $\mathcal{J}$. Stop the algorithm, and give $(g_1u_1g_1^{-1},\ldots, g_nu_ng_n^{-1})$ as the basis of a subgroup of $C(\Psi)$ of index $N$ which is isomorphic to $C(\Phi)$. 
\end{itemize}
If all the tuples have been tested and the algorithm did not stop to give a basis and an index of a subgroup, then it follows from Corollary \ref{cor:SubAlgoFI} that $C(\Psi)$ is not isomorphic to a finite-index subgroup of $C(\Phi)$.
\end{proof}

\begin{remark}
It is worth noticing that, because a cut-and-paste cannot increase the covolume of a peripheral collection of hyperplanes according to Lemma \ref{lem:Covolume}, the algorithm described in the proof of Theorem \ref{thm:AlgoFI} can be used to determine the minimal index of a finite-index subgroup of $C(\Psi)$ isomorphic to $C(\Phi)$, if such a subgroup exists. 
\end{remark}

\section{Concluding remarks and open problem}\label{section:Q}

\noindent
Theorem \ref{thm:MorphismReflection} shows that, given a morphism $\varphi : C(\Gamma_1) \to C(\Gamma_2)$ sending each generator of $C(\Gamma_1)$ to a reflection of $C(\Gamma_2)$, it is possible to pre-compose $\varphi$ with an alternating product of foldings and partial conjugations in order to obtain a peripheral embedding. Then, it is natural to ask whether such an alternating product may be replaced with a product of a folding with a product of partial conjugations. The next example shows that it is not the case. 
\begin{figure}
\begin{center}
\includegraphics[scale=0.38]{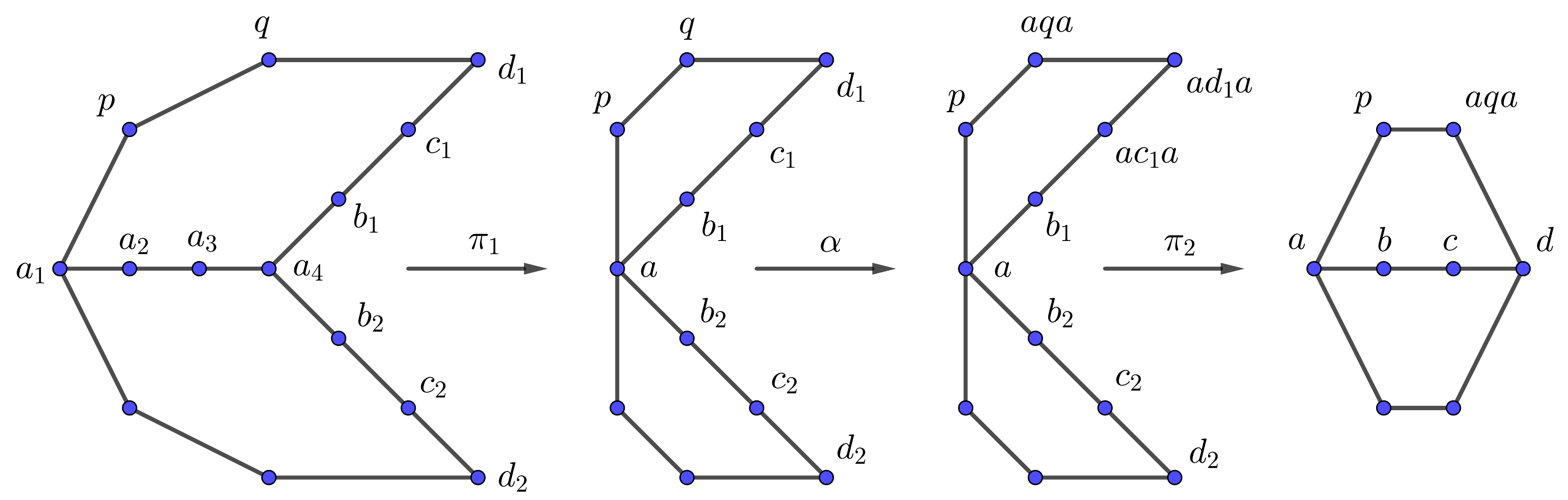}
\caption{The morphism $\pi_2 \circ \alpha \circ \pi_1$ is not the product of a folding with a conjugation.}
\label{altern}
\end{center}
\end{figure}

\begin{ex}\label{ex:alternating}
Figure \ref{altern} illustrates the alternating product $\pi_2 \circ \alpha \circ \pi_1$ of two foldings $\pi_1 : C(\Phi_1) \to C(\Phi_2)$, $\pi_2 : C(\Phi_2) \to C(\Phi_3)$ and a partial conjugation $\alpha \in \mathrm{Aut}(C(\Phi_2))$. Notice that $\Phi_1$ and $\Phi_2$ do not contain separating stars, so, if $\pi_2 \circ \alpha \circ \pi_1$ decomposes as the product of a folding with a product of particular conjugation, then it has to decompose as the product of a folding with a conjugation. But it is clearly not the case.
\end{ex}

\noindent
In Section \ref{section:peripheral}, we described cuts-and-pastes geometrically. However, applying a cut-and-paste to a peripheral embedding $\varphi : C(\Phi) \to C(\Psi)$ makes sense, and it would be interesting to have an algebraic description of this operation. Because it amounts to conjugating by elements of $C(\Psi)$ the images under $\varphi$ of the generators of $C(\Phi)$, it is natural ask if the cut-and-paste of $\varphi$ can be obtained from $\varphi$ by post-composing it with partial conjugations of $C(\Psi)$. The next example shows however that it is not the case.

\begin{ex}
Let $\Gamma$ be a square, and let $a,b,c,d$ denote its successive vertices. Notice that $\mathcal{J}_n= \{ (ab)^{-n} J_c, (ab)^nJ_d, (cd)^{-n}J_a, (cd)^n J_b\}$ is peripheral. (Indeed, $X(\Gamma)$ is the usual square tessellation of the Euclidean plane, and $\mathcal{J}_n$ is a cycle of four hyperplanes delimiting a region which contains $1$.) Let $\varphi_n : C(\Gamma) \hookrightarrow C(\Gamma)$ be the corresponding peripheral embedding. It is clear that the $\varphi_n$'s equivalent modulo cuts-and-pastes. However, the partial conjugations of $C(\Gamma)$ are all conjugations, and the $\varphi_n$'s are pairwise non-conjugate.
\end{ex}

\noindent
Regarding our description of morphisms between two-dimensional right-angled Coxeter groups, a natural question to ask is:

\begin{question}
For which finite (triangle-free) simplicial graph $\Gamma$ is the right-angled Coxeter group $C(\Gamma)$ cohopfian? 
\end{question}

\noindent
Recall that a group $G$ is \emph{cohopfian} if every injective morphism $G \hookrightarrow G$ is automatically surjective. Theorem \ref{thm:Algo} provides an algorithm to determine whether or not a given two-dimensional right-angled Coxeter group is cohopfian, but it does not describe the family of graphs leading to cohopfian right-angled Coxeter groups. Nevertheless, we expect that the techniques introduced in this paper will lead to a better understanding of the question. (For instance, based on the ideas used in the proof of Proposition \ref{prop:MainCut}, it is possible to prove that one-ended two-dimensional hyperbolic right-angled Coxeter groups are cohopfian, which is a consequence of a much more general criterion \cite[Th\'eor\`eme 1.0.7]{Moioli}.)

\medskip \noindent
Finally, the most natural (but also the most difficult) question is: what does happen for right-angled Coxeter groups of dimension at least three? In this case, reflection subgroups have to be replaced with \emph{generalised reflection subgroups}, where a \emph{generalised reflection} is a product of reflections along a collection of pairwise transverse hyperplanes (or equivalently, a product of pairwise commuting reflections). Unfortunately, the ping-pong used in Section \ref{section:pingpong} does not work anymore, and all our arguments fail. So the first step towards the understanding of morphisms between higher-dimensional right-angled Coxeter groups is to study the structure of generalised reflection subgroups. For instance, we do not know the answer to the following question:

\begin{question}
Is a generalised reflection subgroup a right-angled Coxeter group?
\end{question}

\addcontentsline{toc}{section}{References}

\bibliographystyle{alpha}
{\footnotesize\bibliography{MorphismsRACG}}

\end{document}